\documentclass{amsart}[11pt]

\usepackage[left=1.25in, right=1.25in, top=1in, bottom=1in]{geometry}
\usepackage{amsmath, amsthm, amssymb, mathtools, stmaryrd}

\usepackage{framed}
\usepackage{enumerate}
\usepackage{changepage}
\usepackage{verbatim}
\usepackage{tikz-cd}
\usepackage{lineno}
\usepackage[
backend=biber,
style=alphabetic,
maxalphanames=99,
maxbibnames=99
]{biblatex}

\usepackage[draft]{todonotes}   
\usepackage{soul} 
\usepackage{braket}
\usepackage{url}
\usepackage[symbol]{footmisc}

\usepackage{etoolbox} 
\usepackage{mathrsfs} 
\usepackage[pdfencoding=auto, psdextra]{hyperref}
\usepackage{cleveref}
\usepackage[normalem]{ulem} 

\newcommand{\CM}{\operatorname{C-M}}

\newcommand{\mc}[1]{\mathcal{#1}}

\newcommand{\ol}[1]{\overline{#1}}
\newcommand{\ovl}[1]{\overline{#1}}
\newcommand{\mscr}{\mathscr}

\newcommand{\Fal}{\operatorname{Fal}}
\newcommand{\h}{\widehat{h}}
\newcommand{\Z}{\ensuremath{\mathbb{Z}}}

\newcommand{\wh}{\widehat}

\newcommand{\bb}{\mathbb}

\newcommand{\GVF}{\operatorname{GVF}}

\newcommand{\mcL}{\ensuremath{\mathcal{L}}}

\newcommand{\mcO}{\ensuremath{\mathcal{O}}}

\newcommand{\abs}[1]{\ensuremath{\left\| #1 \right\|}}

\renewcommand{\mod}{\mathrm{mod}}

\DeclareMathOperator{\Aut}{Aut}
\DeclareMathOperator{\Stab}{Stab}

\DeclareMathOperator{\Gal}{Gal}

\DeclareMathOperator{\Spec}{Spec}

\DeclareMathOperator{\Hom}{Hom}

\DeclareMathOperator{\tr}{tr}
\DeclareMathOperator{\Tr}{Tr}

\DeclareMathOperator{\Pic}{Pic}

\DeclareMathOperator{\codim}{codim}

\DeclareMathOperator{\Div}{Div}

\DeclareMathOperator{\Frac}{Frac} 

\DeclareMathOperator{\an}{an} 
\DeclareMathOperator{\divv}{div}

\newcommand{\eps}{\varepsilon}

\usepackage[bbgreekl]{mathbbol}
\usepackage{amsfonts}
\DeclareSymbolFontAlphabet{\mathbb}{AMSb} 
\DeclareSymbolFontAlphabet{\mathbbl}{bbold}

\theoremstyle{plain}
\newtheorem{thm}{Theorem}
\newtheorem{theorem}[thm]{Theorem}
\newtheorem{lemma}[thm]{Lemma}

\newtheorem{proposition}[thm]{Proposition}

\theoremstyle{definition}
\newtheorem{defn}[thm]{Definition}
\newtheorem{rmk}[thm]{Remark}
\newtheorem{remark}[thm]{Remark}

\numberwithin{thm}{section}
\numberwithin{prob}{section}
\numberwithin{equation}{section}

\title{Uniform Height Gaps in Arbitrary Characteristics}
\author{Alice Lin, Frank Lu and Jit Wu Yap}
\date{today}

\begin{document}

\begin{abstract}
We prove uniform height gaps for subvarieties of abelian varieties in arbitrary characteristic, extending the new gap principle of Gao--Ge--K{\"u}hne. Our proof goes through the theory of adelic curves and globally valued fields, and proves the Bogomolov conjecture for an arbitrary globally valued field. This gives a different way to obtain uniform Bogomolov-type results, differing from the approaches of Dimitrov--Gao--Habegger--K{\"u}hne and Yuan. First, we prove the Bogomolov conjecture over globally valued fields for divisors. We then reduce the case of general subvarieties to the case of divisors by an induction argument.  Specializing to the case of global function fields, we obtain a new proof of the geometric Bogomolov conjecture following the strategy of Gubler and Yamaki.
\end{abstract}

\maketitle

\section{Introduction}
In their breakthrough paper, Gao--Ge--K{\"u}hne proved the following new gap principle for subvarieties $V$ of an abelian variety $A/\ovl{\bb{Q}},$ as \cite[Theorem 1.2]{GGK21}. We say that a subvariety $V$ generates $A$ if the smallest abelian variety containing $V-V$ is $A$ itself. One consequence of their theorem is the following result.

\begin{theorem}\label{IntroTheorem1}
There exists $\eps = \eps(g,d) > 0$ with the following property: for any polarized abelian variety $(A,L)$ over $\ovl{\bb{Q}}$ of dimension $g$ with $L$ symmetric, and any irreducible closed subvariety $V \subseteq A$ that generates $A$ of degree $\deg_L V \leq d$, with $\Stab(V)$ finite, we have 
$$\h_{A,L}(V) \geq \eps \max \{1, h_{\Fal}(A)\}.$$
\end{theorem}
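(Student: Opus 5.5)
This statement is the consequence of the new gap principle \cite[Theorem 1.2]{GGK21} that the introduction quotes, so I will deduce it from that theorem rather than reprove it. Roughly, the gap principle says the following. There are constants $c_1, c_2 > 0$, depending only on $g$ and $d$, with a specific property. For $V \subseteq A$ as in the statement, let $V^\circ$ be the complement in $V$ of the degenerate locus, a Zariski closed proper subset of controlled degree. Then the set of points $x \in V^\circ(\ovl{\bb{Q}})$ with $\h(x) \leq c_1 \max\{1, h_{\Fal}(A)\}$ lies in a proper Zariski closed subset $Z \subsetneq V$, and $\deg_L Z \leq c_2$.

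First I check that the degenerate locus is a proper subset. This is where the hypotheses enter. Since $\Stab(V)$ is finite and $V$ generates $A$, $V$ is not contained in a translate of a proper abelian subvariety. It is also not a union of translates of positive-dimensional abelian subvarieties. So $V$ is nondegenerate in the sense of Gao--Habegger, and $V^\circ$ is a dense open subset of $V$.

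Next I convert the point statement into a bound on the height of $V$, using Zhang's successive minima inequality:
$$\h_{A,L}(V) \geq \frac{1}{\dim V + 1}\, \zeta_1(V), \qquad \zeta_1(V) = \sup_{Z \subsetneq V} \inf_{x \in (V \setminus Z)(\ovl{\bb{Q}})} \h(x).$$
Take $Z$ to be the union of the exceptional set from the gap principle and the degenerate locus. Every point of $V \setminus Z$ then has height greater than $c_1 \max\{1, h_{\Fal}(A)\}$. Hence $\zeta_1(V) \geq c_1 \max\{1, h_{\Fal}(A)\}$, and we may take $\eps = c_1/(g+1)$, which depends only on $g$ and $d$.

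The main obstacle is matching the normalizations. The gap principle is stated for the Néron--Tate height attached to $L$. I must check that it uses the same $\h$ and the same convention for $\h(V)$, normalized by degree, as in Zhang's inequality; any discrepancy only changes $\eps$ by a factor depending on $g$ and $d$. I must also check that the symmetry of $L$ makes $\h$ nonnegative and quadratic, which Zhang's inequality requires. If the form of GGK21 I invoke requires a fixed level structure or an embedding via $L^{\otimes 4}$, I will pass to that embedding. This changes degrees by a factor of $4^{\dim V}$ and heights by a factor of $4$, both bounded in terms of $g$ and $d$, so the conclusion is unaffected.
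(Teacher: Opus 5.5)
Your deduction is correct, and it is exactly how the paper first presents this statement: as a consequence of \cite[Theorem 1.2]{GGK21}. Zhang's successive minima inequality $\h_{A,L}(V) \geq \frac{1}{\dim V+1}\,\zeta_1(V)$ is the right tool for passing from small points to the height of $V$, and the degree bound $c_2$ is not even needed.

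One correction concerns the set you remove. In \cite[Theorem 1.2]{GGK21} the points are taken in $V^{\circ}$, the complement of the union of all positive-dimensional cosets contained in $V$. It is not the complement of a Gao--Habegger degenerate locus. Gao--Habegger non-degeneracy is a Betti-map condition on subvarieties of abelian schemes over a positive-dimensional base, and a single $V$ over $\ovl{\bb{Q}}$ never satisfies it. What you actually need is that this union of cosets is a proper Zariski closed subset when $\Stab(V)$ is finite. That is true, by Ueno and Kawamata (or Bogomolov's finiteness theorem), but you should cite it rather than assert it.

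The proof the paper itself advertises takes a different route, by compactness. Suppose $(A_n,L_n,V_n)$ satisfy $\h(V_n)/\max\{1,h_{\Fal}(A_n)\}\to 0$.
\begin{enumerate}
\item Place the pairs as points of a bounded-degree Hilbert scheme over $\mathcal{A}_{g,n}$. There $\h(V)$ is the height of a Deligne-pairing adelic line bundle.
\item Rescale the heights on $\ovl{\bb{Q}}$ so that the parameters stay bounded.
\item Form an ultraproduct GVF as in Proposition \ref{GVFConstruction1}. This is Hultberg's reduction \cite[\S 4.2]{Hul26}, and the template is the proof of Theorem \ref{NewGapPrinciple1}.
\item The limit $V_{\omega}\subseteq A_{\omega}$ has height zero. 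By Theorem \ref{IntroTheorem4} it is special: use \cite{chen2022equidistribution} if the limit is archimedean and Theorem \ref{GVFBogomolov2} otherwise.
\item Finite stabilizer and generation then give a contradiction. In the non-archimedean case they force $A_{\omega}$ to be isotrivial, so $h_{\Fal}(A_{\omega})=0$, which contradicts the normalization.
\end{enumerate}

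Your route is short, but all the depth sits inside \cite{GGK21}: Gao--Habegger non-degeneracy, the Dimitrov--Gao--Habegger height inequality and K{\"u}hne's equidistribution. These rely on the Betti map and hence on characteristic zero. The paper's route avoids non-degeneracy entirely. In its place it uses Arakelov theory over adelic curves: Gubler--Yamaki measures at bad places, a key formula in almost-everywhere good reduction, and the exceptional-divisor induction. The cost is that compactness gives no control on $\eps(g,d)$. The gain is that the same argument runs in positive characteristic, which yields Theorem \ref{IntroTheorem2}.
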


Here, $\h_{A,L}(V)$ denotes the Neron--Tate height of the subvariety $V$ with respect to $L$. Theorem \ref{IntroTheorem1} plays an essential role in proving the uniform Mordell-Lang conjecture \cite[Theorem 1.1]{GGK21}. For curves embedded in their Jacobians, the analogue of Theorem \ref{IntroTheorem1} was shown by by Dimitrov--Gao--Habegger \cite{DGH21} and K{\"u}hne \cite{Kuh21} as part of their proof of the uniform Mordell conjecture and uniform Bogomolov conjecture for curves in Jacobians respectively. Yuan \cite{Yua24} subsequently gave an alternative proof for curves in Jacobians, that holds in any characteristic. 
\par 
Our first main result is a new gap principle in arbitrary characteristic. For this article, we say that $K$ is a global function field if $K = k(B)$, where $k$ is a field and $B$ a smooth projective curve over $k$. A global function field $K$ naturally comes with a global height function \cite{BG06}. The Faltings height $h_{\Fal}(A)$ can be analogously defined in the function field case, see \cite[Definition 1.2]{GLFP25} where they use the term differential height instead.

\begin{theorem} \label{IntroTheorem2}
There exists positive constants $\eps = \eps(g,d) > 0, M = M(g,d) > 0$ such that the following holds: let $K$ be a global function field and $(A,L)$ a polarized abelian variety with $L$ symmetric. For any integral subvariety $V \subseteq A_{\ovl{K}}$ satisfying $\deg_L(V) \leq d$, such that $V$ generates $A$ and $\Stab(V)$ is finite, we have 
$$\h_{A,L}(V) \geq \eps h_{\Fal}(B),$$
where $B$ is an abelian variety with an isogeny $\pi: A \to B$ of degree at most $M$. 
\end{theorem}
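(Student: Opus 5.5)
The plan is to argue by contradiction via a compactness/ultraproduct argument in the framework of globally valued fields (GVFs), as the abstract suggests. Suppose the statement fails for some fixed $(g,d)$ and every choice of $\eps$ and $M$. Then there is a sequence of global function fields $K_n = k_n(B_n)$, polarized abelian varieties $(A_n,L_n)$ of dimension $g$, and subvarieties $V_n \subseteq A_{n,\ovl{K_n}}$ with $\deg_{L_n} V_n \le d$, generating $A_n$, with finite stabilizer, such that
$$\h_{A_n,L_n}(V_n) < \tfrac{1}{n}\, h_{\Fal}(B)$$
for every quotient $\pi\colon A_n \to B$ of degree at most $n$. Since the degree is bounded, the data $(A_n,L_n,V_n)$ live in finitely many Hilbert/moduli families; after passing to a subsequence, we may fix a single family over a base of finite type over $\mathbb{Z}$. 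First normalize: rescale the GVF structure on $\ovl{K_n}$ by $\max\{1,h_{\Fal}(A_n')\}^{-1}$, where $A_n'$ is the quotient maximizing the Faltings height among isogenies of degree $\le n$. (If these heights are bounded, we keep the scaling by $1$; the Faltings height in the function field case is a nonnegative differential height.) Then take a nonprincipal ultraproduct $F = \prod \ovl{K_n}/\mathcal U$, which is again a GVF (an adelic curve in the sense of Chen--Moriwaki). The $A_n, V_n$ glue to an abelian variety $A$ over $F$ with a subvariety $V$ of degree $\le d$. The conditions ``$V$ generates $A$'' and ``$\Stab(V)$ finite'' are constructible in families, so they are preserved by Łoś's theorem. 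Moreover $\h(V) = \lim_{\mathcal U} \h(V_n)/\text{scale} = 0$, using continuity of Néron--Tate heights of subvarieties under ultraproducts of GVFs (heights are defined via adelic intersection numbers, which commute with ultralimits).

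Next invoke the Bogomolov conjecture over an arbitrary GVF, which the paper proves by first handling divisors and then inducting. For $V\subseteq A$ with $\h(V)=0$, this says $V$ is a translate of an abelian subvariety, up to the trace part: $V$ is a torsion translate of $A'' + T$, where $T$ comes from the $F/\overline{\mathbb{Q}}$-trace (or constant part) of $A$. Since $V$ generates $A$ and $\Stab(V)$ is finite, $V$ cannot contain a translate of a nontrivial abelian subvariety. The nontrivial abelian part must therefore vanish, and $A$ must have large trace, i.e.\ $A$ is isogenous to a ``constant'' abelian variety over $F$ whose Faltings/differential height is $0$ in the normalized GVF.

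Finally, transfer this back. An isogeny $A \to C$ with $C$ of height $0$ has some degree $N$, and by Łoś it descends to isogenies $A_n \to C_n$ of degree $N$ for $\mathcal U$-most $n$. Height $0$ in the ultraproduct means $h_{\Fal}(C_n)/\text{scale}_n \to 0$. If the scale is $h_{\Fal}(A_n')$ with $A_n'$ the degree-$\le n$ maximizer, this is a contradiction once $n \ge N$: the hypothesis gives $h_{\Fal}(A_n') \ge$ its own scale, but $C_n$ is a competitor whose normalized height tends to $0$, and the normalized maximum is $1$. In the bounded-scale case one checks instead that the inequality is automatic, since $h_{\Fal}=0$ is allowed on the right.

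The hardest step is the middle one: proving that Néron--Tate heights of subvarieties and Faltings heights behave continuously under the normalized ultraproduct. In particular one must show that the Faltings height of $A_n$ controls the height of the GVF-limit $A$, so that zero normalized height forces isotriviality up to bounded isogeny. I would attack this through Moret-Bailly/Zarhin-type comparison of $h_{\Fal}$ with the height of the moduli point, on a fixed level-structure moduli space with bounded degree.
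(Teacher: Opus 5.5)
Your overall architecture (degenerate to an ultraproduct of rescaled function-field GVFs, then apply Theorem \ref{GVFBogomolov2}) matches the paper. However, your final ``transfer back'' step does not produce a contradiction, and that step is exactly the positive-characteristic obstruction the statement is built around.

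In the limit, Bogomolov only tells you that $A$ is isogenous to a constant $C$. In characteristic $p$ this is compatible with $h_{\Fal}(A)>0$; for example, a non-constant quotient of a constant supersingular $E\times E$ by an $\alpha_p$ is non-isotrivial but $p$-isogenous to a constant surface. Pulling the isogeny back by \L{}o\'s gives degree-$N$ quotients $C_n$ of $A_n$ with $h_{\Fal}(C_n)/s_n\to 0$, where $s_n$ is your scale. It does not give $h_{\Fal}(C_n)=0$. This violates neither assumption you are relying on:
\begin{itemize}
\item the negated hypothesis $\h(V_n)<\frac1n h_{\Fal}(C_n)$, since both sides are $o(s_n)$ and nothing compares them;
\item the maximality of $A_n'$, since a competitor of smaller height is harmless.
\end{itemize}
Normalizing by the minimum over bounded-degree quotients also fails. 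By \cite[Proposition 6.3]{GLFP25}, $h_{\Fal}(A_n)$ can be unboundedly larger than that minimum, so the moduli points of $A_n$ lose bounded normalized height and the ultralimit $A$ need not exist. Re-running the argument on $(C_n,\pi(V_n))$ just yields another limit isogenous to a constant, with nothing forcing the process to terminate.

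The missing idea is a canonical, bounded-degree choice of isogenous abelian variety for which ``isogenous to a constant one'' forces ``constant'', and which is well behaved in families. The paper gets this from Lau--Nicole--Vasiu:
\begin{itemize}
\item By Proposition \ref{IsogenyBound1}, there is an isogeny with kernel in $A[p^{2g}]$ onto an abelian variety with minimal $p$-divisible group.
\item The minimal locus is closed in each Newton stratum of $\mathcal{A}_{g,n}$. So, after fixing the Newton polygon, the ultralimit $A_\omega$ is still minimal.
\item Proposition \ref{MinimalTrace1} then shows that such an $A_\omega$, once made isogenous to a constant variety by Bogomolov, is itself constant. Hence $h_{\Fal}(A_\omega)=0$.
\end{itemize}
This is contradicted because the paper normalizes by the height of the moduli point $[A_n,V_n]$ in the Hilbert-scheme component. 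It then uses \cite[Theorem 1.1]{MY25} together with Zhang's inequality, giving $\h(\iota(V_n))\le C\,h_{\Fal}(A_n)$, to force $h_{\Fal}(A_\omega)>0$. That same bound is what you need, and do not supply, to make the Hilbert points of $V_n$ have bounded normalized height so that $V$ exists over $F$.

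Three smaller points:
\begin{itemize}
\item Your ``bounded-scale case'' claim is false. The function-field statement is invariant under rescaling heights, and $h_{\Fal}$ can be arbitrarily small and positive, so you must always rescale.
\item Landing in finitely many families requires bounding $\deg_L A$. This follows from \cite[Lemma 2.5]{GGK21}, because $V-V$ generates $A$.
\item Your single ultraproduct would handle growing characteristic automatically, which the paper instead cites from \cite{Hul26}. Bounded characteristic still needs the $p$-divisible group input above.
\end{itemize}
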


When $K$ is of characteristic zero, an isogeny does not change the Faltings height \cite[Theorem A]{GLFP25}. In positive characteristic however, an isogeny can drastically change the Faltings height, even when the degree of the isogeny remains fixed \cite[Proposition 6.3]{GLFP25}. As such, it is necessary to replace $h_{\Fal}(A)$ with $h_{\Fal}(B)$. 
\par 
The fact that we may bound the degree of $\pi: A \to B$ allows us to still obtain uniform bounds on small points, after moving the subvariety $V$ to $B$ using $\pi$ and then applying the quantitative Zhang's inequality \cite{MY25}. As an example, the following uniform bound on small points for curves is an immediate corollary of Theorem \ref{IntroTheorem2} and \cite[Theorem 1.1]{MY25}.

\begin{theorem} \label{IntroTheorem3}
In the setting of Theorem \ref{IntroTheorem2}, assume that $V = C$ is a curve. Then there exists positive constants $\eps = \eps(g, \deg_L(C)) > 0, M = M(g,\deg_L(C)) > 0, N = N(g,\deg_L(C)) > 0$ such that 
$$\# \left\{ x \in C(\ovl{K}) \mid \h_{A,L}(x) < \eps h_{\Fal}(B) \right\} \leq N,$$
where $B$ is an abelian variety with an isogeny $\pi: A \to B$ of degree at most $M$. 
\end{theorem}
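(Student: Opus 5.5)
The plan is to deduce Theorem \ref{IntroTheorem3} from Theorem \ref{IntroTheorem2} by transporting $C$ along the isogeny and then applying the quantitative Zhang inequality of \cite[Theorem 1.1]{MY25} on $B$. Let $\eps_0 = \eps(g,d)$ and $M = M(g,d)$ be the constants of Theorem \ref{IntroTheorem2} for $d = \deg_L(C)$. Theorem \ref{IntroTheorem2} gives an isogeny $\pi: A \to B$ of degree at most $M$ with $\h_{A,L}(C) \geq \eps_0 h_{\Fal}(B)$. (If the genus-zero case or $C$ not generating is excluded by the hypotheses of Theorem \ref{IntroTheorem2}, we are in its setting by assumption; a curve that generates $A$ with $g\ge 2$ has finite stabilizer automatically.)

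First, push everything to $B$. Choose a symmetric ample $L_B$ on $B$ with $\pi^*L_B \cong L^{\otimes k}$, or more directly use the dual isogeny: there is $\hat\pi: B \to A$ with $\hat\pi\circ\pi = [m]$ for $m \mid \deg\pi \le M$. Set $L_B := \hat\pi^* L$, which is symmetric and ample, and $\pi^* L_B = [m]^*L \cong L^{\otimes m^2}$. Then for points $\h_{B,L_B}(\pi(x)) = m^2 \h_{A,L}(x)$. Also $C' := \pi(C)$ is a curve in $B$ generating $B$. By the projection formula, $\deg_{L_B}(C') \le \deg_{\pi^*L_B}(C) = m^2 \deg_L C \le M^2 d$, and $\deg(C \to C') \le M$. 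The height of $C$ with respect to $\pi^*L_B$ is $m^4\h_{A,L}(C)$ (scaling of degree and height for curves: height scales by $m^{2\cdot 2}$), and it equals $\deg(C\to C')\,\h_{B,L_B}(C')$. Hence $\h_{B,L_B}(C') \ge M^{-1}\eps_0 h_{\Fal}(B)$ up to these explicit factors depending only on $M$.

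Second, apply \cite[Theorem 1.1]{MY25} to $C' \subseteq B$: it should provide constants $c_1,c_2$ depending only on $g$ and $\deg_{L_B}(C') \le M^2 d$ such that the set of $y\in C'(\ovl K)$ with $\h_{B,L_B}(y) < c_1 \h_{B,L_B}(C')$ has at most $c_2$ elements (this is the quantitative Zhang inequality: normalized height of $C'$ bounds the essential minimum, with an explicit count of exceptional points). Combining, points $y$ with $\h_{B,L_B}(y) < c_1 M^{-1}\eps_0 h_{\Fal}(B)$ number at most $c_2$.

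Finally pull back: if $x\in C(\ovl K)$ has $\h_{A,L}(x) < \eps h_{\Fal}(B)$ with $\eps := c_1\eps_0/M^3$, then $\h_{B,L_B}(\pi(x)) = m^2\h_{A,L}(x) < c_1 M^{-1}\eps_0 h_{\Fal}(B)$, so $\pi(x)$ lies in the exceptional set; each fiber of $\pi|_C$ has at most $\deg\pi\le M$ points, giving $N := M c_2$. The only real obstacle is bookkeeping the exact normalization of $\h(V)$ (normalized by degree or not) in \cite{MY25} and in Theorem \ref{IntroTheorem2}, so that the scaling factors under $\pi$ are correct; since all factors are bounded in terms of $M$, $g$, $d$, any consistent normalization works.
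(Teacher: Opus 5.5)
Your argument is correct and follows the paper's route: the paper presents this theorem as an immediate corollary of Theorem~\ref{IntroTheorem2}, obtained by pushing $C$ forward along the bounded-degree isogeny $\pi$ to $B$ and applying the quantitative Zhang inequality \cite[Theorem 1.1]{MY25} there, with the bound on $\deg \pi$ controlling the degree of $\pi(C)$, the height rescaling and the fibre counts as in your bookkeeping. One point to add: if the constants in \cite{MY25} also depend on the polarization degree of $(B,L_B)$, this degree is bounded as well, since $\deg_{L_B}(B)=\deg(\hat\pi)\deg_L(A)\le M^{2g}\deg_L(A)$ and $\deg_L(A)$ is bounded in terms of $g$ and $\deg_L(C)$ by \cite[Lemma 2.5]{GGK21}.
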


For general subvarieties $V$, we expect that Theorem \ref{IntroTheorem2} should give a suitable analogue of the uniform Bogomolov conjecture \cite[Theorem 1.3]{GGK21} under the correct hypotheses. 
\par 
Our proof strategy for Theorem \ref{IntroTheorem2} differs from Gao--Ge--K{\"u}hne's and also from Yuan's. We follow a strategy first suggested by Hrushovski, by proving the Bogomolov conjecture over a general globally valued field. A globally valued field $K$ is a field with the data of a global height function on $\bb{P}^n(K)$, for all $n \geq 1$, satisfying some basic compatibility relations. The set of globally valued fields are stable under ultraproducts (see Section \ref{sec: IntroGVF}) and hence satisfy a form of compactness. These objects have been recently axiomatized by Ben--Destic--Hrushovski--Szachniewicz in \cite{GVFfoundations}. If $K$ is a countable GVF, then it is an adelic curve in the sense of Chen--Moriwaki (see \cite{CM20}). In particular, a robust theory of Arakelov geometry exists for such objects. 
\par 
Given a GVF $K$, we say that $K$ is archimedean if $h(2) \not = 0$ and non-archimedean otherwise. For a non-archimedean GVF, we let $k$ be the subfield of height zero elements. Now assume that $K$ is algebraically closed and $A/K$ is an abelian variety. The GVF structure on $K$ allows us to define global canonical heights of subvarieties of $A$. A subvariety $V$ is said to be special if $V$ is of the form
$$V = x + B + \tr(W \otimes K)$$
where $x \in A(K)$ has canonical height zero, $\tr: \Tr_{K/k}(A) \otimes K \to A$ is the trace morphism from the $K/k$-trace $\Tr_{K/k}(A)$ and $W$ a subvariety of $\Tr_{K/k}(A)$. 

\begin{theorem} \label{IntroTheorem4}
Let $K$ be an algebraically closed, countable GVF and $A/K$ an abelian variety. Let $L$ be a symmetric ample line bundle on $A$ and $V$ a subvariety of $A$. Then
$$\h_{A,L}(V) = 0 \text{ if and only if } V \text{ is special.}$$
\end{theorem}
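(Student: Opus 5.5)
The plan is to prove the two directions separately. The direction ``special implies height zero'' should be formal, and nearly all the work goes into the converse, which the abstract indicates is done by first treating divisors and then inducting.

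\emph{Special implies height zero.} Write $V = x + B + \tr(W\otimes K)$. Translating by a point of canonical height zero does not change $\h_{A,L}$. This uses the standard behaviour of canonical heights under translation, which is available in the adelic-curve framework of Chen--Moriwaki for a countable GVF. An abelian subvariety $B$ has height zero, because $[2]$ maps $B$ onto itself with degree $2^{2\dim B}$ and $\h([2]_*B)=4\h(B)$ up to normalisation. For the trace part, the varieties coming from $\Tr_{K/k}(A)\otimes K$ are defined over the height-zero subfield $k$. Every local metric on them is therefore trivial (the norms are $\le 1$ with equality), so their heights vanish. The height of a sum of height-zero pieces is controlled by the heights of the summands, giving $\h(V)=0$.

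\emph{Height zero implies special: reduction.} Let $\h(V)=0$. Quotienting by the stabiliser $\Stab(V)^\circ$, we may assume $\Stab(V)$ is finite. Replacing $A$ by the abelian subvariety generated by $V-V$ and translating, we may assume $V$ generates $A$. The goal is then to show that, after translation by a height-zero point, $V$ lies in the image of the trace. It suffices to show $V = x+\tr(W\otimes K)$, since a generating $V$ with finite stabiliser leaves no room for a nontrivial $B$ factor.

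\emph{The divisor case.} This is the key case. I would follow Zhang's and Gubler's strategy. Consider the Faltings--Zhang map $\alpha: V^{N}\to A^{N-1}$, $(v_i)\mapsto(v_i-v_{i+1})$. For $V$ a divisor with finite stabiliser and $N$ large, $\alpha$ is generically finite. Then $\h(V)=0$ forces the pulled-back canonical measures at each place to coincide with measures pulled back from $A^{N-1}$. The canonical measure of $V$ at a place $\omega$ of the adelic curve would be equidistributed by small generic nets. That is, using the successive minima/equidistribution theorem over adelic curves, a height-zero $V$ carries a generic net of small points, and their Galois orbits equidistribute to $c_1(\bar L)^{\dim V}|_V$ at every place. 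Comparing the two pushforwards under $\alpha$, as in Zhang's argument, gives a contradiction at any archimedean place, where the measure is a smooth volume form with positive density, unless $V$ is special. The non-archimedean places are handled via Gubler's tropical or skeleton analysis. If $A$ has non-trivial abelian reduction part at a place, the canonical measure of $V$ lives on a tropical skeleton. The degeneracy forced by $\alpha$ shows the tropical variety is a translate of a subtorus, and one pushes this to the Berkovich quotient. Since $V$ is a divisor, I would use an intersection-theoretic computation. The height of a divisor can be written via its local Green functions, so $\h(V)=0$ gives vanishing of $\int \log|s|\,c_1^{g}$ at every place. This should force $V$ to have ``good reduction'' everywhere relative to the trace.

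\emph{Obstacle and induction.} I expect the main obstacle to be this last deduction: from ``$V$ is defined over $k$ at all places'' to ``$V$ comes from $\Tr_{K/k}(A)$''. This needs a descent argument over the GVF $K$, since $k$ is only the height-zero subfield and $K$ need not be finitely generated over it. Here I would try compactness. Use the ultraproduct-stability of GVFs to reduce to a statement with uniform bounds over finitely generated subfields, and then apply the Lang--Néron description of the trace there. The general case then follows by induction on $\operatorname{codim} V$. Pick a generic projection $\pi: A\to A'$ onto an abelian variety of dimension $\dim V+1$ (via an isogeny and a quotient) such that $\pi(V)$ is a divisor. Then $\h(\pi(V))=0$ follows by the projection formula. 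By the divisor case, $\pi(V)$ is special. Varying $\pi$ among countably many choices and intersecting the conclusions recovers $V$ as special, using that $V$ generates $A$ and has finite stabiliser.
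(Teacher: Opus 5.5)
There are two genuine gaps.

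\textbf{The divisor case in the good-reduction situation.} The first gap sits exactly where the paper locates the main obstruction: a non-archimedean GVF over which $A$ has good reduction at almost every place. An example is a non-isotrivial $A$ with everywhere good reduction over $\ovl{k(C)}$.
\begin{itemize}
\item At such places the canonical measures on $V^{\an}_v$ and $A^{\an}_v$ are finite sums of Dirac masses at Shilov points. Comparing Monge--Amp\`ere measures under the Faltings--Zhang map therefore gives no contradiction in general.
\item The Gubler--Yamaki skeleton argument needs a nontrivial \emph{toric} part, not a nontrivial abelian part as you wrote. Concretely, it needs a positive-measure set of bad places, where $\dim D=g-1\ge b(A_v)$.
\item Your substitute does not work. $\h(D)=0$ does not give vanishing of $\int\log|s|_v\,d\mu_{A,v}$ at every place, since these local terms shift by $\log|\lambda|_v$ when $s$ is rescaled. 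At good places you only get $\int_\Omega\log\sup|s|_v\,d\nu(v)=0$, i.e. $s$ has small average.
\item Nothing in your sketch turns small average into isotriviality. The ultraproduct/Lang--N\'eron descent presupposes a conclusion (``$V$ is defined over $k$ at all places'') that you never obtain.
\end{itemize}
The missing idea is the Moret-Bailly--Bost key formula. Tang's canonical isomorphism $\det(\pi_*L)^{\otimes 24}\simeq(e^*\omega_A^\vee)^{\otimes 12d}$ is an isometry at good places. This gives $\wh{\deg}\det(\pi_*\ovl L)=-\frac d2 h_{\Fal}(A)$, so a basis of small-average sections forces the Faltings height to vanish. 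The paper builds such sections from torsion translates $s(x+q)s(x-q)$. These give a closed embedding of an isogenous $B$ by small-average sections, hence $h_{\Fal}(B)=0$, contradicting tracelessness.

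\textbf{The induction step.} The second gap is the reduction of general $V$ to divisors. A ``generic projection'' $A\to A'$ onto an abelian variety of dimension $\dim V+1$ does not exist in general: a simple $A$ has no nontrivial quotients at all. Linear projections of the ambient $\bb{P}^N$ do not preserve canonical heights. Summing copies of $V$ only reaches dimension $g-1$ for curves, since the dimension jumps by $\dim V$ each time.

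The paper's reduction is quite different. Using Debarre--Moonen non-degeneracy, take $m$ minimal with $m\dim V\ge g$.
\begin{itemize}
\item If $m\dim V>g$, the generic fibre of the sum map $V^m\to A$ is a height-zero subvariety over $K(A)$ with its polarized GVF structure. Its height-zero subfield is unchanged, thanks to the divisor case. Induction then makes the fibre a union of translates of a fixed abelian subvariety, forcing $V$ to have infinite stabilizer.
\item If $m\dim V=g$, the Faltings--Zhang map $\delta:V^m\to A$ is generically finite but contracts the diagonal. A Galois-closure construction plus Koll\'ar--Mori produces $X\hookrightarrow (V^m)^s_A$ with an $f$-exceptional divisor $D$.
\item The height-equidistribution identity, derived from the arithmetic Hodge index inequality over GVFs, gives $h_{\ovl M}(D)=0$, because $h_{\ovl N}(D)=0$ by the projection formula. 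So $D$ is a height-zero subvariety of $V^{ms}$ of dimension $g-1$.
\item Proposition~\ref{Induction3} then forces $\gcd(g-1,\dim V)>1$, which is impossible since $\dim V\mid g$.
\end{itemize}
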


Note that Theorem \ref{IntroTheorem4} implies the case of a general uncountable GVF as the data of $(A,L,V)$ can be defined over a countable subfield. For archimedean GVFs, Theorem \ref{IntroTheorem4} was proven by Chen and Moriwaki \cite{chen2022equidistribution} using their equidistribution theorem for adelic curves. In \cite{Hul26}, Hultberg proved Theorem \ref{IntroTheorem4} for characteristic zero GVFs and showed that it implies Theorem \ref{IntroTheorem1}. However his proof uses the non-degeneracy results of Gao--Ge--K{\"u}hne \cite{GGK21} and hence essentially gives the same proof of Theorem \ref{IntroTheorem1}, but repackaged. Our proof of Theorem \ref{IntroTheorem4} does not use non-degeneracy at all and gives us a new proof of Theorem \ref{IntroTheorem1}. Hultberg also shows, using ideas of Luo--Yu \cite{LY25} in the function/number field case, that the Bogomolov conjecture for semiabelian varieties reduces to the abelian variety case \cite[Theorem 2]{Hul26}. We thus obtain the Bogomolov conjecture for semiabelian varieties over GVFs as a corollary of Theorem \ref{IntroTheorem4}, which implies the new gap principle for semiabelian varieties \cite[Theorem 3]{Hul26} over number fields. A different proof of this new gap principle has also been given by Han, Luo and Yu \cite{HLY26}.   
\par 
As remarked in the abstract, our proof when specialized to the case of global function fields, gives a new proof of the geometric Bogomolov conjecture in arbitrary characteristic. For characteristic zero function fields, this was first proven for function fields of curves by Gao and Habegger \cite{GH19}, and then extended to function fields of varieties by Cantat--Gao--Habegger--Xie \cite{CGHX21}. Xie and Yuan subsequently resolved the full geometric Bogomolov conjecture \cite{XY22}, using earlier work of Gubler \cite{gubler2007bogomolov} and Yamaki \cite{Yam18}. 
\par 
Before going into the technical details of our proof of Theorem \ref{IntroTheorem4}, we think it is instructive to first understand how we decided that GVFs (and adelic curves) are the natural way to approach Theorems \ref{IntroTheorem1} and \ref{IntroTheorem2}. Let $B$ be a smooth projective curve over $\ovl{\mathbb{Q}}$ and let $\mathcal{A}/U$ be an abelian scheme over a Zariski open $U \subseteq B$. Let $\mathcal{V}/U$ be a family of subvarieties of $\mathcal{A}/U$. Let $K = \ovl{\mathbb{Q}}(B)$ be the function field and let $A/K, V/K$ be the generic fiber of $\mathcal{A}/U$ and $\mathcal{V}/U$. Then a uniform Bogomolov-type statement for the family $\mathcal{V}/U$ implies the geometric Bogomolov conjecture for the subvariety $V \subseteq A$ over the function field $\ovl{\mathbb{Q}}(B)$. Consequently, any proof of the uniform Bogomolov conjecture must already contain, in some form, a proof of the geometric Bogomolov conjecture.
\par 
Conversely, a proof of the geometric Bogomolov conjecture may provide the starting point for a uniform version. Both the approach of Dimitrov--Gao--Habegger--Kühne and that of Yuan can be understood in this way. The former develops the theory of non-degeneracy introduced by Gao and Habegger in their proof of the geometric Bogomolov conjecture for subvarieties \cite{GH19}, whereas the latter builds on Cinkir’s proof of the geometric Bogomolov conjecture for curves embedded in their Jacobians \cite{Cin11}.
\par 
Motivated by this perspective, we seek a proof of Theorem \ref{IntroTheorem1} inspired by the approach of Gubler and Yamaki to the geometric Bogomolov conjecture. We first recall the underlying strategy. Let $V \subseteq A$ be over $\ovl{\mathbb{Q}}$ with $\h_A(V) = 0$. In their proofs of the original Bogomolov conjecture, both Ullmo \cite{Ulm98} and Zhang \cite{Zha98} construct two unequal measures $\mu_1,\mu_2$ over $\mathbb{C}$ before using the equidistribution theorem of Szpiro--Ullmo--Zhang \cite{SUZ97} to conclude that $\mu_1 = \mu_2$. This gives a contradiction. 
\par 
In the function-field setting, Chambert--Loir \cite{CL06} constructed non-Archimedean analogues of the measures $\mu_1,\mu_2$ on the Berkovich analytification $A^{\an}_v$ for a non-archimedean place $v$. Gubler \cite{gubler2007tropical, gubler2007bogomolov, gubler2010non} studied these measures in detail and proved that they are not equal when $A$ has totally degenerate reduction at $v$, proving the geometric Bogomolov conjecture in the totally degenerate case. Yamaki \cite{yamaki2013geometric, Yam16, Yam18} later extended Gubler's approach beyond the totally degenerate setting, notably to effective divisors $V \subseteq A$ and when $A$ has bad reduction at some place $v$. 
\par 
The approach of Gubler and Yamaki ultimately relies on establishing the non-equality of two non-Archimedean measures. Statements of this kind have been used by the third author to prove uniform versions of the dynamical Bogomolov conjecture \cite{GY26} through the following strategy. One first decomposes the global canonical height as a sum of non-negative local contributions 
$$\h(V) = \sum_{v \in M_K} \wh{\lambda}_{v}(V).$$ 
Using an ultrafilter degeneration argument, following Luo \cite{Luo21, Luo22} and Favre--Gong \cite{FG24}, see also \cite{LY26}, we are able to deduce uniform lower bounds on $\wh{\lambda}_v(V)$ from a non-archimedean rigidity theorem. Roughly speaking, this rigidity statement asserts that two non-Archimedean dynamical measures, under suitable bad reduction hypothesis, can coincide only when the underlying dynamical systems are related. Summing up over $v \in M_K$ then gives us the desired uniform lower bound on $\h(V)$. 
\par 
The main obstacle to carrying out this strategy in our setting is that, for subvarieties of abelian varieties, the canonical height $\h_A(V)$ rarely decomposes as a sum of non-negative local height terms. This difficulty is already visible in earlier work of Zhang \cite{Zha95b} where he proves the Bogomolov conjecture under some restricted hypotheses. It is therefore natural to ask whether the ultrafilter degeneration argument can be applied directly to the global height $\h(V)$ itself, rather than its local components. It turns out such an argument is possible, using the theory of globally valued fields by Ben Yaacov and Hrushovski \cite{BH22}, combined with the theory of adelic curves by Chen and Moriwaki \cite{CM20}.

\subsection{Globally Valued Fields and Adelic Curves} \label{sec: IntroGVF}

The notion of a globally valued field arose from studying the Weil height function $h$ on $\ovl{\mathbb{Q}}$ from a model theory perspective. Following \cite{GVFfoundations}, we define a globally valued field (GVF) to be a field $K$ with a height function $h: K^n \to \mathbb{R} \cup \{ -\infty\}$ satisfying:

\begin{enumerate}
\item $h(x) = -\infty$ if and only if $x = 0$ and $h(1,1) = 0$;

\item $h(x) = 0$ for all $x \in K^{\times}$;

\item $h(\sigma(x)) = h(x)$ for all $x \in K^n$ and permutation $\sigma \in S_n$;

\item $h(x \otimes y) = h(x) + h(y)$ for all $x \in K^n$ and $y \in K^m$, where $x \otimes y \in K^{mn}$ denotes the Segre product;

\item $h(x) \leq h((x,y))$ for all $x \in K^n$ and $y \in K^m$ where we view $(x,y)$ as an element of $K^{n+m}$;

\item there exists $e > 0$, called the archimedean error, such that $h(x+y) \leq h((x,y)) + e$ for all $x \in K^n$ and $y \in K^m$;
\end{enumerate}

Using property (2) and (4), it follows that $h$ descends to a function on $h: \mathbb{P}^n(K) \to \mathbb{R} \cup \{-\infty\}$ and hence can be viewed as a Weil height function. By abuse of notation, for $x \in K$ we will let $h(x) = h([x:1])$. Furthermore, globally valued fields are closed under ultraproducts. Let $\omega \in \beta \mathbb{N}$ be an ultrafilter on $\mathbb{N}$. Given a sequence $(K_n)_{n \geq 1}$ of GVFs with height functions $(h_n)_{n \geq 1}$, we may form its ultraproduct 
$$K_{\omega}:= \left\{ (x_n) \in \prod_{n=1}^{\infty} K_n \mid \sup_{n} h_n(x_n) < +\infty \right\}/\sim ,$$
where we say that $(x_n) \sim (y_n)$ if the set $\{n \mid x_n = y_n\}$ is $\omega$-large. Then $K_{\omega}$ is a field, and we may define a height function 
$h_{\omega}: K_{\omega}^m \to \mathbb{R} \cup \{-\infty\}$ by $$h_{\omega}((x_{n,1}),\ldots,(x_{n,m})) = \lim_{\omega} h_n(x_{n,1},\ldots,x_{n,m}).$$
It is then easy to verify that $(K_{\omega},h_{\omega})$ is a GVF.
\par 
Hrushovski observed that the Bogomolov conjecture admits a natural formulation over arbitrary globally valued fields. Furthermore, using ultraproducts, one can show that the new gap principle of Gao--Ge--Kühne \cite{GGK21} is equivalent to the Bogomolov conjecture over all globally valued fields of characteristic zero. Thus, uniform Bogomolov-type results may be approached by proving the Bogomolov conjecture in the broader setting of globally valued fields.
\par 
A priori, the data of a global height function $h$ for a GVF may seem too unconstrained to be useful. However when $K$ is countable, such height functions always admit a decomposition into local terms as proven in \cite{GVFfoundations}. Namely, there exists a measure space $(\Omega, \mathcal{A}, \nu)$, where $\Omega$ is a set, $\mathcal{A}$ is a $\sigma$-algebra on $\Omega$ and $\nu$ is a measure for $\mathcal{A}$, with a map $\phi: \omega \mapsto | \cdot |_{\omega}$ from $\Omega$ to the set of absolute values $M_K$ of $K$, for which the function $\omega \mapsto \log |x|_{\omega}$ is integrable for each $x \in K^{\times}$. The global height $h$ on $K^n$ then admits the following local decomposition 
$$h(x_1,\ldots,x_n) = \int_{\Omega} \max\{ \log |x_1|_{\omega},\ldots, \log |x_n|_{\omega}\} d \nu(\omega).$$
Height functions of this kind were also studied independently by Chen and Moriwaki \cite{CM20}, who refer to the data $(K,\Omega,\mathcal{A},\nu)$ as an adelic curve. In a series of works \cite{CM20, CM21, chen2024positivity}, Chen and Moriwaki developed Arakelov theory over adelic curves and showed that many fundamental results from classical Arakelov geometry extend to this setting, including the arithmetic Hilbert--Samuel theorem and the equidistribution of small points. This raises the possibility of transporting proofs of the Bogomolov conjecture over number fields and function fields to general adelic curves, and hence to GVFs. Indeed, if the GVF is an archimedean GVF ($h(2) \not = 0$), then Chen--Moriwaki \cite{chen2022equidistribution} already proved the Bogomolov conjecture in this case using the strategy of Szpiro--Ullmo--Zhang \cite{SUZ97}. 
\par 
The first successful implementation of this strategy for general GVFs is due to Hultberg \cite{Hul26}. Following the approach of Luo and Yu in the function-field setting \cite{LY25}, he deduced the Bogomolov conjecture for semiabelian varieties over characteristic-zero GVFs from the corresponding result for abelian varieties. As a consequence, the new gap principle for semiabelian varieties over $\ovl{\mathbb{Q}}$ follows from the work of Gao--Ge--Kühne \cite{GGK21} in the abelian variety setting.
\par 
There are two obstacles to adapting the Gubler--Yamaki strategy to globally valued fields. The first is that the results of Gubler and Yamaki are formulated over discretely valued non-Archimedean fields, whereas the globally valued setting requires us to work over arbitrary valued fields. This difficulty is largely technical: Gubler and Stadlöder \cite{gubler2023monge} have extended Gubler’s results to general non-Archimedean valued fields. In Section \ref{sec: non-archimedean canonical measures}, we explain why Yamaki’s arguments also admit a corresponding extension.
\par 
The second obstacle is more substantial. The arguments of Gubler and Yamaki require the existence of a place of bad reduction, which need not be available over a general globally valued field. Over function fields, this was the principal obstruction to proving the geometric Bogomolov conjecture in full generality, as emphasized by Yamaki \cite{Yam18}. For divisors, Yamaki \cite{Yam17} treated the everywhere good reduction case by a separate argument, while for arbitrary subvarieties this case was later resolved by Xie and Yuan \cite{XY22}, completing the proof of the geometric Bogomolov conjecture.
\par 
The approaches of Yamaki and Xie--Yuan, however, a priori rely heavily on features specific to heights over function fields, and it is not clear how to extend them to an arbitrary adelic curve. We instead give new arguments for both cases that are more Arakelov-theoretic, and hence hold over GVFs. 
\par 
Our new proof consists of two parts. We first prove the Bogomolov conjecture for divisors over GVFs, before showing the case of subvarieties reduces directly to the case of both curves and divisors, and hence just divisors. In Xie--Yuan \cite{XY22}, an inductive argument reduces the case of subvarieties to the case of points via the Manin--Mumford conjecture \cite{Ray83, Hru01}. As height zero points in a general GVF need not be torsion or isotrivial, we cannot invoke the Manin--Mumford conjecture which causes Xie--Yuan's proof breaks down. In particular, our proof gives a new proof of the Manin--Mumford conjecture in positive characteristic, similar in nature to Moriwaki's proof of Manin--Mumford over $\bb{C}$ using the Moriwaki height \cite{Mor00}.

\subsection{Bogomolov Conjecture for Divisors}
We now explain our proof in more detail for the case of divisors. Over a function field $K$, Yamaki \cite{yamaki2013geometric}, extending the work of Gubler in the totally degenerate case \cite{gubler2007bogomolov}, proved that if $X$ is a subvariety of $A$, with finite stabilizer, satisfying $\dim X \geq \dim b(A_v)$, then $X$ does not have a dense set of small points. Here, $b(A_v)$ denotes the good reduction part of $A$ at $v$, which is given by Raynaud's uniformization \cite[Definition 4.3]{yamaki2013geometric}. 
\par 
If $X$ is a divisor with finite stabilizer, and $A$ has bad reduction at $v$, then $\dim b(A_v) \leq \dim A - 1$ and we certainly get $\dim X \geq \dim b(A_v)$. This leaves us with the case where $A$ has everywhere good reduction. Our new idea in this case is motivated by a formula by de Jong--Shokrieh \cite[Theorem A]{deJong2022faltings}
\begin{equation} \label{eq: IntroEqn1}
h_{\Fal}(A) = 2g \h_{A,L}(\Theta) + 2 \sum_{v \in M_K} I(A_v,\lambda_v).
\end{equation}
Here, $(A,L)$ is a principally polarized abelian variety with $L$ symmetric and $\Theta = \divv(s)$ is the divisor of the unique (up to scaling) global section of $L$. The quantity $I(A_v,\lambda_v)$ is some tropical moment associated to the Berkovich analytification $A^{\an}_v$ of $A$ over $v$ and vanishes if and only if $A$ has potential good reduction at $v$. In the number field setting, this generalizes earlier work of Autisser \cite{Aut06} in the everywhere good reduction case.
\par 
If $A$ has everywhere good reduction, then (\ref{eq: IntroEqn1}) reduces immediately to $h_{\Fal}(A) = 2g \h_{A,L}(\Theta)$. This gives a uniform height gap for theta divisors as desired. Using work of Zhang on the admissible pairing of curves \cite{Zha93}, it is already possible to deduce the uniform Bogomolov conjecture for curves in Jacobians \cite{Kuh21, Yua24}. 
\par 
For a general effective divisor $D$, we can try to generalize the mechanism behind (\ref{eq: IntroEqn1}). It turns out the essential part we need comes from Moret-Bailly's key formula \cite{MB85}, and in particular from Bost's reformulation of it \cite{Bos96}. If $D$ is a symmetric divisor with canonical height zero, then if $\ovl{L}$ is the line bundle $O(D)$ with its canonical metric, and $s$ is the canonical section of $L$ representing $D$, we must have 
\begin{equation} \label{eq: IntroEqn3}
\sum_{v \in M_K} \log \sup_{x \in A_v^{\an}} |s(x)|_v  = 0.
\end{equation}
On the other hand, Moret-Bailly's key formula implies that for the adelic vector bundle $\pi_* \ovl{L}$, where each global section $s$ has the norm $||s||_v = \sup_{x \in A_v^{\an}} |s(x)|_v$, we have the identity
\begin{equation} \label{eq: IntroEqn2}
\wh{\mu}(\pi_* \ovl{L}) = -\frac{d}{2} h_{\Fal}(A),
\end{equation}
where $\wh{\mu}$ is the average slope and $d = \dim \pi_*\ol{L}.$ (\ref{eq: IntroEqn2}) can be interpreted as saying that the average value, for a basis of sections, of $\sum_{v \in M_K} \log \sup |s(x)|_v$ for a section $s$ 
should be proportional to the Faltings height, which is in tension with (\ref{eq: IntroEqn3}). If $K$ is of characteristic zero, semistability of $\pi_* \ovl{L}$ \cite[Proposition A.3]{Bos96b} immediately gives a contradiction. 
\par 
In Section \ref{sec: DivisorGVF}, we prove a generalization of the key formula for abelian varieties over adelic curves with everywhere good reduction, using work of Tang \cite{Tan14}. For simplicity, we will assume that $A$ is traceless over the height zero subfield. We then show that if $D$ is a symmetric divisor with $\h_A(D) = 0$, then we can construct many small-average sections of $\ovl{L}$, where $L = O(D)$. The small-average sections gives us a map $A \to \bb{P}^N$ that factors through $A \to B \xhookrightarrow{} \bb{P}^N$ where $\pi: A \to B$ is an isogeny. This then implies that $h_{\Fal}(B) = 0$, which contradicts our assumption that $A$ is traceless. 

\subsection{Bogomolov Conjecture for Subvarieties}
For a general subvariety $V \subseteq A$, we reduce the Bogomolov conjecture for $V$ to the case of divisors. For simplicity let's assume that $A$ is simple. If $V = C$ is a curve, then $D = \underbrace{C + \cdots + C}_{g-1 \text{ times}}$ is a divisor, and if $C$ has canonical height zero then so does $D$. 
\par 
Now let's say $V$ is a surface. We first consider the case where $\dim A$ is odd. Then if $r = \lceil \frac{\dim A}{2} \rceil$, the morphism $\pi: V^r \to A$ given by addition is surjective and its generic fiber has dimension $1$. In the case of function fields, if $A$ has everywhere good reduction, Xie--Yuan \cite{XY22} shows that for a general torsion point $x \in A(\ovl{K})$, the fiber $\pi^{-1}(x)$ also has canonical height zero. This contradicts the case of curves for us. In general for GVFs, we endow the function field $K(A)$ with a GVF structure, coming from the polarization induced by our canonical line bundle. Then the generic fiber of $V^r \to A$ is a curve over $K(A)$ with canonical height zero inside $A_{K(A)}^r$, giving us a contradiction.  
\par 
Now if $\dim A$ is even, say $\dim A = 4$, then $\pi: V^2 \to A$ is generically finite and the generic fiber is zero dimensional. As height zero points need not be torsion for an arbitrary GVF, we cannot use the above strategy. We instead consider $\delta: V^2 \to A$, given by $(x,y) \mapsto x-y$. We have two adelic line bundles on $V^2$, given by $\ovl{P} = p_1^*\ovl{L} + p_2^*\ovl{L}$ and $\ovl{N} = \delta^* \ovl{L}$. The line bundle $\ovl{P}$ seems to be more positive than the bundle $\ovl{N}$ but the height of $V^2$ with respect to both is zero. We can view the strategy of Zhang \cite{Zha98} as exploiting this fact: by equidistribution, we must have $c_1(\ovl{P})^{\dim V^2}$ and $c_1(\ovl{N})^{\dim V^2}$ be proportional to each other, but $c_1(\ovl{N})^{\dim V^2}$ must vanish along the diagonal as $\delta$ contracts it, giving us a contradiction.  
\par 
In the everywhere good reduction case, the measures $c_1(\ovl{P})^{\dim V^2}$ and $c_1(\ovl{N})^{\dim V^2}$ no longer give any useful information. However, there is still a height equidistribution property: for any effective divisor $D \subseteq V^2$, we have 
$$h_{\ovl{P}}(D) = h_{\ovl{N}}(D) = 0.$$
The reader may see Section \ref{sec: HeightEquidistribution} for more details. In the classical case, this property can be deduced from a result of Chambert-Loir and Thuillier \cite{CLT09}, that computes the limit of $h_{\ovl{M}}(x_n)$ for a generic sequence of small points $(x_n)$. This motivates us to call it a height equidistribution property. Now if $D$ was an exceptional divisor that is contracted by $\delta: V^2 \to A$, we must have $h_{\ovl{N}}(D) = 0$ and hence $h_{\ovl{P}}(D) = 0$. A generic linear projection $A^2 \to A$ converts $D$ to a height zero divisor inside $A$, contradicting the divisor case. 
\par 
The main difficulty now is to construct this exceptional divisor. The idea is to use a lemma from birational geometry \cite[Corollary 2.63]{KM08}, which states that for a proper birational morphism $\pi: X' \to X$, if $\pi$ is not an isomorphism then it must have an exceptional divisor. Let $Y = V^2$. Then the map $\delta: Y \to A$ allows us to view $K(Y)$ as a finite extension of $K(A)$. Let's assume for simplicity that $K(Y)/K(A)$ is separable and Galois with Galois group $G$. If $G$ already acts on $Y$, then we may take $Y' = Y/G$, which will have a birational morphism $\delta': Y' \to A$. However this morphism cannot be an isomorphism as $Y \to Y'$ is finite but $\delta: Y \to A$ is not finite. Hence by \cite[Corollary 2.63]{KM08}, there must be an exceptional divisor as desired. In general, we modify $Y$ to obtain $X$ so that $G$ acts on $X$, but $X$ still embeds into $Y_A^s$ for some positive integer $s$, see Proposition \ref{ExceptionalDivisor1}. The idea for this Proposition was supplied by GPT. As $X$ embeds into $A^s$, a height zero divisor $D$ for $X$ will become a height zero subvariety of $A^s$ of smaller dimension, which allows us to obtain a contradiction by induction. 

\subsection{Outline of the paper} We begin in Section 2 by reviewing the theory of adelic curves, following the work of Chen--Moriwaki \cite{CM20, CM21, chen2024positivity}. The theory of adelic line bundles on quasi-projective varieties developed by Yuan--Zhang \cite{yuan2021adelic} is recalled in Section 3. Section 4 then introduces globally valued fields (GVFs), following \cite{GVFfoundations}, and explains their relationship with both Yuan--Zhang and Chen--Moriwaki adelic line bundles. Section 5 revisits the strategy of Gubler and Yamaki in the bad reduction case and show, following Gubler--Stadloader \cite{GS23}, that their arguments extend to arbitrary valued fields.
\par 
With this background in place, Section 6 establishes the Bogomolov conjecture for divisors over GVFs. We then extend in Section 7 some results from Arakelov theory that are necessary for us, previously known over number fields and function fields, to the general setting of GVFs. The Bogomolov conjecture for arbitrary subvarieties over GVFs is proved in Section 8. Finally, Section 9 applies these results to deduce uniform height gaps for subvarieties in positive characteristic.

\subsection{AI Disclosure} The third author used GPT throughout the preparation of the paper to check and verify claims, and also to proofread the paper. All ideas in the paper were due to the authors, with the exception of Proposition \ref{ExceptionalDivisor1} whose main idea was suggested to us by GPT.

\subsection{Acknowledgements} We would like to thank Nuno Hultberg and Michał Szachniewicz for valuable discussions regarding the theory of GVFs. We would also like to thank Ziyang Gao and Niki Myrto Mavraki for helpful discussions regarding the Bogomolov conjecture.

\section{Adelic Curves}
Classically, Arakelov theory was usually studied over a base field $K$ which was either a number field or a global function field. In \cite{Mor00}, Moriwaki introduced what is now known as the Moriwaki height when $K$ is a finitely generated field over $\bb{Q}$. The advantage of the Moriwaki height is that it still satisfies the Northcott property \cite[Section 4]{Mor00}, unlike the usual function field height for $K$. Moriwaki proceeded to show that many results from Arakelov theory still holds for this Moriwaki height, and in particular he was able to adapt Ullmo \cite{Ulm98} and Zhang's \cite{Zha98} strategy for the Bogomolov conjecture. This gives a new proof of the Manin--Mumford conjecture over $\bb{C}$, as Zhang's proof of the Bogomolov conjecture only applies over $\ovl{\bb{Q}}$. 
\par 
In \cite{CM20}, Chen and Moriwaki defined a very general object known as an adelic curve. Adelic curves contains almost all classical examples as special cases, such as Moriwaki heights and also heights coming from product formula fields. Chen and Moriwaki \cite{CM20, CM21, chen2024positivity} then show a very robust version of Arakelov theory exists in this very general setting. In this section, we will review some of the basic definitions and theory of adelic curves and adelic line bundles on these curves. Recently, S{\'e}dillot \cite{Sed26} proposed a more general setting of topological adelic curves, which aims to place Nevanlinna theory in this framework too. 

\subsection{Basics on Adelic Curves}

We start by introducing adelic curves. We follow \cite[Chapter 3]{CM20}, where this notion was introduced.

\begin{defn}
An \textbf{adelic curve} is the data $(K, (\Omega, \mathcal{A}, \nu), \phi)$ of a field $K,$ a measure space $(\Omega, \mathcal{A}, \nu),$ and a map $\phi: \Omega \rightarrow M_K,$ where $M_K$ is the set of all absolute value functions on $K,$ satisfying the following properties:
\begin{enumerate}
    \item $\mathcal{A}$ is an $\sigma$-algebra and $\nu$ is a measure on $(\Omega, \mathcal{A}),$
    \item For any $a \in K^{\times},$ the function $\omega \mapsto \log \phi(\omega)(a)$ is integrable with respect to $\nu$ (and thus is also $\mathcal{A}$-measurable).
\end{enumerate}
We say that this adelic curve is \textbf{proper} if for all $a \in K^{\times},$ $\int\limits_{\Omega} \log \phi(\omega)(a) d\nu = 0.$
\end{defn}
We will simplify notation and write $|\cdot|_{\omega}$ for the absolute value $\phi(\omega).$ Similarly, we write $K_{\omega}$ for the completion of $K$ with respect to $|\cdot|_{\omega}.$  
\par For a proper adelic curve, we may define the height $h(x)$ of an element $x \in K^{\times}$ by 
$$h(x) := \int_{\Omega} \max\{\log|x|_{\omega}, 0\} d \nu(\omega).$$
\par As an example, let $K$ be a number field. Then, we can let $\Omega$ be the set of equivalence classes of nontrivial absolute values of $K,$ and let $\mathcal{A}$ be the power set of $\Omega.$ We then have the function $\phi: \Omega \rightarrow M_K$ which sends an equivalence class to the absolute value which is normalized in the following way:
\begin{enumerate}
    \item If $|\cdot|_{\omega}$ is nonarchimedean and corresponds to a place over $p,$ then we normalize such that $|p|_{\omega} = 1/p.$
    \item If $|\cdot|_{\omega}$ is archimedean, we normalize such that $|2|_{\omega} = 2.$
\end{enumerate}
We then consider the measure $\nu$ such that the measure of $|\cdot|_{\omega}$ is $[K_{\omega}: \mathbb{Q}_p]$ if it nonarchimedean and corresponds to a place over $p$ and is $[K_{\omega}:\mathbb{R}]$ if is archimedean. Then, this data forms a proper adelic curve, owing to the product formula. 
\par In general, given an adelic curve structure on a field $K,$ and an algebraic extension $L/K,$ one can form a compatible adelic curve structure on $L,$ by letting $\Omega_L$ consist of the absolute values of $L$ that extend the absolute values $\phi(\omega)$ for $\omega \in \Omega_K;$ see \cite[\S 3.3]{CM20} for more details. 
\par One useful result is that the condition of an absolute value being trivial or archimedean is measureable, as follows.
\begin{proposition}[\cite{CM20}, Propositions 3.1.1, 3.1.3]
Let $(K, (\Omega, \mathcal{A}, \nu), \phi)$ be an adelic curve. Let $\Omega_{\infty}$ be the set of $\omega \in \Omega$ for which $\phi(\omega)$ is archimedean, and let $\Omega_0$ be the set of $\omega \in \Omega$ for which $\phi(\omega)$ is trivial. Then, $\Omega_{\infty}$ is $\nu$-measurable, and if $K$ is countable, then $\Omega_0$ is as well. 
\end{proposition}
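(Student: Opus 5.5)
We need to show two things: $\Omega_\infty$ is measurable for any adelic curve, and $\Omega_0$ is measurable when $K$ is countable. The only measurability available comes from axiom (2): for each $a \in K^{\times}$ the function $f_a(\omega) = \log|a|_\omega$ is $\mathcal{A}$-measurable. So the plan is to write each of $\Omega_\infty$ and $\Omega_0$ as a countable Boolean combination of sets $\{\omega : f_a(\omega) \in I\}$ with $I$ Borel.

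For $\Omega_\infty$, I would use the standard criterion that an absolute value is archimedean if and only if it is unbounded on the image of $\mathbb{Z}$. Equivalently, it is archimedean if and only if $|2| > 1$. Indeed, a non-archimedean absolute value satisfies $|n| \le 1$ for all integers $n$. Conversely, if $|n| \le 1$ for all $n$, the usual binomial estimate $|x+y|^N \le (N+1)\max(|x|,|y|)^N$ gives the ultrametric inequality.

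If $\mathrm{char}\, K = p > 0$, every absolute value is non-archimedean, since $|\cdot|$ is bounded on the finite prime field. Hence $\Omega_\infty = \emptyset$, which is measurable. In characteristic zero, $2 \in K^\times$, and
$$\Omega_\infty = \{\omega : \log|2|_\omega > 0\} = f_2^{-1}((0,\infty)),$$
which is measurable by axiom (2). If one prefers to avoid the sharp "$|2|>1$" criterion, one can instead write
$$\Omega_\infty = \bigcup_{n \ge 1} f_n^{-1}((0,\infty)),$$
which is a countable union and so also measurable.

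For $\Omega_0$, an absolute value is trivial if and only if $|a| = 1$ for all $a \in K^\times$. Thus
$$\Omega_0 = \bigcap_{a \in K^\times} f_a^{-1}(\{0\}).$$
Each set in this intersection is measurable, and countability of $K$ makes the intersection countable, so $\Omega_0 \in \mathcal{A}$.

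The only step with real content is the characterization of archimedean absolute values via integers; the rest is bookkeeping. That characterization is classical (Ostrowski/Artin), and I would simply cite it. For $\Omega_0$, the countability hypothesis is exactly what is needed, since for uncountable $K$ the intersection need not be measurable.
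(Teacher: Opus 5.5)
Your argument is correct and is essentially the one in Chen--Moriwaki, which the paper cites without reproducing: $\Omega_\infty$ is empty in positive characteristic and equals $\{\omega : \log|2|_\omega>0\}$ in characteristic zero, while $\Omega_0=\bigcap_{a\in K^{\times}}\{\omega:\log|a|_\omega=0\}$ is a countable intersection of measurable sets when $K$ is countable. Your fallback description $\Omega_\infty=\bigcup_{n\ge 1}\{\omega:\log|n|_\omega>0\}$ makes the proof self-contained without Artin's base-$2$ estimate.
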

\par We say that our adelic curve is \textbf{archimedean} if $\nu(\Omega_{\infty}) > 0$ and \textbf{non-archimedean} otherwise. We may detect this by looking at whether $h(2)$ is positive or not. 
\par As we can see in the above proposition, the theory is nicer when $K$ is assumed to be countable. Throughout, we will mainly be working in the countable case.

\subsection{Chen--Moriwaki Adelic Line Bundles}
For this subsection, fix a proper adelic curve $S = (K, (\Omega, \mathcal{A}, \nu), \phi).$ In \cite{CM20}, Chen--Moriwaki define the notion of adelic line bundles as follows.

\begin{defn}[\cite{CM20}, Definition 4.1.28]
An \textbf{adelic line bundle} is a one-dimensional vector space $V$ along with metrics $\|\cdot\|_{\omega}$ on $V \otimes_K K_{\omega}$ for each $\omega \in \Omega,$ such that the family of these metrics are dominated (\cite[Definition 4.1.2]{CM20}) and measurable (\cite[\S 4.1.3]{CM20}).    
\end{defn}

More generally, they define the notion of an adelic line bundle on a projective scheme over $K.$

\begin{defn}[\cite{CM20}, Definition 6.2.1]
Let $X$ be a projective scheme over $\Spec K$. An \textbf{adelic line bundle} on $X$ is an invertible $\mcO_X$-module $L$ equipped with a metric family $\varphi$ which is dominated (\cite[Definition 6.1.9]{CM20}) and measurable (\cite[Definition 6.1.27]{CM20}).
\end{defn}

\begin{remark}
Note that already over a number field $K$, the Chen--Moriwaki line bundles are more general than the ones introduced by Zhang \cite{Zha95b} as we only assume a dominated condition on the metrics, instead of triviality at all but finitely many places. As such, we will distinguish these two notions by calling the Chen--Moriwaki version CM adelic line bundles. Later in the paper we describe how these two notions of adelic line bundles are related.
\end{remark}

Adelic line bundles on adelic curves also admit the notion of Arakelov degree.

\begin{defn}[\cite{CM20}, Definition 4.3.1]
Let $V$ be an adelic line bundle on $S.$ Its Arakelov degree is defined to be
\begin{align}
\label{eqn:arakelovdegree}
    \int_{\Omega} -\log \|s\|_{\omega} d\nu. 
\end{align}
for any nonzero $s \in V.$
\end{defn}
Because our adelic curve is proper, this expression is independent of the choice of nonzero section, and so this is actually well-defined.
\par We may similarly define the notion of an adelic Cartier divisor. For the rest of the subsection, fix a geometrically integral projective $K$-scheme $X.$ 
\begin{defn}[\cite{CM20}, Definition 2.5.2]
Let $\omega \in \Omega$ and $D$ a Cartier divisor on $X.$ Let $X^{\an}_{\omega}$ be the Berkovich analytification of $X$ with respect to the valued field $(K, \phi(\omega)).$ A \textbf{Green function} for $D$ is a continuous function $g$ on some Zariski-dense open $U$ of $X$ such that for any nonempty Zariski open of $X$ where $D$ is cut out by the function $f,$ $g + \log f$ extends to a continuous function on $U^{\an}.$ 
\end{defn}
\begin{defn}[\cite{CM20}, \S 6.2.3]
An \textbf{adelic Cartier divisor} is the data of a Cartier divisor $D$ on $X$ and a family of compatible Green functions $\{g_{\omega}\}_{\omega \in \Omega}$ such that the associated metric family given by $e^{-g_{\omega}(\bullet)}$ is dominated and measureable (in the same sense as for adelic line bundles).
\end{defn}
Now, suppose that $L$ is an adelic line bundle on $X.$ Choosing a section $s$ of $L$ naturally gives rise to an adelic Cartier divisor $\ovl{D}$ by taking $g_{\omega}(x) = \log |s(x)|_{\omega}$.
\par We say that our adelic Cartier divisor is \textbf{semipositive} if the metric $| \cdot |_{\omega}$ is semipositive (see \cite[\S 2.3.1]{CM20}) for all $\omega \in \Omega$ and is \textbf{integrable} if $\ovl{D} = \ovl{D}_1 - \ovl{D}_2$, where each $\ovl{D}_i$ is an ample Cartier divisor with semipositive Green functions. 
\par Let $d = \dim X$. Given $d+1$ integrable adelic Cartier divisors $\ovl{D}_0,\ldots,\ovl{D}_d$ such that the Cartier divisors intersect properly, we may define a local intersection number $(\ovl{D}_0 \cdot \ovl{D}_1 \cdots \ovl{D}_d)_{\omega}$ for each $\omega \in \Omega$ \cite[Definition 3.5.1]{CM21}. Over a number field, this recovers the classical local intersection number \cite[Section 4.5]{CM21}. The function $\omega \mapsto (\ovl{D}_0 \cdots \ovl{D}_d)_{\omega}$ turns out to be integrable \cite[Theorem 4.2.10]{CM21}, which allows us to define the global intersection number as
$$\ovl{D}_0 \cdots \ovl{D}_d =  \int_{\Omega} (\ovl{D}_0 \cdots \ovl{D}_d)_{\omega} d \nu(\omega).$$
This gives a well-defined global intersection number for integrable adelic line bundles $\ovl{L}_0,\ldots,\ovl{L}_d$. 
\par 
For reference, we shall state some important results in Arakelov theory that have analogues over countable adelic curves. There is an Nakai--Moishezon criterion \cite[Section 7]{CM20}, an arithmetic Hilbert--Samuel formula \cite[Section 5.5]{chen2024positivity}, and an equidistribution theorem for small points \cite[Section 8.11]{chen2024positivity}. Using these results, Chen and Moriwaki are able to prove the Bogomolov conjecture for a countable archimedean adelic curve \cite[Section 9.4]{chen2024positivity}.

\section{Adelic Line Bundles on Quasi-Projective Varieties}\label{sec: adelic line bundles on quasi-projective varieties}
We now review the theory of adelic line bundles on quasi-projective varieties following Yuan--Zhang \cite{yuan2021adelic}. In the early development of Arakelov theory, only model line bundles were considered, i.e. hermitian line bundles $\ovl{\mathcal{L}}$ on an arithmetic variety $X$ over $\Spec O_K$, see for example \cite{Fal84, GS90}. In \cite{Zha95b}, Zhang, motivated by the definition of the Call--Silverman canonical height \cite{CS93}, introduced adelic line bundles as a limit of model line bundles. 
\par 
The theory of Yuan--Zhang can be seen as taking one step further, defining adelic line bundles on a quasi-projective variety $U/K$ as a limit of adelic line bundles on a sequence of projective compactifications $(X_n)_{n \geq 1}$ of $U$. Such adelic line bundles arise naturally when considering families over a base $U$ and play an important role in uniformity results for both arithmetic dynamics \cite{DM24} and diophantine geometry \cite{Yua24}. In this section, we review basic definitions and properties from \cite{yuan2021adelic}.

\subsection{Yuan--Zhang Adelic Line Bundles}
We first define the notion of adelic line bundles in the sense of \cite{yuan2021adelic}, which we will call Yuan--Zhang adelic line bundles. 
The following discussion follows \cite[\S 1.1]{yuan2021adelic}.

Let $k$ be $\Z$ or a field, and let $X/k$ be quasiprojective; that is, $X$ is an integral and open subscheme of a projective and flat $k$-scheme. Let $X^{\an}$ be the Berkovich analytification of $X$ over the Banach ring $(\bb{Z}, | \cdot |_{\infty});$ see \cite[Section 3]{yuan2021adelic} for more information.

\begin{defn}
We denote by $\widehat{\Pic}(X^{\an})$ to be the group (under tensor product) of isomorphism classes of pairs $(L, \abs{\cdot})$ of a line bundle $L$ on $X$ and a continuous metric $\abs{\cdot}$ of $L$ on $X^{\an},$ where isomorphisms are given by isomorphisms of line bundles which preserve the metric. We call such pairs \textbf{metrized line bundles.}
\end{defn}
\begin{remark}
\cite{yuan2021adelic} also defines the category of metrized line bundles. However, we will only need to use the group, so we will not concern ourselves with this definition. 
\end{remark}

We now turn to the construction of adelic line bundles, which are a particular subcategory of $\widehat{\Pic}(X^{\an}).$

\begin{defn}
We say that a metrized line bundle $L$ on $X$ is a \textbf{model adelic line bundle} if there exists a pair $(\mathcal{X}, \overline{\mcL}),$ where $\mathcal{X}$ is a projective model of $X$ over $k$ and $\overline{\mcL}$ is
\begin{enumerate}
    \item a line bundle on $\mathcal{X}$ extending $L^{\otimes e}$ for some positive integer $e,$ if $k$ is a field, or
    \item a hermitian line bundle $(\mcL, \abs{\cdot})$ on $\mathcal{X}$ if $k = \mathbb{Z},$ where $\mcL$ is a line bundle on $\mathcal{X}$ extending $L^{\otimes e}$ for some positive integer $e$ and $\abs{\cdot}$ are continuous metrics of $\mcL(\mathbb{C})$ in $\mathcal{X}(\mathbb{C}),$ such that these metrics are invariant under complex conjugation.  
\end{enumerate}
The subcategory of $\widehat{\Pic}(X^{\an})$ consisting of model adelic line bundles is denoted $\widehat{\Pic}(X/k)_{\mod}.$
\end{defn}

We then define a \textbf{boundary topology} on the space $C(X^{\an})$ of real-valued continuous functions on $X^{\an}.$ This is done by fixing a Green function $g$ which is continuous on $X^{\an}$ and has logarithmic poles along $\ovl{X}^{\an} \setminus X^{\an}$ for some projective model $\ovl{X}$ of $X$. Then, we say that a sequence $\{f_n\}_{n \geq 1}$ of functions on $X^{\an}$ converges to $f$ if $|f - f_n| \leq c_n |g|$ for some sequence $c_n$ tending to $0$. This boundary topology is independent of the choice of $g;$ see \cite[Lemma 2.4.1]{yuan2021adelic}, for instance.
\par Using this topology, we can define Cauchy sequences of model adelic line bundles (see \cite[Section 1.1.3]{yuan2021adelic}). The category of model adelic line bundles is not closed under taking limits of Cauchy sequences with respect to the boundary topology, but the limits exist in the larger category $\widehat{\Pic}(X^{\an})$.
\begin{defn}
    An \textbf{adelic line bundle} is an object of $\widehat{\Pic}(X^{\an})$ which is isomorphic to the limit of a sequence in $\widehat{\Pic}(X/k)_{\mod}$. We denote the category of adelic line bundles as $\widehat{\Pic}(X/k)$. 
\end{defn}

One can extend the notion of adelic line bundles from varieties over $k$ to varieties over finitely generated fields $F$ over $\Frac(k).$ Suppose that $X/F$ is a quasiprojective variety. We then define \[\widehat{\Pic}(X/k) := \lim\limits_{U \rightarrow V} \widehat{\Pic}(U/k),\] where we take the limit over flat morphisms $U \rightarrow V$ of quasiprojective varieties over $k,$ where $V$ has generic point $\Spec F$ and the generic fiber of this morphism is $X \rightarrow \Spec F.$ Note that if $X$ is a quasi-projective variety over $\bb{Q}$, then $\wh{\Pic}(X/\bb{Q})$ and $\wh{\Pic}(X/\bb{Z})$ are two different categories. An object in $\wh{\Pic}(X/\bb{Q})$ is a limit of geometric line bundles of compactifications of $X$ over $\bb{Q}$, whereas an object of $\wh{\Pic}(X/\bb{Z})$ is a limit of adelic line bundles coming from $\wh{\Pic}(\mathcal{U}/\bb{Z})$ where $\mathcal{U}$ are quasi-projective varieties over $\bb{Z}$.

\begin{remark}
Yuan--Zhang later provide a different definition of adelic line bundles, namely as equivalence classes of adelic divisors (see \cite[\S 2.4]{yuan2021adelic}). However, this construction is equivalent to the definition we use, which is taken from \cite[\S 1]{yuan2021adelic} (see also \cite[\S 3.4]{yuan2021adelic}).
\end{remark}

We now define the notion of a strongly nef (snef), nef and integrable adelic line bundle. For a model adelic line bundle $(\mathcal{X}, \ovl{\mathcal{L}})$, if $k$ is a field, we say that it is \textbf{nef} if $\mathcal{L}$ is a nef line bundle on the model $\mathcal{X}$. If $k = \bb{Z}$, then we ask that the hermitian metric $| \cdot |$ is semipositive (that is, has positive curvature current), and that $\wh{\deg}(\ovl{\mathcal{L}} \vert_C) \geq 0$ for any one-dimensional closed integral subscheme $C \subseteq \mathcal{X};$ see \cite[\S A.4.1]{yuan2021adelic}. This latter condition implies that $\mathcal{L}$ is nef if we consider vertical $C$'s, but also contains additional information from horizontal curves. 
\par Now given an adelic line bundle $\ovl{L} \in \wh{\Pic}(X^{\an})$, we say that it is \textbf{strongly nef} if it is the limit of nef model adelic line bundles. We say it is \textbf{nef} if there exists a strongly nef adelic line bundle $\ovl{M}$ such that $ a \ovl{L} + \ovl{M}$ is strongly nef for all $a \geq 0$. We say that it is \textbf{integrable} if it can be written as the difference of two strongly nef adelic line bundles. We will denote these subcategories by $\wh{\Pic}_{\operatorname{snef}}(X/k), \wh{\Pic}_{\operatorname{nef}}(X/k)$ and $\wh{\Pic}_{\mathrm{int}}(X/k)$ respectively. 

\subsection{The Deligne Pairing and Heights}
We now recall the Deligne pairing defined in \cite{yuan2021adelic}, following \cite[\S 1.2]{yuan2021adelic}. The Deligne pairing was first introduced by Deligne in \cite{Del87} for relative curves, and then subsequently extended to higher relative dimensions by Elkik \cite{Elk89}. An alternative approach was given by Ducrot \cite{Duc05}.
\par Suppose we are given a projective flat morphism of quasiprojective varieties $X \rightarrow Y$ over $k$ of relative dimension $n.$ If we are given a line bundle $L$ on $X$ which comes from a projective model $\tilde{X}$ of $X,$ one can define the Deligne pairing to produce a line bundle on $Y$ as follows. Using Nagata compactification and the Raynaud--Gruson flattening theorem (\cite[Theorem 5.2.2]{GR71}), we can pick compatible compactifications $\mc X$ of $X$ and $\mc Y$ of $Y$ such that there is a projective flat morphism $\mc X \to \mc Y$ and $L$ comes from $\mathcal{X}.$ Then, we can use the classical Deligne pairing, which is a map $\Pic(\mc X)^{n+1} \to \Pic(\mc Y)$, where $n$ is the relative dimension of $\mc X \to \mc Y.$ 
\par In other words, the Deligne pairing on projective models gives us a Deligne pairing on model adelic line bundles. In \cite{yuan2021adelic}, they prove that this Deligne pairing on model adelic line bundles extends to integrable adelic line bundles, as follows.

\begin{thm}[Theorem 1.2.1, \cite{yuan2021adelic}]
Let $X \to Y$ be a projective flat morphism of quasiprojective varieties over $k$ of relative dimension $n$, where $k$ is either $\Z$ or a field. Suppose $Y$ is normal. 
The Deligne pairing on model adelic line bundles induces a symmetric and multilinear functor \[\widehat{\Pic}(X)^{n+1}_{\mathrm{int}} \rightarrow \widehat{\Pic}(Y)_{\mathrm{int}}\] 
compatible with base changes $Y' \to Y$ where for $Y'$ any quasiprojective normal variety over $k$ such that $X \times_Y Y' $ is integral.
\end{thm}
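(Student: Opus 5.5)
The plan is to build the pairing on model adelic line bundles, show it is uniformly continuous for the boundary topology on the snef cone, and then extend by completion and multilinearity.

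First, the model level. Given nef model line bundles $\ovl{\mathcal{L}}_0,\ldots,\ovl{\mathcal{L}}_n$ on a common projective model $\mathcal{X}$, fix a flat projective morphism $\mathcal{X}\to\mathcal{Y}$ extending $X\to Y$; it exists by Nagata compactification and Raynaud--Gruson flattening. The classical Deligne--Elkik pairing then gives a line bundle $\langle \mathcal{L}_0,\ldots,\mathcal{L}_n\rangle$ on $\mathcal{Y}$. When $k=\Z$ it carries the Deligne metric, computed by the inductive integration formula $\log\|\langle s_0,\ldots,s_n\rangle\| = \log\|\langle s_0,\ldots,s_{n-1}\rangle|_{\divv s_n}\| + \int \log\|s_n\|\, c_1(\ovl{\mathcal{L}}_0)\cdots c_1(\ovl{\mathcal{L}}_{n-1})$.

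Two checks are needed at this stage.
\begin{enumerate}
\item Independence of the model. Any two flattened models are dominated by a third, and the Deligne pairing commutes with pullback along a birational morphism of models that is an isomorphism over $Y$. The restriction to $Y$ therefore depends only on the classes in $\widehat{\Pic}(X/k)_{\mod}$.
\item Rescaling. Replacing $L$ by $L^{\otimes e}$ is compatible with multilinearity, so we pass to $\mathbb{Q}$-coefficients.
\end{enumerate}
Symmetry, multilinearity and base change for $Y'\to Y$ then follow from the corresponding classical properties, since flatness makes the pairing compatible with base change. Normality of $Y$ is used here: it lets us define the pairing over a big open subset of the model of $Y$ and extend it uniquely.

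The main step, and the expected obstacle, is continuity. Suppose $\ovl{L}_{i,m}\to\ovl{L}_i$ are Cauchy sequences of nef model bundles in the boundary topology. We must show that $\langle\ovl{L}_{0,m},\ldots,\ovl{L}_{n,m}\rangle$ is Cauchy on $Y$. By multilinearity and telescoping, it suffices to bound $\langle \ovl{\mathcal{O}}(f),\ovl{L}_1,\ldots,\ovl{L}_n\rangle$ in the case where $\ovl{L}_0$ and $\ovl{L}_0'$ differ by a model function $f$ with $|f|\le\epsilon\, g$. Here $g$ is a boundary Green function, which we realize as an effective boundary divisor $\ovl{D}_0$ on $X$ coming from a divisor $\ovl{D}_0$ on $\mathcal{X}$. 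We may choose $\ovl{D}_0$ to be the pullback of a boundary divisor $\ovl{E}$ on $\mathcal{Y}$ plus a relatively ample part.

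Positivity of the pairing on nef entries gives
\[ -\epsilon\langle \ovl{D}_0,\ovl{L}_1,\ldots,\ovl{L}_n\rangle \le \langle \ovl{\mathcal{O}}(f),\ldots\rangle \le \epsilon\langle \ovl{D}_0,\ovl{L}_1,\ldots,\ovl{L}_n\rangle. \]
Two facts are needed to control the right side.
\begin{enumerate}
\item Positivity itself: an effective entry paired with nef entries is effective. This follows from the inductive formula, because the integrand $\log\|s\|\le 0$ is integrated against a positive measure.
\item Uniform boundedness: $\langle \ovl{D}_0,\ovl{L}_{1,m},\ldots\rangle$ is bounded by a fixed multiple of a boundary divisor on $Y$, uniformly in $m$. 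We get this by dominating each $\ovl{L}_{i,m}$ by a fixed nef bundle plus a multiple of $\ovl{D}_0$. Then $\langle \ovl{\pi^*E},\ldots\rangle = \deg(\cdot)\,\ovl{E}$ by the projection formula, with the fibre degree fixed, and the remaining terms are handled by the same estimate.
\end{enumerate}
Iterating over the $n+1$ entries shows the sequence is Cauchy in the boundary topology of $Y$, with limit independent of the approximating sequences.

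Finally, the pairing extends to integrable bundles by multilinearity, after writing each integrable bundle as a difference of strongly nef ones. Well-definedness follows from additivity on the snef cone, and symmetry and base change pass to the limit.
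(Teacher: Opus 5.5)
Your route is essentially the proof in Yuan--Zhang, which the paper only cites. The paper's sketch of the model-level step via Nagata compactification and Raynaud--Gruson flattening is exactly yours: pair on a flattened compactification, show that Cauchy sequences of nef model bundles are sent to Cauchy sequences using positivity and the projection formula, then extend by multilinearity.

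\textbf{The continuity step fails as written.} You should not add a ``relatively ample part'' to $\ovl{D}_0$. Suppose $\ovl{D}_0=\pi^*\ovl{E}+\ovl{H}$ with $\ovl{H}$ relatively ample. Then $\ovl{H}$ meets $X$, so $\ovl{D}_0$ is no longer a boundary divisor. Moreover $\langle\ovl{D}_0,\ovl{L}_{1,m},\ldots,\ovl{L}_{n,m}\rangle$ contains the term $\langle\ovl{H},\ovl{L}_{1,m},\ldots,\ovl{L}_{n,m}\rangle$, which is nontrivial on $Y$ and cannot be bounded by any boundary divisor of $Y$. So your item (2) fails, and dominating the $\ovl{L}_{i,m}$ by a fixed nef bundle does not repair it.

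The extra part is also unnecessary. Since $X\to Y$ is proper and $\mathcal{X}$ is integral, $X$ is open and closed in $\pi^{-1}(Y)$. Hence $X=\pi^{-1}(Y)$ and $\mathcal{X}\setminus X=\pi^{-1}(\mathcal{Y}\setminus Y)$, so $\pi^*\ovl{E}$ is itself a boundary divisor of $X$ and the Cauchy condition can be tested against it. The projection formula then gives exactly $\langle\pi^*\ovl{E},\ovl{L}_{1,m},\ldots,\ovl{L}_{n,m}\rangle=\deg(L_1\cdots L_n|_{X_\eta})\,\ovl{E}$. The degree is independent of $m$ because every $\ovl{L}_{i,m}$ restricts to $L_i$ on $X$. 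That is the entire uniform bound.

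\textbf{Two ingredients are missing.} First, the statement asserts that the image lies in $\widehat{\Pic}(Y)_{\mathrm{int}}$. This requires the Deligne pairing of nef model bundles to be nef on $\mathcal{Y}$: semipositivity of the Deligne metric, and by base change $\widehat{\deg}\,\langle\ovl{\mathcal{L}}_0,\ldots,\ovl{\mathcal{L}}_n\rangle|_C=\ovl{\mathcal{L}}_0\cdots\ovl{\mathcal{L}}_n\cdot[\mathcal{X}_C]\ge 0$ for curves $C\subset\mathcal{Y}$. Without this, your limits of strongly nef inputs are only known to lie in $\widehat{\Pic}(Y)$.

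Second, your positivity argument only treats the Green function. You also need the finite part: for an effective vertical $D$, the canonical rational section of $\langle\mathcal{O}(D),\mathcal{L}_1,\ldots,\mathcal{L}_n\rangle$ has divisor $\pi_*(c_1(\mathcal{L}_1)\cdots c_1(\mathcal{L}_n)\cap[D])$, which is effective because the $\mathcal{L}_i$ are nef. When $k$ is a field this is the whole argument.

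Finally, your account of where normality is used is off. After flattening, $\mathcal{X}\to\mathcal{Y}$ is flat over all of $\mathcal{Y}$, so nothing needs to be extended from a big open subset. On a normal but non-factorial model, such an extension of a line bundle need not exist anyway.
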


Now, suppose that $\overline{L}$ is a line bundle in $\widehat{\Pic}(X/k)_{\mathrm{int}},$ and $F$ is the function field of $Y.$ Then, \cite{yuan2021adelic} define a vector-valued height function \[X(\overline{F}) \rightarrow \widehat{\Pic}(F/k)_{\mathrm{int}, \mathbb{Q}}\] as follows. Consider the image of $x: \Spec \overline{F} \rightarrow X_F;$ this will be some subscheme $x'$ of $X_F,$ which spreads out to a projective and flat scheme over an open of $Y.$ We may then apply the Deligne pairing to this open subscheme to obtain $\langle \overline{L}|_{x'} \rangle \in \widehat{\Pic}(F/k)_{\mathrm{int}, \mathbb{Q}},$ and thus define \[h_{\overline{L}}(x) = \frac{\langle \overline{L}|_{x'} \rangle}{\deg x'},\] where $\deg x'$ is the degree of the residue field of $x'$ over $F.$

We can similarly define such a height function for subvarieties $Z$ of $X_F$ by defining 
\[
h_{\overline{L}}(Z) = \frac{\langle \ol{L}|_{Z'} \rangle^{\dim Z + 1}}{(\dim Z + 1) \langle \tilde{L}|_{Z'_F} \rangle^{\dim Z}} \in \widehat{\Pic}(F/k)_{\mathrm{int}, \mathbb{Q}},
\] 
where $Z'$ is the image of $Z$ in $X,$ and $\tilde{L}|_{Z'_F}$ is the image of $\ol{L}$ under 
\[\widehat{\Pic}(X/k) \rightarrow \widehat{\Pic}(Z'/k)_{\mathrm{int}} \rightarrow \Pic(Z_F'/F).
\]
In particular, the denominator is a real number, since $\widehat{\Pic}(Z_F'/F) = \Pic(Z_F'/F)$.

\section{Globally Valued Fields}
We now introduce the notion of globally valued fields. These are fields $K$ with compatible global height functions on $\bb{P}^n(K)$ for each $n \geq 1$. As in \cite{GVFfoundations}, the theory of GVFs is related to both Yuan--Zhang adelic line bundles and Chen--Moriwaki adelic line bundles. The relation with the former allows us to view GVFs as generic points (as suggested by Hultberg) of a family of adelic line bundles, while the latter allows us to still invoke Arakelov theory over this generic point. 

\subsection{Definitions}
Following \cite{GVFfoundations}, we define a globally valued field (GVF) to be a field $K$ with a height function $h: \bigsqcup\limits_{n \geq 1} K^n \to \mathbb{R} \cup \{ -\infty\}$ satisfying:

\begin{enumerate}
\item $h(x) = -\infty$ if and only if $x = (0, \ldots, 0) \in K^n$ for some $n,$ 
\item $h(1,1) = 0,$

\item $h(x) = 0$ for all $x \in K^{\times},$

\item $h(\sigma(x)) = h(x)$ for all $x \in K^n$ and permutation $\sigma \in S_n,$

\item $h(x \otimes y) = h(x) + h(y)$ for all $x \in K^n$ and $y \in K^m$, where $x \otimes y \in K^{mn}$ denotes the Segre product,

\item $h(x) \leq h((x,y))$ for all $x \in K^n$ and $y \in K^m$ where we view $(x,y)$ as an element of $K^{n+m},$

\item There exists $e \geq 0$, called the archimedean error, such that $h(x+y) \leq h((x,y)) + e$ for all $x \in K^n$ and $y \in K^m.$
\end{enumerate}

Using property (3) and (5), it follows that $h$ descends to a function on $h: \mathbb{P}^n(K) \to \mathbb{R} \cup \{-\infty\}$ and hence can be viewed as a Weil height function. By abuse of notation, for $x \in K$ we will let $h(x) = h([x:1])$.

\par Given an adelic curve $(K, (\Omega,\mathcal{A},\nu),\phi)$, we can construct a GVF from it, defining a height function for the field $K$ by the function \[h(a_1, a_2, \ldots, a_n) = \int\limits_{\Omega} \log \max\{ |a_1|_{\omega},\ldots, |a_n|_{\omega}\} d\nu(\omega)\] (see \cite[p.33]{GVFfoundations}). It turns out that we can also go backwards in the cases we are interested in:
\begin{proposition}[Corollary 7.11 in \cite{GVFfoundations}]
Every countable GVF arises from a proper adelic curve.
\end{proposition}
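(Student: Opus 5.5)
The plan is to build the adelic curve directly out of the height function, via a Riesz-type representation theorem. Throughout, $K$ is a countable GVF with height $h$.

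\textbf{Step 1: the space of absolute values.} Let $\Omega$ be the set of all (possibly trivial) absolute values on $K$, suitably normalized, including the archimedean ones whose exponent is bounded by the archimedean error $e$. Topologize $\Omega$ by pointwise convergence of $x \mapsto \log|x|$. After fixing the normalization (for instance $\max$ over a scaling), $\Omega$, or a suitable cone over it, becomes a compact or locally compact space. Countability of $K$ makes this topology second countable, so the Borel $\sigma$-algebra is well behaved.

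\textbf{Step 2: the positive functional.} Consider the space $\mathcal{F}$ of functions on $\Omega$ of the form
$$\omega \mapsto \log\max_i|a_i|_\omega - \log\max_j|b_j|_\omega.$$
This space is closed under $\max$, $\min$ and sums, since the Segre product gives sums and the tuple axioms give $\max$. So it is a vector lattice after $\mathbb{Q}$-scaling, which is obtained by passing to $\bar K$ or by taking roots formally. Define
$$\Lambda(f) = h(a) - h(b).$$
The key verification is that $\Lambda$ is well defined and positive: if $f \geq 0$ pointwise on $\Omega$, then $\Lambda(f) \geq 0$. Positivity is the heart of the matter. One shows that a pointwise inequality between $\log\max$ expressions is witnessed algebraically, so that some power of the $b_j$ lies in the integral closure of the ideal generated by the $a_i$ (up to the archimedean constants). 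That algebraic witness is then converted into a height inequality using axioms (5)--(7): the Segre product, monotonicity, and the triangle inequality with error $e$, with the error terms killed by taking high powers. This valuative-criterion/Positivstellensatz step is the main obstacle. I would try to prove it by a compactness argument on the space of valuations, combined with the standard fact that integral dependence is detected by all valuations.

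\textbf{Step 3: Riesz representation and properness.} The lattice $\mathcal{F}$ separates points of $\Omega$, so by Stone--Weierstrass it is dense in the continuous functions with compact support on (a projectivized model of) $\Omega$. Riesz representation then gives a Radon measure $\nu$ with
$$\Lambda(f) = \int_\Omega f \, d\nu.$$
Taking $b = 1$ recovers
$$h(a_1,\ldots,a_n) = \int_\Omega \log\max_i |a_i|_\omega \, d\nu.$$
Integrability of $\omega \mapsto \log|x|_\omega$ follows from the finiteness of $h(x)$ and $h(1/x)$. Properness follows from axiom (3): $h(x) = 0$ for every single nonzero element, so
$$\int_\Omega \log|x|_\omega \, d\nu = 0.$$
Thus $(K,(\Omega,\text{Borel},\nu),\mathrm{id})$ is a proper adelic curve inducing $h$, which is the desired conclusion.
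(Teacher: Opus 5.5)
The paper does not prove this statement itself. It quotes \cite[Corollary 7.11]{GVFfoundations}, which rests on the correspondence between GVF structures and positive functionals on lattice divisors (\cite[Theorem 7.7]{GVFfoundations}) together with a measure-representation argument. Your overall shape (positive functional, then Riesz) is broadly the one behind that result.

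\textbf{Steps~1 and~3 fail as written.} The space of absolute values is not locally compact, even after discarding the trivial absolute value and dividing out the scaling $|\cdot|\mapsto|\cdot|^{s}$.

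\emph{A counterexample.} Take $K=\mathbb{Q}(t)$ and $\lambda=-\mathrm{ord}_t$. Let $|\sum_j a_jt^j|_{q,r}=\max_j|a_j|_q r^j$ be the $q$-adic Gauss norm of radius $r$. Fix $x_1,\dots,x_n\in K^\times$, and let $q$ be any prime dividing no nonzero coefficient of the numerators and denominators of the $x_i$ (written in $\mathbb{Z}[t]$). Then for every $\delta>0$ the rescaled norm $\mu_{q,\delta}=\delta^{-1}\log|\cdot|_{q,\exp(-\delta)}$ agrees with $\lambda$ on every $x_i$, while $\mu_{q,\delta}(q)=-\delta^{-1}\log q$. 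So no neighbourhood of $\lambda$ is relatively compact. As $\delta\to 0$ this family can only accumulate at a seminorm with $|q|=0$. After renormalising, it accumulates at the $q$-adic Gauss point, and these Gauss points accumulate only at the trivial absolute value as $q\to\infty$.

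\emph{No normalisation rescues Stone--Weierstrass.} Nor does any normalisation rescue the extension of $\Lambda$ to $C_c$. Lattice divisors are $1$-homogeneous, vanish at the trivial absolute value, and are not compactly supported. In general none of them is positive at every nontrivial absolute value; over $\mathbb{Q}$ each is supported at finitely many places.

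\emph{What a Riesz argument actually needs.} It must be run on a compactification that contains degenerate points, such as seminorms with nontrivial kernel. You must then show that the resulting measure does not charge them. For each $x\in K^\times$ the set $\{|x|=0\}$ is null, because $\log|x|$ has to be integrable ($h(x,1)$ and $h(x^{-1},1)$ are finite). The degenerate locus is $\bigcup_{x\in K^\times}\{|x|=0\}$, which is a countable union of null sets because $K$ is countable. This is the substantive use of countability, and your proof never makes it. Using countability only for second countability, which Riesz does not need, is a warning sign: nothing in your argument distinguishes countable from uncountable $K$.

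\textbf{Step~2 is the other gap.} When $e=0$, your route through integral dependence is plausible. In general, however, positivity of $\Lambda$ is essentially the content of \cite[Theorem 7.7]{GVFfoundations}, which the paper only cites. Your proposed mechanism does not obviously handle archimedean places. There the pointwise inequality $\max_j|b_j|_\omega\le\max_i|a_i|_\omega$ carries no margin, and each use of axiom~(7) costs a fixed $e$. Moreover, the integer coefficients of an integral-dependence relation for $b^M$ grow exponentially in $M$. So the error accumulated through~(7) is a priori of order $M$ and survives division by $M$. You need either to invoke that theorem or to give a different proof of positivity, and in either case to add the compactification and null-set step above.
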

In particular, if we are interested in working with countable GVFs, we can pass to working with proper adelic curve over a countable field (which we will just call a countable proper adelic curve). 
\par Given a GVF $K$, there is an unique GVF structure we may put on any finite extension $L/K$ that is invariant under $\Aut(L/K)$, called the symmetric extension, see \cite[Proposition 10.5]{GVFfoundations}. Note that without imposing invariance under $\Aut(L/K)$, there may be multiple possible GVF structures one can put on $L$, see \cite[Example 10.6]{GVFfoundations}.

\subsection{Ultrafilters of GVFs}
We now recall the notion of an ultrafilter and how this can be applied to GVFs. Ultrafilters of valued fields have been used by the third author to obtain uniformity results, see \cite{LY26, GY26}. We begin with a brief discussion about the general notion of an ultrafilter; the reader interested in a more thorough discussion about ultrafilters can consult \cite{goldbringultrafilters}. 

\begin{defn}[\cite{goldbringultrafilters}, Definitions 1.1.1 and 1.1.8]
Let $I$ be a set. Then, a \textbf{filter} is a family of subsets $\omega$ satisfying the following three properties:
\begin{enumerate}
    \item $\emptyset \not \in \omega,$
    \item If $A \subset B \subset I$ and $A \in \omega,$ then $B \in \omega,$
    \item If $A, B \in \omega,$ then $A \cap B \in \omega$ too.
\end{enumerate}
An \textbf{ultrafilter} is a filter which is maximal; that is, for all $A \subset I,$ either $A \in \omega$ or $I - A \in \omega.$
\end{defn}
It is known that every filter $\omega$ lies inside of some ultrafilter $\omega';$ see \cite[Corollary 1.1.17]{goldbringultrafilters}.
\par We let $\beta I$ be the collection of all ultrafilters on $I;$ this has a topology where a basis of opens is given by $\{\omega \in \beta I| E \in \omega\}$ over all subsets $E \subset I;$ see \cite[Theorem 3.2.1]{goldbringultrafilters}. With this topology, $\beta I$ becomes a compact Hausdorff space, and this topological space satisfies the following universal property: for all compact Hausdorff spaces $K,$ whose underlying set is $|K|,$ we have the equality \[\Hom_{\text{Top}}(\beta I, K) = \Hom_{\text{Set}}(I, |K|);\] see \cite[Theorem 3.2.8]{goldbringultrafilters}.
Given a set map $f: I \rightarrow |K|,$ the corresponding map $\beta f: \beta I \rightarrow K$ is given by sending $\omega$ to the unique point $x \in K$ such that $f^{-1}(U) \in \omega$ for all opens $U$ containing $x.$ 
\par If we view $f$ as a collection of $x_i \in K,$ indexed by $i \in I,$ we can then define the notion of the limit of this collection with respect to an ultrafilter $\omega$ as \[\lim\limits_{\omega} x_i = \beta f(\omega).\]
\par In this paper, we will use this notion on both GVFs and valued fields. First, we have the definition for GVFs, as follows. 
\begin{defn}[\cite{GVFfoundations}, Construction 11.9] \label{defn: ultrafilter}
Suppose that we have a set $I,$ an ultrafilter $\omega,$ and, for each $i \in I,$ a GVF $K_i.$ Then, we define the \textbf{ultraproduct} of the $K_i$ with respect to $\omega$ as the quotient of \[\left\{(x_i)\in \prod\limits_{i \in I} K_i \mid h_{K_i}(x_i) \text{ bounded}\right\}\] by the equivalence relation that $(x_i) \sim (y_i)$ if $x_i = y_i$ on a set $S \in \omega.$
\end{defn}

This ultraproduct is also a GVF, with the addition and multiplication operations done coordinate-wise, and whose height function on a sequence $(x_i)$ is given by $\lim\limits_{\omega} h(x_i).$ Note that this limit exists since the $x_i$ all have bounded height, so this corresponds to a map from $I$ to a compact interval (where this limit exists).
\par We now describe how we can use these constructions geometrically, as discussed in \cite[pp.14-15]{Hul26}.

\begin{proposition} \label{GVFConstruction1}
Let $\omega$ be a non-principal ultrafilter on a set $I$ and let $X$ be a finite type quasi-projective variety over a field $k$ and fix an embedding $X \xhookrightarrow{} \bb{P}_{k}^N$. For each $i \in I$, let $K_i$ be a GVF field mapping to $X$ by $x_i: \Spec K_i \to X.$ After viewing $x_i$ as a point in $\bb{P}^N$, assume that $h(x_i)$ is uniformly bounded. Let $K_{\omega}$ be the ultraproduct of the $K_i$ with respect to $\omega.$ Then there exists a map
$$x_{\omega}: \Spec K_{\omega} \to X$$
induced by the $x_i$'s.
\end{proposition}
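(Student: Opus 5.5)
The plan is to build $x_\omega$ first as a $K_\omega$-point of $\bb{P}^N_k$, and then check that it lands in $X$. Write $x_i = [a_{i,0}:\cdots:a_{i,N}]$ with $a_{i,j} \in K_i$. The first task is to normalize the coordinates so that each one has bounded height and the coordinate vectors cannot all vanish in the limit.

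\textbf{Choice of a nonvanishing coordinate.} For each $i$, some coordinate is nonzero. Since there are finitely many indices $j \in \{0,\ldots,N\}$ and $\omega$ is an ultrafilter, there is a single $j_0$ such that $\{i : a_{i,j_0} \neq 0\} \in \omega$. On this set we divide by $a_{i,j_0}$, so that $a_{i,j_0} = 1$. On the complementary ($\omega$-small) set we make an arbitrary choice, which does not affect the class in the ultraproduct.

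\textbf{Bounded heights of the coordinates.} By property (6) of the GVF axioms together with permutation invariance (4), for each $j$ we have
$$h(a_{i,j}) = h(a_{i,j}, 1) \le h(a_{i,0},\ldots,a_{i,N}) = h(x_i).$$
Here we use that $h$ is defined on $\bb{P}^N$, so the normalization does not change $h(x_i)$. Since $h(x_i)$ is uniformly bounded, the tuples $(a_{i,j})_i$ define elements $\alpha_j \in K_\omega$, with $\alpha_{j_0} = 1 \neq 0$. Hence $[\alpha_0 : \cdots : \alpha_N]$ is a well-defined point of $\bb{P}^N(K_\omega)$. We also need $K_\omega$ to be a $k$-algebra. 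This holds if the $K_i$ are $k$-algebras, which is implicit since $x_i$ is a map to a $k$-scheme, and if constants from $k$ have bounded height. The latter should be arranged, e.g.\ by requiring that the structure maps of $X$ make the relevant constants (coefficients of defining equations) have bounded height. This is the point I expect needs care: if the $K_i$ are $k$-algebras in the GVF sense, then elements of $k$ have height $0$. In general, I would simply work over the finitely generated subring of $k$ generated by the coefficients of the equations defining $X$, and observe that its generators have bounded height in the $K_i$, using the fact that $x_i$ factors through it.

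\textbf{Landing in $X$.} Write $X = \ovl{X} \setminus Z$, where $\ovl{X} = V(f_1,\ldots,f_r)$ is the closure in $\bb{P}^N$ and $Z = V(g_1,\ldots,g_s) \cap \ovl{X}$. Each $f_l$ vanishes at $(a_{i,j})_j$ for all $i$, hence $f_l(\alpha) = 0$, because the ring operations in $K_\omega$ are coordinatewise. For $Z$: for each $i$ some $g_m(a_i) \neq 0$. By the ultrafilter property again, there is a single $m$ with $g_m(a_i) \neq 0$ on an $\omega$-large set, so $g_m(\alpha) \neq 0$ in $K_\omega$. Thus $\alpha \in X(K_\omega)$, which gives $x_\omega : \Spec K_\omega \to X$.

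Finally, I would remark that $x_\omega$ is independent of the choices made up to the obvious identifications, and that $h(x_\omega) = \lim_\omega h(x_i)$ by the definition of $h_\omega$. The main obstacle is the bookkeeping about the $k$-structure on $K_\omega$; the remaining steps are routine applications of the fact that a finite partition of $I$ has exactly one $\omega$-large piece.
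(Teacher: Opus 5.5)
Your construction is essentially the paper's. It picks the standard affine chart of $\mathbb{P}^N$ containing $x_i$ for an $\omega$-large set of indices, sends the affine coordinates to the coordinatewise sequences in $\prod_i K_i$, passes to $K_\omega$, and notes that the defining equations vanish coordinatewise. You are more careful than the written proof in two places. You derive the boundedness of the affine coordinates from the bound on $h(x_i)$ via axioms (4) and (6), whereas the paper simply assumes bounded coordinates in its affine step. You also handle the open part of the quasi-projective $X$ by a second ultrafilter selection of some $g_m$ with $g_m(\alpha)\neq 0$, whereas the paper only speaks of ``the ideal cutting out'' the chart, tacitly treating it as closed in $\mathbb{A}^N$.

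The one step that fails is the last sentence of your paragraph on the $k$-structure. That $x_i$ factors through a finitely generated ring $R\subseteq k$ gives no control on the heights of the images in $K_i$ of the generators of $R$; the hypothesis bounds only the homogeneous coordinates. For example, take $k=\mathbb{Q}(t)$, $X=\Spec k$ embedded as the point $[1:0]$, and $K_i=\overline{\mathbb{Q}(s)}$ with its function-field height, with $t\mapsto s^i$. Then $h(x_i)=0$ for all $i$. But $(s^i)_i$ has unbounded height on every $\omega$-large set, so it is not an element of $K_\omega$, and no map $\Spec K_\omega\to X$ is induced.

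So the statement genuinely needs the constants to have bounded height. The paper gets this by working throughout with schemes of finite type over $\mathbb{Z}$. Then the only constants are integers, whose heights are uniformly bounded once the archimedean errors of the $K_i$ are. That bound is needed anyway for $K_\omega$ to be a GVF, and it is automatic in the non-archimedean setting where integers have height zero. In the paper's application $k=\overline{\mathbb{F}}_p$, whose nonzero elements are roots of unity of height zero in every GVF, so the issue does not arise there. Replace your justification with this reduction, or state the bounded-constants hypothesis explicitly, and the argument is complete.
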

\begin{proof}
First, suppose that $\Spec A$ is a finite type affine scheme over $\mathbb{Z}$ with an embedding $\Spec A \rightarrow \mathbb{A}^N_{\mathbb{Z}},$ and we have points $\Spec K_i \rightarrow \Spec A$ where the $K_i$ are GVFs indexed by a set $I,$ such that the coordinates of the points in $\mathbb{A}^N_{\mathbb{Z}}$ have bounded height. That is, there is a constant $C$ such that for all $i, j,$ the image of the $j$th coordinate $T_j$ under the morphism $\mathbb{Z}[T_1, T_2, \ldots, T_N] \rightarrow K_i$ has height at most $C.$ We can then form the ring map \[\mathbb{Z}[T_1, T_2, \ldots, T_N] \rightarrow \prod\limits_{i \in I} K_i,\] whose image lies in the set of sequences whose heights are bounded. We may then further quotient to get a morphism $\mathbb{Z}[T_1, T_2, \ldots, T_N] \rightarrow K_{\omega}.$ Finally, since the image of the ideal cutting out $A$ vanishes in each $K_i,$ it vanishes in $K_{\omega},$ so this induces a morphism $A \rightarrow K_{\omega},$ and thus a point $\Spec K_{\omega} \rightarrow \Spec A.$
\par 
In our situation, we may cover $\bb{P}^N$ with $N+1$ distinct affine opens $\Spec A_j \simeq \mathbb{A}^N_{\mathbb{Z}}.$ Then there exists $j$ such that $I_j = \{i \mid x_i \in \Spec A_j\}$ lies in $\omega.$ We may then pass to this subsequence and deal only with points landing in $\Spec A_j,$ owing to the existence of the map $\prod\limits_{i \in I_j} K_i \rightarrow K_{\omega}$ given by extending the tuple $(f_i)_{i \in I_j}$ with zeroes for all $i \in I - I_j.$ This reduces us to the case of the previous paragraph, giving us the desired result.
\end{proof}

\subsection{Adelic Line Bundles on GVFs}
In this subsection, we relate Yuan--Zhang adelic line bundles and Chen--Moriwaki adelic line bundles, and use these to give another equivalent notion of a GVF structure. The fact that you can move between these bundles has been noted and used in \cite{Hul26}, but for completeness we check these facts explicitly. Throughout this subsection, let $k$ be either $\mathbb{Z}$ or a field.
\par Suppose that $K$ is finitely generated as a field over $k$ (or $\mathbb{Q}$ if $k = \mathbb{Z}$). Recall from \cite[Theorem 7.7]{GVFfoundations} that the data of a GVF structure on a field $K$ is equivalent to the data of a positive functional; that is, a homomorphism 
\[
\text{LDiv}_{\mathbb{Q}}(K) \rightarrow \mathbb{R}
\] 
sending nonnegative lattice divisors to nonnegative numbers, and which vanishes on principal lattice divisors (see \cite[Definition 5.1]{GVFfoundations}). We first show that we can reinterpret this functional as a functional on Yuan--Zhang adelic line bundles, as follows.

\begin{lemma}\label{lem: GVF structure given by map from adelic line bundles}
There is a natural bijection between the set of GVF structures on $K$, which we assume are trivial on $k$ if $k$ is a field, and the set of group homomorphisms \[\widehat{\Pic}(K/k) \rightarrow \mathbb{R}\] such that the composition \[\widehat{\Div}(K/k) \rightarrow \widehat{\Pic}(K/k) \rightarrow \mathbb{R}\] sends (Cauchy sequences of) effective divisors to nonnegative numbers. 
\end{lemma}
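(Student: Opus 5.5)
The plan is to route everything through \cite[Theorem 7.7]{GVFfoundations}, which identifies GVF structures on $K$ with positive functionals $\ell: \mathrm{LDiv}_{\mathbb{Q}}(K) \to \mathbb{R}$ vanishing on principal lattice divisors. It then suffices to give a natural bijection between such positive functionals and homomorphisms $\widehat{\Pic}(K/k) \to \mathbb{R}$ whose composite with $\widehat{\Div}(K/k) \to \widehat{\Pic}(K/k)$ is nonnegative on effective adelic divisors. Since $\widehat{\Pic}(K/k)$ is the quotient of $\widehat{\Div}(K/k)$ by principal divisors (with the model metrics from \cite[\S 2.4]{yuan2021adelic}), a homomorphism on $\widehat{\Pic}$ is the same as a homomorphism on $\widehat{\Div}$ killing principal divisors. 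So I would compare the two divisor groups directly.

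\textbf{Step 1: lattice divisors give dense model divisors.} Recall that a lattice divisor is essentially a finite max/min combination of the functions $\log|f|$ for $f \in K^\times$, viewed as a piecewise-linear function on the space of valuations. For each such lattice divisor, I would produce a model $\mathcal{U}$ of $K$ over $k$, or over $\mathbb{Z}$ with the archimedean part $\log\max$ of the absolute values, together with a model adelic divisor whose Green function is exactly this piecewise-linear expression. The natural choice is to blow up the ideals generated by the $f_i$ so that $\max\{\log|f_i|\}$ becomes the Green function of a Cartier divisor. This gives an order-preserving homomorphism $\iota: \mathrm{LDiv}_{\mathbb{Q}}(K) \to \widehat{\Div}(K/k)_{\mathbb{Q}}$ that sends principal divisors to principal divisors. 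Conversely, every model adelic divisor is locally of this form. Its Green function is then a difference of two lattice expressions on each chart of an affine cover, and in the archimedean case it is a lattice expression up to a bounded continuous correction.

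\textbf{Step 2: restriction gives a positive functional.} Given $\phi: \widehat{\Pic}(K/k) \to \mathbb{R}$ with the stated positivity, I would set $\ell = \phi \circ \iota$. This is positive and kills principal divisors, so by \cite[Theorem 7.7]{GVFfoundations} it is a GVF structure. If $k$ is a field, the structure is trivial on $k$ because constants give vertical-free principal divisors with zero Green function.

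\textbf{Step 3: extension is unique and exists.} Conversely, given $\ell$, I would extend it to $\widehat{\Div}(K/k)$ by approximation. This is the main obstacle. The key input is a sandwich property: any effective or Cauchy adelic divisor $\overline{D}$ satisfies
\[ \iota(a_n) - \epsilon_n \overline{B} \le \overline{D} \le \iota(b_n) + \epsilon_n \overline{B} \]
for lattice divisors $a_n, b_n$ with $\ell(b_n) - \ell(a_n) \to 0$, where $\overline{B}$ is the fixed boundary divisor defining the boundary topology and $\epsilon_n \to 0$. Since $\overline{B}$ is itself dominated by a lattice divisor, positivity of $\ell$ forces a unique value $\phi(\overline{D}) = \lim \ell(a_n)$. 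For the existence of these approximations, I would use that continuous functions on the analytification are uniformly approximated on compacta by lattice combinations of the $\log|f|$, via Stone--Weierstrass in its lattice form, with the boundary topology controlling the behavior near the boundary.

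Positivity and additivity pass to the limit, and the two constructions are inverse to each other because $\iota$ has dense image. Naturality is then immediate from the construction.
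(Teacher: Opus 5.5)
Your plan is essentially the paper's proof: you reduce via \cite[Theorem 7.7]{GVFfoundations} to positive functionals on $\mathrm{LDiv}_{\mathbb{Q}}(K)$, map lattice divisors into $\widehat{\Div}(K/k)\otimes\mathbb{Q}$, get one direction by restriction, and for the other extend first to model (arithmetic) divisors and then to the boundary completion, using $-\epsilon_i\overline{\mathcal{E}_0}\le \mathcal{D}_i-\mathcal{D}_{i'}\le \epsilon_i\overline{\mathcal{E}_0}$ together with positivity for both existence and uniqueness of the limit. The difference is that the paper cites the comparison between lattice divisors and model divisors (\cite[Corollary 8.4, Theorem 8.1]{GVFfoundations} and \cite[Proposition 3.1.16]{sza23}) rather than reproving it by blow-ups and Stone--Weierstrass. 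If you prove it by hand, two points need care. First, $\iota$ must reflect the order (modulo the lattice ideal generated by $\mathrm{LDiv}_{\mathbb{Q}}(k)$ when $k$ is a field), which your sandwich argument silently uses. Second, the archimedean approximants must be lattice divisors that are trivial at all non-archimedean places, since arbitrary lattice expressions in the $\log|f|$ also change the finite-place data.
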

\begin{proof}
To prove this lemma, we first show that there is a natural way to map from $\text{LDiv}_{\mathbb{Q}}(K)$ to $\widehat{\Div}(K/k) \otimes_{\mathbb{Z}} \mathbb{Q}.$ We then show that every positive functional extends uniquely to a group homomorphism $\widehat{\Pic}(K/k) \rightarrow \mathbb{R}$ satisfying the condition, and that every such functional on $\widehat{\Pic}(K/k)$ restricts to a positive functional on $\text{LDiv}_{\mathbb{Q}}(K/k).$
\par To give the map $\text{LDiv}_{\mathbb{Q}}(K) \rightarrow \widehat{\Div}(K/k),$ we do casework. Suppose first that $k$ is a field. Then, by \cite[Corollary 8.4]{GVFfoundations}, the quotient of $\text{LDiv}_{\mathbb{Q}}(K)$ by the lattice ideal generated by $\text{LDiv}_{\mathbb{Q}}(k)$ is equal to \[\varinjlim (\Div(X/k) \otimes_{\mathbb{Z}} \mathbb{Q})\] ranging over all projective varieties $X$ equipped with isomorphisms $k(X) \simeq K.$ On the other hand, recall from \cite[\S 2.4.3]{yuan2021adelic} that \[\widehat{\Div(K/k)} = \varinjlim \widehat{\Div}(U/k),\] ranging over all quasiprojective varieties $U$ with isomorphisms $k(U) \simeq K,$ where $\widehat{\Div}(U/k)$ was the completion of $\varinjlim \Div(X, U)$ over projective models $X$ of $U;$ see \cite[\S 2.2.3]{yuan2021adelic} for the definition of $\Div(X, U).$ The morphisms \[\Div(X/k) = \widehat{\Div}(X/k) \rightarrow \widehat{\Div}(K/k) \rightarrow \widehat{\Div}(K/k) \otimes_{\mathbb{Z}} \mathbb{Q}\] for each projective variety $X/k$ with $k(X) \simeq K$ then gives us our desired map $\text{LDiv}_{\mathbb{Q}}(K) \rightarrow \widehat{\Div}(K/k) \otimes_{\mathbb{Z}} \mathbb{Q}.$
\par In the case where $k = \mathbb{Z},$ we instead consider the set $\text{ADiv}_{\mathbb{Q}}(K)$ from \cite{sza23}, which we may do by \cite[Theorem 8.1]{GVFfoundations}. From the definition, this is given as the direct limit of arithmetic divisors $\text{ADiv}_{\mathbb{Q}}(\mathcal{X}),$ which is just $\widehat{\Div}_{\mathbb{Q}}(\mathcal{X})$ as defined in \cite[\S 2.2]{yuan2021adelic}. We can now map $\text{ADiv}_{\mathbb{Q}}(K)$ into $\widehat{\Div}(K/k) \otimes_{\mathbb{Z}} \mathbb{Q},$ since $\mathcal{X}$ is a quasiprojective variety with function field $K.$ Notice that in both of these cases that our map sends principal lattice divisors to principal divisors and positive ones to effective divisors.
\par We now turn to showing that positive functionals on $\text{LDiv}_{\mathbb{Q}}(K)$ extend to positive functionals on $\widehat{\Div}(K/k) \otimes_{\mathbb{Z}} \mathbb{Q}.$ Note in the arithmetic case that, by \cite[Proposition 3.1.16]{sza23}, positive functionals extend uniquely to positive functionals from $\text{ADiv}_{\mathbb{Q}}(K)$ (where positivity for $\text{ADiv}_{\mathbb{Q}}(K)$ refers to the effective cone). In particular, in either case, we have positive functionals $\widehat{\Div}_{\mathbb{Q}}(X/k) \rightarrow \mathbb{R}$ for projective $X/k$ which are compatible with the morphisms induced by functoriality.
\par This means that for every quasi-projective variety $\mathcal{U}$ with function field $K,$ we get a positive functional $l:\widehat{\Div}(\mathcal{U})_{\text{mod}} \rightarrow \mathbb{R}$ (that is, one preserving the order given in \cite[\S 2.4.1]{yuan2021adelic}) using the projection maps $\widehat{\Div}(\mathcal{X}, \mathcal{U}) \rightarrow \widehat{\Div}_{\mathbb{Q}}(\mathcal{X})$ and then taking colimits. Furthermore, these are also compatible with the morphisms arising from functoriality, since these projection maps also commute with the pullback morphisms arising from these diagrams:
\[\begin{tikzcd}
	{\mathcal{U}'} && {\mathcal{X}'} \\
	\\
	{\mathcal{U}} && {\mathcal{X}}
	\arrow[hook, from=1-1, to=1-3]
	\arrow[from=1-1, to=3-1]
	\arrow[from=1-3, to=3-3]
	\arrow[hook, from=3-1, to=3-3]
\end{tikzcd}\]
We now show that this functional uniquely extends to a morphism $\widehat{\Div}(\mathcal{U}) \rightarrow \mathbb{R}.$ To do this, fix a boundary divisor $\overline{\mathcal{E}_0} \in \widehat{\Div}(\mathcal{X}_0, \mathcal{U}).$ Then, recall that $\widehat{\Div}(\mathcal{U})$ is defined by completing with respect to the boundary norm with respect to $\ol{\mathcal{E}_0}.$ 
\par However, for every Cauchy sequence of divisors $\mathcal{D}_i,$ note that there is a sequence of positive rational numbers $\eps_i$ converging to zero, such that \[-\epsilon_i \ol{\mathcal{E}_0} \leq \mathcal{D}_i - \mathcal{D}_{i'} \leq \epsilon_i \ol{\mathcal{E}_0}\] for all $i' \geq i \geq 1.$ Applying $l,$ since $l$ is positive, we get that \[-\epsilon_i l(\ol{\mathcal{E}_0}) \leq l(\mathcal{D}_i) - l(\mathcal{D}_{i'}) \leq \epsilon_i l(\ol{\mathcal{E}_0}).\] But this means that the sequence of $l(\mathcal{D}_i)$ is Cauchy in $\mathbb{R},$ so it therefore converges; we define $l$ of this sequence to be that limit. The same argument shows that this is well-defined, and sends Cauchy sequences of effective divisors to nonnegative numbers. 
\par This condition also uniquely pins down $l.$ To check this, notice that the positivity condition applied to the Cauchy sequences $\pm (\mathcal{D}_i - \mathcal{D}_j) + \epsilon_j \ol{\mathcal{E}_0}$ (for fixed $j$) implies that, if we denote by $\mathcal{D}$ the Cauchy sequence in $\widehat{\Div}(\mathcal{U}),$ \[\pm(l(\mathcal{D}) - l(\mathcal{D}_j)) + \epsilon_j l(\ol{\mathcal{E}_0}) \geq 0,\] or that \[\epsilon_j l(\ol{\mathcal{E}_0}) \geq |l(\mathcal{D}) - l(\mathcal{D}_j)|.\] This implies that $l(\mathcal{D})$ has to be the limit of the $l(\mathcal{D}_j).$
\par The uniqueness and the compatibility of the functionals on $\widehat{\Div}(\mathcal{U}/k)_{\text{mod}}$ imply the functionals $\widehat{\Div}(\mathcal{U}/k) \rightarrow \mathbb{R}$ are compatible, so we get a morphism $\widehat{\Div}(K/k) \rightarrow \mathbb{R}$ (and thus one from $\widehat{\Div}(K/k) \otimes_{\mathbb{Z}} \mathbb{Q}$). Finally, this is trivial on principal divisors by assumption, so this actually induces our desired morphism $\widehat{\Pic}(K/k) \rightarrow \mathbb{R}$ (and using \cite[Proposition 2.5.1]{yuan2021adelic}).
\par For the other direction, given our morphism $\widehat{\Pic}(K/k) \rightarrow \mathbb{R},$ we get a functional on lattice divisors by \[\text{LDiv}_{\mathbb{Q}}(K) \rightarrow \varinjlim (\widehat{\Div}(X/k) \otimes_{\mathbb{Z}} \mathbb{Q}) \rightarrow \widehat{\Div}(K/k) \otimes_{\mathbb{Z}} \mathbb{Q} \rightarrow \widehat{\Pic}(K/k) \otimes_{\mathbb{Z}} \mathbb{Q} \rightarrow \mathbb{R}.\] This sends principal divisors to zero, and the positivity condition implies, in particular, that positive lattice divisors are sent to nonnegative numbers, which finishes the proof of the lemma.
\end{proof}

Let $K$ be a countable GVF and let $X/K$ be a projective variety. Following Hultberg \cite[Section 2.2]{Hul26}, we say a model of $X/K$ is the data of a finite type quasi-projective scheme $\mathcal{Y}$ over $\Spec \bb{Z}$, and a flat projective morphism $\mathcal{X} \to \mathcal{Y}$, along with a map $\Spec K$ to the generic point of $\mathcal{Y}$ and an isomorphism $X \simeq \mathcal{X}_K$. An adelic line bundle $\ovl{L}$ on $X/K$ is any adelic line bundle $\ovl{\mathcal{L}}$ on $\mathcal{X}$, in the sense of Yuan--Zhang, where $\mathcal{X} \to \mathcal{Y}$ is a model. Here, if $K$ is characteristic $0$ and is an archimedean GVF, then $\ovl{\mathcal{L}}$ is an element of $\wh{\Pic}(\mathcal{X}/\bb{Z})$. If $K$ is a characteristic $0$ non-archimedean GVF, we only consider models $(\mathcal{X} \to \mathcal{Y})$ with $\mathcal{Y}$ quasi-projective over $\bb{Q}$ and we let $\ovl{\mathcal{L}}$ be an element of $\wh{\Pic}(\mathcal{X}/\bb{Q})$. Finally, if $K$ is characteristic $p$, then it is an element of $\wh{\Pic}(\mathcal{X}/\bb{F}_p)$. Any two models are dominated by a third and we identify adelic line bundles if they agree on a dominating model. We say that $\ovl{L}$ is strongly nef, nef or integrable if $\ovl{\mathcal{L}}$ is on $\mathcal{X}$. 
\par 
Given an adelic line bundle $(\mathcal{X} \to \mathcal{Y}, \mathcal{L})$, we may pass to the analytification and obtain a metrized line bundle $\mathcal{L}^{\an}$ on $\mathcal{X}^{\an} \to \mathcal{Y}^{\an}$. Here, we take the Berkovich analytification over $(\bb{Z}, | \cdot |_{\infty})$ \cite[Section 3.1]{yuan2021adelic}. The algebraic morphism $\iota: \Spec K \to \mathcal{Y}$ gives us $(\Spec K)^{\an} \to \mathcal{Y}^{\an}$. We now view $K$ as an adelic curve $(K,(\Omega, \mathcal{A},\nu),\phi)$. Pulling back, we obtain for each $\omega \in \Omega$ a metrized line bundle $| \cdot |_{\omega}$ on the line bundle $L = \iota^* \mathcal{L}$ of $X_{\omega}^{\an}$ over $K_{\omega}$. As remarked by Hultberg in the discussion above \cite[Remark 2.2.2]{Hul26}, these metrics are compatible and gives rise to a Chen--Moriwaki adelic line bundle on $X/K$. We will sketch a proof below,

\begin{proposition} \label{YuanZhangChenMoriwai1}
The data of the metrics $\{| \cdot |_{\omega}\}_{\omega \in \Omega}$ gives $L$ the structure of a Chen--Moriwaki adelic line bundle on $X/K$.
\end{proposition}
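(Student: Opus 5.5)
The plan is to check the two defining conditions of a Chen--Moriwaki adelic line bundle, domination and measurability, for the family $\{|\cdot|_\omega\}_{\omega\in\Omega}$. We first reduce to model adelic line bundles and then pass to limits. Write $\ovl{\mathcal{L}}$ as a limit of model adelic line bundles $(\mathcal{X}_n,\ovl{\mathcal{L}}_n)$ on projective compactifications of $\mathcal{X}$ over $k$. By the definition of the boundary topology, there is a boundary divisor $\ovl{\mathcal{E}_0}$ with Green function $g_0$ and constants $c_n\to 0$ such that
\[
\bigl|\log|\cdot|_{\mathcal{L}} - \log|\cdot|_{\mathcal{L}_n}\bigr| \le c_n |g_0|
\]
on $\mathcal{X}^{\an}$. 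Since $\Spec K$ maps to the generic point of $\mathcal{Y}$, the image of $X^{\an}_\omega$ lies in $\mathcal{X}^{\an}$, away from the boundary. Pulling back along $\iota$, the function $\omega\mapsto \sup_{X^{\an}_\omega}|g_0|$ is controlled by the heights of finitely many elements of $K$, namely local equations of the boundary divisor restricted to $X$. Because $\omega\mapsto \log|a|_\omega$ is integrable for each $a\in K^\times$, this supremum should be $\nu$-integrable, after shrinking $\mathcal{Y}$ so that $X\to\Spec K$ spreads out over it.

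\emph{Model case.} Here $\mathcal{L}_n$ lives on a projective $\mathcal{X}_n\to\mathcal{Y}_n$ extending $L^{\otimes e}$. Choose a finite open affine cover of $\mathcal{X}_n$ trivializing $\mathcal{L}_n$, and an embedding into $\bb{P}^N$ given by a very ample power. After twisting by an ample model, the metric at $\omega$ is then compared with the Fubini--Study-type metric attached to a basis of sections: the pure model metric at non-archimedean $\omega$, or the hermitian metric at archimedean $\omega$. The two differ by $\log\max_j|f_j|_\omega$ for finitely many $f_j\in K^\times$, namely the coefficients of the transition functions pulled back to $K$. This gives a bound by an integrable function of $\omega$, which is the dominancy condition \cite[Definition 6.1.9]{CM20}. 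At all but a "small" set of $\omega$ (those where these finitely many elements are units) the metric coincides with the standard model metric, exactly as in the classical number-field picture.

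\emph{Measurability.} For each closed point $x\in X$ with residue field $K'$ and each section, $\omega\mapsto\log|s(x)|_\omega$ is built from $\log|a|_\omega$ for $a\in K'$ via max/min and the continuous (archimedean) hermitian metric. These are measurable functions on $\Omega_{K'}$, so the measurability criterion of \cite[Definition 6.1.27]{CM20} holds because $K$ is countable. Passing to the limit $n\to\infty$, the distance between $|\cdot|_\omega$ and $|\cdot|_{\omega,n}$ is at most $c_n\sup|g_0|_\omega$, which tends to zero in $L^1(\nu)$. Dominancy is preserved under such perturbations by an integrable function, and pointwise limits of measurable functions are measurable. Hence the limit family is again dominated and measurable.

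The main obstacle I expect is the uniform-in-$\omega$ comparison in the model case, especially at archimedean places. There one must check that the pullback of a continuous hermitian metric on $\mathcal{X}_n(\bb{C})$ along $\omega$-points differs from a Fubini--Study metric by a quantity bounded in terms of $\log\max|f_j|_\omega$. The approach I would try first is compactness of $\mathcal{X}_n(\bb{C})$: the ratio of the two metrics is bounded by a constant there, and the GVF archimedean error $e$ absorbs the resulting constant after integration against $\nu(\Omega_\infty)<\infty$.
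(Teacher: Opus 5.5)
Your overall route matches the paper's. You treat model adelic line bundles first; you do this by hand via Fubini--Study comparisons, whereas the paper cites that lattice adelic line bundles are CM adelic line bundles \cite{DHS24}, plus uniform approximation by such bundles at archimedean places. You then pass to the limit in the boundary topology: dominancy survives because the error is $\eps_n$ times an integrable function, and measurability survives pointwise limits \cite[Proposition 6.1.29]{CM20}.

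The step that does not work as written is the integrability of $\omega\mapsto\sup_{X^{\an}_\omega}|g_0|$. For an arbitrary boundary divisor $\ovl{\mathcal{E}_0}$ of $\mathcal{X}$, the local equations of $\mathcal{E}_0$ restricted to $X$ are not elements of $K$. They are rational functions on $X$, invertible on open pieces of $X$; think of one component of a reducible boundary fibre, cut out near a node by $x$ where $xy=t$. So $g_0$ is a genuinely non-constant function on each fibre $X^{\an}_\omega$. Nothing in your sketch bounds its supremum, uniformly in $\omega$, by an integrable function, and your ``should be'' is exactly where the argument is missing.

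The missing idea, and the paper's route, is to make the boundary divisor vertical:
\begin{itemize}
\item Choose a compactification $\pi:\ovl{\mathcal{X}}\to\ovl{\mathcal{Y}}$ with $\ovl{\mathcal{X}}\times_{\ovl{\mathcal{Y}}}\mathcal{Y}=\mathcal{X}$ and $D=\ovl{\mathcal{Y}}\setminus\mathcal{Y}$ Cartier.
\item Give $O(D)$ a model metric and use $\pi^*\ovl{O(D)}$ as the boundary divisor. This is allowed because the boundary topology does not depend on the choice; equivalently, $\ovl{\mathcal{E}_0}\le c\,\pi^*\ovl{O(D)}$ for some $c>0$.
\item Its Green function is $h\circ\pi$, hence constant on $X^{\an}_\omega$. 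The constant is $-\log\|\iota^*s_D\|_\omega$, where $s_D$ is the canonical section and $\iota:\Spec K\to\mathcal{Y}$.
\item This is integrable in $\omega$ because $\iota^*\ovl{O(D)}$ is itself a CM adelic line bundle on $K$, which is just your model case applied to $\Spec K\to\mathcal{Y}$.
\end{itemize}
With this in place your limit argument goes through.

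A smaller correction concerns archimedean places: do not rely on $\nu(\Omega_\infty)<\infty$, which can fail for a general adelic curve. Writing $|\cdot|_\omega=|\cdot|^{\kappa(\omega)}$, your compactness constant $C$ enters as $\kappa(\omega)\log C$. The function $\kappa$ is integrable on $\Omega_\infty$ because $\omega\mapsto\log|2|_\omega$ is.
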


\begin{proof}
First by \cite[Remark 3.9]{DHS24}, we know that if $\ovl{L}$ is a lattice adelic line bundle then its analytification is a CM adelic line bundle; see \cite[Definition 3.8]{DHS24} for the definition of a lattice adelic line bundle. Now, suppose that $\ovl{L}$ on $X/K$ is induced by an adelic line bundle on the model $\mathcal{X} \to \mathcal{Y}$. If $\ovl{L}$ comes from a model adelic line bundle $\ovl{\mathcal{L}}$ of a compactification $\ovl{\mathcal{X}}$ of $\mathcal{X}$, it is easy to verify that $\ovl{L}$ is a lattice adelic line bundle in the geometric case, i.e. when $K$ is nonarchimedean. For $k = \bb{Z}$, because of the archimedean places, we can only approximate $\ovl{L}$ by lattice adelic line bundles. However, one can show that $\ovl{L}$ can be uniformly approximated by lattice adelic line bundles (for instance, see the proof of \cite[Proposition 3.1.10]{sza23}). We hence obtain that model adelic line bundles are CM adelic line bundles.
\par In general, suppose we have an adelic line bundle $\ovl{\mathcal{L}}$ on $\pi: \mathcal{X} \to \mathcal{Y}.$ Fix a projective compactification $\pi: \ovl{\mathcal{X}} \to \ovl{\mathcal{Y}}$ so that $\ovl{\mathcal{Y}} \setminus \mathcal{Y} = D$ is Cartier. We then endow $O(D)$ with a model adelic metric $\ovl{O(D)}$, with archimedean metrics chosen (if necessary) so that pulling back to $\Spec K$ gives us a lattice adelic line bundle. Then $\ovl{N} = \pi^* \ovl{O(D)}$ is an adelic line bundle and has a Green function $g$, with logarithmic poles on $\pi^*D$, satisfying $g(x) = h(\pi(x))$, where $h$ is the Green function for $\ovl{O(D)}.$
\par Now let $\{\ol{\mathcal{L}_i}\}_{i \geq 1}$ be our sequence of model adelic line bundles approximating $\ovl{\mathcal{L}}$. For $\omega \in \Omega$, let $\omega_y \in \mathcal{Y}^{\an}$ be its image coming from $\Spec K \to \mathcal{Y}$. Then by the definition of the boundary topology, we have a sequence of positive constants $\eps_n \to 0 $ so that for $\omega \in \Omega$, we have
$$d_{\omega}( \ovl{\mathcal{L}}_n, \ovl{\mathcal{L}}) \leq \eps_n \sup_{x \in X_{\omega_y}^{\an}} g(x) = \eps_n h(\omega_y).$$
As $\ovl{O(D)}$ is a lattice adelic line bundle, it an integrable CM adelic line bundle on $K$ itself by \cite[Lemma 3.7 and Remark 3.9]{DHS24}. We hence obtain that the metrics of $\ovl{\mathcal{L}}$ is dominated, since the metrics for $\ol{\mcL_n}$ are. Measurability follows from the fact that pointwise limit of measurable family of metrics are still measurable by \cite[Proposition 6.1.29]{CM20}, and hence $\ovl{\mathcal{L}}$ induces a CM adelic line bundle as desired.     
\end{proof}

If $\ovl{L}$ is our adelic line bundle on $X/K$ where $K$ is a GVF, we will denote the induced CM adelic line bundle by $\ovl{L}^{\CM}$. If $\ovl{L}$ is nef, then $\ovl{L}^{\CM}$ has semipositive metrics as a CM adelic line bundle, i.e. $c_1(\ovl{L}_{\omega}^{\CM}) \geq 0$ for all $\omega \in \Omega$ (for instance, see \cite[Remark 2.3.18]{CM20} for model adelic line bundles, and \cite[Proposition 2.3.3]{CM20} to obtain the general case). Hence if $\ovl{L}$ is integrable, then $\ovl{L}^{\CM}$ is also integrable in the sense of Chen--Moriwaki.

Let $\dim X = n$. Given $n+1$ adelic line bundles $\ovl{L}_1,\ldots,\ovl{L}_{n+1}$ on $X/K$, we now have two viable ways to compute a global intersection number. The first is to view them as Yuan--Zhang adelic line bundles (which we may do after passing to a finitely generated subfield of $K$ over which these bundles are defined), take their Deligne pairing and then evaluate the height functional coming from the GVF structure on the Deligne pairing. On the other hand, we may consider them as CM adelic line bundles $\ovl{L}_1^{\CM},\ldots, \ovl{L}_{n+1}^{\CM}$ and then take their global intersection number as defined by Chen--Moriwaki. It turns out that these both agree.  
\begin{lemma}[Proposition 2.5.5 from \cite{Hul26}]\label{lem: comparing intersections for YZ and CM}
Let $X$ be a projective variety over $K$ of dimension $n,$ and let $\ol{L}_1, \ol{L}_2, \ldots, \ol{L}_{n+1}$ be integrable Yuan-Zhang adelic line bundles. Then, their intersection number as defined in \cite[Definition 2.2.1]{Hul26} equals their intersection number as Chen-Moriwaki adelic line bundles.  
\end{lemma}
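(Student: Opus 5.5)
The plan is to reduce to the case of model adelic line bundles and then pass to the limit on both sides. Both intersection numbers are multilinear and symmetric in $(\ovl{L}_1,\ldots,\ovl{L}_{n+1})$. Every integrable bundle is a difference of strongly nef ones. It therefore suffices to treat strongly nef $\ovl{L}_i$, and by Proposition \ref{YuanZhangChenMoriwai1} each $\ovl{L}_i^{\CM}$ is then a semipositive, hence integrable, CM adelic line bundle.

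\textbf{Model case.} Suppose first that each $\ovl{L}_i$ comes from a model adelic line bundle on a common projective compactification $\ovl{\mathcal{X}} \to \ovl{\mathcal{Y}}$ of a model $\mathcal{X} \to \mathcal{Y}$ of $X$. The Deligne pairing $\langle \ovl{\mathcal{L}}_1,\ldots,\ovl{\mathcal{L}}_{n+1}\rangle$ is then a model adelic line bundle on $\ovl{\mathcal{Y}}$. Its value under the positive functional of Lemma \ref{lem: GVF structure given by map from adelic line bundles} is computed by choosing sections $s_i$ meeting properly on the generic fibre. The Deligne pairing has a rational section $\langle s_1,\ldots,s_{n+1}\rangle$, and its metric at each $\omega$ is given by the usual induction formula: a local intersection $(\ovl{D}_1\cdots\ovl{D}_{n+1})_\omega$ on $X_\omega^{\an}$, i.e. $-\log$ of a Green function evaluated on the intersection cycle of $n$ divisors plus $\int g_{n+1}\, c_1(\ovl{L}_1)\cdots c_1(\ovl{L}_n)$. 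This is exactly Chen--Moriwaki's local intersection number \cite[Definition 3.5.1]{CM21}. Integrating over $\Omega$, using the local decomposition of the GVF height (countability, \cite[Corollary 7.11]{GVFfoundations}), gives the Chen--Moriwaki global number. To make this rigorous, I would induct on $n$ via the restriction formula, which both theories satisfy. The case $n=0$ is the identity $\widehat{\deg} = \int -\log\|s\|_\omega\,d\nu$, and that identity is the definition of the functional on $\wh{\Pic}(K/k)$ through lattice divisors.

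\textbf{Limit.} For general strongly nef $\ovl{L}_i$, choose nef model approximations $\ovl{\mathcal{L}}_{i,m}$ converging in the boundary topology, with errors $\eps_m \ovl{N}$ where $\ovl{N}=\pi^*\ovl{O(D)}$ as in the proof of Proposition \ref{YuanZhangChenMoriwai1}. On the Yuan--Zhang side, the Deligne pairing of the limit is by definition the limit of the pairings, and the functional is continuous by the Cauchy argument in Lemma \ref{lem: GVF structure given by map from adelic line bundles}. On the CM side, local intersection numbers are continuous under uniform convergence of metrics, with error bounded by $\eps_m\, h(\omega_y)\cdot\deg$. Here $\omega\mapsto h(\omega_y)$ is $\nu$-integrable because $\ovl{O(D)}$ pulls back to an integrable CM bundle on $K$. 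Dominated convergence then gives convergence of the global integrals, and the two limits agree.

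The main obstacle is the model case: checking that the Deligne pairing metric at each $\omega$ coincides with the Chen--Moriwaki local intersection number. This is subtle in the archimedean case $k=\bb{Z}$, where model bundles are only uniformly approximable by lattice bundles. I would first prove the model case for lattice bundles, where everything reduces to \cite{DHS24}, and then approximate uniformly, exactly as in the proof of Proposition \ref{YuanZhangChenMoriwai1}.
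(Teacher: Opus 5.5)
The paper gives no argument for this lemma; it simply quotes \cite[Proposition 2.5.5]{Hul26}, so your proposal has to stand on its own. Its soft parts are fine. The reduction by multilinearity to strongly nef bundles works, with one adjustment: under the definition of CM-integrability recalled in Section 2, you need to write $\ovl{L}=(\ovl{L}+\ovl{H})-\ovl{H}$ with $\ovl{H}$ ample and semipositive, since $L$ is only nef. The limit step also works. The discrepancy $\eps_m h(\omega_y)$ is constant on each fibre $X^{\an}_\omega$. Semipositivity of all the metrics makes each CM local intersection number move by at most a fixed multiple of $\eps_m h(\omega_y)$. Finally, $\int_\Omega h(\omega_y)\,d\nu(\omega)$ is the height of the point $\Spec K\to\mathcal{Y}$ with respect to $\ovl{O(D)}$, hence finite.

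The gap is the model case. It is the whole content of the lemma, and you assert it rather than prove it. What is needed is the pointwise identity
\[
-\log\|\langle s_1,\ldots,s_{n+1}\rangle\|_{\omega} = \bigl(\wh{\divv}(s_1)\cdots\wh{\divv}(s_{n+1})\bigr)_{\omega}
\]
for $\nu$-almost every $\omega$. Here the left side is the model metric of the Deligne pairing on $\ovl{\mathcal{Y}}$, evaluated at the image $\omega_y$ of $\omega$.

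You say that both theories satisfy the restriction formula, but on the Yuan--Zhang side that is exactly the point to be proved. The isomorphism $\langle L_1,\ldots,L_n,\mathcal{O}(Z)\rangle\simeq\langle L_1|_Z,\ldots,L_n|_Z\rangle_{Z/\ovl{\mathcal{Y}}}$ for $Z=\divv(s_{n+1})$ requires $Z$ to be flat over the base. Nothing guarantees this near the centre of $\omega$ in $\ovl{\mathcal{Y}}$, which may even lie on the boundary. In general $\divv(s_{n+1})$ has components lying over proper closed subsets of $\ovl{\mathcal{Y}}$, and these are precisely what the term $\int\log\|s_{n+1}\|_\omega\,c_1(\ovl{L}_1)_\omega\cdots c_1(\ovl{L}_n)_\omega$ has to account for. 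Moreover, $\omega|_{K_0}$ is typically a non-discrete rank-one valuation of the function field $K_0$ of $\mathcal{Y}$, so the classical DVR description of the Deligne pairing does not apply.

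The missing ingredient is a genuinely local theorem. After base change of $\ovl{\mathcal{X}}$ to the valuation ring of the completion of $(K_0,\omega)$, closures of generic-fibre cycles are automatically flat. Over that ring you must show that the valuation of $\langle s_1,\ldots,s_{n+1}\rangle$ equals the local height of $X$ for the model metrics, in the sense of Gubler's local intersection theory over rank-one valuation rings. Combined with invariance of local intersection numbers under the extension to $K_\omega$, and with Zhang's induction formula at archimedean $\omega$, this would identify the two sides.

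Your fallback, that the lattice case ``reduces to'' \cite{DHS24}, is not substantiated. As used in this paper, \cite{DHS24} provides only two things: lattice bundles are CM adelic line bundles, and heights of points are continuous on $X_{\GVF}$. Neither touches $(n+1)$-fold intersection numbers; together they amount to your case $n=0$.
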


\subsection{Equidistribution of Small Points}
Let $K$ be a countable GVF, say algebraically closed. Let $X$ be a projective variety over $K$ and first view $K$ as an adelic curve. Let $\ovl{L}$ be a semipositive adelic line bundle such that $L$ is geometrically semiample and big. Then Chen--Moriwaki in \cite[\S 8.11]{chen2024positivity} establish an equidistribution theorem for small points on $X(\ovl{K})$. 
\par Equidistribution of small points goes back to Szpiro--Ullmo--Zhang, where in \cite{SUZ97} they proved it for abelian varieties and archimedean places. For non-archimedean places, a candidate probability measure that lives on the Berkovich analytification was proposed by Chambert-Loir in \cite{CL06}. Gubler extended Szpiro--Ullmo--Zhang's result to a tropical equidistribution theorem over non-archimedean places in \cite{gubler2007bogomolov, Gub08b}. Yuan in \cite{Yua08} subsequently generalized the previous results to the setting of big line bundles that holds for both archimedean and non-archimedean places.
\par 
To carry out the Szpiro--Ullmo--Zhang strategy, we want to show that if we have another semipositive adelic line bundle $\ovl{M}$, which is geometrically semiample and big, such that
$$h_{\ovl{L}}(X) = h_{\ovl{M}}(X) = 0$$
and both $\ovl{L},\ovl{M}$ are nef, then we can obtain an equality of measures in an appropriate setting.
\par 
We first need the following proposition about existence of small points over a suitable GVF extension of $K$. In the canonical height case, one need not take a GVF extension at all. 

\begin{proposition} \label{SmallPointsExistence1}
Let $K$ be a countable GVF, $X/K$ a projective variety and $\ovl{L}$ a nef semipositive adelic line bundle which is geometrically big. Assume that $h_{\ovl{L}}(X) = 0$. Fix an nef semipositive adelic line bundle $\ovl{M}$ which is geometrically ample. Then there exists a countable GVF extension $K'/K$ such that for each $n \geq 1$, if $\ovl{L}_n = \ovl{L} + \frac{1}{n} \ovl{M}$ and $\ovl{L}_0 = \ovl{L}$, then $X(K')$ has a generic sequence of small points for $\ovl{L}_n$.     
\end{proposition}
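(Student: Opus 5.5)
The plan is to construct the small points by an ultraproduct of countable approximating extensions, then cut back down to a countable GVF by a L\"owenheim--Skolem type argument.

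\textbf{Step 1: small points with bounded error.} Fix $n \geq 0$. By the arithmetic Hilbert--Samuel formula over countable adelic curves \cite[Section 5.5]{chen2024positivity} and the comparison Lemma \ref{lem: comparing intersections for YZ and CM}, the normalized height $h_{\ovl{L}_n}(X)$ tends to $h_{\ovl{L}}(X)=0$ as $n\to\infty$. Indeed, it is a polynomial in $1/n$ of intersection numbers divided by $\deg_{L_n}(X) \geq \deg_L(X) > 0$, and $L$ is big. The height of $X$ controls the essential minimum from above (Zhang's successive minima inequality over adelic curves, available from the Chen--Moriwaki positivity theory). So for each $n$ and each $\eps>0$, the set
$$\{x \in X(\ovl{K}) : h_{\ovl{L}_n}(x) \le \eta_n + \eps\}$$
is Zariski dense, where $\eta_n \to 0$ is an upper bound for the essential minimum. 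We do not need exact height zero: it suffices that for every proper closed $Z \subsetneq X$ and every $\eps>0$ there is a point outside $Z$ with $h_{\ovl{L}_m}(x) \le \eta_m+\eps$ for all $m \le N$ simultaneously. This is arranged by applying the density statement to $\ovl{L}_N$ and using that the $h_{\ovl{L}_m}$ are monotone in $m$ up to the nonnegative term $\frac{1}{m}h_{\ovl{M}}$, where $h_{\ovl{M}}$ is bounded below since $\ovl{M}$ is nef.

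\textbf{Step 2: ultraproduct.} Enumerate the countably many proper closed subvarieties $Z_1, Z_2, \ldots$ of $X$ defined over $K$; these suffice, since $K$ is algebraically closed after passing to $\ovl{K}$ with the symmetric extension. For each $j$ pick $x_j \in X(\ovl{K})$ with $x_j \notin Z_1\cup\cdots\cup Z_j$ and $h_{\ovl{L}_m}(x_j) \le \eta_m + 1/j$ for $m\le j$. Take $K_j = \ovl{K}$ for all $j$ and a non-principal ultrafilter $\omega$. Then $K_\omega$ is a GVF containing $\ovl{K}$ diagonally, and by Proposition \ref{GVFConstruction1} the $x_j$ induce a point $x_\omega \in X(K_\omega)$; the heights are uniformly bounded by the choice of embedding via $\ovl{M}$. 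Heights of adelic line bundles are computed by the GVF functional of Lemma \ref{lem: GVF structure given by map from adelic line bundles} on Deligne pairings, which commutes with $\lim_\omega$. Hence $h_{\ovl{L}_m}(x_\omega) = \lim_\omega h_{\ovl{L}_m}(x_j) \le \eta_m$.

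\textbf{Step 3: from one point to a generic sequence.} The single point $x_\omega$ is not what we want; we want a sequence $(y_i)$ in $X(K')$ that is generic over $K'$ with $h_{\ovl{L}_n}(y_i)\to$ the essential minimum. I would iterate Step 2 countably many times, each time over the previously constructed countable field, producing $y_i$ avoiding all proper subvarieties defined over the field built so far. Then I would take the union of the chain. At each stage, replace $K_\omega$ by a countable sub-GVF containing the relevant coordinates, which is possible by downward L\"owenheim--Skolem for the GVF theory \cite{GVFfoundations}. Genericity with respect to subvarieties defined over the final field $K'$ requires a diagonal bookkeeping argument, since each such subvariety is defined over some stage.

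\textbf{Main obstacle.} The delicate points are the following.
\begin{enumerate}[(i)]
\item We need that heights of adelic line bundles on the $K$-variety $X$ are preserved under GVF extension, so that $h_{\ovl{L}}(X)=0$ still holds over $K'$ and the essential minimum does not drop badly. This holds because the models and the functional are compatible, by Lemma \ref{lem: comparing intersections for YZ and CM}.
\item We need to pin down that the limits of $h_{\ovl{L}_n}(y_i)$ equal the required value, namely the one making them ``small'' for the later equidistribution. For $n=0$ this is $0$ via the inequality $\lambda_{\mathrm{ess}} \le (\dim X+1)\,h(X)$ combined with nonnegativity from nefness.
\end{enumerate}

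I expect (ii), the successive-minima inequality over general adelic curves, to be the main technical input needing justification.
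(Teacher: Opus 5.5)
\textbf{Gap in Step 1 (simultaneous smallness).} Your architecture is the paper's. The successive-minima input you flag in (ii) is exactly what the paper imports as \cite[Proposition 2.6.8]{Hul26}. Your ultraproduct with $x_j\notin Z_1\cup\cdots\cup Z_j$ is an explicit version of the compactness step the paper outsources to \cite[Remark 2.6.3]{Hul26}. Your chain of countable sub-GVFs is cleaner than the paper's last step, which combines points from different extensions $K'_{1/n}$ and tacitly needs a common countable extension.

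However, the simultaneity claim in Step 1 is wrong as justified. For $m\le N$ you have $h_{\ovl L_m}=h_{\ovl L_N}+(\frac1m-\frac1N)h_{\ovl M}\ge h_{\ovl L_N}$. So smallness for $\ovl L_N$ controls $h_{\ovl L_m}$ only through an \emph{upper} bound on $h_{\ovl M}$, and the lower bound you get from nefness of $\ovl M$ is useless here. What is true, using $h_{\ovl L}\ge 0$, is $h_{\ovl M}\le N\,h_{\ovl L_N}$, hence $h_{\ovl L_m}\le \frac{N}{m}h_{\ovl L_N}$. This rescues your claim only for the crude choice $\eta_m=C/m$, with $C$ chosen so that the essential minimum satisfies $e_1(\ovl L_t)\le Ct$ on $(0,1]$; such a $C$ exists because $\ovl L^{d+1}=0$ makes $h_{\ovl L_t}(X)=O(t)$.

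\textbf{Consequence for $n\ge 1$, and the repair.} Your construction does not deliver what the paper's proof delivers for $n\ge1$. There, ``small for $\ovl L_n$'' means attaining the essential minimum of $\ovl L_n$ (the dense sets $T_{1/n}$). You only get $h_{\ovl L_n}(y_i)\le C/n$, and your item (ii) leaves $n\ge1$ open. No single sequence can do better in general: $t\mapsto e_1(\ovl L+t\ovl M)$ is concave, and a generic sequence attaining it at two values of $t$ forces it to be affine in between.

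The repair fits your framework. Drop simultaneity, and for each $n\ge1$ separately run Step 2 with $h_{\ovl L_n}(x_j)\le e_1(\ovl L_n)+1/j$, interleaving the countably many indices $(n,i)$ along your chain. Weil heights stay bounded because $h_{\ovl M}\le n\,h_{\ovl L_n}$. For $n=0$, choose the $x_j$ with $h_{\ovl L_j}(x_j)\le e_1(\ovl L_j)+1/j$. This gives $h_{\ovl L}(x_j)\le (C+1)/j$ and $h_{\ovl M}(x_j)\le C+1$. The second bound is needed to apply Proposition \ref{GVFConstruction1}, since $\ovl L$ itself is not ample. The resulting point $x_\omega$ even has $h_{\ovl L}(x_\omega)=0$ exactly. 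Only this $n=0$ case is used later in the paper, so the downstream results are unaffected.
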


\begin{proof}
We let $\ovl{M}$ be a nef semipositive adelic line bundle which is geometrically ample on $X$. We consider $\ovl{L}_t = \ovl{L} + t \ovl{M}$. Then for $t > 0,$ $\ovl{L}_t$ is geometrically ample and we may apply \cite[Proposition 2.6.8]{Hul26} so that the essential minimum of $\ovl{L}_t$ is $\leq h_{\ovl{L}_t}(X) = O(t)$. Furthermore, $h_{\ovl{L}_t}(x) \geq h_{\ovl{L}}(x)$. As observed by \cite[Remark 2.6.3]{Hul26}, we can find a GVF extension $K_t'$ of $K$ such that there is a Zariski dense set $T_t$ of points of $X(K_t')$ attaining the essential minimum for $\ovl{L}_t$. We may replace $T_t$ with a countable subset while preserving Zariski density as $X$ is of finite type over $K$. 
\par 
We now consider $T' = \bigcup_{n \in \bb{N}} T_{1/n}$ and we let $K'$ be a countable extension of $K$ so that it contains all the coordinates of $T'$. This gives us our generic sequence for $\ovl{L}_n$ for $n \geq 1$. Then enumerating $T'$, we obtain a generic sequence $(x_n)$ such that $h_{\ovl{L}}(x_n) \to 0$ as desired, giving us a sequence for $\ovl{L}_0$.
\end{proof}

We now have the following theorem. Recall that a line bundle $L$ on a projective variety $X/K$ is said to be semiample if there exists $m \geq 1$ such that $L^{\otimes m}$ is globally generated (see \cite[Section 2.I.B]{Laz04}). It is said to be big if $\dim H^0(X,L^m) \geq c m^{\dim X}$, for some $c > 0$ and for all sufficiently large $m$ (see \cite[Section 2.2]{Laz04}). Semiampleness is preserved under pullback by any morphism, while bigness is preserved under pullback by surjective generically finite morphisms (see \cite[Lemma 2.1.13]{Laz04}, for instance).

\begin{theorem} \label{EqualityMeasures1}
Let $K$ be a countable GVF and $X$ a projective variety over $K$. View $K$ as an adelic curve and let $\ovl{L},\ovl{M}$ be two nef semipositive adelic line bundles which are geometrically semiample and big. Assume that there exists $ a > 0$ such that $a \ovl{L} - \ovl{M}$ is nef, and that
$$h_{\ovl{L}}(X) = h_{\ovl{M}}(X) = 0.$$
Then there exists a countable GVF extension $K'/K$, such that after viewing $K' = (K', (\Omega,\mathcal{A},\nu),\phi)$ as an adelic curve, we have 
$$\frac{c_1(\ovl{L})_{v}^{\dim X}}{\deg_L(X)} = \frac{c_1(\ovl{M})_{v}^{\dim X}}{\deg_M(X)}$$
for all $v \in \Omega$ outside a set of measure zero.
\end{theorem}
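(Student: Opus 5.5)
We follow the Szpiro--Ullmo--Zhang strategy, run through the equidistribution theorem of Chen--Moriwaki \cite[\S 8.11]{chen2024positivity} over the countable GVF extension supplied by Proposition \ref{SmallPointsExistence1}.

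\emph{Step 1: a common generic sequence.} Fix a nef semipositive, geometrically ample $\ovl{A}$ on $X$. Apply Proposition \ref{SmallPointsExistence1} to $\ovl{L}$ and to $\ovl{M}$ (the second applied over the first extension). This gives a countable GVF extension $K'/K$ over which there is a single generic sequence $(x_n)$ in $X(\ovl{K'})$ with $h_{\ovl{L}}(x_n) \to 0$. We want the same sequence to be small for $\ovl{M}$ as well, and the hypothesis that $a\ovl{L} - \ovl{M}$ is nef supplies this. Nefness gives $h_{a\ovl{L}-\ovl{M}}(y) \ge 0$ for all points $y$, hence $h_{\ovl{M}}(x_n) \le a\, h_{\ovl{L}}(x_n) \to 0$. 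Since $\ovl{M}$ is nef, $h_{\ovl{M}} \ge 0$, so $h_{\ovl{M}}(x_n) \to 0 = h_{\ovl{M}}(X)$. Hence $(x_n)$ is a generic sequence that is small for both $\ovl{L}$ and $\ovl{M}$. Heights of points and of $X$ are unchanged by the extension $K'/K$, since the adelic structure on $K'$ restricts to that of $K$ on $K$-defined data.

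\emph{Step 2: equidistribution.} Apply the Chen--Moriwaki equidistribution theorem to $\ovl{L}$, which is semipositive, geometrically semiample and big, with $h_{\ovl{L}}(X)=0$. It says that for $\nu$-almost every $v \in \Omega$, the Galois orbit measures of $x_n$ on $X_v^{\an}$ converge weakly to $c_1(\ovl{L})_v^{\dim X}/\deg_L(X)$. The same theorem applied to $\ovl{M}$ along the same sequence gives convergence to $c_1(\ovl{M})_v^{\dim X}/\deg_M(X)$ for almost every $v$. Weak limits are unique, so the two measures agree off the union of the two exceptional null sets.

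\emph{Expected obstacle.} The delicate point is that the equidistribution theorem in \cite{chen2024positivity} is stated for semiample and big (not ample) line bundles and in an integrated or test-function form. Its conclusion may be a statement like $\int_\Omega \int f_v \, d\delta_{x_n} \, d\nu \to \int_\Omega \int f_v \, c_1(\ovl{L})_v^{\dim X} \, d\nu$ for adelic test functions $f$, rather than a pointwise-in-$v$ convergence. If so, I would fix a countable family of test functions. These would be model functions coming from models over finitely generated subfields, available because $K'$ is countable, and should separate measures on each $X_v^{\an}$. Applying the theorem to small perturbations $\ovl{L}(\epsilon f)$ in the style of Yuan's variational argument and to measurable cutoffs $\mathbf{1}_E(v) f$ for $E \in \mathcal{A}$ then forces the two fiberwise integrals to agree for almost every $v$. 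Taking the countable intersection of full-measure sets gives the equality of measures almost everywhere.
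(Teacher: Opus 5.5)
Your proposal is correct and is essentially the paper's proof. One application of Proposition \ref{SmallPointsExistence1} gives a generic $\ovl{L}$-small sequence, nefness of $a\ovl{L}-\ovl{M}$ makes that sequence $\ovl{M}$-small as well, and Chen--Moriwaki equidistribution then identifies both normalized Monge--Amp\`ere measures as the same fiberwise limit for almost every $v$. The fiberwise almost-everywhere convergence you worried about is exactly what the paper takes from \cite[Theorem F and Proposition 7.7.1]{chen2024positivity}, so you need neither the test-function fallback nor the second application of Proposition \ref{SmallPointsExistence1} to $\ovl{M}$.
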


\begin{proof}
By Proposition \ref{SmallPointsExistence1}, we find a countable extension $K'/K$, algebraically closed, such that $X(K')$ has a generic sequence $(x_n)$ of small points for $\ovl{L}$. Since $a \ovl{L} - \ovl{M}$ is nef, we know that $a h_{\ovl{L}} \geq h_{\ovl{M}}.$ Hence the same sequence is also small for $\ovl{M}$. 
\par 
Viewing $x_n$ as a sequence of closed subschemes of $X/K'$, the statement of \cite[Theorem F, \S 1.2.4]{chen2024positivity} tells us that these points equidistribute to a global family of Borel measures given by the Monge-Amp\`ere measures (see \cite[Definition 7.5.3]{chen2024positivity}). However, this convergence of measures is over the global adelic space $X_{K'}^{\an}$ which is the disjoint union of $X_{v}^{\an}$ for $v \in \Omega$. To obtain the equidistribution on an individual fiber $X_{\omega}^{\an}$, we apply \cite[Proposition 7.7.1]{chen2024positivity}, which tells us that, for all places $v$ on our adelic curve $K'$ outside of a set of measure zero, the small height points $x_n$ equidistribute to the Monge-Amp\`ere measure $\frac{c_1(\ovl{L})_v^{\dim X}}{\deg_L(X)}$, where we normalize it so that it is a probability measure. As $(x_n)$ is a generic sequence of small points for $\ovl{M}$ too, we obtain the equality
$$\frac{c_1(\ovl{L})_v^{\dim X}}{\deg_L(X)} = \frac{c_1(\ovl{M})_v^{\dim X}}{\deg_M(X)}$$
for $v \in \Omega$ outside a set of measure zero, as desired.
\end{proof}

Theorem \ref{EqualityMeasures1} allows us to run the Szpiro--Ullmo--Zhang strategy over adelic curves. In the classical situation where $K$ is a number field or a global function field, the semiampleness assumption is not needed (see \cite{Yua08}). However, Chen--Moriwaki's equidistribution result requires this in order to control some slope estimates as their Chen--Moriwaki adelic line bundles are more general. In applications to Bogomolov-type problems, the semiampleness assumption is very mild as it is preserved under general pullbacks.

\subsection{GVF Analytification}
Let $K$ be a countable GVF and let $X$ be a finite type scheme over $K$. We describe a construction (following \cite[Section 2.2]{DHS24}) called the GVF analytification, which mimics the Berkovich analytification for the valued field case. We will denote this space by $X_{\GVF,K}$, or just $X_{\GVF}$. 
\par 
As a set, $X_{\GVF}$ consists of points $(\pi(x), h_x)$, where $\pi(x)$ is a point on the scheme $X$, and $h_x$ is a GVF structure on the residue field $k(\pi(x))$ that extends the GVF structure on $K$. Let $U$ be an open of $X$ containing $\pi(x)$. Given functions $f_1,\ldots,f_n \in O(U)$, we may consider passing $f_1,\ldots,f_n$ to elements of $k(\pi(x))$, and then evaluating our height function $h_x$ on the tuple $(f_1,\ldots,f_n)$, This defines for us a function 
$$(f_1,\ldots,f_n): U_{\GVF} \to \bb{R}_{\geq 0} \text{ given by } y \mapsto h_y(f_1,\ldots,f_n).$$
We endow $X_{\GVF}$ with the weakest topology such that the map $\pi: X_{\GVF} \to X$, given by taking $\pi(x)$, is continuous for the constructible topology on $X$, and for every Zariski open $U$ and tuple $(f_1,\ldots,f_n)$, the function $(f_1,\ldots,f_n)$ is continuous. This topology is Hausdorff and locally compact \cite[Remark 2.7]{DHS24}. 
\par Given an adelic line bundle $\ovl{L}$ on $X/K$, we may use it to define a function $h_{\ovl{L}}: X_{\GVF} \to \bb{R}$, given by evaluating $\ovl{L}$ on the GVF point $x \in X_{\GVF}$. The below proposition is stated in \cite[Section 2.5]{Hul26}. Since we cannot find a reference for this, we briefly sketch a proof.

\begin{proposition} \label{GVFAnalytification1}
Let $\ovl{L}$ be an adelic line bundle on $X/K$. Then the height functional 
$$h_{\ovl{L}} : X_{\GVF} \to \bb{R} \text{ given by } x \mapsto h_x( \ovl{L} \vert_{k(\pi(x))})$$
is continuous. 
\end{proposition}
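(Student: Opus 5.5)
The plan is to check continuity first for model adelic line bundles, where $h_{\ovl{L}}$ is a finite combination of the basic functions $(f_1,\ldots,f_n)$, and then pass to general $\ovl{L}$ using a uniform limit in the boundary topology. Since the topology on $X_{\GVF}$ makes $\pi$ continuous for the constructible topology, continuity is local on constructible pieces. So we may restrict to a Zariski open $U$ and a constructible subset of $X$, and compute heights at points lying over it.

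\emph{Model case.} Suppose $\ovl{L}$ comes from a model line bundle $\ovl{\mathcal{L}}$ on a projective model $\ovl{\mathcal{X}}\to\ovl{\mathcal{Y}}$, with hermitian metrics in the arithmetic case. Choose a finite cover of $\ovl{\mathcal{X}}$ by opens $\mathcal{U}_j$ on which $\mathcal{L}$ is generated by finitely many sections $s_{j,0},\ldots,s_{j,m}$. The model metric is then bounded, up to an explicitly continuous archimedean correction, by $\max_i |s_{j,i}/s|$ for a fixed nonvanishing rational section $s$. For a point $x$ whose closure meets $\mathcal{U}_j$, the height $h_x(\ovl{L}\vert_{k(\pi(x))})$ is the height of the tuple $(s_{j,0}/s,\ldots,s_{j,m}/s)$ evaluated in $k(\pi(x))$. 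This is the description of GVF heights through lattice divisors used in Lemma \ref{lem: GVF structure given by map from adelic line bundles}, following \cite[Theorem 7.7]{GVFfoundations}. Such a function is one of the generating continuous functions $(f_1,\ldots,f_n)$ on $U_{\GVF}$, possibly plus a bounded archimedean term. In the arithmetic case we write the hermitian metric as a uniform limit of lattice metrics, as in the proof of Proposition \ref{YuanZhangChenMoriwai1}. A uniform limit of continuous functions is continuous, and the sets $\pi^{-1}(C)$ for constructible $C$ are clopen, so gluing along the finite stratification by the charts $\mathcal{U}_j$ gives continuity of $h_{\ovl{L}}$.

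\emph{General case.} Let $\ovl{\mathcal{L}}_n\to\ovl{\mathcal{L}}$ be a Cauchy sequence of model bundles. By definition of the boundary topology there are $\eps_n\to 0$ with $|h_{\ovl{\mathcal{L}}_n}-h_{\ovl{\mathcal{L}}}|\le \eps_n\, h_{\ovl{\mathcal{E}}_0}$ on $X_{\GVF}$, where $\ovl{\mathcal{E}}_0$ is the fixed boundary divisor. Positivity of the height functional, again as in Lemma \ref{lem: GVF structure given by map from adelic line bundles}, turns this inequality of divisors into an inequality of heights. The function $h_{\ovl{\mathcal{E}}_0}$ comes from a model bundle pulled back from the base, so by the model case it is continuous and hence locally bounded. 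The convergence is therefore locally uniform, and $h_{\ovl{L}}$ is continuous.

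\emph{Main obstacle.} The hard step is the model case, namely identifying the height at an arbitrary GVF point of $X$ with a finite max-type expression in local functions. This identification must be uniform as the point moves, including for points $\pi(x)$ that are not closed, where the relevant chart depends on $\pi(x)$. The constructible stratification is what handles this. In the archimedean case one also has to control the error between a hermitian metric and a lattice metric uniformly on $X_{\GVF}$.
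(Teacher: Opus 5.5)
Your passage from model bundles to general ones is the paper's argument: bound $|h_{\ovl{\mathcal L}_n}-h_{\ovl{\mathcal L}}|$ by $\eps_n$ times the height of a fixed boundary bundle, then use local boundedness. With your choice of a bundle pulled back from $\ovl{\mathcal Y}$, that height is even constant on $X_{\GVF}$, since every GVF point restricts to the given structure on $K$.

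The gap is in the model case, which the paper obtains by citing \cite[Proposition 3.18]{DHS24} for lattice adelic line bundles. You claim that for every $x$ whose closure meets $\mathcal U_j$, $h_x(\ovl L|_{k(\pi(x))})$ equals the single basic function $h_x(s_{j,0}/s,\ldots,s_{j,m}/s)$. This is false, because global heights are not local on the model. The places of the GVF $k(\pi(x))$ have centres all over the closure $Z$ of $\pi(x)$ in $\ovl{\mathcal X}$, not only in $Z\cap\mathcal U_j$.

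For example, take $\mathcal O(1)$ on $\bb{P}^1$, the chart $\{X_0\neq0\}$, and the local generator $X_0$. The point $[1:a]$ with $a\in K$ has closure meeting the chart, but your formula returns $h_x(1)=0$ instead of $h([1:a])$. Even with global sections, equality requires them to generate $\mathcal L$ along all of $Z$. Otherwise the base locus contributes an effective lattice divisor and you only get an inequality, which is also all that ``the model metric is bounded by $\max_i|s_{j,i}/s|$'' gives you. Moreover, a general $\mathcal L$ need not have any global sections. A single fixed rational section $s$ also makes $s_{j,i}/s$ meaningless at points $\pi(x)$ in the support of $\divv(s)$.

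The missing idea is to use global generation on the projective model. Write $\mathcal L=\mathcal L_1\otimes\mathcal L_2^{-1}$ with $\mathcal L_1,\mathcal L_2$ globally generated on $\ovl{\mathcal X}$, with generating global sections $a_0,\ldots,a_M$ and $b_0,\ldots,b_N$. Their restrictions still generate on every closed subvariety $Z$. Hence for $\pi(x)$ in the Zariski open $U_{ik}=X\cap\{a_ib_k\neq0\}$ one has
\[
h_{\ovl L}(x)=h_x(a_0/a_i,\ldots,a_M/a_i)-h_x(b_0/b_k,\ldots,b_N/b_k).
\]
This is a difference of two of the defining continuous functions on the open set $\pi^{-1}(U_{ik})$, and these open sets cover $X_{\GVF}$. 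That is the lattice case you actually need, in substance \cite[Proposition 3.18]{DHS24}. In the arithmetic case you must then, as you and the paper both do, approximate the hermitian metrics uniformly by lattice metrics. This makes the archimedean error tend to zero uniformly on $X_{\GVF}$; a merely bounded correction term would not give continuity.
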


\begin{proof} 
First by \cite[Proposition 3.18]{DHS24}, we know that it is continuous if $\ovl{L}$ is a lattice adelic line bundle, see \cite[Definition 3.8]{DHS24} for the definition. Now, let $\ovl{L}$ on $X/K$ be induced by the model $\mathcal{X} \to \mathcal{Y}$. Then as in the proof of Proposition \ref{YuanZhangChenMoriwai1}, if $\ovl{L}$ comes from a model Yuan--Zhang adelic line bundle $\ovl{\mathcal{L}}$ of a compactification $\ovl{\mathcal{X}}$ of $\mathcal{X}$, then $\ovl{L}$ is a lattice adelic line bundle if $K$ is of positive characteristic and $\ovl{L}$ can be uniformly approximated by lattice adelic line bundles if $K$ is characteristic zero. We hence obtain continuity for model Yuan--Zhang adelic line bundles. 
\par 
In general, we fix a model adelic line bundle $\ovl{N}$ whose underlying divisor is $\ovl{\mathcal{X}} \setminus \mathcal{X}$ such that if $s$ is the section of $N$ representing $\ovl{\mathcal{X}} \setminus \mathcal{X}$, then $\wh{\divv}(s)$ is a strictly effective Cartier divisor. Then if $\{\ol{\mathcal{L}_i}\}_{i \geq 1}$ is our sequence of model adelic line bundles approximating $\ovl{\mathcal{L}}$, we have a sequence of positive constants $\eps_n \to 0 $ so that 
\begin{equation} \label{eq: GVFHeightComparison1}
\left| h_{\ovl{\mathcal{L}}}(x) - h_{\ovl{\mathcal{L}}_n}(x) \right| \leq \eps_n h_{\ovl{N}}(x).
\end{equation}
Indeed as we have 
$$-\eps_n \log |s(x) |_{\ovl{N}} \leq \log | \cdot |_{\ovl{\mathcal{L}}} - \log | \cdot |_{\ovl{\mathcal{L}}_n} \leq \eps_n \log |s(x)|_{\ovl{N}} \text{ for all } x \in X^{\an},$$
viewing $K$ as an adelic curve and using the formula for heights gives us (\ref{eq: GVFHeightComparison1}). 
\par 
As $h_{\ovl{N}}$ is already known to be continuous, the set $\{h_{\ovl{N}}(x) < M\}$ is open and we have uniform convergence on such an open set. As $M \to \infty$, this exhausts $X_{\GVF}$ and hence gives local uniform convergence. This gives us $h_{\ovl{\mathcal{L}}} = h_{\ovl{L}}$ is continuous as desired.
\end{proof} 
\par 
There is a natural analytification map $X(K) \to X_{\GVF}$. We now say that a GVF $K$ is existentially closed if for any such finite type scheme $X$, the analytification map $X(K) \to X_{\GVF}$ has dense image. By \cite[Theorem 2.5.3]{Hul26}, following \cite{sza23} and \cite{BH22}, the globally valued fields $\ovl{\bb{Q}}[1]$ and $\ovl{k(t)}[1]$ are existentially closed, where we use the standard global Weil heights on $\ovl{\bb{Q}}$ and $\ovl{k(t)}$ for our height functions.

\section{Non-Archimedean Canonical Measures for Valued Fields}\label{sec: non-archimedean canonical measures}

Our goal in this section is to show that the strategy of \cite{gubler2007bogomolov} and \cite{yamaki2013geometric} to prove the Bogomolov conjecture in the case where we have a place of bad reduction carries over from the discretely valued setting to the general (height one) valued setting. Ideally, we would like to apply these arguments to the limiting abelian variety $A_{\omega}/K'_{\omega}$ (after completing at a place $v$ of bad reduction) in order to conclude. However, $(K'_{\omega}, v)$ will no longer be a discretely valued field.
\par Let $A$ be an abelian variety over an algebraically closed nonarchimedean (non-trivally) valued field $K$ with valuation $v.$ As in \cite{yamaki2013geometric}, we let $b(A)$ be the dimension of the abelian part of the reduction of $A$ at $v.$ Given a subvariety $X \subset A,$ we can make sense of the abelian part of $X$ as follows. For a Mumford model $\mathcal{A}$ of $A$ over $K,$ we can write it as a quotient $p: \mathcal{E} \rightarrow \mathcal{A}$ of a semiabelian formal scheme, which itself has another quotient $q: \mathcal{E} \rightarrow \mathcal{B}$ to an abelian formal scheme. Let $\mathcal{X}$ be the Zariski closure of $X$ in $\mathcal{A},$ and $\mathcal{Y}$ a quasicompact open subscheme of $\mathcal{E}$ such that $p(\mathcal{Y}) = \mathcal{X}.$ Then, we let $b(X)$ be the dimension of $q(\mathcal{Y});$ this is well-defined by \cite[Lemma 4.2]{yamaki2013geometric}.
\par The statement that Yamaki proves that we wish to generalize is the following.  
\begin{thm}[Theorems 5.3, 5.4 from \cite{yamaki2013geometric}]\label{thm: yamaki result}
    Let $S$ be an irreducible normal projective variety over an algebraically closed field, and let $F$ be its field of fractions. Let $A$ be an abelian variety over $\ol{F},$ and endow $\ol{F}$ with the function field height as in \cite[\S 0.5]{yamaki2013geometric}. Let $X$ be a subvariety of $A,$ and $G_X \subset A$ the stabilizer of $X.$ Suppose that for some place $v$ of $\ol{F}$ arising from a codimension one point of the normalization in some finite extension of $F$, either:
    \begin{enumerate}[(i)]
        \item $\dim (X/G_X) > b((X/G_X)_v),$ or
        \item $\dim (X/G_X) \geq b((A/G_X)_v)$ and $X$ is not a special subvariety (see \cite[Definition 2.5]{yamaki2013geometric}).
    \end{enumerate}
    Then, $X$ does not have a Zariski dense set of small points.  
\end{thm}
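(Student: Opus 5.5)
This is Yamaki's theorem, and my plan follows his strategy: a Bogomolov-type argument by contradiction that compares two canonical measures on the Berkovich analytification at the place $v$ of bad reduction. Suppose $X$ has a Zariski dense set of small points. First I pass to $X/G_X \subseteq A/G_X$. The quotient map $A \to A/G_X$ is an isogeny composed with a projection and preserves smallness, so I may assume $G_X$ is finite; both hypotheses (i) and (ii) are phrased for the quotient, so they transfer. Next consider the difference morphism $\alpha: X^N \to A^{N-1}$, $(x_1,\ldots,x_N)\mapsto (x_2-x_1,\ldots,x_N-x_{N-1})$. Because $G_X$ is finite, $\alpha$ is generically finite onto its image for $N$ large. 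The small points of $X^N$ are Zariski dense, and so are their images under $\alpha$. Applying the equidistribution theorem at $v$ (Gubler's tropical or non-archimedean version, as in \cite{gubler2007bogomolov}) on $X^N$ to both the canonical line bundle $\ovl{L}^{\boxtimes N}$ and the pullback $\alpha^*\ovl{L}'$ gives equality of the normalized canonical measures $\mu_1 = c_1(\ovl{L}^{\boxtimes N})_v^{\dim X^N}/\deg$ and $\mu_2 = c_1(\alpha^*\ovl{L}')_v^{\dim X^N}/\deg$.

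The contradiction comes from the tropical description of canonical measures via Raynaud uniformization $A^{\an}_v = E^{\an}/M$. Here $E$ is an extension of the abelian variety $B_v$ (with $\dim B_v = b(A_v)$) by a torus. Gubler and Yamaki show that $\mu_1$ pushed to the skeleton $\mathbb{R}^{n}/\Lambda$ with $n = g - b(A_v)$ is supported on the tropicalization $\operatorname{trop}(X)$ of dimension $\dim X - b(X_v)$. Concretely, it is a sum of Lebesgue measures on the polytopes of maximal dimension together with measures coming from the abelian part. In case (i), $\dim X > b(X_v)$, so $\operatorname{trop}(X)$ has positive dimension. One then locates a point of the skeleton of $X^N$, for instance a generic point of a top-dimensional face of $\operatorname{trop}(X)^N$ near the diagonal, where $\mu_1$ has positive mass. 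At such a point the map $\alpha$ contracts a direction tropically (along the diagonal), so $\mu_2$ gives it zero mass. This contradicts $\mu_1=\mu_2$. Case (ii) takes more care. There $\operatorname{trop}(X)$ could be of dimension zero, and one uses $\dim X \ge b(A_v)$ together with non-speciality to show that the reduction of $X$ in the abelian part $B_v$ is not a translate of an abelian subvariety. Yamaki's argument then compares the abelian-part contributions of the two measures using the Bogomolov result over the residue field (a function field of lower transcendence degree) inductively, or a Zhang-type non-contraction on the reduction.

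The main obstacle is case (ii): showing that the reductions detect non-speciality and produce the required unequal mass, which is where Yamaki's induction on the reduction enters. I would follow \cite{yamaki2013geometric} directly there. Everything else reduces to the cited equidistribution and the tropical formula for canonical measures.
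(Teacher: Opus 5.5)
Your treatment of (i) follows the Gubler--Yamaki argument the paper relies on. The paper does not reprove this theorem; it cites \cite{yamaki2013geometric} and isolates the ingredients. Case (ii), however, has a genuine gap, and the mechanism you sketch for it runs in the wrong direction.

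The missing ingredient is the one the paper singles out for (ii): Gubler's appendix result (Corollary A.2 of \cite{yamaki2013geometric}, resting on Theorem A.5). It says the lower bound $\dim X-b(X_v)$ on the dimensions of the non-degenerate, mass-carrying polysimplices is \emph{attained}, i.e.\ there is a non-degenerate stratum $S$ of dimension exactly $b(X_v)$. With it the argument closes as follows.

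After reducing to $G_X$ finite and $\dim X>0$, you have $\dim X\ge b(A_v)\ge b(X_v)$. So either $\dim X>b(X_v)$ and (i) applies, or $\dim X=b(X_v)=b(A_v)$. In the latter case the polysimplex of $S$ is a point $u$ and $\dim S=b(A_v)$. Non-degeneracy then means $S$ maps generically finitely, hence surjectively, onto the abelian part $\widetilde{B}_v$. By functoriality of the Raynaud extension, the reduction of $\alpha$ on $S^N$ (and on any product of components over $u$) factors through the difference map $\widetilde{B}_v^{N}\to\widetilde{B}_v^{N-1}$, which has positive-dimensional fibres. Hence $\mu_2$ gives no mass to $\overline{\mathrm{val}}^{-1}(u,\dots,u)$. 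But $\mu_1$ is the $N$-fold product of the canonical measure $\mu_X$ of $X$, so it gives this set mass $\mu_X(\overline{\mathrm{val}}^{-1}(u))^N>0$.

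So the contradiction comes from a \emph{contraction} on the reduction, and it uses precisely that the abelian reduction of $X$ \emph{is} all of $\widetilde{B}_v$. Your proposed step of showing that this reduction is \emph{not} a translate of an abelian subvariety is false in exactly the case that matters. An induction through a Bogomolov statement over the residue field is also unavailable, since heights of points of $X$ over $\overline{F}$ carry no information about heights of their reductions.

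You also misplace non-speciality. For $G_X$ finite and $\dim X>0$ the argument above never uses it. It is needed only when $\dim(X/G_X)=0$, i.e.\ when $X$ is a coset of $G_X$ and $\alpha$ yields nothing. That case is missing from your proposal; it is Yamaki's Lemma 2.12 (a coset with a dense set of small points is special). Relatedly, your reduction to finite stabilizer does not transfer hypothesis (ii) automatically, because (ii) concerns speciality of $X$, not of $X/G_X$. This needs Yamaki's Proposition 2.11 ($X$ is special if and only if $X/G_X$ is), which is exactly what the paper re-checks when it extends Lemma 2.12 to GVFs.

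Finally, a smaller inaccuracy: $\dim X-b(X_v)$ is the minimal dimension of the non-degenerate polysimplices, not $\dim\operatorname{trop}(X)$, which can be strictly larger. This is harmless in (i), where what you actually need is that every mass-carrying polysimplex $\Delta$ has positive dimension, so that $\overline{\alpha}$ is non-injective on $\Delta^N$.
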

\begin{rmk}
    In the case that $b(A_v) = 0$ for some place $v$ (i.e.~$A$ has totally degenerate bad reduction at $v$), this result is due to \cite{gubler2007bogomolov}.
\end{rmk}
\begin{rmk}
    Note that if $X$ is an ample divisor in $A$, then its stabilizer $G_X$ is finite, so $\dim (X/G_X) = \dim X.$ Furthermore, $\dim X \geq b(A_v)$ if $v$ is a place of bad reduction, since $b(A_v) \leq \dim A-1 = \dim X.$
\end{rmk}
\par The main thing we need to check is that one can remove the assumption that $(K, v)$ is a discretely valued field through the proofs of \cite[Theorems 5.3, 5.4]{yamaki2013geometric}. We can do this because of the recent work of Gubler and Stadl\"oder, \cite{gubler2023monge}, which proves a lot of the results about the Chambert-Loir measures discussed in \cite{gubler2010non} but without the discrete valuation assumption.
\par One of the key structural results used by Yamaki for Theorem \ref{thm: yamaki result}(i) is \cite[Theorem 1.1]{gubler2010non}, which states that one can express the tropicalization of a closed subvariety $X$ of $A$ as a union of certain canonical simplices, and furthermore the Monge-Amp\`ere measure associated with an ample line bundle $L$ can be expressed as sums of the Lebesgue measure supported on these simplices. The result cited only applies to discretely valued fields; however, we may replace this result with \cite[Corollary 1.4]{gubler2023monge} and the discussion in the proof of \cite[Proposition 6.3]{gubler2023monge} to get the analogous result for our more general (non-discretely valued) situation, which for completeness we state below.
\begin{thm}\label{thm: measure sum of polytope measures}
Let $(K,v)$ be a complete nonarchimedean valued field which is algebraically closed. Let $A$ be an abelian variety over $K,$ $X$ a closed subvariety of $A,$ and let $L$ be an ample line bundle. Then, the tropicalization of the Monge-Amp\`ere measure of $L,$ which we denote $\mu^{\text{trop}}_{X_v, L},$ can be expressed as \[\sum r_j\mu_{\Delta_j},\] where the $\Delta_j \subset \mathbb{R}^n$ are polysimplices and $\mu_{\Delta_j}$ is the Lebesgue measure supported on the polysimplex $\Delta_j.$
\end{thm}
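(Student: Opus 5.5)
We will deduce the statement from the non-discrete results of Gubler--Stadl\"oder \cite{gubler2023monge}, following the route of \cite[Theorem 1.1]{gubler2010non} in the discretely valued case. First we uniformize. By Raynaud--Bosch--L\"utkebohmert, $A^{\an} = E^{\an}/M$, where $E$ is an extension of an abelian variety $B$ with good reduction by a torus $T = \mathbb{G}_m^n$, and $M \subset E(K)$ is a lattice of rank $n$. The tropicalization map $\operatorname{trop}: A^{\an} \to \mathbb{R}^n/\Lambda$ is induced by $\operatorname{val}$ on $T$, where $\Lambda = \operatorname{trop}(M)$ is a complete lattice. Thus $\mu^{\text{trop}}_{X_v,L}$ is the pushforward along $\operatorname{trop}$ of the Chambert-Loir measure $c_1(\ovl{L}|_X)^{\dim X}$ for the canonical metric. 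We will lift everything to a fundamental domain in $\mathbb{R}^n$ and work $\Lambda$-periodically. Since $\operatorname{trop}$ is continuous and proper on the relevant compact pieces, it suffices to describe the measure locally.

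The second step is to identify the canonical metric with a piecewise-linear limit. Choose a rational polytopal $\Lambda$-periodic decomposition $\mathscr{C}$ of $\mathbb{R}^n$. It gives a Mumford model $\mathcal{A}$ of $A$, and $L$ with its canonical metric is the uniform limit of formal metrics $\|\cdot\|_m$. These come from pulling back by $[m]$, which on the tropical side is just scaling by $m$. Using \cite[Corollary 1.4]{gubler2023monge}, we know that for a formal (algebraic, hence piecewise linear) semipositive metric on a closed subvariety $X$ the Monge--Amp\`ere measure is supported on the skeleton. Its tropicalization is a finite sum $\sum r_j \mu_{\Delta_j}$, where the $\Delta_j$ are faces of dimension $\dim X - b(X)$ of the tropical variety $\operatorname{trop}(X)$, which is a finite union of polytopes by Bieri--Groves in the non-discrete setting. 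The coefficients $r_j$ are degrees of the fibres of the reduction over the abelian part, namely intersection numbers on the special fibre of $\mathcal{X}$. This is what the discussion in the proof of \cite[Proposition 6.3]{gubler2023monge} supplies.

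The third step, and the main obstacle, is passing to the canonical metric. Here we use that the canonical metric is invariant in the sense that $[m]^*\ovl{L} \cong \ovl{L}^{\otimes m^2}$ (after symmetrizing, or up to translation for a general ample $L$). Hence the measure for the canonical metric equals $m^{-2\dim X}$ times the pushforward along $[m]$ of the measure on $[m]^{-1}X$ computed from any formal model. The problem is that $\operatorname{trop}([m]^{-1}X)$ need not be compatible with a fixed decomposition, and the limit of polytopal measures need not stay polytopal. We will follow Gubler's argument. The canonical local metric on the torus part is the quadratic function $q$ attached to the polarization on $\mathbb{R}^n$. The canonical measure is then the real Monge--Amp\`ere measure of $q$ restricted to the $(\dim X - b(X))$-dimensional polytopes of $\operatorname{trop}(X)$, weighted by the degrees $r_j$ from the second step. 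On each face $\Delta_j$ of the tropical variety, the real Monge--Amp\`ere of a quadratic function restricted to an affine subspace is a constant multiple of Lebesgue measure. This gives $\mu^{\text{trop}}_{X_v,L} = \sum r_j \mu_{\Delta_j}$ after absorbing constants into $r_j$. We expect to need a subdivision of $\operatorname{trop}(X)$ into polysimplices fine enough that the degrees $r_j$ are constant on each cell. This is possible because $\operatorname{trop}(X)$ is a finite $\Lambda$-periodic polyhedral complex with rational structure over the value group.
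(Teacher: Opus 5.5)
\textbf{There is a genuine gap, located exactly at the step you flag as the main obstacle.}

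\emph{Your second step is false.} For a formal (model) metric, the Monge--Amp\`ere measure $c_1(\ovl{L}|_X)^{\dim X}$ is a finite sum of Dirac masses. They sit at the divisorial (Shilov) points of $X^{\an}$ attached to the irreducible components of the special fibre of the model, weighted by the degrees of the reduced line bundle on those components. Its image under $\operatorname{trop}$ is therefore a finite atomic measure. It is not a sum of Lebesgue measures on positive-dimensional faces.

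So no polytopal structure can be read off from the formal approximations $\|\cdot\|_m$; the Lebesgue pieces appear only for the canonical metric itself. Also, the faces need not all have dimension $\dim X - b(X)$: the relevant polysimplices have dimensions ranging between $\dim X - b(X)$ and $\dim X$, as in the proposition right after the statement.

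\emph{Your third step then has nothing to pass to the limit from.} It simply asserts the conclusion: that the canonical measure is the real Monge--Amp\`ere measure of $q$ on faces of $\operatorname{trop}(X)$, weighted by certain degrees. That assertion is the whole content of the theorem. Moreover, $\operatorname{trop}(X)$ alone cannot supply the weights when $b(X)>0$. A face of dimension $e<\dim X$ carries mass coming from $(\dim X-e)$-dimensional strata of the reduction that map with the same dimension to the abelian part. Seeing these requires an actual model of $X$ with a skeleton, not just the tropical variety and Bieri--Groves.

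\emph{The missing idea, which is how the paper argues, is to avoid formal approximations altogether.} Take a strictly polystable alteration $\mathscr{X}'\to\mathscr{X}$ of the closure of $X$ in a Mumford model; such alterations exist over arbitrary height-one valuations by \cite{adiprasito2018log}. The pulled-back canonical metric can be treated directly on the skeleton of $\mathscr{X}'$, with its torus part governed by the quadratic function. Gubler--Stadl\"oder, via Corollary 1.4 and the proof of Proposition 6.3 in \cite{gubler2023monge}, compute its Monge--Amp\`ere measure directly as a weighted sum of Lebesgue measures on the non-degenerate canonical polysimplices of that skeleton. By their Theorem 7.8, $\operatorname{trop}$ restricted to each such polysimplex is a linear isomorphism onto its image. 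Hence the pushforward is again of the form $\sum r_j\mu_{\Delta_j}$, and no limit argument is needed.

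If you want to keep a limit argument instead, you would have to prove that the atomic measures from the formal metrics converge weakly to the claimed Lebesgue densities. Gubler did this in the totally degenerate case via tropical intersection theory. In general that route also requires controlling the abelian-part contributions, and your proposal does not address them.
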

Yamaki's proof of Theorem \ref{thm: yamaki result}(i) also requires that the following bound on the dimension of polysimplices holds, which we state in our more general setting.
\begin{proposition}\label{prop: support of measure and dimension of non-degenerate}
In the setting of the previous theorem, the union of these polysimplices is the tropicalization of $X,$ and we can take them to be precisely the non-degenerate strata (as in \cite[\S 6.3]{gubler2010non}) of a strictly polystable alteration of $X.$ In particular, the dimension of these polysimplices all lie between $\dim X - b(X_v)$ and $\dim X.$
\end{proposition}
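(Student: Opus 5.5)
The plan is to run Gubler's description of the tropical Monge--Amp\`ere measure through the Gubler--Stadl\"oder machinery, replacing the discretely valued inputs with their non-discrete analogues. First, using the de Jong/Temkin-type alteration theorem for general non-archimedean fields, as in \cite{gubler2023monge}, I will choose a generically finite surjective morphism $f: X' \to X$ with $X'$ admitting a strictly polystable formal model $\mathcal{X}'$. The map $f$ should extend to a morphism $\mathcal{X}' \to \mathcal{A}$ into a Mumford model of $A$ compatible with the Raynaud uniformization $p:\mathcal{E}\to\mathcal{A}$. The skeleton $S(\mathcal{X}')$ is a union of canonical polysimplices $\Delta_S$, indexed by strata $S$ of the special fibre. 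The tropicalization $\operatorname{val}\circ f$ restricted to $S(\mathcal{X}')$ is affine on each $\Delta_S$, and its image is $X^{\mathrm{trop}}$, because the retraction onto the skeleton commutes with tropicalization. Following \cite[\S 6.3]{gubler2010non}, a stratum is called non-degenerate when $\operatorname{val}\circ f$ is injective on $\Delta_S$. The proof of \cite[Proposition 6.3]{gubler2023monge} expresses $\mu^{\mathrm{trop}}_{X_v,L}$ as $\sum r_j \mu_{f(\Delta_j)}$ with $r_j>0$ exactly over the non-degenerate strata; the $r_j$ are degrees of the restriction of $L$ to the closures of those strata.

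Next, the union of the non-degenerate images equals $X^{\mathrm{trop}}$. The support of the Monge--Amp\`ere measure projects onto the whole tropicalization: $\operatorname{trop}_*$ of $c_1(L)^{\dim X}$ has full support on $X^{\mathrm{trop}}$. Alternatively, any point of $X^{\mathrm{trop}}$ lies in the image of some maximal simplex, and degenerate simplices map into lower-dimensional images of faces that are covered by non-degenerate ones. I would justify the covering exactly as in Gubler's discrete case, which only uses the combinatorics of the skeleton and the affineness of the tropicalization map. Neither argument depends on the value group being discrete.

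For the dimension bounds, the upper bound is immediate, since $\dim \Delta_S \le \dim X' = \dim X$. For the lower bound, let $S$ be a non-degenerate stratum with closure $\overline{S}$ in the special fibre of $\mathcal{X}'$. Then $\dim \overline{S} = \dim X - \dim \Delta_S$. On the other hand, $\overline{S}$ maps to the reduction, and after lifting to $\mathcal{E}$ and composing with $q$, into $q(\mathcal{Y})$, which has dimension $b(X_v)$. Non-degeneracy of $S$ means the torus part of $\mathcal{E}$ contributes nothing along $\overline{S}$. So I must show that $\overline{S}$ is generically finite onto its image in $q(\mathcal{Y})$, giving $\dim \overline{S}\le b(X_v)$ and hence $\dim\Delta_S \ge \dim X - b(X_v)$.

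This finiteness is the main obstacle. In Gubler's argument, non-degeneracy gives positivity of $\deg_L \overline{S}$, because $r_j>0$ for the strata appearing in the measure. By the projection formula, this degree can be computed via the pullback of an ample bundle on the abelian part $\mathcal{B}$, since the torus directions are absorbed into $\Delta_S$. Positivity of this degree forces $\overline{S}\to q(\mathcal{Y})$ to be generically finite. I will check that the formula $r_j=\deg_{f^*L}(\overline{S})$ from \cite{gubler2023monge}, together with the ampleness of the descended bundle on $\mathcal{B}$, holds without discreteness. Well-definedness of $b(X_v)$ is \cite[Lemma 4.2]{yamaki2013geometric}, and its proof uses only formal geometry of Mumford models.
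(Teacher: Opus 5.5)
Your notion of non-degeneracy is not the one the statement uses. In \cite[\S 6.3]{gubler2010non} and \cite[(6.2.1)]{gubler2023monge}, a stratum $S$ is non-degenerate only if \emph{both} of the following hold: tropicalization is injective on $\Delta_S$, \emph{and} $\overline{S}$ maps generically finitely onto its image in the abelian part of the reduction of the Raynaud extension. With this definition the lower bound is immediate, and this is the paper's argument. The image of $\overline{S}$ lies in $q(\mathcal{Y})$, so $\dim S\le b(X_v)$, and $\dim\Delta_S=\dim X-\dim S$.

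Your argument via positivity of $r_j$ and the projection formula is correct but roundabout: it recovers the second half of the definition from the coefficient formula. With tropical injectivity alone, however, your claim that $r_j>0$ exactly over the non-degenerate strata is false. Take $A$ totally degenerate, so $b(X_v)=0$, and $\dim X\ge 1$. Each vertex $\Delta_S$, with $S$ an irreducible component of the special fibre, is trivially tropically injective. Yet its weight is zero, because $\overline{S}$ is positive-dimensional and the abelian part is a point.

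This matters for the claim that the polysimplices cover $X^{\mathrm{trop}}$, and that is where the genuine gap lies.
\begin{itemize}
\item Your first justification, that $\operatorname{trop}_*c_1(L)^{\dim X}$ has full support on $X^{\mathrm{trop}}$, is just a restatement of the claim.
\item Your second, the face-covering argument, is correct as far as it goes: a non-injective affine image of a polysimplex is covered by the images of its proper faces. But it only produces tropically injective faces, which may carry weight zero, such as the vertices above. So it does not show that the weighted polysimplices, i.e.\ the non-degenerate strata in the correct sense, cover $X^{\mathrm{trop}}$.
\end{itemize}
In the totally degenerate case the claim forces $X^{\mathrm{trop}}$ to be pure of dimension $\dim X$. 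That is a genuine theorem of Gubler \cite{gubler2007tropical}, not a consequence of the combinatorics of the skeleton and the affineness of tropicalization. Contrary to your remark, Gubler's discrete-case argument is not purely combinatorial either.

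What is missing is an argument that every point of $X^{\mathrm{trop}}$ lies in the image of a stratum that is non-degenerate in both senses. The paper takes this from the argument of \cite[Remark 7.2]{gubler2023monge}. It uses \cite[Theorem 7.8]{gubler2023monge} for the fact that tropicalization restricts to a linear isomorphism on these polysimplices.
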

\begin{remark}
In the case where $K$ is the algebraic closure of a discretely valued field, this is given by \cite[Theorem 1.1]{gubler2010non} and \cite[(4.6.14)]{yamaki2013geometric}.
\end{remark}
Note that these strictly polystable alterations exist by the polystable alteration theorem \cite[Theorem 5.2.19]{adiprasito2018log}, for instance, which proves the existence of an alteration $\mscr X' \to \mscr X$ with $\mscr X'$ strictly polystable and with smooth generic fiber (that is, strongly nondegenerate; see \cite[\S 4.4]{berkovich2004smooth}). 
\begin{proof}[Proof of Proposition \ref{prop: support of measure and dimension of non-degenerate}]
The fact that these polysimplices correspond to the non-degenerate strata follows from the proof of \cite[Proposition 6.3]{gubler2023monge}, which proves that the Monge-Amp\`ere measure is a sum of the Lebesgue measures supported precisely on the polysimplices that are non-degenerate in the skeleton of a polystable alteration $\mathcal{X}'$. Furthermore, these remain polysimplices when we pass to the tropicalization, since by \cite[Theorem 7.8]{gubler2023monge} the tropicalization restricted to each of these polysimplices is a linear isomorphism.
\par Then, the fact that the union of the simplices is equal to the entire tropicalization of $X$ follows by the arguments outlined in \cite[Remark 7.2]{gubler2023monge}. For the dimension bound, we use the definition of non-degeneracy, where the dimension of the strata is preserved when passing to the abelian part, and the dimension of the corresponding polysimplex is preserved under tropicalization; see \cite[Equation (6.2.1)]{gubler2023monge}. Combining this with the fact that for a strata $S$ in the special fiber of the skeleton $\mathcal{X}'$ with corresponding canonical face $\Delta_S,$ $\dim \Delta_S = \dim X - \dim S,$ yields the desired result (see the discussion in \cite[\S 5.4]{gubler2023monge}, for instance).
\end{proof}
The remainder of Yamaki's proof of Theorem \ref{thm: yamaki result}(i) modulo the equidistribution result carries over, replacing ``simplex" with ``polysimplex." Indeed, \cite[Proposition 4.5]{yamaki2013geometric} holds in our more general setting as well, since the facts they use only rely on results in \cite[\S 3]{gubler2007tropical}, which do not require the discrete valuation assumption.
\par We now turn to Theorem \ref{thm: yamaki result}(ii). For this case, Yamaki makes use of Gubler's result \cite[Corollary A.2]{yamaki2013geometric}, which states that the lower bound given in Proposition \ref{prop: support of measure and dimension of non-degenerate} is attained. In the proof of this corollary (which in turn is a corollary of \cite[Theorem A.1]{yamaki2013geometric}), we can use \cite[Corollary 1.4]{GS23} to replace \cite[Theorem 6.7]{gubler2010non} (since we are only concerned with studying $c_1(\ol{L}|_X)^{\wedge d}$) as well as the same replacements we used in the proof of Proposition \ref{prop: support of measure and dimension of non-degenerate}. This leaves us with proving Theorem A.5 in the same appendix. Thus, in order to show that \cite[Theorem 5.4]{yamaki2013geometric} applies in the more general valued field setting (assuming equidistribution), it suffices to check that \cite[Theorem A.5]{yamaki2013geometric} holds in this more general setting. That is, we need to show the following.
\begin{thm}\label{thm: polystable version of Theorem A5}
Let $\mathcal{A}_0$ be a Mumford model of $A$ over $K^{\circ},$ the valuation ring of our complete and algebraically closed nonarchimedean valued field $K.$ Let $\mathcal{X}_0$ be the Zariski closure of $X$ in $\mathcal{A}_0,$ and let $\varphi: \mathcal{X}' \rightarrow \mathcal{X}_0$ be a strongly non-degenerate strictly polystable alteration. Then, there is a $b(X)$-dimensional stratum of the special fiber of $\mathcal{X}'$ whose corresponding polysimplex is non-degenerate.
\end{thm}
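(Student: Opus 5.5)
We plan to follow the proof of \cite[Theorem A.5]{yamaki2013geometric} and replace each discreteness-dependent input with its polystable counterpart from \cite{gubler2023monge}. The basic picture is as follows. The abelian part of $\mathcal{X}_0$ is the image $q(\mathcal{Y}) \subseteq \mathcal{B}_s$, of dimension $b(X)$ by definition. We want a stratum $S$ of $\mathcal{X}'_s$ that maps onto a $b(X)$-dimensional piece of this abelian part, and whose canonical polysimplex $\Delta_S$ has full dimension $\dim X - b(X)$. Recall that a stratum is non-degenerate exactly when its dimension is preserved on passing to the abelian part, and then $\dim \Delta_S = \dim X - \dim S$.

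\textbf{Step 1: find a candidate stratum.} We first pass to the semiabelian cover $p:\mathcal{E}\to\mathcal{A}_0$ and lift the alteration locally. Concretely, we base change $\mathcal{X}'$ along $p$ on a quasicompact open $\mathcal{Y}$ with $p(\mathcal{Y})=\mathcal{X}_0$, so that we get a composite $\psi: \mathcal{X}'_{\mathcal{Y}} \to \mathcal{Y} \to \mathcal{B}$ on special fibers. Since $\varphi$ is surjective and $q(\mathcal{Y})$ has dimension $b(X)$, the image of $\psi_s$ contains an irreducible component $Z$ of dimension $b(X)$. We then choose a stratum $S$ of $\mathcal{X}'_s$ such that $\psi_s(\overline{S})$ dominates $Z$ and $S$ is minimal for this property, i.e. has minimal dimension among such strata. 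Strata form a finite stratification, so this minimum exists.

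\textbf{Step 2: show $\dim S = b(X)$ and that $S$ is non-degenerate.} For the bound $\dim S \le b(X)$, suppose instead $\dim S > b(X)$. The fibers of $\overline{S}\to Z$ are then positive dimensional. Using the torus-fibration structure of the reduction of $\mathcal{E}$ over $\mathcal{B}$, we will argue, as in Yamaki, that such a fiber must meet the boundary $\overline{S}\setminus S$, which is a union of smaller strata. One of those smaller strata would still dominate $Z$, contradicting minimality. The fact needed here is that the closure of a stratum is a union of strata. In the strictly polystable setting this is \cite[\S 5.4]{gubler2023monge} together with the local structure of polystable formal schemes in \cite{berkovich2004smooth}, and neither source needs discreteness. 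Once $\dim S = b(X)$, the map $\overline{S}\to Z$ is generically finite, so the dimension is preserved on passing to the abelian part, which is exactly non-degeneracy in the sense of \cite[Equation (6.2.1)]{gubler2023monge}. Finally, $\dim \Delta_S = \dim X - b(X)$, matching the lower bound in Proposition \ref{prop: support of measure and dimension of non-degenerate}.

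\textbf{Main obstacle.} The hard step is the boundary argument in Step 2. Yamaki uses Raynaud extensions over a discrete valuation ring and the explicit toric local charts of strictly semistable models, where the reduction of $\mathcal{E}$ is a torus bundle over $\mathcal{B}$ and the torus orbits are tied to the strata. Over a non-discrete valuation ring we must use the Raynaud uniformization of formal abelian schemes over $K^\circ$ from Bosch--L\"utkebohmert, which is available in general. We also need the analogue of the statement that the tropicalization map is affine on each canonical polysimplex, and we will use \cite[Theorem 7.8]{gubler2023monge} for this. We would first try to deduce the contradiction purely tropically: a degenerate stratum $S$ has $\dim \Delta_S < \dim X - b(X)$, and the tropical image of the corresponding part of $X^{\an}$ must still surject onto a region of dimension $\dim X - b(X)$ in $\mathbb{R}^n$, by the fiber-dimension count for $\operatorname{trop}$ restricted over $Z$. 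This would avoid the delicate scheme-theoretic boundary analysis entirely.
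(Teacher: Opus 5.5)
Your proposal has a genuine gap at the end of Step~2. There you conclude that a $b(X)$-dimensional stratum $S$ with $\overline S\to Z$ generically finite is non-degenerate, but that is only half of the definition. As recalled in the proof of Proposition~\ref{prop: support of measure and dimension of non-degenerate} (see \cite[(6.2.1)]{gubler2023monge}), non-degeneracy also requires that the affine map $\overline f\colon \Delta_S\to\mathbb{R}^n$ induced by tropicalization preserve dimension, i.e.\ $\dim\overline f(\Delta_S)=\dim\Delta_S$. The identity $\dim\Delta_S=\dim X-\dim S$ holds for \emph{every} stratum and says nothing about this. It is precisely this tropical condition that makes $\Delta_S$ carry positive canonical mass, which is what the application in \cite[Corollary A.2]{yamaki2013geometric} needs.

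Choosing $S$ of minimal dimension among strata dominating $Z$ does not force the tropical condition. Take $X=A=\mathbb{G}_m^{\an}/q^{\mathbb{Z}}$ a Tate elliptic curve, so $b(X)=0$ and $Z$ is a point. Let $\mathcal{A}_0$ be a Mumford model whose special fibre is a cycle of at least two $\mathbb{P}^1$'s, and let $\mathcal{X}'$ be the admissible blow-up of the ideal $(t-1,\varpi^{\epsilon})$ at a smooth point of the special fibre. Then $\mathcal{X}'$ is strictly semistable with smooth generic fibre, and it has a new node $S$ where an extra $\mathbb{P}^1$ is attached to the cycle. This $S$ is a $0$-dimensional stratum dominating $Z$, so your Step~1 may pick it, and your criterion declares it non-degenerate. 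But $|t|=1$ along the whole new edge of the skeleton, so $\overline f$ collapses $\Delta_S$ to a vertex of the circle, and the canonical measure (Lebesgue measure on the circle) does not charge it.

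The same conflation of $\dim\Delta_S$ with $\dim\overline f(\Delta_S)$ undermines your tropical fall-back. The boundary step is also only asserted. Affineness of the torus fibres of $\widetilde{\mathcal E}\to\widetilde{\mathcal B}$ shows merely that a complete fibre of $\overline S\to Z$ lying in $S$ is contracted by $\tilde f$, and an alteration is allowed to contract such fibres.

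The paper does not try to reconstruct the argument. It notes that Gubler's proof of \cite[Theorem A.5]{yamaki2013geometric} already takes place over the completed algebraic closure. So discreteness enters only through the semistability of the alteration, i.e.\ through the results of \cite[\S\S 4--5]{gubler2010non} that the proof cites. Those of \S 4 hold for arbitrary valuations, and those of \S 5 carry over to strongly non-degenerate strictly polystable formal schemes by \cite[Remark 5.19]{gubler2010non}. Hence Gubler's proof, which does control both halves of non-degeneracy, applies word for word. Your opening sentence announces this strategy; completing it means checking exactly these cited inputs, not replacing the proof by a minimal-stratum argument.
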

Note that the use of semistable in \cite[Theorem A.5]{yamaki2013geometric} suffices for their purposes because the appendix starts with $X \subset A$ defined over a field with a discrete valuation, to which de Jong's semistable alteration theorem applies. However, in the case that $X$ is defined over a field with an arbitrary height one valuation rather than a discrete valuation, we only have the polystable alteration theorem of \cite[Theorem 5.2.19]{adiprasito2018log}, so we must show the theorem in the setting of strongly non-degenerate strictly polystable alterations.
\begin{proof}
The proof of \cite[Theorem A.5]{yamaki2013geometric} occurs after passage to the completion of the algebraic closure, and so the only potential places where the proof uses the discretely valued field hypothesis is in assuming the alteration is semistable. These arise in citations of \S 4-5 of \cite{gubler2010non} throughout the proof. But the results \S 4 of \cite{gubler2010non} work in the more general setting, so it is just the results of \S 5 of \textit{ibid.}~that we need to generalize from strictly semistable to the case of strongly non-degenerate strictly polystable schemes. However, all of the results in \cite[\S 5]{gubler2010non} carry over to strongly non-degenerate strictly polystable schemes by \cite[Remark 5.19]{gubler2010non}. Hence, one can follow the proof word-for-word in \cite[Theorem A.5]{yamaki2013geometric} to prove the above theorem.
\end{proof}

To obtain our generalization of Theorem \ref{thm: yamaki result} (\cite[Theorem 5.4]{yamaki2013geometric}), we also need to generalize \cite[Lemma 2.12]{yamaki2013geometric}, which for us takes the following form.

\begin{lemma}\label{lem: dimension zero bogomolov}
Let $A$ be an abelian variety over an algebraically closed nonarchimedean GVF $K.$ Then, for any irreducible closed subvariety $X \subset A$ such that $\dim X/G_X = 0,$ $X$ can only have a dense set of small points if it is special. 
\end{lemma}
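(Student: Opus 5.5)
If $\dim X/G_X = 0$, then $X/G_X$ is a single point of $A/G_X$. Here $G_X$ is the stabilizer of $X$, a closed subgroup, and its identity component is an abelian subvariety $G_X^0$. Since $X$ is irreducible and is a union of $G_X$-cosets, it is a single translate $X = x + G_X^0$ for some $x \in A(K)$. So the statement reduces to showing that a translate $x + B$ of an abelian subvariety $B$ with a dense set of small points is special. The plan is to quotient out by $B$ and reduce to a statement about single points.

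Let $\pi: A \to A' := A/B$ be the quotient map. Fix a symmetric ample $L'$ on $A'$ and a symmetric ample $L$ on $A$ such that $L - \pi^*L'$ is ample, which we can arrange by replacing $L$ with a multiple. Then the canonical heights satisfy $\h_{A,L} \geq \h_{A',L'} \circ \pi$ up to a positive multiplicative constant; this is the usual comparison for canonical heights, obtained by comparing on $[n]$-iterates. Take a generic net of small points $x_n \in X$. Each $\pi(x_n) = \pi(x)$, because $X$ maps to the single point $\pi(x)$. Hence $\h_{A',L'}(\pi(x)) \leq C\,\h_{A,L}(x_n) \to 0$, so $\pi(x)$ has canonical height zero in $A'$.

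The key step, which I expect to be the main obstacle, is lifting this: I need a point $y \in X(K)$ with $\h_{A,L}(y) = 0$. Over a GVF, height-zero points need not be torsion, so I cannot lift via torsion. I would use Poincaré reducibility. Choose an abelian subvariety $C \subseteq A$ with $B + C = A$ and $B \cap C$ finite. The map $C \to A'$ is an isogeny, so we can pick $z \in C(K)$ with $\pi(z) = \pi(x)$, using that $K$ is algebraically closed. Canonical heights are invariant up to the positive degree factor under isogenies with pulled-back polarization, so $\h_{C,L|_C}(z)$ is a positive multiple of $\h_{A',L'}(\pi(x)) = 0$. Here the fact that a nef canonical height vanishing on a point is detected after pullback uses quadraticity, $\h([m]z) = m^2 \h(z)$, which holds over any GVF since the canonical adelic metric is defined over the adelic curve via the limit of $[m]$. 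Then $z \in x + B = X$, and $\h_{A,L}(z) = \h_{C,L|_C}(z) = 0$. For this, rather than verifying the equality directly, I bound $\h_{A,L}(z)$ by the height on $A'$ through a decomposition $A \sim B \times C$ after isogeny, where $L$ pulls back to a sum of $L|_B$ and $L|_C$ up to algebraic equivalence. On such a product, the canonical heights are additive modulo the bilinear Néron–Tate pairing term, which vanishes when one coordinate is $0$.

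Therefore $X = z + B$ with $\h(z) = 0$, which is of the form $x + B + \tr(W \otimes K)$ with $W$ a point. So $X$ is special in the sense of the definition given in the introduction; in fact $B$ itself serves as the abelian subvariety and no trace component is needed. This is how I would replace the function-field-specific argument of \cite[Lemma 2.12]{yamaki2013geometric}, which invokes the Lang–Néron theorem.
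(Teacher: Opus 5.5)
Your proof is correct and follows essentially the paper's route. The paper also reduces the lemma to two facts: homomorphisms send canonical-height-zero points to height-zero points (via ampleness of $L^{\otimes n}\otimes\phi^*M^{-1}$), and height-zero points lift along the quotient via a Poincar\'e complement $C$, an isogeny, and quadraticity, exactly as you do with $C\to A/B$. Two small cleanups: $\h_{C,L|_C}(z)$ is not a positive multiple of $\h_{A',L'}(\pi(x))$ but only bounded by one (using $m(\pi|_C)^*L'-L|_C$ ample), and your final detour through $B\times C$ is unnecessary, since $\h_{A,L}(z)=\h_{C,L|_C}(z)$ follows directly from functoriality of canonical heights under the inclusion $C\hookrightarrow A$.
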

\begin{rmk}
The notion of a special subvariety from \cite[Definition 2.5]{yamaki2013geometric} extends to this setting, where we let $k$ be the height zero subfield of $K$ and we allow the point $x$ to have canonical height zero in general.
\end{rmk}
\begin{proof}
The only part of the proof of this lemma which doesn't immediately generalize is the claim, which is part of \cite[Proposition 2.11]{yamaki2013geometric}, that $X/G_X$ being special holds if and only if $X$ is. Furthermore, to prove that claim along the lines of Yamaki's proof, it suffices to check the following if $\phi: A \rightarrow B$ is a homomorphism of abelian varieties, then the map sends points of canonical height zero on $A$ to points of canonical height zero on $B,$ and furthermore that this restricted map is surjective. For the first fact, this follows since for any ample symmetric line bundles $L, M$ on $A, B,$ respectively, there is some integer $n > 0$ such that $L^{\otimes n} \otimes \phi^*M^{-1}$ is ample, meaning that $n \hat{h}_L(x) \geq \hat{h}_M(\phi(x)) \geq 0$ for all $x \in A;$ in particular, if $x$ is canonical height zero, $\phi(x)$ is too.
\par For the surjectivity statement, recall that there exists an abelian variety $C/K$ such that $B \times C$ is isogeneous to $A,$ where the homomorphism $A \rightarrow B \times C$ composed with the projection to $B$ is just $\phi.$ Now, composing with its dual, we get a map $[n]: B \times C \rightarrow B \times C.$ Finally, let $y \in B(K)$ be a height zero point. Then, observe that the canonical height of any pre-image $y'$ of $y$ is zero, since $\hat{h}(y) = n^2\hat{h}(y') = 0.$ So then taking the image of $(y', z)$ under $B \times C \rightarrow A$ where $z$ is any height zero point of $C,$ we find that its image in $A$ is a height zero point whose image under $\phi$ is $y,$ as desired.  
\end{proof}

As a consequence of the above, by applying Theorem \ref{EqualityMeasures1} in place of the equidistribution argument and following Yamaki's proof of Theorem \ref{thm: yamaki result} (\cite[Theorem 5.4]{yamaki2013geometric}), we obtain the following. 

\begin{theorem} \label{YamakiGVF1}
Let $K$ be a nonarchimedean countable GVF with height zero subfield $k,$ and let $A$ be an abelian variety over $K$. Assume that $\Tr_{\ovl{K}/\ovl{k}}(A) = 0$. Let $X$ be a subvariety of $A$ and $G_X$ the stabilizer of $X$. Assume that $X$ is not of the form $x+B$, where $x$ is a point of canonical height zero and $B$ is an abelian subvariety. Let 
$$\Omega' = \{ v \in \Omega \mid A \text{ has bad reduction at }v\}$$
and assume for a positive measure of places of $\Omega'$, we have $\dim(X/G_X) \geq b((A/G_X)_v).$ Then $X$ does not have a dense set of small points.
\end{theorem}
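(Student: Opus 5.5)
We argue by contradiction: assume $X$ has a dense set of small points, so that $\h_{A,L}(X) = 0$ for a symmetric ample $L$ with canonical adelic metric $\ovl{L}$. The plan is to follow Yamaki's proof of \cite[Theorem 5.4]{yamaki2013geometric}, replacing the equidistribution theorem by Theorem \ref{EqualityMeasures1} and the discretely valued structure theory by Theorem \ref{thm: measure sum of polytope measures}, Proposition \ref{prop: support of measure and dimension of non-degenerate} and Theorem \ref{thm: polystable version of Theorem A5}.

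\emph{Step 1: reduce to finite stabilizer.} Replace $(A,X)$ by $(A/G_X, X/G_X)$; the quotient map is compatible with canonical heights. By the argument of Lemma \ref{lem: dimension zero bogomolov}, height zero points push forward to height zero points and lift surjectively. So $X/G_X$ still has a dense set of small points, and the hypotheses are preserved: tracelessness passes to quotients, and the condition $\dim(X/G_X) \geq b((A/G_X)_v)$ on a positive measure subset of $\Omega'$ is unchanged. Moreover, $X$ is of the form $x+B$ if and only if $X/G_X$ is a height zero point. If $\dim X/G_X = 0$, Lemma \ref{lem: dimension zero bogomolov} together with tracelessness makes $X$ a translate of an abelian subvariety by a height zero point, contradicting the assumption. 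Hence we may assume $G_X$ is finite and $d = \dim X \geq 1$.

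\emph{Step 2: produce two line bundles of height zero and compare their measures.} Following Zhang and Gubler, take $N$ large and consider $\alpha: X^N \to A^{N-1}$, $(x_i) \mapsto (x_{i+1}-x_i)$. Since $G_X$ is finite, $\alpha$ is generically finite onto its image. Put $\ovl{P} = \sum_i p_i^*\ovl{L}$ and $\ovl{Q} = \alpha^*\ovl{P}_{A^{N-1}} + \ovl{P}$. Both are nef, semipositive, geometrically semiample and big (bigness by generic finiteness, semiampleness by pullback), and $a\ovl{P} - \ovl{Q}$ is nef for suitable $a$. The height of $X^N$ vanishes for both, because heights of products and of images under homomorphisms of height zero varieties are zero. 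Theorem \ref{EqualityMeasures1} then gives a countable GVF extension $K'$ over which the normalized Monge--Amp\`ere measures of $\ovl{P}$ and $\ovl{Q}$ on $(X^N)_v^{\an}$ agree for almost every $v$. In particular, they agree for some $v \in \Omega'$ in the positive measure set where $\dim(X/G_X) \geq b((A/G_X)_v)$. Here we must check that passing to $K'$ preserves that set up to measure zero; this should hold because places of $K'$ restrict to places of $K$ and $\nu$ pushes forward compatibly.

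\emph{Step 3: contradiction at a bad place via tropicalization.} At such a place $v$, complete and algebraically close $K'$, and tropicalize both measures using Raynaud uniformization. By Theorem \ref{thm: measure sum of polytope measures} and Proposition \ref{prop: support of measure and dimension of non-degenerate}, the tropical measure of $\ovl{P}$ is a sum of Lebesgue measures on polysimplices of dimensions between $N(d - b(X_v))$ and $Nd$. By Theorem \ref{thm: polystable version of Theorem A5}, the lower dimension $d - b(X_v)$ is actually attained on $X$. We then run Yamaki's dimension count (his Proposition 4.5 and Section 5) with polysimplices in place of simplices. The inequality $d \geq b(A_v)$ together with $b(A_v) < \dim A$ at a bad place forces a mismatch: $\alpha$ must contract a lowest-dimensional polysimplex in the support of the $\ovl{P}$-measure, whereas the $\ovl{Q}$-measure gives it a different mass. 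This contradicts the equality of measures. If $X$ were special, this step would fail, and that case is excluded because special subvarieties with finite stabilizer in a traceless $A$ are of the form $x+B$.

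The main obstacle I expect is Step 3. One must verify that Yamaki's combinatorial comparison genuinely uses only the structural results restated above, and not discreteness of the value group, for instance rationality of vertices or semistable models over a DVR. The verification in Step 2 that the exceptional measure zero set from Theorem \ref{EqualityMeasures1} does not swallow $\Omega'$ is a secondary technical point.
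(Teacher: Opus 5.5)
Your overall plan matches the paper's. The paper proves Theorem \ref{YamakiGVF1} only by pointing to Yamaki's proof of \cite[Theorem 5.4]{yamaki2013geometric}, with Theorem \ref{EqualityMeasures1} replacing equidistribution and the results of Section \ref{sec: non-archimedean canonical measures} replacing the discretely valued structure theory. Your Step 1 fits that.

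\textbf{The gap.} The pair of bundles you compare in Step 2 makes the dimension count of Step 3 impossible. Your $\ovl{Q}=\alpha^*\ovl{P}_{A^{N-1}}+\ovl{P}$ is, like $\ovl{P}$, the restriction to $X^N$ of a canonically metrized symmetric \emph{ample} line bundle on $A^N$; the pullback of a canonical metric by the homomorphism $A^N\to A^{N-1}$ is again canonical. So by Theorem \ref{thm: measure sum of polytope measures} and Proposition \ref{prop: support of measure and dimension of non-degenerate}, both tropical measures are positive combinations of Lebesgue measures on the \emph{same} polysimplices. These are the non-degenerate strata of a single polystable alteration of $X^N$, and the two measures differ only in their weights. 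No dimension count can separate them. Your claim that the $\ovl{Q}$-measure gives a lowest polysimplex ``a different mass'' would need an explicit comparison of weights (degrees of abelian-part strata and volumes for two different quadratic forms), which neither you nor the cited results supply. The summand $\ovl{P}$ is Zhang's device for getting ampleness in the older equidistribution theorem. It is unnecessary here, since Theorem \ref{EqualityMeasures1} only requires geometrically semiample and big bundles.

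\textbf{The fix.} Compare $\ovl{P}$ with $\alpha^*\ovl{P}_{A^{N-1}}$ itself. This bundle satisfies the hypotheses of Theorem \ref{EqualityMeasures1}: it is nef and semiample, it is big because $\alpha$ is generically finite, $a\ovl{P}-\alpha^*\ovl{P}_{A^{N-1}}$ is nef by the theorem of the cube, and its height on $X^N$ vanishes. The projection formula then turns the equality of measures into $\alpha_*\mu_{X^N}=\mu_Y$ for $Y=\alpha(X^N)$. Tropicalizing, using compatibility with homomorphisms, you now compare measures on two \emph{different} tropical varieties, and the count works as follows.
\begin{enumerate}
\item By Theorem \ref{thm: polystable version of Theorem A5}, there is a polysimplex $\Delta$ in the support of $\mu^{\mathrm{trop}}_X$ of the minimal dimension $e=d-b(X_v)$.
\item Then $\Delta^N$ has positive $\mu^{\mathrm{trop}}_{X^N}$-mass, and $\overline{\alpha}(\Delta^N)$ has dimension at most $(N-1)e$.
\item On the other side, $\mu^{\mathrm{trop}}_Y$ lives on polysimplices of dimension at least $Nd-b(Y_v)$.
\item If $e\ge 1$, the bound $b(Y_v)\le N\,b(X_v)$ (the abelian part of $Y$ is an image of that of $X^N$) already gives $(N-1)e<Nd-b(Y_v)$.
\item If $e=0$, then $\Delta$ is a point, nothing is contracted, and your description of the mismatch does not apply. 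This is exactly where the hypothesis $d\ge b(A_v)$ is used. It forces $b(X_v)=b(A_v)=d$, so the abelian part of $X$ is the whole abelian part of $A$, which $\alpha$ collapses along the diagonal. Hence $b(Y_v)\le (N-1)d$. Then $\mu^{\mathrm{trop}}_Y$ charges only polysimplices of dimension at least $d\ge 1$ and has no atoms, whereas $\overline{\alpha}_*\mu^{\mathrm{trop}}_{X^N}$ has an atom at $0$.
\end{enumerate}
The inequality $b(A_v)<\dim A$ at a bad place plays no role beyond excluding $X=A$.
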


\section{Bogomolov Conjecture for Divisors over GVFs} \label{sec: DivisorGVF}
Let $K$ be a nonarchimedean GVF, which we may assume to be algebraically closed and let $k$ be its height zero subfield. Let $A$ be an abelian variety over $K$. Assume that $A$ is traceless, i.e. $\Tr_{K/k}(A) = 0$. We aim to prove the Bogomolov conjecture for divisors $D$ in this setting. 

\begin{theorem} \label{GVFDivisor1}
Let $K$ be an algebraically closed non-archimedean GVF and $k$ its height zero subfield. Let $A/K$ be a $K/k$-traceless abelian variety and let $D$ be a geometrically integral divisor in $A$. Let $L$ be a symmetric ample line bundle on $A$. Then 
$$\h_{A,L}(D) = 0 \implies D = x +B$$
where $B \subseteq A$ is an abelian subvariety. 
\end{theorem}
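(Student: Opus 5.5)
We reduce to the case where $D$ has finite stabilizer, then split according to whether $A$ has bad reduction on a set of places of positive measure.

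\textbf{Reduction to finite stabilizer.} Let $G_D$ be the stabilizer of $D$ and $B_0=G_D^{\circ}$ its identity component. We pass to the quotient $A\to A/B_0$. The image $D'$ of $D$ is a divisor, and Lemma \ref{lem: dimension zero bogomolov} shows that canonical heights and height-zero points behave well under homomorphisms, so $\h(D')=0$. If $D'$ is a translate of an abelian subvariety, then so is $D$. The quotient $A/B_0$ is still traceless, so we may assume $G_D$ is finite. In that case $D$ is ample, and we must derive a contradiction, since an ample divisor is not a translate. We replace $K$ by a countable algebraically closed GVF over which $(A,L,D)$ is defined, and view it as an adelic curve $(K,(\Omega,\mathcal{A},\nu),\phi)$.

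\textbf{Case 1: bad reduction on a set of positive measure.} Suppose the set $\Omega'$ of places of bad reduction has positive measure. For $v\in\Omega'$ we have $b(A_v)\le \dim A-1=\dim D$, and since $G_D$ is finite, $\dim (D/G_D)=\dim D$. Theorem \ref{YamakiGVF1} then says $D$ has no dense set of small points. On the other hand, $\h(D)=0$ together with Zhang's successive minima (as used in Proposition \ref{SmallPointsExistence1}, after a countable GVF extension, which keeps $A$ traceless) produces such a dense set. This is a contradiction.

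\textbf{Case 2: good reduction almost everywhere.} Here the key formula must replace the measures. We first symmetrize: replace $D$ by $D+[-1]^*D$, which has height zero because the height is additive on effective divisors for the canonical metric. Set $\ovl{L}=O(D)$ with its canonical adelic metric (cube-theorem normalized) and let $s$ be the canonical section. Height zero of the divisor translates, via the Bost formula $\h_{\ovl L}(A)=\h_{\ovl L}(\divv s)-\int_\Omega\int_{A_v}\log|s|_v\,c_1(\ovl L)^g$, into $\int_\Omega \log\sup_{A_v^{\an}}|s|_v\,d\nu=0$. This uses that at good reduction places the measure is the Dirac mass at the Gauss point, so sup and integral agree. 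Next we prove the adelic-curve key formula $\wh{\mu}(\pi_*\ovl{L})=-\frac12 h_{\Fal}(A)$, with sup norms, following Tang's proof of the Moret-Bailly formula for good reduction. Translating $s$ by height-zero torsion points $t$, and using the Heisenberg group action, gives many sections $t^*s$ with the same vanishing sup-norm integral. These span a subspace $W$ whose adelic degree is $\ge 0$, up to controlled Hadamard-type error terms, which vanish non-archimedeanly. The linear system $|W|$ defines $A\to\bb{P}^N$, which factors through an isogeny $A\to B$ followed by an embedding, because $W$ is stable under a finite subgroup of the theta group. Comparing slopes then forces $h_{\Fal}(B)\le 0$, hence $h_{\Fal}(B)=0$. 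Finally, we need that a vanishing Faltings height for $B$ with everywhere good reduction over a non-archimedean GVF implies $B$ is isotrivial, i.e. descends to $k$. That would make $\Tr_{K/k}(A)\ne 0$, contradicting tracelessness.

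\textbf{Main obstacle.} The hardest step is Case 2: establishing the key formula over an adelic curve, and showing that small sections span enough of $H^0$ to force $h_{\Fal}(B)=0$. The last implication, from $h_{\Fal}(B)=0$ to isotriviality, also seems to require an argument, for instance via degenerating ultraproducts or the Hodge bundle being numerically trivial. I would first try Tang's intersection-theoretic derivation, and take the existence of a basis of height-zero theta translates as the core computation.
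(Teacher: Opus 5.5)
Your reduction to finite stabilizer and your bad-reduction case via Theorem \ref{YamakiGVF1} match the paper. So does your use of Tang's isomorphism $\det(\pi_*L)^{\otimes 24}\simeq(e^*\omega_A^{\vee})^{\otimes 12d}$, an isometry at good places, to get $\wh{\deg}\det(\pi_*\ovl{L})=-\frac{d}{2}h_{\Fal}(A)$. The gap is in Case 2, in producing enough small-average sections.

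A translate $t^*s$ is a section of $t^*L$, which is isomorphic to $L$ only for $t\in K(L)$. So the Heisenberg group gives only the finitely many translates by $K(L)(K)$. If $K(L)$ is \'etale this is actually enough, and you do not need $B$ at all: $H^0(A,L)$ is an irreducible $\mathcal{G}(L)(K)$-module, the translates of $s$ span it, and Proposition \ref{SmallBasis1} applied to $A$ gives $h_{\Fal}(A)=0$.

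In characteristic $p$ with $p\mid \#K(L)$ this breaks down. That always happens for $p=2$ after your symmetrization, since $L$ is algebraically equivalent to $O(D)^{\otimes 2}$ and so $K(L)\supseteq A[2]$. Then $K(L)(K)$ can be trivial (e.g.\ $A$ supersingular and $O(D)$ principal), so $W=K\cdot s$ and $|W|$ has base locus all of $D$.

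There are two further problems even when $|W|$ is base-point free.
\begin{itemize}
\item Invariance under a finite subgroup of the theta group does not make $A\to\bb{P}(W^{\vee})$ an isogeny followed by an embedding. A base-point-free $W$ of dimension $g+1$ gives a finite surjection onto $\bb{P}^g$.
\item Comparing $\wh{\mu}(W)\ge 0$ with the average slope of the full space of sections would need semistability of $\pi_*\ovl{L}$. That is not available over general adelic curves, and the paper avoids it for exactly this reason. The key formula only helps once you have a small-average basis of all of $H^0$ of a symmetric line bundle.
\end{itemize}

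The missing idea is to translate by all of $A[\ell^{\infty}]$ with $\ell\neq p$ and use the theorem of the square. The section $s_q(x)=s(x+q)s(x-q)$ of $t_q^*L\otimes t_{-q}^*L\simeq L^{2}$ again has small average. This gives infinitely many small-average sections of a single line bundle. The construction then runs as follows.
\begin{itemize}
\item Zariski density of $A[\ell^\infty]$ gives $q_0,\dots,q_g$ with $D_{q_0}\cap\cdots\cap D_{q_g}=\emptyset$, hence a morphism $f:A\to\bb{P}^g$.
\item Let $H=\{a: f\circ T_a=f\}$, a finite group scheme, and $B=A/H$. Scheme-theoretic density of the torsion translates shows that finitely many $f_{x_i}=f\circ t_{x_i}$ give $A\to B\hookrightarrow(\bb{P}^g)^m$. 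Scheme-theoretic rather than pointwise density is needed because of possible inseparability.
\item Symmetrize the pulled-back $O(1)$ to $M'=M\otimes[-1]^*M$, so that $\pi^*\ovl{M}'\simeq\ovl{L}^{4m}$. By surjectivity of $H^0(\bb{P}^N,O(n))\to H^0(B,M'^{n})$ for $n\gg 0$, you obtain a small-average basis of $H^0(B,M'^{n})$.
\item Proposition \ref{SmallBasis1} applied on $B$ then gives $h_{\Fal}(B)=0$.
\end{itemize}

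The last step you were worried about is then immediate. $h_{\Fal}$ is the height attached to the Hodge bundle, which is ample on the minimal compactification. So height zero forces the moduli point of $B$ to be defined over the height-zero subfield $k$. No good-reduction or ultraproduct argument is needed. Since $B$ is isogenous to $A$, this contradicts tracelessness.
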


Let $k_0 = \bb{Q}$ if $K$ is of characteristic $0$ and $\bb{F}_p$ if $K$ is of characteristic $p$. Fix some $n \geq 3$ not divisible by the characteristic of $K$. Then after choosing level $n$-structure, $A$ defines a point on $\mathcal{A}_{g,n}.$ Now the Hodge bundle $\omega$ on $\mathcal{A}_{g,n}$ extends to an ample line bundle on the minimal compactification of $\mathcal{A}_{g,n}$. This gives us an adelic line bundle $\ovl{\omega} \in \wh{\Pic}(\mathcal{A}_{g,n}/k_0)$. We define the Faltings height $h_{\omega}$ for non-archimedean GVFs to be the height induced by this ample adelic line bundle $\ovl{\omega}$. Since $\omega$ is ample, we have 
$$h_{\Fal}(A) = 0 \implies A \text{ is defined over } k$$
where $k$ is the height zero subfield of $K.$ 

We first begin the proof in the case where $A$ has bad reduction. In this case, we can apply the equidistribution arguments of Ullmo \cite{Ulm98} and Zhang \cite{Zha98}, which have been adapted to function fields by Gubler \cite{gubler2007bogomolov} and Yamaki \cite{yamaki2013geometric}. We will construct a height function $h_{\partial \mathcal{A}_{g,n}}$ on $\mathcal{A}_{g,n}.$ that measures bad reduction of $A$. To do this, fix a toroidal compactification $\ol{\mathcal{A}_{g, n}}$ of $\mathcal{A}_{g, n}$ over $\Spec \mathbb{Z}[1/n, \zeta_n],$ and let $\partial \mathcal{A}_{g, n}$ be $\ol{\mathcal{A}_{g, n}} - \mathcal{A}_{g, n}.$ Note that this is a relative effective Cartier divisor (see \cite[Theorem 6.7]{chaifaltings}), so we may then consider the line bundle $\mathcal{O}(\partial \mathcal{A}_{g, n})$ on $\ol{\mathcal{A}_{g, n}}.$ We can then restrict this data to lie over $k_0$. We hence obtain a model adelic line bundle $\ol{\mathcal{O}(\partial \mathcal{A}_{g, n})} \in \wh{\Pic}(\mathcal{A}_{g,n}/k_0)$. We may therefore define the height $h_{\partial \mathcal{A}_{g, n}}$ by taking the height functional induced by $\ol{\partial \mathcal{A}_{g,n}}$. For convenience, we abbreviate this as $h_{\partial \mathcal{A}_g}$.

\begin{proposition} \label{prop: boundary height and places of bad reduction}
View $K = (K, (\Omega, \mathcal{A},\nu),\phi)$ as an adelic curve and assume that $K$ is a non-archimedean GVF. The set $\{\omega \in \Omega \mid A \text{ has bad reduction at } \omega\}$ is a measurable subset of $\Omega.$ It has positive measure if and only if $h_{\partial \mathcal{A}_{g}}(A) > 0$. 
\end{proposition}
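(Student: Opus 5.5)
The plan is to express $h_{\partial \mathcal{A}_g}(A)$ as an integral of local terms over $\Omega$, and to show that the local term at $\omega$ is $\geq 0$, with equality exactly when $A$ has (potentially) good reduction at $\omega$. Since $K$ is algebraically closed, potentially good reduction is the same as good reduction.

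First, choose the section $s$ of $\mathcal{O}(\partial \mathcal{A}_{g,n})$ cutting out the boundary. The model metric on $\ovl{\mathcal{O}(\partial\mathcal{A}_{g,n})}$ satisfies $|s| \le 1$ at every non-archimedean place, because the divisor is effective on the model. Let $a \in \mathcal{A}_{g,n}(K)$ be the point given by $A$ with its level structure. The point lies in the open part, so $s(a)\neq 0$. By Lemma \ref{lem: comparing intersections for YZ and CM} and the proof of Proposition \ref{YuanZhangChenMoriwai1}, the height is the Chen--Moriwaki height of the pulled-back CM adelic line bundle. This gives
$$h_{\partial\mathcal{A}_g}(A) = \int_\Omega -\log |s(a)|_\omega \, d\nu(\omega).$$
There are no archimedean places, since $K$ is non-archimedean and so $\nu(\Omega_\infty)=0$. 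The integrand $\lambda(\omega) := -\log|s(a)|_\omega$ is therefore non-negative and integrable. It is measurable by the measurability part of Proposition \ref{YuanZhangChenMoriwai1}, using \cite[Proposition 6.1.29]{CM20}.

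Next, identify the zero set of $\lambda$. Fix $\omega$ and work over the completion $K_\omega$ with valuation ring $K_\omega^\circ$. The model $\ovl{\mathcal{A}_{g,n}}$ is proper over $k_0$ (or over $\mathbb{Z}[1/n,\zeta_n]$). By the valuative criterion, the point $a$ extends to a unique $K_\omega^\circ$-point $\tilde a$ of the toroidal compactification; trivial valuations are included here. The metric is the model metric, so $\lambda(\omega)=0$ exactly when the reduction of $\tilde a$ avoids $\partial\mathcal{A}_{g,n}$, that is, when $\tilde a$ lands in $\mathcal{A}_{g,n}$.

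By the moduli interpretation of $\mathcal{A}_{g,n}$ (fine for $n\ge 3$), landing in $\mathcal{A}_{g,n}$ means $A$ extends to an abelian scheme over $K_\omega^\circ$. So $\lambda(\omega)=0$ exactly when $A$ has good reduction at $\omega$. For the converse direction I would use Faltings--Chai: the boundary of the toroidal compactification is precisely where the universal semiabelian degeneration has nontrivial toric part. This is the step I expect to be the main obstacle. Faltings--Chai is stated over discrete valuation rings or noetherian bases, while $K_\omega^\circ$ here is an arbitrary rank-one valuation ring. I would handle this by a limit argument: $a$ and its extension $\tilde a$ are defined over a finitely generated subring, the extension of a semiabelian scheme is unique, and good reduction is detected after restricting to the valuation ring of a finitely generated subfield, where noetherian approximation applies. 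One also needs the $n$-torsion to be étale over $K_\omega^\circ$. This holds because $n$ is invertible in the characteristic, so level structures extend along with the abelian scheme.

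Finally, combine the two steps. The bad-reduction set is $\{\omega : \lambda(\omega)>0\}$, which is measurable because $\lambda$ is measurable. Since $\lambda\ge 0$, its integral is positive exactly when this set has positive measure. This proves both claims of the proposition.
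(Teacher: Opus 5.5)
Your proposal is correct and follows essentially the paper's argument. Both write $h_{\partial\mathcal{A}_g}(A)=\int_\Omega -\log\|s\|_\omega\,d\nu(\omega)$ with a non-negative integrand, and both identify $\{\omega : \|s\|_\omega<1\}$ with the bad-reduction places by asking whether $\Spec K\to\mathcal{A}_{g,n}$ extends over the valuation ring. Two remarks. First, as the paper does at the outset, you should discard the $\nu$-null set of places whose residue characteristic differs from that of $K$; it is null because $\nu(\Omega_\infty)=0$, $\int_\Omega\log|\ell|_\omega\,d\nu(\omega)=0$, and $\log|\ell|_\omega\le 0$ at non-archimedean places, and only at the remaining places does the model over $k_0$ with $n$ invertible apply. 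Second, the Faltings--Chai step you flag can be bypassed: an abelian extension over $K_\omega^\circ$, together with its polarization and level-$n$ structure (constant, since $K_\omega$ is algebraically closed), gives a point of $\mathcal{A}_{g,n}(K_\omega^\circ)$, which must equal $\tilde a$ by separatedness of the compactification.
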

\begin{proof}
Since $K$ is a non-archimedean GVF, the set of places $\omega \in \Omega$ with residue characteristic different from the characteristic of $K$ has measure zero. We may thus remove those places and assume the residue characteristic of all $\omega \in \Omega$ is equal the characteristic of $K$.
\par 
Fix the data of $h_{\partial \mathcal{A}_{g, n}}$ as above, and consider the section $s$ of $\mathcal{O}(\partial \mathcal{A}_{g, n})$ on $\ol{\mathcal{A}_{g, n}}$ which cuts out $\partial \mathcal{A}_{g, n}.$ Note that for an absolute value $\omega$ for our adelic curve $K,$ $A$ has good reduction at $\omega$ if and only if, if $R$ is the valuation ring corresponding to the absolute value $\omega,$ the morphism $\Spec K \rightarrow \mathcal{A}_{g, n}$ extends to $\Spec R.$ In particular, as $n$ is not divisible by the characteristic, this occurs if and only if the pullback of $s$ to $\Spec K$ has norm at least $1$ with respect to $\phi(\omega)$ when we take the analytification of the adelic line bundle (see \cite[\S 3.3.1]{yuan2021adelic}).
\par Therefore, if we pullback the adelic line bundle $\ol{\partial \mathcal{A}_{g, n}}$ to $K$ and consider this section $s,$ we see that the set of places $\omega \in \Omega$ such that $A$ has bad reduction at $\omega$ where the residue characteristic of $\omega$ doesn't divide $n$ is precisely the set \[\{\omega \in \Omega| \,\, \|s\|_{\ol{\partial \mathcal{A}_{g, n}}, \omega} < 1\},\] which is measurable since Yuan--Zhang adelic line bundles induce Chen--Moriwaki adelic line bundles. 
\par 
Since 
$$h_{\partial \mathcal{A}_{g,n}} = \int_{\Omega} \log ||s||^{-1}_{\omega} d \nu(\omega),$$
and $\log ||s||^{-1}_{\omega} \geq 0$ as $s$ is an effective section of $\ol{\mathcal{A}_{g,n}}$, it follows that $h_{\partial \mathcal{A}_{g,n}}(A) > 0$ if and only if there is a set of positive measure in $\Omega$ for which $A$ has bad reduction.
\end{proof}

\begin{proposition} \label{GVFDivisor2}
In the setting of Theorem \ref{GVFDivisor1}, assume that $h_{\partial \mathcal{A}_g}(A) > 0$. Then Theorem \ref{GVFDivisor1} is true.    
\end{proposition}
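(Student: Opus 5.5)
We argue by contradiction using Theorem \ref{YamakiGVF1}. Suppose $\h_{A,L}(D) = 0$ but $D$ is not a translate $x+B$ of an abelian subvariety. We may pass to a countable algebraically closed sub-GVF over which $A$, $L$, $D$ are defined; tracelessness and the heights are unchanged. The aim is to check the hypotheses of Theorem \ref{YamakiGVF1} for $X = D$, and to obtain a contradiction with the existence of a dense set of small points.

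First we produce a dense set of small points. Since $\h_{A,L}(D) = 0$ and the canonical adelic line bundle $\ovl{L}$ is nef with $L$ ample, the successive-minima/essential-minimum inequality (\cite[Proposition 2.6.8]{Hul26}, as used in Proposition \ref{SmallPointsExistence1}) gives a countable GVF extension $K'/K$ with a generic sequence of small points on $D(K')$. We must check that the hypotheses of Theorem \ref{YamakiGVF1} survive the passage $K \subseteq K'$. Bad reduction at a positive measure of places is preserved, since the boundary height $h_{\partial\mathcal{A}_g}(A)$ is intrinsic to the point of $\mathcal{A}_{g,n}$ and the GVF structure on $K'$ extends that on $K$. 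Tracelessness over the height zero subfield is the delicate point. Here we use that $A$ is defined over $K$, that $K$ is algebraically closed, and the rigidity of the trace: a nonzero trace over $k'$ would descend to a nonzero homomorphism defined over $K$. Alternatively, we can arrange that Theorem \ref{YamakiGVF1} is applied so that only the measure equality of Theorem \ref{EqualityMeasures1} over $K'$ is used, which is what the theorem's proof actually needs.

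Next we verify the dimension condition. By Proposition \ref{prop: boundary height and places of bad reduction}, the hypothesis $h_{\partial \mathcal{A}_g}(A) > 0$ means that $\Omega' = \{v : A \text{ has bad reduction at } v\}$ has positive measure. We split according to the stabilizer $G_D$.

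If $G_D$ is finite, then $\dim(D/G_D) = g-1$. Moreover $A/G_D$ is isogenous to $A$, so it also has bad reduction at each $v \in \Omega'$. Hence $b((A/G_D)_v) \le g-1$, and the condition $\dim(D/G_D) \ge b((A/G_D)_v)$ holds on all of $\Omega'$.

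If $G_D$ is infinite, let $G = G_D^0$. Then $D$ is a union of cosets of $G$, and $D/G$ is a divisor in $A/G$ with finite stabilizer. Two subcases arise:
\begin{enumerate}
\item If $A/G$ still has bad reduction on a positive measure set, the previous argument applies verbatim to $D/G \subseteq A/G$.
\item If $A/G$ has good reduction almost everywhere, $\dim(D/G_D)$ could fall below $b$. In this subcase we reduce to the quotient $A/G$. By Lemma \ref{lem: dimension zero bogomolov} and the comparison of canonical heights under isogenies and quotients used in its proof, $\h(D/G) = 0$ on $A/G$. Since $A/G$ is traceless of smaller dimension, we can induct on $\dim A$.
\end{enumerate}
The base of the induction and the good-reduction subcase are exactly what Theorem \ref{GVFDivisor1} handles in the remaining (good reduction) part of the paper. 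So for this proposition it suffices to treat the first subcase and the finite stabilizer case.

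With these hypotheses in hand, Theorem \ref{YamakiGVF1} says $D$ has no dense set of small points. This contradicts the generic small sequence constructed above, so $D = x + B$. The main obstacle is the compatibility in the first step: ensuring that the GVF extension $K'$ needed to realize small points does not create trace, and that bad reduction on a positive measure set is preserved. I would handle this by noting that the proof of Theorem \ref{YamakiGVF1} only consumes the equality of Monge--Amp\`ere measures from Theorem \ref{EqualityMeasures1} at places of bad reduction. That equality is already formulated over such an extension $K'$, and the tropical computation of Theorem \ref{thm: measure sum of polytope measures} and Proposition \ref{prop: support of measure and dimension of non-degenerate} contradicts it whenever $\dim(D/G_D) \ge b$ and $D$ is not a translate of an abelian subvariety.
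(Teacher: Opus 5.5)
Your proposal is correct and is essentially the paper's argument: produce small points over a countable extension $K'$ via Proposition~\ref{SmallPointsExistence1}, note that $h_{\partial\mathcal{A}_g}(A)>0$ persists over $K'$, and apply Theorem~\ref{YamakiGVF1} with $\dim(D/G_D)=g-1\geq b((A/G_D)_v)$ at the places of bad reduction. Your extra care also covers points the paper's short proof passes over:
\begin{itemize}
\item If $G_D$ is infinite, the quotient $A/G_D$ may have good reduction almost everywhere. In that case you should simply apply the good-reduction case (Theorem~\ref{GVFDivisor3}) to $D/G\subseteq A/G$; no induction on $\dim A$ is needed.
\item The vanishing $\h(D/G)=0$ comes from Lemma~\ref{lem: image of height zero}, not from the dimension-zero lemma.
\item Tracelessness over $K'$ should be handled by your first option, the base-change compatibility of the trace that the paper takes from Conrad just before Proposition~\ref{Induction1}. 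Your second option does not avoid the issue, because the ``special implies coset'' step inside Theorem~\ref{YamakiGVF1} is itself run over $K'$.
\end{itemize}
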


Before we turn to the proof of Proposition \ref{GVFDivisor2}, we have the following useful lemma.
\begin{lemma} \label{lem: image of height zero}
Suppose that $\phi: A \rightarrow B$ is a homomorphism of abelian varieties. Then, the image of a subvariety with canonical height zero (with respect to any ample symmetric line bundle) has canonical height zero.
\end{lemma}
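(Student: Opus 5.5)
We plan to argue via characterizations of height zero subvarieties by small points, or more directly via the comparison of line bundles, as in the proof of Lemma \ref{lem: dimension zero bogomolov}. Let $V \subseteq A$ be a subvariety with $\h_{A,L}(V) = 0$ for an ample symmetric $L$ on $A$. Let $W = \phi(V) \subseteq B$, and let $M$ be any ample symmetric line bundle on $B$. First we reduce to a convenient choice of bundles. Canonical heights of subvarieties are positive multiples of each other's up to comparison: for two ample symmetric bundles $L, L'$ on $A$ there is $n$ with $nL - L'$ ample. We would then use that the canonical height of $V$ is a limit of normalized intersection numbers with nef canonical adelic line bundles. Since $n\ovl{L} - \ovl{L'}$ is nef in the canonical sense (it is the canonical metrization of an ample symmetric bundle), height zero for one ample symmetric bundle implies height zero for all. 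So the statement is independent of the choice of $L$ and $M$.

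Next we use the criterion through small points. By the successive minima (Zhang's inequality, which holds in this setting via \cite[Proposition 2.6.8]{Hul26} and the discussion around Proposition \ref{SmallPointsExistence1}), $\h_{A,L}(V) = 0$ is equivalent to the essential minimum of $\h_L$ on $V$ being $0$. After passing to a countable GVF extension $K'$, this gives a Zariski dense (generic) sequence $x_m \in V(K')$ with $\h_L(x_m) \to 0$; the canonical height of $V_{K'}$ is unchanged under GVF extension. As in the proof of Lemma \ref{lem: dimension zero bogomolov}, choose $n$ with $L^{\otimes n}\otimes \phi^*M^{-1}$ ample, so that $\h_M(\phi(x)) \leq n \h_L(x)$ for all points $x$. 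Then $\phi(x_m)$ is a sequence in $W$ with $\h_M(\phi(x_m)) \to 0$, and it is Zariski dense in $W$ because $\phi|_V : V \to W$ is dominant, so images of generic sequences are generic. Hence the essential minimum of $\h_M$ on $W$ is $0$, and the other direction of Zhang's inequality, $\h_M(W) \leq \mathrm{ess.min}$ up to a factor, together with $\h_M(W) \geq 0$ from nefness of $\ovl{M}$, gives $\h_M(W) = 0$.

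The main obstacle is justifying both directions of Zhang's inequality over a general countable GVF for canonical adelic bundles on $V$ and $W$. I would rely on Hultberg's version \cite[Proposition 2.6.8]{Hul26} for ample bundles and on the existence of small generic sequences after an extension from Proposition \ref{SmallPointsExistence1}. Canonical metrics are nef, semipositive and geometrically ample, so those hypotheses apply.

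An alternative that avoids points is to view the height as an intersection number $\ovl{L}^{\dim V+1}\cdot V$. We could compare it with $(\phi^*\ovl{M})^{\dim V +1}\cdot V$ using the projection formula when $\phi|_V$ is generically finite, or using mixed terms and the Hodge index type inequality when $\dim W<\dim V$. I consider the point approach cleaner and would only fall back on this if the GVF successive minima statement is unavailable.
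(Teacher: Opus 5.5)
Your proposal is correct and is essentially the paper's argument: after a countable GVF extension, \cite[Proposition 2.6.8]{Hul26} gives a Zariski-dense set of small points on $V$ (the paper takes height zero points). Their images under $\phi$ are small by the comparison $\h_M(\phi(x)) \le n\,\h_L(x)$ from Lemma \ref{lem: dimension zero bogomolov} and stay dense in $\phi(V)$, so \cite[Proposition 2.6.10]{Hul26} forces $\h_M(\phi(V)) = 0$; your preliminary reduction to independence of the choice of ample symmetric bundle is a harmless extra that the paper leaves implicit.
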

\begin{proof}
For the case of dimension zero subvarieties, this follows from the argument in Lemma \ref{lem: dimension zero bogomolov}. In general, say that $V$ is a positive-dimensional subvariety of $A.$ As $\h_{A,L}(V) = 0$, by \cite[Proposition 2.6.8]{Hul26}, after passing to a countable GVF extension, we obtain a Zariski dense set of height zero points for $V$. Then $\phi(V)$ also contains a Zariski dense set of height zero points, and hence the canonical height of $q(V)$ must be zero by \cite[Proposition 2.6.10]{Hul26}. 
\end{proof}

\begin{proof}[Proof of Proposition \ref{GVFDivisor2}]
First, we may assume that $\Stab(D)$ is finite. Otherwise, if $B$ is the stabilizer, we may work with the image of $D$ in $A/B$ instead, where we now have finite stabilizer. Now as $\h_L(D) = 0$ and $L$ is ample, by Proposition \ref{SmallPointsExistence1}, we may pass to a countable extension $K'/K$ such that $D(K')$ contains a generic sequence $(x_n)$ of small points. Then, base changing to the GVF $K',$ $h_{\partial \mathcal{A}_g}(A)$ is still positive, and hence we still have a positive measure set of bad reduction places.  
\par We now apply Theorem \ref{YamakiGVF1}. As $\Stab(D)$ is finite and $A_v$ has bad reduction, we certainly always have $ \dim(D/\Stab(D)) = g-1 \geq b(A_v)$. Hence $D$ cannot have a dense set of small points unless it is a special subvariety. Since $A$ is traceless, this means that $D$ is a torsion translate of an abelian subvariety, as desired.
\end{proof}

We are now left with the case where $h_{\partial \mathcal{A}_g}(A) = 0$, i.e. the case of almost everywhere good reduction. It is useful to analyze the case of global function fields first, say $K = \bb{C}(B)$ where $B$ is a curve over $\bb{C}.$ In this case, we have good reduction literally everywhere. Furthermore, let's assume $D$ is symmetric, ample, and endow $L = O(D)$ with the canonical metric after fixing a rigidification $0^*L \simeq K$, where $0: K \to A$ is the zero section. Let $s \in H^0(A,L)$ be the global section corresponding to $D$. We have the formula 
$$\h_{A,L}(D) = \sum_{v \in M_K} \int \log |s(x)|_v d \mu_{A,v}.$$
Since $A$ has good reduction at $v$ for all places $v,$ $\mu_{A,v}$ is supported exactly at the Shilov point and hence $\log |s(x)|_v = \log \sup\limits_{x \in A^{\an}_v} |s(x)|_v$. Since $\h_{A,L}(D) = 0$, we have 
$$\sum_{v \in M_K} \log \sup\limits_{x \in A^{\an}_v} |s(x)|_v = 0.$$
This consequence has already been observed by Yamaki \cite{Yam17} in his proof of Bogomolov conjecture for divisors in the everywhere good reduction case.
\par 
On the other hand, the key formula of Moret-Bailly \cite{MB85}, in the reformulation of Bost \cite{Bos96} (see \cite[Equation A.4]{GG26} for the function field case), gives us 
\begin{equation} \label{eq: MoretBailly1}
\wh{\mu}(\pi_* \ovl{L}) = -\frac{1}{2} h_{\Fal}(A).
\end{equation}
Here, $\wh{\mu}$ is the average slope of the adelic vector bundle $\pi_* \ovl{L}$, where an element $x \in \pi_* \ovl{L}$ corresponding to a global section $s$ is given the metric $|x|_v := \sup |s|_v$. If $\dim H^0(A,L) = 1$, say if $D$ is the theta divisor of a principally polarized abelian variety, then we automatically get that 
$$-\frac{1}{2} h_{\Fal}(A) = 0$$
which implies that $A$ is isotrivial. For a general symmetric divisor $D$, if $K$ is of characteristic zero then we can get a contradiction by using semistability of the metrized vector bundle $\pi_* \ovl{L}$ \cite{Bos96b} which tells us that $\hat{\mu}(\pi_*\ol{L})$ bounds above the slope of any line (in particular, the line generated by $s,$ whose slope is zero). If $K$ is of positive characteristic, then the situation is more complicated but one can still arrive at a contradiction by constructing many small sections of $\ovl{L}^2$. 
\par 
For a general countable GVF $K$, viewing $K$ as an adelic curve instead, we may view $\pi_* \ovl{L}$ as an adelic vector bundle in the sense of Chen--Moriwaki \cite[Theorem 6.1.13]{CM20}. However, the formula (\ref{eq: MoretBailly1}) is not known to hold in the setting of a general adelic curve and indeed, it seems that would require understanding various compactifications of symmetric line bundles $\mathcal{L}$ on an abelian scheme $\mathcal{A}/S$ when $S$ is only quasi-projective (see \cite{BGHJ26} for the case of the principal theta divisor).
\par 
However, luckily for us, we only have to handle the case where $A/K$ has almost everywhere good reduction. For an abelian scheme $\mathcal{A}/S$, Tang \cite{Tan14} has proven that given a symmetric rigidified relatively ample line bundle $L$ on $\mathcal{A}$, we have a canonical isomorphism 
$$\det(\pi_*L)^{\otimes 24} \simeq (e^* \omega_A^{\vee})^{\otimes 12 d}$$
where $d$ is the rank of the locally free sheaf $\pi_* L$ and $e: S \to A$ is the zero section. Using this, we prove the following. 

\begin{proposition} \label{GoodReductionKeyFormula1}
Let $(K, (\Omega, \mathcal{A},\nu),\phi)$ be an adelic curve and let $A/K$ be an abelian variety. Let $L$ be a rigidified symmetric ample line bundle on $A$ and endow $L$ with the canonical metric. Then there is an isomorphism 
$$\det(\pi_* L)^{\otimes 24} \simeq (e^* \omega_A^{\vee})^{\otimes 12 d}$$
such that for every non-archimedean place $v \in \Omega$ of good reduction, the isomorphism extends to an isometry 
$$\det(\pi_* \ovl{L}_v)^{\otimes 24} \simeq (e^* \ovl{\omega}_{A,v}^{\vee})^{\otimes 12 d}.$$
\end{proposition}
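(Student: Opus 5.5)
We will obtain the isomorphism from Tang's theorem \cite{Tan14} applied to the base $S=\Spec K$ (with $A\to \Spec K$ as abelian scheme, $L$ rigidified symmetric relatively ample). This gives a canonical isomorphism $\det(\pi_*L)^{\otimes 24}\simeq (e^*\omega_A^\vee)^{\otimes 12d}$ of one-dimensional $K$-vector spaces. Its key feature is canonicity: it is compatible with arbitrary base change $S'\to S$. At a non-archimedean place $v$ of good reduction, let $R_v$ be the valuation ring of $K_v$ and $\mathcal{A}_v/R_v$ the abelian scheme extending $A_{K_v}$ (we may pass to the completion of an algebraic closure if needed). The rigidification of $L$ determines, via the theorem of the cube, a unique rigidified symmetric extension $\mathcal{L}_v$ of $L$ to $\mathcal{A}_v$, relatively ample. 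Applying Tang's theorem over $\Spec R_v$ gives an isomorphism of invertible $R_v$-modules $\det(\pi_*\mathcal{L}_v)^{\otimes 24}\simeq (e^*\omega_{\mathcal{A}_v}^\vee)^{\otimes 12d}$. By base-change compatibility, its generic fibre is our isomorphism over $K$. Tang's theorem is stated over general (noetherian) bases, so for non-noetherian $R_v$ we descend: $\mathcal{A}_v,\mathcal{L}_v$ are of finite presentation and are defined over a noetherian subring, so the isomorphism spreads out.

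Next we identify metrics. At a good reduction place, the canonical (cubical) metric on $L_v$ is the model metric induced by $\mathcal{L}_v$. This holds because the model metric of the rigidified cubical extension satisfies $[m]^*\|\cdot\|=\|\cdot\|^{m^2}$ and is compatible with the rigidification, and the canonical metric is characterized by these properties. Hence for $s\in H^0(A,L)\otimes K_v$, the sup norm $\sup_x|s(x)|_v$ is at most $1$ exactly when $s$ lies in $H^0(\mathcal{A}_v,\mathcal{L}_v)=\pi_*\mathcal{L}_v$. Here we use that the Shilov point reduces to the generic point of the integral special fibre, so the sup norm is the lattice norm. Thus the norm on $\pi_*\ovl{L}_v$ is the lattice norm of $\pi_*\mathcal{L}_v$, which is a free $R_v$-module since $\pi_*$ commutes with base change for relatively ample $\mathcal{L}$ on an abelian scheme (vanishing of higher cohomology). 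The determinant norm is then the lattice norm of $\det\pi_*\mathcal{L}_v$. Similarly, the metric on $e^*\omega_{A,v}^\vee$ is by definition the one induced by the lattice $e^*\omega_{\mathcal{A}_v}^\vee$. An isomorphism of invertible $R_v$-modules induces an isometry of the associated lattice norms on the generic fibre, which gives the claim.

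The main obstacle is the metric identification. First, the determinant norm of a sup-norm vector bundle over a non-discrete valuation ring must agree with the lattice-induced one. This requires the sup norm on $\pi_*\ovl{L}_v$ to be exactly the lattice norm, with orthogonal bases given by $R_v$-bases. Second, the canonical metric must coincide with the model metric of $\mathcal{L}_v$. We would first check the latter via the uniqueness of cubical metrics. For the former we would use that the special fibre is reduced and integral, so the reduction of a primitive section is nonzero.
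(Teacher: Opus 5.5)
Your proposal is correct and follows the paper's proof. Tang's canonical, base-change-compatible isomorphism, applied on the good-reduction model $(\mathcal{A}_v,\mathcal{L}_v)$, restricts on the generic fibre to the isomorphism over $K$, and all the metrics involved are the lattice metrics of that model; you additionally spell out what the paper leaves implicit (canonical metric equals model metric, sup norm equals lattice norm because the special fibre is integral, the determinant norm, and the descent for non-noetherian valuation rings).
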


\begin{proof}
This follows immediately from \cite[Theorem 1.3]{Tan14} and the universal nature of the construction given in \cite[\S 3]{Tan14}. Indeed choosing his canonical isomorphism over $K$, if $v$ is a place of good reduction for $A$, we let $\mathcal{A}_v, \mathcal{L}_v, \mathcal{\omega}_{\mathcal{A}_v}$ be the canonical models for $(A,L,\omega_A)$. Then, the canonical isomorphism given by \cite[Theorem 1.3]{Tan14} between $\det(\pi_* \mathcal{L}_v)^{\otimes 24}$ and $(e^* \mathcal{\omega}_{\mathcal{A}_v}^{\vee})^{\otimes 12 d}$ must induce our canonical isomorphism over $K,$ giving us the isometry we desired.
\end{proof}

As a consequence, we obtain the following. 

\begin{proposition} \label{SmallBasis1}
Let $(K, (\Omega, \mathcal{A}, \nu),\phi)$ be a non-archimedean adelic curve and let $A/K$ be an abelian variety. Assume that $A$ has almost everywhere good reduction and let $L$ be a symmetric ample line bundle and endow $L$ with its canonical adelic metric $\ovl{L}$. Then if $H^0(A,L)$ has a basis $\{s_1,\ldots,s_N\}$ with 
\begin{equation} \label{eq: SmallAverage1}
\int_{\Omega} \log \sup |s_i|_v d \nu(v) \leq 0
\end{equation}
for all $i$, we must have $h_{\Fal}(A) = 0$. 
\end{proposition}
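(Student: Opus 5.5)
We compare the adelic degree of $\det(\pi_*\ovl{L})$, computed in two ways, against the Faltings height. Put the sup-norms $\|s\|_v=\sup_{x\in A_v^{\an}}|s(x)|_v$ on $H^0(A,L)$; by \cite[Theorem 6.1.13]{CM20} this makes $\pi_*\ovl{L}$ an adelic vector bundle of rank $N=d$. The plan is to show that $\wh{\deg}(\det\pi_*\ovl{L})\geq 0$ from the small basis, and that $24\,\wh{\deg}(\det \pi_*\ovl{L}) = -12 d\, h_{\Fal}(A)$ from Proposition \ref{GoodReductionKeyFormula1}. Since $h_{\Fal}(A)\geq 0$ (it is the height of an ample line bundle, and the GVF positivity gives nonnegativity), these two facts force $h_{\Fal}(A)=0$.

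\emph{Lower bound.} We use Hadamard's inequality at each non-archimedean place: for the ultrametric sup-norm one has $\|s_1\wedge\cdots\wedge s_N\|_{\det,v}\leq \prod_i \|s_i\|_v$, with the determinant norm defined as in \cite{CM20}. Since $\nu$ is concentrated on non-archimedean places, this holds for almost every $v$. Integrating over $\Omega$ gives
\[
\wh{\deg}(\det \pi_*\ovl{L}) = -\int_\Omega \log\|s_1\wedge\cdots\wedge s_N\|_{\det,v}\,d\nu \geq -\sum_i \int_\Omega \log \|s_i\|_v\,d\nu \geq 0,
\]
where the last step is hypothesis (\ref{eq: SmallAverage1}).

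\emph{Key formula.} By Proposition \ref{GoodReductionKeyFormula1}, the isomorphism $\det(\pi_*L)^{\otimes 24}\simeq (e^*\omega_A^\vee)^{\otimes 12d}$ is an isometry at every good reduction place, hence $\nu$-almost everywhere. A measure-zero set does not affect the integral, so
\[
24\,\wh{\deg}(\det\pi_*\ovl{L}) = 12d\,\wh{\deg}(e^*\ovl{\omega}_A^\vee).
\]
It remains to identify $\wh{\deg}(e^*\ovl{\omega}_A)$, with $\ovl\omega_A$ carrying the metric induced at good places by the Néron model, with $h_{\Fal}(A)=h_\omega(A)$. This is the height of the pullback of the Hodge bundle $\ovl{\omega}$ along $\Spec K\to\mathcal{A}_{g,n}$.

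\emph{Main obstacle.} The hardest step is this comparison of metrics. One has to check that the Yuan--Zhang adelic Hodge bundle, pulled back via Proposition \ref{YuanZhangChenMoriwai1}, gives the same CM norm as the Néron-model lattice norm at almost every place. At a good reduction place $v$, the map $\Spec K\to \mathcal{A}_{g,n}$ extends to $\Spec R_v$, and the model metric of $\omega$ on $\ovl{\mathcal{A}_{g,n}}$ is exactly the lattice norm from $e^*\Omega^g_{\mathcal{A}_v/R_v}$. This follows from the compatibility of the Hodge bundle with base change of abelian schemes, and from the fact that the model metric on the extension of $\omega$ to the minimal compactification agrees with the integral structure on $\mathcal{A}_{g,n}$ itself. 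If some places are bad, they form a null set and so do not contribute.

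Combining the two steps gives $-\tfrac{d}{2}h_{\Fal}(A)=\wh{\deg}(\det\pi_*\ovl{L})\geq 0$, so $h_{\Fal}(A)=0$.
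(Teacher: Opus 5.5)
Your proposal is correct and follows essentially the same route as the paper's proof. Both arguments combine three ingredients: the Hadamard-type bound $\|s_1\wedge\cdots\wedge s_N\|_v\leq\prod_i\sup|s_i|_v$, which gives $\wh{\deg}\det(\pi_*\ovl{L})\geq 0$; the almost-everywhere isometry of Proposition \ref{GoodReductionKeyFormula1}, which gives $\wh{\deg}\det(\pi_*\ovl{L})=-\frac{d}{2}h_{\Fal}(A)$; and nonnegativity of $h_{\Fal}$. The only difference is that you sketch the identification $\wh{\deg}(e^*\ovl{\omega}_A)=h_{\Fal}(A)$ through the good-reduction extension $\Spec R_v\to\mathcal{A}_{g,n}$, whereas the paper simply cites \cite[Section 5.5]{yuan2021adelic} for it.
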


\begin{proof}
By Proposition \ref{GoodReductionKeyFormula1}, viewing $\det(\pi_* \ovl{L})$ as a Chen-Moriwaki adelic line bundle \cite[\S 4.1.1.7]{CM20}, we have
$$24 \wh{\deg} \det(\pi_*L) = (-12 d) \wh{\deg}(e^*\ovl{\omega_A}),$$ since we have an isometry of adelic line bundles over almost every place.
As $\wh{\deg}(e^*\ovl{ \omega_A}) = h_{\Fal}(A)$ (see for example \cite[Section 5.5]{yuan2021adelic}), we have
$$\wh{\deg} \det(\pi_*L) = -\frac{d}{2} h_{\Fal}(A).$$
Now using our basis of sections $s_1,\ldots,s_N$, we obtain a section $s_1\wedge \cdots \wedge s_N$ of $\det(\pi_*L)$ which satisfies 
$$|s_1 \wedge \cdots \wedge s_N|_v \leq \prod_{i=1}^{N} \sup |s_i|_v$$
for all places $v$. Hence 
$$\int_{\Omega} \log |s_1 \wedge \cdots \wedge s_N|_v^{-1} d \nu(v) \geq -\int_{\Omega} \left(\sum_{i=1}^{N} \log \sup |s_i|_v \right) d \nu(v) \geq 0.$$
We therefore conclude that 
$$-\frac{d}{2} h_{\Fal}(A) \geq 0 \implies h_{\Fal}(A) = 0$$
as desired (since the Faltings height is nonnegative). 
\end{proof}

We are now ready to begin the proof of the Bogomolov conjecture over GVF for divisors in the (almost) everywhere good reduction case. We will say a section $s$ has small average if it satisfies (\ref{eq: SmallAverage1}). 

\begin{theorem} \label{GVFDivisor3}
Assume that $A/K$ has almost everywhere good reduction. Then Theorem \ref{GVFDivisor1} is true.
\end{theorem}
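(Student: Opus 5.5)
We argue by contradiction, assuming $\h_{A,L}(D)=0$ and that $D$ is not a translate of an abelian subvariety.

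\emph{Reduction to finite stabilizer.} As in the proof of Proposition \ref{GVFDivisor2}, we first pass to $A/\Stab(D)$. The image of $D$ is a divisor of height zero by Lemma \ref{lem: image of height zero}. The quotient is still traceless and still has almost everywhere good reduction. So we may assume $\Stab(D)$ is finite, which makes $D$ ample.

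\emph{Symmetrization.} Next we replace $D$ by a symmetric ample divisor of height zero. Pick height-zero points $x_n$ on $D$ and on $-D$, available after a countable GVF extension by Proposition \ref{SmallPointsExistence1}. Then $D'=(D-x)+(-D+x)$ is a symmetric effective divisor of height zero, by additivity of canonical heights of divisors; equivalently, it represents $[-1]^*M\otimes M$ with $M=O(D-x)$. Set $L=O(D')$ with its canonical metric and let $s$ be the section with $\divv(s)=D'$. Almost everywhere good reduction means $\mu_{A,v}$ is the Dirac mass at the Shilov point for almost every $v$. The local decomposition of $\h_{A,L}(D')$ then gives
$$\int_\Omega \log\sup|s|_v\,d\nu(v)=0.$$
So $s$ has small average.

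\emph{Producing many small-average sections.} This is the main step. One section alone is not enough, since Proposition \ref{SmallBasis1} needs a whole basis. We use translations by points $a$ of height zero. Let $M'=O(D-x)$, so that $L\simeq [-1]^*M'\otimes M'$ up to translation. For $a\in A(K)$ of height zero, the divisors $(D-x+a)+(-D+x-a)$ and $(D-x+a)+(-D+x+a)$ lie in $|L|$ or in $|t_{2a}^*L|$. The second family is handled by the theorem of the square, with $a$ replaced by a $2$-division point. Each such translate is again a height-zero divisor, because translating by a height-zero point preserves the canonical height. Each corresponding section $s_a$ satisfies $\int\log\sup|s_a|_v=0$ by the same Shilov-point argument.

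\emph{Spanning.} Height-zero points are Zariski dense: by Lemma \ref{lem: dimension zero bogomolov} they form a divisible subgroup containing the torsion. Since $D$ is ample with finite stabilizer, the sections $s_a$ for $a$ torsion span $H^0(A,L)$, or at least $H^0(A,L^2)$ after replacing $L$ by $L^{2}$. Indeed, the span is a nonzero subspace invariant under the action of a large subgroup of the theta group. Mumford's irreducibility of the Heisenberg representation should then force the span to be everything.

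\emph{Conclusion.} We extract a basis of small-average sections and apply Proposition \ref{SmallBasis1} to get $h_{\Fal}(A)=0$. Since $\omega$ is ample on the minimal compactification, $A$ is then defined over $k$, contradicting tracelessness.

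\emph{Main obstacle.} The delicate point is the spanning step. First, the translates must stay inside a single linear system; second, the invariant subspace must be the whole space. In characteristic $p$ the theta group argument can fail, for instance when $p$ divides the degree or the group scheme is nonreduced. This fits the introduction's remark that one may only get a factorization $A\to B\hookrightarrow\mathbb{P}^N$ through an isogeny. If the span is a proper subspace, I would use these sections to define a map $A\to\mathbb{P}^N$. That map factors through an isogeny $A\to B$, and I would then rerun the determinant argument of Proposition \ref{SmallBasis1} on $B$ to get $h_{\Fal}(B)=0$. This again contradicts tracelessness, since $B$ is isogenous to $A$.
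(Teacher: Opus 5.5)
\paragraph{Verdict.} The proposal has a genuine gap: the spanning step fails in positive characteristic, and the fallback omits exactly the steps that make the paper's proof work there.

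\paragraph{Where your route works.} Your main route differs from the paper's and is sound when the group scheme $K(L)=\ker\phi_L$ is étale, for example in characteristic $0$ or when $p\nmid\deg\phi_L$. One correction is needed even then. The family $\{s_a\}$ is not stable under the theta group. Its divisors $(D_0+a)+(-D_0-a)$ are all symmetric, so the $s_a$ are all even and span at most $H^0(A,L)^+$. Instead, let $W\subseteq H^0(A,L)$ be the span of all sections whose divisor has canonical height zero. Each such generator has small average by your Shilov-point computation. $W$ is stable under $\mathcal{G}(L)(K)$, because translating by $b\in K(L)(K)$ preserves height zero. Mumford's irreducibility then gives $W=H^0(A,L)$, and Proposition \ref{SmallBasis1} finishes. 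In this case that is slicker than the paper's detour through an isogeny.

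\paragraph{Where it fails.} Theorem \ref{GVFDivisor1} is needed in every characteristic. With your symmetrization $L=M\otimes[-1]^*M$ we have $\phi_L=2\phi_M$, so $K(L)\supseteq A[2]$. Hence $K(L)$ is non-reduced always in characteristic $2$, and in characteristic $p$ whenever $p\mid\deg\phi_L$. The $K$-points of $\mathcal{G}(L)$ then see only the reduced part of $K(L)$. $H^0(A,L)$ is reducible under them, and $W$ can be a proper subspace. So the main argument fails exactly in the positive-characteristic case the paper is built for.

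\paragraph{The fallback.} This is where all the real work of the paper's proof lies, and as stated it is not correct.
\begin{itemize}
\item \emph{Base points.} A span of small sections need not be base-point free. The paper proves this separately (Proposition \ref{DivisorSupport1}), using Zariski density of $\ell$-power torsion with $\ell\neq p$.
\item \emph{Factoring through an isogeny.} Even when the map is a morphism, it need not factor as an isogeny followed by a closed immersion. For your even sections $s_a$ it factors through the Kummer quotient $A/\{\pm1\}$; compare the map defined by $|2\Theta|$. The paper gets around this in several moves. It passes to $L^{2}$ via $s(x+q)s(x-q)$ and takes the product $\prod_i f\circ t_{x_i}$ of finitely many translates of the map $f$. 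It sets $B=A/H$ for the scheme-theoretic stabilizer $H$ of $f$. It proves $\bigcap_x\Gamma_x=A\times_B A$ by scheme-theoretic density of torsion translates (Lemma \ref{DivisorSupport3}). This makes $B\to(\mathbb{P}^g)^m$ a monomorphism, hence a closed immersion, which is what controls inseparability.
\item \emph{Getting a basis on $B$.} Pulled-back hyperplane sections only span the image of $H^0(\mathbb{P}^N,O(1))$. They do not give a basis of sections of a symmetric bundle on $B$. The paper symmetrizes to $M'=M\otimes[-1]^*M$ and checks $\pi^*\ovl{M}'=\ovl{L}^{4m}$ via the theorem of the square, so sup norms transfer. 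It uses that products of small-average sections have small average. It then passes to $n\gg0$ so that $H^0(\mathbb{P}^N,O(n))\to H^0(B,M'^{n})$ is surjective, and only then applies Proposition \ref{SmallBasis1} on $B$.
\end{itemize}
None of these steps is supplied, so in positive characteristic the proposal does not yet prove the theorem.
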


\begin{proof}
We may assume that $K$ is algebraically closed. As in the proof of Theorem \ref{GVFDivisor2}, we may assume that $\Stab(D)$ is finite. Then $D$ is an ample divisor. Now by replacing $D$ with $D + [-1]^*D$, we may assume that $D$ is symmetric, at the cost of losing the integrality of $D$. Let $L = O(D)$ and endow $L$ with the canonical metric by fixing a rigidification $e^*L \simeq K$ where $e: K \to A$ is the zero section. Let $s$ be the canonical (up to scaling) section of $L$ with $\divv(s) = D$. View $K = (K,(\Omega,\mathcal{A},\nu),\phi)$ as an adelic curve. Therefore, we have
$$\h_{A,L}(D) = \h_{A,L}(A) + \int_{\Omega} \left(\int_{A^{\an}_v} \log |s(x)|_v d \mu_{A,v} \right) d \nu(v) = \int_{\Omega} \log \sup_{x \in A^{\an}_v} |s(x)|_v d\nu(v),$$
where the second equality follows because $A$ has good reduction at every place outside a set of measure zero. Now by assumption, $\h_{A,L}(D) = 0$ and since $\h_{A,L}(A)  = 0$, we obtain 
$$\int_{\Omega} \log \sup |s(x)|_v d \nu(v) = 0$$
and hence $s$ has small average. We now aim to construct many small sections of the adelic line bundle $\ovl{L}^2$. This step appears in Yamaki's proof (\cite{Yam17}). To do this, fix a prime $\ell$ which is different from the characteristic $p$ of $K.$ Then, for $q \in A[\ell^{\infty}],$ let $t_q: A_K \rightarrow A_K$ be the translation by $q$ isomorphism, and let $D_q = t_q^*D + t_{-q}^*D.$ These are all divisors which are symmetric and linearly equivalent to $L^{\otimes 2},$ whose section is given by $s(x + q)s(x-q);$ we abbreviate this as $s_q.$ Then as $\sup |s(x+q)|, \sup |s(x-q)| = \sup |s(x)|$, the sections $s(x+q),s(x-q)$ certainly have small average and so does $s_q$. 

\begin{proposition} \label{DivisorSupport1}
There exists $q_0,\ldots,q_g \in A[\ell^{\infty}]$ such that $D_{q_0} \cap \cdots \cap D_{q_g} = \emptyset$. 
\end{proposition}

\begin{proof}
Observe that given $x \in A(K)$, the set of $q \in A(K)$ such that $x \in D_q$ forms a proper subvariety. We now construct inductively $q_0,\ldots,q_g$ such that for $0 \leq j \leq g,$ the irreducible components $\cap_{i=0}^{j} D_{q_i}$ all have codimension $j.$ Given $D_{q_0},\ldots,D_{q_j}$, we choose points $x_1,\ldots,x_N$ from each of the irreducible components of $\cap_{i=0}^{j} D_{q_i}$. Since $A[\ell^{\infty}]$ is Zariski dense in $A$, we may find $q_{j+1} \in A[\ell^{\infty}]$ such that $D_{q_{j+1}}$ does not contain any of the $x_i$'s. Then $D_{q_{g+1}}$ intersects any irreducible component of $\cap_{i=0}^{j} D_{q_i}$ properly, and hence the irreducible components of $\cap_{i=0}^{j+1} D_{q_i}$ has codimension $j+1$ as desired, finishing the induction. 
\end{proof}

Proposition \ref{DivisorSupport1} gives us sections $s_0,\ldots,s_g$ of $\ovl{L}^2$ which all have small average, such that they form a basepoint free set, and thus we obtain a morphism $f: A \rightarrow \mathbb{P}^g_K$ given by the sections $s_0, s_1, \ldots, s_g.$ We now want to produce enough sections to obtain a closed embedding, not just basepoint freeness. 

\begin{proposition} \label{DivisorSupport2}
There exists finitely many torsion points $x_1,\ldots,x_m \in  A[\ell^{\infty}](K)$ such that $\prod_{i=1}^{m} f_{x_i} : A \to (\bb{P}^g_{K})^m$ factors into $A \to B \xhookrightarrow{} (\bb{P}^g_{K})^m$, where $B$ is an abelian variety over $K$, $\pi: A \to B$ is an isogeny and $f_x$ is given by $f \circ t_x: A \rightarrow \bb{P}^g.$
\end{proposition}

\begin{proof}
We have to be careful in this proposition as $K$ may be of positive characteristic and hence an injection on the level of points need not imply a closed embedding due to inseparability. 
\par 
For each $x \in A(K),$ let $\Gamma_x = (f_x,f_x)^{-1}(\Delta) \subseteq A \times A,$ where $\Delta$ is the diagonal of $\bb{P}^g \times \bb{P}^g$. Then $\Gamma_x$ is a closed subscheme of $A^2$. We will consider the intersection $\bigcap_{x \in A[\ell^{\infty}](K)} \Gamma_x$ which is a closed subscheme. We define 
$$H = \{ a \in A \mid f \circ T_a = f \}.$$
This is a closed subscheme of $A$, where given an affine scheme $S$ we have 
$$H(T) = \{ a \in A(S) \mid f_S \circ T_a = f_S\}.$$
Note that $H$ must factor through $T_x^{-1}(f^{-1}(f(x)))$ for all $x \in A(K),$ and hence is a finite group scheme, since the fiber of $f$ over some point is finite (for instance by using the construction in Proposition \ref{DivisorSupport2}). We define $B = A/H$ so that we have an isogeny $\pi: A \to B$. 

\begin{lemma} \label{DivisorSupport3}
View $A \times_B A$ as a closed subscheme of $A \times A$. We have $R = A \times_B A$. 
\end{lemma}

\begin{proof}
Let $S$ be an affine $K$-scheme. Recall that $R = \cap_{x \in A[\ell^{\infty}](K)} \Gamma_x$, where 
$$\Gamma_x(S) = \{(a,b) \in A(S)^2 \mid f_S(a+x_S) = f_S(b+x_S)\}.$$
By definition it is clear that $(A \times_B A)(S) \subseteq \Gamma_x(S)$ for any $x$ and so we have $A \times_B A \subseteq R$. 
\par 
For the reverse inclusion, let $(a,b) \in R(S)$. Set $h = b-a \in A(S)$. Then by definition, we have 
$$(f_S \circ T_h)(x_S+a) = f_S(x_S+a) \text{ for all } x \in A[\ell^{\infty}](K)$$
where $x_S$ denotes the base change to $S$. We claim that $\{x_S + a \}$ as $x$ ranges over $A[\ell^{\infty}](K)$ forms a scheme-theoretic dense set. If this were true, then $f_S \circ h = f_S$ and hence we get the reverse inclusion $R \subseteq A \times_B A$, giving us equality $R = A \times_B A$ as desired. 
\par It now suffices to prove that $\{x_T\}$ is scheme-theoretically dense in $A_T$. But $A[\ell^{\infty}](K)$ is Zariski dense in $A(K)$ and hence is scheme-theoretically dense in $A$ as $A$ is reduced. Then scheme-theoretic density in $A_T$ follows from \cite[Lemma IX.4.4]{SGA3}. 
\end{proof}

Using Lemma \ref{DivisorSupport3}, by Noetherianness we may find finitely many $x_1,\ldots,x_m \in A[\ell^{\infty}](K)$ such that $A \times_B A = \cap_{i=1}^{m} \Gamma_{x_i}$. If we let $f_i$ denote the morphism $f_{x_i}$, then $\prod_{i=1}^{m} f_i: A \to (\bb{P}^g)^{m}$ factors through $A \to B \to (\bb{P}^g)^m$. We claim that $B \to (\bb{P}^g)^{m}$ has to be a closed immersion. By \cite[\href{https://stacks.math.columbia.edu/tag/04XV}{Tag 04XV}]{stacks-project}, as $B$ is projective, it suffices to show that it is a monomorphism. This can be checked by using test schemes $T$, in which case it is straightforward using the fact that $A \times_B A = \cap_{i=1}^{m} \Gamma_{x_i}$, finishing the proof of our Proposition.
\end{proof}

Using the Segre embedding, we may embed $(\bb{P}^g)^m$ into $\bb{P}^{(g+1)^m-1}$. Let $M$ be the pullback of $O(1)$ to $B$, so that $\pi^* M =\otimes_{i=1}^{m} t_x^* L^2$. Let $M' = M \otimes [-1]^*M$, so that $M'$ is symmetric. Then 
$$\pi^* M' \simeq \bigotimes_{i=1}^{m} \left(t_x^* L^2 \otimes t_{-x}^* L^2 \right) \simeq \bigotimes_{i=1}^{m} L^4 \simeq L^{4m},$$ 
where the second isomorphism is by the theorem of the square. Endowing $M'$ with a canonical adelic metric $\ovl{M}$, we obtain that $\pi^* \ovl{M}' = \ovl{L}^{4m}$. 
\par 
Now using the embedding $B \xhookrightarrow{} (\bb{P}^{(g+1)^m-1})$, we obtain a set of sections $\{s_1,\ldots,s_N\}$ for $M$, such that under pullback to $A$ by $\prod_{i=1}^{m} f_i$, they have small average for the adelic line bundle $\otimes_{i=1}^{m} t_x^* \ovl{L}^2$. Applying $[-1]^*$ gives us  a corresponding set of sections $\{s'_1,\ldots,s'_N\}$ for ${-1}^*M$ whose pullback to $A$ have small average for $\otimes_{i=1}^{m} t^*_{-x} \ovl{L}^2$. 
\par Now observe that products of small average sections still have small average. Hence $\{s_i \otimes s'_j\}$ give us a set of small average sections for $\ovl{M}'$. Furthermore, they induce a closed immersion of $B$ to projective space, say $\bb{P}^{N}$. Then by ampleness, we know that for some large enough $n$, we have a surjection $H^0(\bb{P}^N,O(n)) \to H^0(B,(M')^n)$. On the other hand, taking products of $\{s_i \otimes s'_j\}$ gives us a basis of $H^0(\bb{P}^N,O(n))$ with small average, and hence $(\ovl{M}')^n$ is a symmetric adelic line bundle on $B$ with a basis of small average sections. Applying Proposition \ref{SmallBasis1} gives us that $h_{\Fal}(B) = 0$. This implies that $B$ is defined over the height zero subfield, contradicting the fact that $A$ is traceless as desired.  
\end{proof}

\section{Height Equidistribution and Inequalities} \label{sec: HeightEquidistribution}
The aim of this section is to extend two known Arakelov theoretic results from the case of number/function fields to the setting of general countable GVFs. The first is the following height equidistribution property over GVFs. Let $K$ be a non-archimedean GVF and let $X$ be a projective variety over $K$. Let $\ovl{L},\ovl{M}$ be two adelic line bundles on $X$ so that $L$ and $M$ are both big. Assume there exists some $a > 0$ so that $a \ovl{L} - \ovl{M}$ is nef. Let $D$ be a geometrically integral divisor on $X,$ and assume that $h_{\ovl{L}}(X) = 0$. Then we wish to prove that
\begin{equation} \label{eq: HeightEqui1}
h_{\ovl{L}}(D) = h_{\ovl{M}}(D).
\end{equation}
We view this as a more refined version of equidistribution which gives us information even if the related Chambert-Loir measures for $\ovl{L}$ and $\ovl{M}$ are trivial. When $K$ is a global field, this would follow by using \cite[Lemma 6.1]{CLT09} and the fact that there is a generic sequence of points $(x_n)$ that is simultaneously small for both $\ovl{L}$ and $\ovl{M}$. S{\'e}dillot \cite[Proposition 5.5]{Sed23} gives an extension to this for adelic curves but he requires the assumption that $L$ is ample and $D$ is a Cartier divisor. The ampleness condition is fine for our application but we would like to remove the Cartier assumption. The usual approach by Chambert-Loir and Thuillier uses the arithmetic Siu inequality by Yuan \cite{Yua08} which seems to be not known in the general adelic curve case yet. 
\par 
The second result that we need is an arithmetic Brunn--Minkowski type inequality. Given nef semipositive line bundles $\ovl{L},\ovl{M}$ on $X$ such that $L,M$ are both big, we have
\begin{equation} \label{eq: BrunnMinkowski1}
\ovl{L}^d \cdot \ovl{M} = 0 \implies \ovl{M}^{d+1} = 0.
\end{equation}
Over number fields, this is implied by a result of Chen \cite[Theorem 1.1]{Che18} which is a refinement of the arithmetic isoperimetric inequality and the arithmetic Brunn--Minkowski inequality. The arithmetic Brunn--Minkowski inequality was established previously by Yuan \cite{Yua09} and as noted, can be viewed as a higher dimensional generalization of the arithmetic Hodge index theorem. 
\par 
As we were unable to find a reference for a function field version of Chen's result, although the same proof most likely works, we instead rederive both (\ref{eq: HeightEqui1}) and (\ref{eq: BrunnMinkowski1}) from the arithmetic Hodge index theorem for GVFs. Note that we only need the inequality part of the arithmetic Hodge index, which is much easier than determining the equality cases. 
\par 
The arithmetic Hodge index theorem states that if $\ovl{L}_1,\ldots,\ovl{L}_{n-1}$ are nef adelic line bundles with each $L_i$ big and $\ovl{M}$ an integrable adelic line bundle, then 
$$M \cdot L_1 \cdots L_{n-1} = 0 \implies \ovl{M}^2 \cdot \ovl{L}_1 \cdots \ovl{L}_{n-1} \leq 0.$$
For number fields this is by Yuan \cite{YZ17} and for function fields, this is by Carney \cite{Car20}. For GVFs, we prove this using a continuity argument on the GVF analytification. Recall that $k_0$ is $\bb{Q}$ if $K$ is characteristic zero and $\bb{F}_p$ otherwise. First, we may find a quasi-projective variety $U$ over $k_0$, a flat family $\mathcal{X} \to U$ and adelic line bundles $\ovl{\mathcal{L}}, \ovl{\mathcal{M}} \in \wh{\Pic}_{\operatorname{int}}(\mathcal{X}/k_0)$ such that $\Spec K$ maps to the generic point of $U,$ and we recover back $(X,L,M)$ after base changing to $\Spec K$. Note that if $\ovl{L}$ is nef, then we can find such a model with $\ovl{\mathcal{L}}$ being nef and hence $\iota^* \ovl{\mathcal{L}}$ is nef for any map $\iota: \Spec L \to U$.

\begin{proposition} \label{HodgeIndex1}
Assume that $\ovl{L}_1,\ldots,\ovl{L}_{n-1}$ are nef and geometrically big line bundles. Then for any integrable $\ovl{M}$, we have
$$M \cdot L_1 \cdots L_{n-1} = 0 \implies \ovl{M}^2 \cdot \ovl{L}_1 \cdots \ovl{L}_{n-1} \leq 0.$$
\end{proposition}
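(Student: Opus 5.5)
We prove the inequality by a continuity argument on the GVF analytification: the intersection number is a continuous function of the GVF point, and the classical Hodge index theorem holds on a dense set of points. Fix a model as in the preceding paragraph. This is a quasi-projective variety $U$ over $k_0$ with a flat projective family $\mathcal{X} \to U$, and integrable Yuan--Zhang adelic line bundles $\ovl{\mathcal{L}}_1,\ldots,\ovl{\mathcal{L}}_{n-1}, \ovl{\mathcal{M}}$ on $\mathcal{X}$, with each $\ovl{\mathcal{L}}_i$ nef, recovering $(X,\ovl{L}_i,\ovl{M})$ over $\Spec K \to U$. After shrinking $U$ we may arrange the following: $L_{i,u}$ is big on every fiber $\mathcal{X}_u$, and $\mathcal{M}\cdot\mathcal{L}_1\cdots\mathcal{L}_{n-1}$ vanishes fiberwise. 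The geometric intersection number is locally constant in flat families, so it is zero on all fibers because it is zero at the generic point. We then form the Deligne pairing $\langle \ovl{\mathcal{M}},\ovl{\mathcal{M}},\ovl{\mathcal{L}}_1,\ldots,\ovl{\mathcal{L}}_{n-1}\rangle \in \wh{\Pic}_{\mathrm{int}}(U/k_0)$. By Lemma \ref{lem: comparing intersections for YZ and CM}, the quantity $\ovl{M}^2\cdot\ovl{L}_1\cdots\ovl{L}_{n-1}$ is the height of this adelic line bundle at the GVF point of $U$ determined by $K$. The same holds with $K$ replaced by any GVF $F$ with a map $\Spec F \to U$.

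By Proposition \ref{GVFAnalytification1}, the function $\Phi: U_{\GVF} \to \bb{R}$, $y \mapsto h_y(\langle \ovl{\mathcal{M}}^2,\ovl{\mathcal{L}}_1,\ldots\rangle)$, is continuous. Here $U_{\GVF}$ is taken over $k_0$ equipped with the trivial GVF structure in the geometric case, and over $\bb{Q}$ with its standard structure in the archimedean case. Let $K_0$ be $\ovl{\bb{Q}}[1]$, or $\ovl{k_0(t)}[1]$ in the non-archimedean case; the latter choice is compatible because $K$ is non-archimedean. The $K$-point lives in $U_{\GVF}$. The $K_0$-points of $U$, viewed as points of $U_{\GVF}$ via analytification, should be dense in the relevant part of $U_{\GVF}$, since $\ovl{\bb{Q}}[1]$ and $\ovl{k(t)}[1]$ are existentially closed (\cite[Theorem 2.5.3]{Hul26}). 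There is a technical point: the GVF structure of $K$ restricted to $k_0(U)$ is a point over $k_0$, while existential closedness of $K_0$ concerns $U_{K_0}$. We handle this by base changing $U$ to $K_0$, or by using that $k_0$ embeds in $K_0$ with height zero, and then pulling back the point. By continuity it suffices to show $\Phi(y) \le 0$ for $y$ coming from $U(K_0)$, possibly after rescaling the GVF structure by a positive constant, which does not affect the sign.

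For such a point $y$ with residue field a number field or a function field of a curve over a finite field (or over $\ovl{\bb{F}}_p$), the fiber $\mathcal{X}_y$ is a projective variety over a global field. On it, $\ovl{\mathcal{L}}_{i}|_y$ are nef adelic line bundles with big underlying bundles, and $\mathcal{M}_y \cdot \mathcal{L}_{1,y}\cdots\mathcal{L}_{n-1,y} = 0$. The classical arithmetic Hodge index inequality then gives $\Phi(y) \le 0$. For number fields this is Yuan--Zhang \cite{YZ17}, for function fields Carney \cite{Car20}, extended to limits of model metrics as in those papers. Taking limits, $\Phi(x_K) \le 0$.

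The main obstacle is the density step. One must check that the $K$-point really lies in the closure of the image of $U(K_0)$. This requires matching the GVF analytification of $U$ over the trivially valued prime field with the analytification over $K_0$, and handling the normalization by the rescaling $[1]$ versus $[c]$. A second issue is that existential closedness is only proved for these specific fields. If the topology does not match directly, we would instead take a countable existentially closed extension of $K$ itself and argue inside it, approximating by fibers over global-field points.
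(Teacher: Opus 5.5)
Your overall architecture is the paper's: a model over $U$, shrinking so that bigness and the vanishing of the geometric intersection number hold fiberwise, the Deligne pairing $\ovl{\mathcal{N}}$, continuity of its height on a GVF analytification, and the classical Hodge index at a dense set of points over global fields. However, the step you yourself flag as the main obstacle is where the proof actually lives, and none of your proposed fixes closes it.

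In the non-archimedean case, the point defined by $K$ lives in the analytification of $U$ over the trivially valued prime field $k_0$. Existential closedness of $\ovl{k_0(t)}$ only says that $\ovl{k_0(t)}$-points are dense in the analytification of a variety \emph{over} $\ovl{k_0(t)}$.
\begin{itemize}
\item Base changing $U$ to $K_0$, or ``pulling back the point'', requires lifting the $K$-point to a GVF point of $U_{K_0}$. That is an amalgam of $K$ and $K_0$ over $k_0$ compatible with both height functions, which you neither prove nor cite.
\item The fallback of passing to an existentially closed extension of $K$ is circular. The points it makes dense are not defined over global fields, so no Hodge index is available at them.
\end{itemize}

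The missing idea is to find $k_0(t)$ inside the function field of the base itself. If the GVF structure restricted to $k_0(U)$ is trivial, every intersection number vanishes and there is nothing to prove. Otherwise:
\begin{itemize}
\item Pick $t\in k_0(U)\setminus k_0$ with $h(t)>0$, shrink $U$ so that $t$ is regular, and let $U_0$ be the generic fiber of $t\colon U\to\bb{A}^1_{k_0}$.
\item The structure induced on $k_0(t)$ is the standard height scaled by $h(t)$; this is where your rescaling enters.
\item So $\Spec K\to U_0$ is a genuine point of $U_0^{\GVF}$ over the base GVF $k_0(t)$. Existential closedness of $\ovl{k_0(t)}$ (rescaled) then gives density of $\ovl{k_0(t)}$-points directly, with no amalgamation needed.
\item At such points the fiber lives over the function field of a curve over a finite extension of $k_0$. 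The function-field Hodge index applies there, and continuity finishes the argument.
\end{itemize}
Note that in characteristic $0$ these are curves over number fields, a case your list of residue fields omits.

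Your archimedean variant works as stated, since $k_0(U)\supseteq\bb{Q}$ already carries a structure $\bb{Q}[c]$. The paper only needs the non-archimedean case.
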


\begin{proof}
Let $\mathcal{X} \to \mathcal{U}$ be our flat family. Since big line bundles are exactly those which can be written as $A \otimes E$, where $A$ is ample and $H^0(X,E) > 0$ \cite[Corollary 2.2.7]{Laz04}, by further shrinking $U$, we may assume that for every $s \in \mathcal{U}$, the line bundle $\mathcal{L}_{i,s}$ is big on the fiber $\mathcal{X}_s$. By flatness, we may also assume that $\mathcal{M}_s \cdot \mathcal{L}_{1,s} \cdots \mathcal{L}_{n-1,s}= 0.$ We form the Deligne pairing 
$$\ovl{\mathcal{N}} = \langle \mathcal{M}, \mathcal{M}, \mathcal{L}_1,\ldots,\mathcal{L}_{n-1} \rangle \in \wh{\Pic}(\mathcal{U}/k_0).$$
We now pick some $t \in \Frac(U) \setminus k_0$ for which $h(t) > 0$, which we may assume exists as otherwise the GVF structure is trivial and every intersection product is zero. Further shrinking $\mathcal{U}$, we may assume that $t$ is regular so that we obtain a map $\pi: \mathcal{U} \to \bb{A}^1$ induced by $t$. We let $\mathcal{U}_0$ denote its generic fiber so that it is a variety over $k(t)$. 
\par 
We now consider the GVF analytification $\mathcal{U}^{\GVF}_0$ over the base GVF $k(t).$ Taking the height of the adelic line bundle $\ovl{N}$ defines a continuous function for us 
$$h_{\ovl{N}}: \mathcal{U}^{\GVF}_0 \to \bb{R}.$$
Furthermore, we know from the function field arithmetic Hodge index that $h_{\ovl{N}}(x) \leq 0$ when evaluated at any $\ovl{k(t)}$-point $x$. By existential closedness (\cite[Theorem 2.1]{BH22}), such points are dense and so $h_{\ovl{N}} \leq 0$ by continuity, proving our arithmetic Hodge index for the GVF point $\Spec K$ inside $\mathcal{U}_0^{\GVF}$. 
\end{proof}

Using Proposition \ref{HodgeIndex1}, we can prove the following mixed-intersections inequality which will be useful for us later. 

\begin{proposition} \label{HodgeIndex2}
Let $X$ be a projective variety over $K$ and let $\ovl{L}$ and $\ovl{M}$ be nef semipositive adelic line bundles which are geometrically big. Then if $d = \dim X$, we have
$$\ovl{L}^d \cdot \ovl{M} = \ovl{L}^{d+1} =  0 \implies \ovl{M}^{d+1} = 0.$$
\end{proposition}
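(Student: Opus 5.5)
The plan is to reduce everything to Proposition \ref{HodgeIndex1} by applying it to a difference of the two bundles, chosen so that its geometric intersection number against the $\ovl{L}$'s vanishes. Since $L$ and $M$ are big and nef, the degrees $\deg_L X = L^d>0$ and $L^{d-1}\cdot M>0$ are positive. Set
$$\lambda = \frac{L^{d}}{L^{d-1}\cdot M}\cdot\frac{L^{d-1}\cdot M}{L^{d}}\ \text{ adjusted as needed; concretely } \ovl{N}_t = \ovl{M} - t\ovl{L}, \quad t = \frac{M\cdot L^{d-1}}{L^d},$$
so that $N_t\cdot L^{d-1}=0$. Proposition \ref{HodgeIndex1} with $\ovl{L}_1=\cdots=\ovl{L}_{d-1}=\ovl{L}$ then gives $\ovl{N}_t^2\cdot\ovl{L}^{d-1}\le 0$. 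Expanding with $\ovl{L}^{d+1}=\ovl{L}^d\cdot\ovl{M}=0$ yields $\ovl{M}^2\cdot\ovl{L}^{d-1}\le 0$. On the other hand, all the mixed products $\ovl{M}^{i}\cdot\ovl{L}^{d+1-i}$ of nef bundles are $\ge 0$; this holds over GVFs because nef adelic line bundles have nonnegative heights (take the model nef, or use positivity of the GVF functional on the Deligne pairing). Hence $\ovl{M}^2\cdot\ovl{L}^{d-1}=0$.

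Next I would iterate this argument. Suppose inductively that $\ovl{M}^{i}\cdot\ovl{L}^{d+1-i}=0$ for $i\le k$. Apply Proposition \ref{HodgeIndex1} with the fixed bundles $\ovl{L}_1,\ldots,\ovl{L}_{d-1}$ taken to be $k-1$ copies of $\ovl{M}$ and $d-k$ copies of $\ovl{L}$ (all nef and geometrically big), and with the test bundle $\ovl{N}=\ovl{M}-t\ovl{L}$, where $t=(M^{k}\cdot L^{d-k})/(M^{k-1}\cdot L^{d-k+1})>0$ is chosen so that the geometric intersection vanishes. Expanding $\ovl{N}^2\cdot\ovl{M}^{k-1}\ovl{L}^{d-k}\le 0$, the terms $\ovl{M}^{k}\ovl{L}^{d+1-k}$ and $\ovl{M}^{k-1}\ovl{L}^{d+2-k}$ vanish by the inductive hypothesis. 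This leaves $\ovl{M}^{k+1}\cdot\ovl{L}^{d-k}\le 0$, and nefness again forces equality. Running the induction up to $k=d$ gives $\ovl{M}^{d+1}=0$.

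The main obstacle is justifying the nonnegativity $\ovl{M}^{i}\cdot\ovl{L}^{d+1-i}\ge 0$ for nef semipositive adelic bundles over a GVF. I would handle it as in Proposition \ref{HodgeIndex1}. Spread out to a nef Yuan--Zhang model over $\mathcal{U}$ and note that the Deligne pairing of nef bundles is nef. Nef bundles on $K$ have nonnegative degree under the positive functional of Lemma \ref{lem: GVF structure given by map from adelic line bundles}, since they are limits of nef model bundles. A further point to check is that in the Hodge index step $\ovl{N}$ is merely integrable, which Proposition \ref{HodgeIndex1} allows.
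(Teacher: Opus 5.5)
Your argument is correct and is essentially the paper's proof. The paper also applies Proposition \ref{HodgeIndex1} to $\ovl{M}-c_j\ovl{L}$ against $j$ copies of $\ovl{M}$ and $d-1-j$ copies of $\ovl{L}$, and runs the same induction $I_j=I_{j+1}=0\Rightarrow I_{j+2}\le 0\Rightarrow I_{j+2}=0$ using nonnegativity from nefness. The only difference is that the paper first replaces $\ovl{M}$ by $\ovl{M}+\eps\ovl{L}$ to guarantee that all the geometric degrees $L^{d-i}\cdot M^{i}$ are positive. You instead take this positivity for granted; it does hold for nef and big bundles, e.g.\ by the Khovanskii--Teissier inequality, but you only justify it for $i\le 1$, so you should either cite that inequality or use the paper's $\eps$-perturbation.
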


\begin{proof}
By replacing $\ovl{M}$ with $\ovl{M}_{\eps} = \ovl{M} + \eps \ovl{L}$, we may assume that the geometric degrees $d_i = L^{n-i} \cdot M^i$ are all positive. We define $I_j = \ovl{L}^{n+1-j} \cdot \ovl{M}^{j}$, so that $I_0 = I_1 = 0$ and we wish to prove that $I_{n+1} = 0$. In fact we will prove that $I_j = 0$ for all $0 \leq j \leq n+1$. 
\par 
To do so, we set
$$c_j = \frac{d_{j+1}}{d_j}, \quad N_j =  M - c_j L,\quad \ovl{N}_j = \ovl{M} - c_j \ovl{L}.$$
Then 
$$N_j \cdot L^{n-1-j} \cdot M^j = 0$$
so that we can apply Proposition \ref{HodgeIndex1} to $\ovl{N}_j$ with $(n-1-j)$ copies of $\ovl{L}$ and $j$ copies of $\ovl{M}$. We obtain 
$$(\ovl{M} - c_j \ovl{L})^2 \ovl{L}^{n-1-j} \ovl{M}^{j} \leq 0.$$
Expanding out, we obtain $I_{j+2} - 2 c_j I_{j+1} + c_j^2 I_j \leq 0$. Now by induction, starting with $I_1 = I_0 = 0$, we have 
$$I_j, I_{j+1} = 0 \implies I_{j+2} \leq 0 \implies I_{j+2} = 0$$ 
as by nefness, we must have $I_{j+2} \geq 0$. Hence $I_{n+1} = 0$, finishing our proof. 
\end{proof}

We now prove our height equidistribution property for divisors. If $X$ is a projective variety, then for any effective divisor $D$ we prove the following.

\begin{proposition} \label{HeightEquidistribution2}
Let $X$ be a projective variety over $K$ and $\ovl{L}, \ovl{M}$ two nef line bundles such that $L$ and $M$ are both geometrically semiample and big. Assume there exists some $a > 0$ such that $a \ovl{L} - \ovl{M}$ is nef. Then $h_{\ovl{L}}(X) = 0$ implies that for any geometrically integral effective divisor $D$ of $X$, we have 
$$\frac{ h_{\ovl{L}}(D)}{\deg_L(X)} = \frac{h_{\ovl{M}}(D)}{\deg_M(X)}.$$
where $h_{\ovl{L}}, h_{\ovl{M}}$ are the un-normalized heights (that is, $h_{\ovl{L}}(D) = \widehat{\deg}(\ol{L}^d|_D)$).  
\end{proposition}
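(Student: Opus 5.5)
Since $D$ is an effective divisor, the plan is to realize it as the divisor of a section and to express $h_{\ovl{L}}(D)$ through intersection numbers on $X$ with a twisted bundle. We then use Proposition \ref{HodgeIndex2}, or more precisely the Hodge index inequality of Proposition \ref{HodgeIndex1}, applied to small perturbations $\ovl{L}+\eps\ovl{E}$.

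First, we may replace $D$ by a Cartier divisor as follows. Pass to a birational modification $\phi: X'\to X$, for example the blow-up of the ideal of $D$ followed by normalization. Then the strict transform $D'$ of $D$ is Cartier, and $\phi_*D'=D$, so $h_{\phi^*\ovl{L}}(D')=h_{\ovl{L}}(D)$. Pullback preserves nefness, bigness, semiampleness, the hypothesis that $a\ovl{L}-\ovl{M}$ is nef, and the vanishing $h_{\ovl{L}}(X)=0$. The degrees $\deg_L(X)$ and $\deg_M(X)$ are also unchanged. So we may assume $D$ is Cartier.

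Next choose an integrable adelic structure $\ovl{E}=\ovl{O(D)}$ with a section $s$ of $E$ satisfying $\divv(s)=D$. For any integrable $\ovl{N}$, the formula
$$\ovl{N}^{d}\cdot \ovl{E}=h_{\ovl{N}}(D)-\int_\Omega\int_{X_v^{\an}}\log|s|_v\,c_1(\ovl{N})_v^{d}\,d\nu(v)$$
holds. Normalizing $\ovl{L}$ and $\ovl{M}$ to have total degree $1$, the goal becomes showing that the functional $\ovl{E}\mapsto \ovl{L}^d\cdot\ovl{E}/\deg_L X-\ovl{M}^d\cdot\ovl{E}/\deg_M X$ vanishes. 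This is because the local integral terms agree by Theorem \ref{EqualityMeasures1}, after passing to the countable extension $K'$, which does not change heights.

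To show the linear functional vanishes, we derive a first-order variation argument from Hodge index. Consider $\ovl{L}_t=\ovl{L}+t\ovl{E}$ for small $|t|$, and use Proposition \ref{HodgeIndex1} in the following way. The key inequality we want is the quadratic estimate
$$\ovl{L}^{d-1}\cdot\Big(\ovl{E}-c\ovl{L}\Big)^2\le 0,\qquad c=\tfrac{E\cdot L^{d-1}}{L^d}.$$
Combined with $\ovl{L}^{d+1}=0$, this gives $\ovl{L}^{d-1}\ovl{E}^2\le 2c\,\ovl{L}^d\ovl{E}$. The analogous statements hold for mixed products $\ovl{L}^{d-1-j}\ovl{M}^j$. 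By Proposition \ref{HodgeIndex2} we know $\ovl{L}^{d+1-j}\ovl{M}^j=0$ for all $j$, using $0\le h_{\ovl{M}}(X)\le a\,h_{\ovl{L}}(X)$-type bounds coming from nefness of $a\ovl{L}-\ovl{M}$.

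Now apply the same Hodge-index/Cauchy--Schwarz argument to the bundle $\ovl{M}-\lambda\ovl{L}$ paired against $\ovl{E}$. The positive semidefinite (after sign) bilinear form $(\ovl{A},\ovl{B})\mapsto -\ovl{A}\cdot\ovl{B}\cdot\ovl{L}^{d-1}$ on the degree-zero part has $\ovl{M}-\lambda\ovl{L}$ in its kernel. The kernel claim holds because the form is nonnegative on this vector and its self-pairing vanishes, by the vanishing of all $I_j$. Cauchy--Schwarz then gives the orthogonality of $\ovl{M}-\lambda\ovl{L}$ with $\ovl{E}-c\ovl{L}$. Iterating, replacing one copy of $\ovl{L}$ by $\ovl{M}$ at a time, yields $\ovl{L}^d\ovl{E}/L^d=\ovl{M}^d\ovl{E}/M^d$ up to the $\ovl{L}$-multiple terms, which vanish.

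The main obstacle is making this Cauchy--Schwarz step rigorous. Positivity holds only on the subspace where the geometric intersection vanishes, and nefness of $\ovl{E}$ fails, so the bundles must be integrable. One also has to check that the local terms genuinely cancel via Theorem \ref{EqualityMeasures1}, which requires the semiampleness hypothesis. This is where I expect most of the work.
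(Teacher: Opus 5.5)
Your proposal is correct. Its outer structure matches the paper's proof: blow up so that $D$ is Cartier, write $h_{\ovl L}(D)=\ovl L^d\cdot\ovl E+\int_\Omega\int_{X_v^{\an}}\log|s|_v\,c_1(\ovl L)_v^d\,d\nu(v)$, cancel the local terms with Theorem~\ref{EqualityMeasures1}, and deduce $\ovl L^d\cdot\ovl E/L^d=\ovl M^d\cdot\ovl E/M^d$ from Proposition~\ref{HodgeIndex1}. The genuine difference is how you obtain that last identity.

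\textbf{How the two routes differ.} The paper interpolates continuously along $\ovl L_t=(1-t)\ovl L+t\ovl M$. It applies Hodge index with $d-1$ copies of $\ovl L_t$ to get $\ovl L_t^{d-1}\cdot\ovl\Delta\cdot\ovl N=c_t\,\ovl L_t^d\cdot\ovl N$. It then observes that $A(t)=\ovl L_t^d\cdot\ovl N$ and $v(t)=L_t^d$ satisfy the same differential equation, so $A/v$ is constant.

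Your last paragraph, made explicit, instead walks discretely through the forms $Q_j(\ovl A,\ovl B)=\ovl L^{d-1-j}\ovl M^j\cdot\ovl A\cdot\ovl B$ for $0\le j\le d-1$, exactly as in the proof of Proposition~\ref{HodgeIndex2}. Set $d_j=L^{d-j}M^j$ and $\lambda_j=d_{j+1}/d_j$.
\begin{itemize}
\item The vector $\ovl M-\lambda_j\ovl L$ lies in the subspace $\{A: A\cdot L^{d-1-j}M^j=0\}$, on which $Q_j$ is negative semidefinite.
\item Its $Q_j$-square is $I_{j+2}-2\lambda_jI_{j+1}+\lambda_j^2I_j=0$.
\item Hence it is $Q_j$-orthogonal to $\ovl E-c_j\ovl L$.
\item It is also $Q_j$-orthogonal to $\ovl L$, since $I_{j+1}-\lambda_jI_j=0$.
\end{itemize}
Together these give $J_{j+1}/d_{j+1}=J_j/d_j$ for $J_j=\ovl L^{d-j}\ovl M^j\cdot\ovl E$, and telescoping from $j=0$ to $d-1$ gives the identity.

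Your route needs only $d$ applications of mixed Hodge index and no calculus. The paper's route uses Hodge index only with all entries equal to $\ovl L_t$, at the price of working with a one-parameter family. The Cauchy--Schwarz step you single out as the main obstacle is routine: both vectors lie in the linear subspace on which Proposition~\ref{HodgeIndex1} gives semidefiniteness.

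\textbf{Points to fix or tidy.}
\begin{itemize}
\item You should state that $d_j>0$ for all $j$. This holds for nef and big $L,M$ by Khovanskii--Teissier, or can be arranged by perturbing $\ovl M$ to $\ovl M+\eps\ovl L$ as in Proposition~\ref{HodgeIndex2}.
\item The perturbation $\ovl L+t\ovl E$ and the inequality $\ovl L^{d-1}\ovl E^2\le 2c\,\ovl L^d\ovl E$ are never used and can be dropped.
\item Blowing up $I_D$ only guarantees that the total transform $\phi^{-1}(D)$ is Cartier, not the strict transform. Work with $\phi^{-1}(D)$ instead. Its components other than $D'$ are $\phi$-exceptional, so they have zero $\phi^*\ovl L$- and $\phi^*\ovl M$-height by the projection formula, and the argument is unchanged.
\end{itemize}
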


\begin{proof}
Let $d = \dim X$. We first check that all mixed intersections between $\ovl{L}$ and $\ovl{M}$ vanish on $X.$ Indeed since $a \ovl{L} - \ovl{M}$ is nef, it follows from $\ovl{L}^{d+1} = 0$ that $(\ovl{L} + \ovl{M})^{d +1} = 0$ too. Expanding out, as each term is non-negative by nefness, it follows that each mixed intersection must be zero. 
\par 
We now prove that for any integrable adelic line bundle $\ovl{N}$, we have 
\begin{equation} \label{eq: DivisorDegree1}
\frac{\ovl{L}^d \cdot \ovl{N}}{\deg_L(X)} = \frac{\ovl{M}^d \cdot \ovl{N}}{\deg_M(X)}.
\end{equation}
Let $\ovl{L}_t = (1-t) \ovl{L} + t \ovl{M}$ and let 
$$F(t) = \frac{\ovl{L}_t^d \cdot \ovl{N}}{L_t^d}.$$ 
Let $\ovl{\Delta} = \ovl{M} - \ovl{L}$ so that $\Delta = M - L$. We set 
$$c_t = \frac{L_t^{d-1} \cdot \Delta}{L_t^d}$$
so that if $\ovl{D}_t = \ovl{\Delta} - c_t \ovl{L}_t$, by definition we have $D_t \cdot L_t^{d-1} = 0$. Now given our $\ovl{N}$, we may assume that $L_t^{d-1} \cdot N = 0$ by adding a multiple of $\ovl{L}$ to $\ovl{N}$, which won't change the quantity $\ovl{L}^d \cdot \ovl{N}$ and $\ovl{M}^d \cdot \ovl{N}$.
\par 
Consider the quadratic form induced on the space of integrable adelic line bundles $\ovl{N}$, which satisfies $N \cdot L_t^{d-1}= 0$, by defining
$$\langle \ovl{N}_1, \ovl{N}_2 \rangle_t := \ovl{L}_t^{d-1} \cdot \ovl{N}_1 \cdot \ovl{N}_2.$$
Since $L_t$ is big and $\ovl{L}_t$ is nef and semi-positive, by Proposition \ref{HodgeIndex1} it follows that $\langle \bullet , \bullet \rangle_t$ is negative semi-definite. Observe that $\langle \ovl{D}_t, \ovl{D}_t \rangle_t = 0,$ since $\ol{D}_t$ is a linear combination of $\ol{L}, \ol{M},$ and all of the arithmetic intersections of these adelic line bundles are zero. Therefore, by negative semi-definiteness, it follows that $\langle \ovl{D}_t, \ovl{N} \rangle_t = 0$. Expanding out, this gives us $$\ovl{L}_t^{d-1} \cdot \ovl{\Delta} \cdot \ovl{N} = c_t \ovl{L}_t^d \cdot \ovl{N}.$$
We now set $A(t) = \ovl{L}_t^{d} \cdot \ovl{N}$ and $v(t) = L_t^{d}$. We may compute that 
$$A'(t) = d \ovl{L}_t^{d-1} \ovl{\Delta} \cdot \ovl{N} = dc_t \ovl{L}_t^d \cdot \ovl{N} = d c_t A(t).$$
Similarly we have $v'(t) = d c_t v(t)$. Hence $A'(t)v(t) = A(t)v'(t)$ for all $t,$ and so 
$$\frac{d}{dt} \left(\frac{A(t)}{v(t)} \right) = 0.$$
Evaluating at $t =0$ and $1$ gives us (\ref{eq: DivisorDegree1}) as desired.
\par 
We now replace $X$ with a blowup, so that $D$ is a Cartier divisor. Then $L$ and $M$ are still geometrically semiample and big. Let $N = O(D)$ and give $\ovl{N}$ an adelic metric so that it is integrable. We thus have 
$$
\frac{\ovl{L}^d \cdot \ovl{N}}{\deg_L(X)} = \frac{\ovl{M}^d \cdot \ovl{N}}{\deg_M(X)}.$$
On the other hand, if $s$ is the canonical section of $O(D)$, we have the formula 
\begin{equation} \label{eq: DivisorHeight1}
h_L(D) = \wh{\deg}(\ovl{L}^d \vert_D) = \ovl{L}^d \cdot \ovl{N} + \int_{\Omega_K} \left(\int_{X_v^{\an}} \log |s|_v d c_1(\ovl{L})^d_v \right)d \nu(v).
\end{equation}
and similarly for $M$. Here, we let $c_1(\ovl{L})^d_v$ be the un-normalized measure, i.e. it has total mass $\deg_L(X)$. We now apply Proposition \ref{EqualityMeasures1}, which holds for geometrically semiample and big line bundles. We hence obtain that outside a set of measure zero, we have $\frac{c_1(\ovl{L})_v^d}{\deg_L(X)} = \frac{c_1(\ovl{M})_v^d}{\deg_{M}(X)}$. Combining with (\ref{eq: DivisorHeight1}) gives us $\frac{h_L(D)}{\deg_L(X)} = \frac{h_M(D)}{\deg_M(X)}$ as desired.
\end{proof}

\section{Bogomolov Conjecture for Subvarieties over GVFs}

In this section, we will turn to proving the Bogomolov conjecture over arbitrary GVFs. As the archimedean case was already proven by Chen--Moriwaki in \cite{chen2022equidistribution}, we only need to handle the non-archimedean case.

\begin{theorem} \label{GVFBogomolov1}
Let $K$ be a non-trivial non-archimedean GVF and let $k_0$ denote its height zero subfield. Let $A$ be an abelian variety over $K$ and assume that $\Tr_{\ovl{K}/\ovl{k_0}}(A_{\ovl{K}}) = 0$. Let $V$ be a positive dimensional subvariety of $A$. Then 
$$\h_A(V) = 0 \implies V = x +B$$
where $x$ is a point of $A$ and $B$ an abelian subvariety. 
\end{theorem}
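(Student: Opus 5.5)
We argue by induction on $\dim A$, and for fixed $A$ on $\dim V$, following the outline in the introduction. First we reduce to $\Stab(V)$ finite: if $G=\Stab(V)^0$ is positive dimensional, pass to $A/G$. Lemma \ref{lem: image of height zero} shows the image of $V$ still has canonical height zero. The quotient is still traceless, being a quotient of a traceless abelian variety up to isogeny. Induction on $\dim A$ then gives that the image is a translate of an abelian subvariety, and pulling back gives the same for $V$. We may also replace $A$ by the abelian subvariety generated by $V-V$, using Lemma \ref{lem: image of height zero} for the translate. If $\dim V=\dim A-1$, Theorem \ref{GVFDivisor1} applies directly. If $h_{\partial\mathcal{A}_g}(A)>0$, Proposition \ref{prop: boundary height and places of bad reduction} gives a positive-measure set of bad places. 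Theorem \ref{YamakiGVF1} then handles every $V$ with $\dim V\ge b(A_v)$; for smaller $V$ we still run the reduction below. So it suffices to show that a subvariety $V$ with finite stabilizer, $0<\dim V<\dim A-1$, generating $A$ and with $\h(V)=0$, leads to a contradiction.

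The main reduction works with $Y=V^r$ and an addition or difference map $\delta: Y\to A$. We equip $Y$ with $\ovl P=\sum p_i^*\ovl L$ and $\ovl N=\delta^*\ovl L$. Both have height zero on $Y$: for $\ovl P$ because $\h(V)=0$, and for $\ovl N$ by nefness of $a\ovl P-\ovl N$ together with the pullback formula $\delta^*L\le aP$. Choose $r$ so that $\delta$ is surjective.

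If $\delta$ has positive dimensional generic fiber, we endow $K(A)$ with the GVF structure induced by $\ovl L$ (a countable GVF extension, via Lemma \ref{lem: GVF structure given by map from adelic line bundles}). Over this field the generic fiber $F$ is a subvariety of $A^r_{K(A)}$ of smaller dimension than $Y$, with height zero for $\ovl P$. Here we use that $h_{\ovl P}(Y)=0$ forces the fiberwise Deligne-pairing height to vanish, via the vanishing of mixed intersections as in Proposition \ref{HeightEquidistribution2}. The field $K(A)$ has the same height zero subfield situation, so tracelessness is preserved. The induction hypothesis then makes $F$ a translate of an abelian subvariety, contradicting finiteness of $\Stab(V)$ once we project back.

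If instead $\delta$ is generically finite but not finite, we use Proposition \ref{ExceptionalDivisor1} to obtain a variety $X$ embedding into $Y_A^s\subseteq A^{rs}$ with an exceptional divisor $E$ contracted by the induced map to $A$. Then $h_{\ovl N}(E)=0$, since $E$ maps to a lower-dimensional subvariety. Proposition \ref{HeightEquidistribution2} gives $h_{\ovl P}(E)=0$, and $\ovl P$, $\ovl N$ are semiample and big because $\delta$ is generically finite and surjective. Hence $E$ is a height zero subvariety of $A^{rs}$ of dimension $\dim X-1$. By induction, using a generic projection $A^{rs}\to A$ as in the introduction, its image in $A$ is a height zero divisor or subvariety. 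Theorem \ref{GVFDivisor1} or induction makes it special, and we derive a contradiction with the finite stabilizer and generation hypotheses, again using Proposition \ref{HodgeIndex2} to control intersections.

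The main obstacle is this last case. We must ensure the exceptional divisor, after projection, really yields a non-special height zero subvariety, i.e.\ that it is not a translate of an abelian subvariety. Otherwise $V$ itself would have positive dimensional stabilizer, and this is where the precise form of Proposition \ref{ExceptionalDivisor1} must be exploited.
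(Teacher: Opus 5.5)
There is a genuine gap in the final step, and you flag it yourself without closing it. Once you have a height-zero $E\subseteq V^{ms}\subseteq A^{ms}$ with $\dim E=g-1$, a generic homomorphism $\phi:A^{ms}\to A$ gives you no control. Nothing forces $\dim\phi(E)=g-1$. For $\dim V\le\dim\phi(E)<g-1$ your induction hypothesis (fixed $A$, smaller $\dim V$) does not apply. When the image is not a divisor, its being special contradicts nothing.

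The paper's proof does not project $E$ to $A$; it uses a dimension count instead.
\begin{itemize}
\item First, take $m$ \emph{minimal} with $m\dim V\ge g$, and use that $V$ is non-degenerate in the sense of Debarre--Moonen (Proposition \ref{Induction1}). The positive-dimensional-fibre argument then shows $m\dim V=g$ (Proposition \ref{Induction2}), so $\dim V\mid g$ and $g\ge 2\dim V$.
\item Second, Proposition \ref{Induction3}: in a minimal counterexample, every positive-dimensional height-zero $D\subseteq V^N$ has $\gcd(\dim D,\dim V)>1$. Each projection $D\to V$ is constant or surjective. A proper positive-dimensional image would have height zero, hence be a coset $x+B$ with $\dim B<\dim V$ by minimality. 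Then $\dim(A/B)>\dim V$, contradicting $q(V)=A/B$.
\item One then base changes to $K(V)$ with its polarized GVF structure. This stays traceless because all prime divisors of $V$ have positive height. One gets a height-zero subvariety of $V_{K(V)}^{N-1}$ of dimension $\dim D-\dim V$, and inducts on $\dim D$.
\end{itemize}
Since $\dim E=g-1$ is coprime to $\dim V$, this gives the contradiction. This divisibility argument is the missing idea; no finer property of Proposition \ref{ExceptionalDivisor1} is needed.

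Your induction is also set up in the wrong order. You induct on $\dim A$ first, but the generic fibre $F$ lives in $A^r_{K(A)}$, which has dimension $rg>g$ and is over a different GVF. So your hypothesis never applies to $F$. The paper inducts lexicographically on $(\dim V,\dim A)$ with $\dim V$ first, over all GVFs simultaneously, and it needs $0<\dim F<\dim V$. Your choice of ``$r$ with $\delta$ surjective'' does not give this. Taking $r$ minimal together with the Debarre--Moonen formula $\dim(V+\cdots+V)=\min(g,r\dim V)$ does.

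Preserving tracelessness over $K(A)$ is also not automatic. Lemma \ref{HeightZeroSubfield1} requires every prime divisor of $A$ to have positive height. That uses the divisor case together with the fact that $A$ contains no elliptic curve, which again follows from Proposition \ref{Induction1} since $\dim V<g-1$.

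Finally, even with the correct ordering, the exceptional divisor has dimension $g-1>\dim V$, so the main induction cannot be applied to it directly. This is exactly why the separate induction on $\dim D$ in Proposition \ref{Induction3} is needed. The bad-reduction detour through Theorem \ref{YamakiGVF1} is unnecessary.
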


When $V$ is a divisor, this follows from the results of Section \ref{sec: DivisorGVF}. The aim is to reduce the general subvariety case to the case of divisors. For instance, suppose $A$ is simple and $V = C$ is a curve. If $\h_A(C) = 0,$ then the divisor $D = \underbrace{C + C + \cdots + C}_{g-1 \text{ times}}$ satisfies $\h_A(D) = 0,$ giving us a contradiction. In general, we can try to do an inductive argument similar to Xie--Yuan \cite{XY22}, whenever we have a $r > 0$ so that the addition morphism $p: V^r \to A$ has a generic fiber of positive dimension, but of dimension strictly less than $V$. 
\par The first case not covered by this type of argument is when $V$ is a surface inside an abelian fourfold $A.$ In this case one expects $p: V \times V \to A$ to be a generically finite morphism, and hence the generic fiber is dimension zero. We now explain how to construct our desired height zero divisor $D$ in this case. 
\par Now, assume first for simplicity that $p: V \times V \to A$ is a birational morphism. Then as it cannot be an isomorphism, a result of Kollar--Mori, \cite[Corollary 2.63]{KM08}, implies that there must exist an exceptional divisor $E$ on $V \times V$. Let $L$ be a symmetric ample line bundle on $A$ and let $\ovl{L}$ be the canonical adelic line bundle. Let $\ovl{P} = \pi_1^* \ovl{L} + \pi_2^*\ovl{L}$ and let $\ovl{N} = \delta^*\ovl{L}$. Using Proposition \ref{HeightEquidistribution2}, one obtains that
$$h_{\ovl{P}}(D) = h_{\ovl{N}}(D)$$
for any geometrically integral divisor $D$ on $V \times V$ as a consequence of $\h_A(V) = 0$. This step can be viewed as a refined version of equidistribution, which gives us information even when the measures are all trivial. Applying this to $E$, we see that since $h_{\ovl{N}}(E) = 0$ by the projection formula as $\delta$ contracts $E$. Hence $h_{\ovl{P}}(E) = 0$, giving us our height zero divisor. 
\par 
In general $p : V \times V \to A$ will not be birational. As it may however be finite, we switch to looking at $\delta: V \times V \to A$ given by $(x,y) \mapsto x-y$. This map is expected to be generically finite, but not finite as the diagonal $\Delta$ is sent to $0$ under $\delta$. We then use an algebraic geometry construction from \cite{ADK13}, which was suggested by GPT6 Astra. Let $X =  V \times V$ and view $K(X)$ as a finite extension of $K(A)$ using $\delta$. For simplicity, let's assume this extension is separable. We let $E/K(A)$ be the Galois closure of $K(X)/K(A)$ and we let $A'$ be the normalization of $A$ in $E$. Let $\sigma_i: K(X) \xhookrightarrow{} E$ for $i=1,\ldots,r$ be the different conjugate embeddings. Then we obtain rational maps $\tau_i : A' \dashrightarrow{} X$. Letting $X' = \underbrace{X \times_A \times \cdots \times_A X}_{r \text{ times }}$, we obtain a rational map $\tau: A' \dashrightarrow{} X'$ by combining the $\tau_i$'s. 
\par 
We now let $Y$ be the closure of the graph of $\tau$. This comes with projection maps $\pi_i: Y \to X$ and also maps $f: Y \to A$ given by $\delta \circ \pi_i$. Furthermore, the Galois group $G = \Gal(E/K(A))$ acts on $Y$. We now set $Z = Y/G$. Then $f: Y \to A$ descends to $\tilde{f}: Z \to A$ as a projective birational morphism. This map $\tilde{f}$ cannot be an isomorphism as otherwise $f: Y \to A$ is a finite morphism, which cannot be because $\delta$ contracts the diagonal. This produces for us our exceptional divisor. 
\par 
We now begin the proof of Theorem \ref{GVFBogomolov1} in the general case. We first proceed by induction on $(\dim V, \dim A)$, where if we are in the case of $(r,g)$, we assume that the cases $(a,b)$ has been proven for all $a < r$ and all $a = r, b < g$. Assume $V$ is a positive dimensional subvariety of $A$ with $\dim A = g$, $\deg_L(V) = d$ and $\h_{A,L}(V) = 0$. We may assume that $\Stab(V)$ is finite and that $V-V$ generates $A$, as otherwise we are done by induction (namely, by either quotienting $A$ or considering the abelian subvariety generated by $V - V$).  We may also assume that $K$ is countable, and is algebraically closed. Note that we may pass from $K$ to any other countable GVF extension $L$ which is algebraically closed, as the zero trace condition is maintained. Indeed if $k$ is the height zero subfield of $K$ and $k'$ that of $L$, then both $k'$ and $k$ are algebraically closed too. Applying Theorem 6.4(3) and Theorem 6.8 of \cite{Con06} gives us that $\Tr_{L/k'}(A_L)$ is still $0$ too. 
\par 
The case of $\dim V = 1$ can be deduced from the divisor case by the following argument. Let $B$ be the abelian subvariety that $C-C$ generates. After translating $C$ by a height zero point on $C$, which exists after passing to some countable GVF extension, we may assume that $C \subseteq B$. Then 
$$D = \underbrace{C + C + \cdots + C}_{\dim B - 1 \text{ times}}$$
is a divisor of $B$ with finite stabilizer and having canonical height $0$. Then it must be the translate of some abelian subvariety of $B$, contradicting the fact that $C$ generates $B$. 
\begin{proposition} \label{Induction1}
For any abelian subvariety $B \subseteq A$, let $q: A \to A/B$ be the quotient morphism. Then $q(V) = A/B$. 
\end{proposition}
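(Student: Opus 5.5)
We argue by contradiction within the induction on $(\dim V,\dim A)$ already set up. Suppose $B \subseteq A$ is an abelian subvariety with $q(V) \neq A/B$, and put $W = q(V) \subseteq A/B$. Necessarily $B \neq 0$: otherwise $W = V \subsetneq A$, which is fine, so we only need the case $B \neq 0$ where $q(V)$ could still fail to be all of $A/B$. Since $V-V$ generates $A$, the image $W - W$ generates $A/B$, so $W$ is not a point unless $B = A$; if $B=A$ there is nothing to prove. Hence $W$ is a positive dimensional proper subvariety of $A/B$ with $\dim(A/B) < g$.

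The first step is to show $\h_{A/B}(W) = 0$. This is Lemma \ref{lem: image of height zero}, applied to the homomorphism $q$ and a symmetric ample bundle on $A/B$. We also need the trace hypothesis to descend: $A/B$ is a quotient of the traceless $A$. By Poincaré reducibility, $A/B$ is isogenous to an abelian subvariety of $A$, so $\Tr_{\ovl{K}/\ovl{k_0}}(A/B)=0$ as well.

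Next we apply the induction hypothesis to $W \subseteq A/B$. We have $\dim W \leq \dim V$. If $\dim W < \dim V$, the pair is smaller in the first coordinate; if $\dim W = \dim V$, then $\dim(A/B) < g$ makes it smaller in the second. Either way Theorem \ref{GVFBogomolov1} holds for $W$, so $W = y + B'$ with $B' \subseteq A/B$ an abelian subvariety. Because $W-W=B'-B'$ generates $A/B$, we get $B' = A/B$, so $W = A/B$, contradicting $q(V) \neq A/B$.

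The main obstacle is bookkeeping rather than depth: making sure the induction ordering genuinely covers the case $\dim W = \dim V$ (it does, via $\dim A/B<g$). One must also check that the hypotheses of Theorem \ref{GVFBogomolov1} (tracelessness, positive dimension, countable algebraically closed base) are preserved. The positive-dimension check uses the generation assumption to rule out $W$ being a point when $B\ne A$.
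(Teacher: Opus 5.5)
Your proposal is correct and follows the paper's proof: push $V$ forward by $q$, get $\h(q(V))=0$ from Lemma~\ref{lem: image of height zero}, apply the induction hypothesis to write $q(V)$ as a coset $y+B'$, and conclude $B'=A/B$ because $q(V)-q(V)$ generates $A/B$. The extra checks you supply (tracelessness of $A/B$ via Poincar\'e reducibility, positive dimensionality of $q(V)$, and the lexicographic descent when $\dim q(V)=\dim V$) are ones the paper leaves implicit. One wording fix: for $B=0$ the statement would say $V=A$, which is false, so that case must be excluded from the statement (as the paper tacitly does), not dismissed with ``necessarily $B\neq 0$''.
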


\begin{proof}
Assume otherwise that $q(V)$ is a proper subvariety of $A/B;$ we know by Lemma \ref{lem: image of height zero} that $q(V)$ also has height zero. Now by induction, we know that $q(V) = x + B'$ for some abelian subvariety $B'$ of $A/B$. However $q(V)-q(V)$ must generate $A/B$ which is a contradiction unless $B' = A/B$. 
\end{proof}

Proposition \ref{Induction1} implies that $V$ is geometrically non-degenerate in the sense of Debarre--Moonen \cite[Definition 5.1]{DM25}. Then \cite[Corollary 5.6]{DM25} implies that
$$\dim(\underbrace{a_1V + \cdots + a_mV}_{m \text{ times }}) = \min(g , m (\dim V)),$$
where $a_1,\ldots,a_m$ are any positive integers. Let $m$ be the smallest positive integer so that $m (\dim V) \geq g$ and consider the addition map $f: V^m \to A$. 

\begin{proposition} \label{Induction2}
We have the equality $m(\dim V) = g$.  
\end{proposition}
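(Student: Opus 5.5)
We argue by contradiction: suppose $m(\dim V) > g$, and use the fibers of the addition map $f \colon V^m \to A$. Put $r = \dim V$. By minimality of $m$ we have $(m-1)r < g < mr$. By the Debarre--Moonen dimension formula $f$ is surjective, so its generic fiber has dimension $e = mr - g$, with $0 < e < r$. The strategy, as sketched in the introduction, is to view the generic fiber of $f$ as a subvariety over the function field $K(A)$ and show it has canonical height zero. By the induction hypothesis (it has dimension $e < r$), it must then be a translate of an abelian subvariety. This contradicts the non-degeneracy of $V$ from Proposition \ref{Induction1}.

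The first step is to equip $F = K(A)$ with a GVF structure. For the canonical metric $\ovl{L}$ on $A$, the height $h_{\ovl{L}}$ is a nef functional on adelic line bundles over $A$. Since $\h_{A,L}(A)=0$, the normalized functional $\ovl{N} \mapsto \ovl{L}^g\cdot \ovl{N}/\deg_L(A)$ restricted to pullbacks gives, via Lemma \ref{lem: GVF structure given by map from adelic line bundles}, a GVF structure on $F$ extending that of $K$. Concretely, $h(\phi) = \ovl{L}^g \cdot \wh{\divv}$-type intersections, where positivity comes from nefness of $\ovl{L}$. We pass to $\ovl{F}$ with its symmetric extension. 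The height zero subfield of $\ovl{F}$ should still have trivial $A$-trace. I would check this by noting that $A_{\ovl F}$ is a constant base change of $A/K$, and applying \cite{Con06} as in the paragraph above.

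Next, let $W = f^{-1}(\eta)$ be the generic fiber. It sits inside $A^m_{\ovl{F}}$ as a subvariety of dimension $e$ (after choosing a geometric component). We compute its canonical height for $\ovl{P} = \sum_i p_i^*\ovl{L}$ over $\ovl{F}$. With the GVF structure just defined, this height equals, up to normalization, the global intersection number
$$\ovl{P}^{e+1}\cdot f^*\ovl{L}^{g} \quad\text{on } V^m,$$
by a projection-formula/Deligne-pairing argument (Lemma \ref{lem: comparing intersections for YZ and CM}). Since $\h(V)=0$, we have $\h(V^m)=0$ for $\ovl{P}$. Moreover $f^*\ovl{L}$ is nef and bounded by $c\ovl{P}$ by the theorem of the cube. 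So by the argument in Proposition \ref{HeightEquidistribution2} (expanding $(\ovl{P}+f^*\ovl{L})^{mr+1}$ with nonnegative terms), all mixed intersections vanish. Hence $\h(W)=0$.

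Finally, since $e<r$, the induction hypothesis applied over the GVF $\ovl{F}$ gives $W = w + B'$ with $B' \subseteq A^m_{\ovl F}$ an abelian subvariety; this requires $\mathrm{Tr}(A^m_{\ovl F})=0$. As $A$ is defined over $K$, $B'$ descends to an abelian subvariety of $A^m$. Then $V^m$ is stable under $B'$ up to the generic translate, which forces $\Stab(V^m)$ to be infinite. This contradicts the finiteness of $\Stab(V)$ (note $e>0$). The main obstacle is the second step. One must rigorously define the GVF structure on $K(A)$ and identify the height of the generic fiber with the mixed intersection number, including measurability and descending from Yuan--Zhang to Chen--Moriwaki bundles. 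Verifying the trace condition over $\ovl{F}$ is the secondary difficulty.
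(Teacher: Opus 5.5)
Most of your argument matches the paper's, but one step is not adequately justified: checking that the induction hypothesis applies over $\ovl{F}$. The steps that do match are these:
\begin{itemize}
\item You put the polarized GVF structure on $F=K(A)$.
\item You rewrite the height of the generic fiber $W$ of $f\colon V^m\to A$ as $\ovl{P}^{e+1}\cdot f^*\ovl{L}^{g}$ on $V^m$ via the Deligne pairing. You kill this term using $\h(V^m)=0$ and the nefness of $c\ovl{P}-f^*\ovl{L}$; your theorem-of-the-cube justification is cleaner than the paper's appeal to ampleness.
\item You use induction on $\dim W<\dim V$ to make $W$ a union of translates of an abelian subvariety defined over $K$. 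This contradicts finiteness of $\Stab(V^m)=\Stab(V)^m$. The contradiction is with the finite stabilizer, not with Proposition \ref{Induction1} as your first paragraph says.
\end{itemize}
Those steps are fine.

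The induction hypothesis needs $\Tr_{\ovl{F}/k'}(A^m_{\ovl{F}})=0$, where $k'$ is the height zero subfield of $\ovl{F}$ for the polarized structure. Observing that $A_{\ovl{F}}$ is base changed from $K$ and quoting Conrad settles this cleanly only if $k'=k$. In that case $\ovl{F}/K$ is separable, $k$ is algebraically closed in $\ovl{F}$, and the trace does not change. At the very least one needs $k'$ linearly disjoint from $K$ over $k$, and $K\cap k'=k$ alone does not give linear disjointness. The general remark you cite from the paragraph before Proposition \ref{Induction1} implicitly relies on this point. Nothing in your argument rules out non-constant rational functions on $A$ of height zero for the functional $\ovl{L}^g\cdot(\,\cdot\,)$. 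For example, if $A$ had a codimension-one abelian subvariety $B$, functions pulled back from the elliptic curve $A/B$ have polar divisors that are translates of $B$. These a priori could have height zero, for instance torsion translates.

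In this proof the paper does not rely on that general remark; it closes exactly this hole before introducing $K'$:
\begin{itemize}
\item If $A$ contained an elliptic curve $E$, Proposition \ref{Induction1} would force $q(V)=A/E$. Then $\dim V\ge g-1$, so $V$ would be a divisor, a case already handled by Theorem \ref{GVFDivisor1}.
\item Hence $A$ has no codimension-one abelian subvariety, by Poincar\'e reducibility. By Theorem \ref{GVFDivisor1}, every prime divisor of $A$ then has positive canonical height.
\item Lemma \ref{HeightZeroSubfield1} (Hultberg's lemma) then shows that the height zero subfield of $\ovl{K(A)}$ is still $k$.
\end{itemize}
Only after this is the trace condition a direct consequence of Conrad's compatibility results with the constant field unchanged. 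You need this step, or some other argument controlling $k'$, for the inductive appeal to be legitimate.
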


\begin{proof}
Assume otherwise that $m(\dim V) > g$. Then the generic fiber of $f: V^m \to A$ has dimension $r$, where $0 < r< \dim V$. First, observe by Proposition \ref{Induction1} that $A$ cannot have an elliptic factor, as otherwise it is impossible for $q(V) = A/E$ since we may assume that $V$ is not a divisor by Theorem \ref{GVFDivisor1}. Thus all effective divisors of $A$ have positive canonical height. 
\par 
Now let $K' = K(A)$ be the generic fiber of $A$. We endow it with a polarized GVF structure \cite[Definition 2.3.5]{Hul26} coming from the adelic line bundle $\ovl{L}$, i.e. the GVF structure denoted by $K(A, \ovl{L},\ldots,\ovl{L})$. By definition, if $\ovl{\mathcal{D}}$ is an adelic divisor on a birational model of $A$, the GVF functional $\ell$ for the structure $K(A, \ol{L}, \ldots, \ol{L})$ is defined by 
$$\ell(\ovl{\mathcal{D}}) = \wh{\deg}(\ovl{L}^g \cdot \ovl{\mathcal{D}}).$$

\begin{lemma} \label{HeightZeroSubfield1}
The height zero subfield of $\ovl{K'}$ is still $k$.
\end{lemma}

\begin{proof}
This is \cite[Lemma 5.4.3]{Hul26}. His proof uses the Bogomolov conjecture for divisors of $A$ which we already have. In general if $X$ is a normal projective variety and $\ovl{L}$ an adelic line bundle on $X$, the same proof shows that the polarized GVF structure on $K(X)$ still has the same height zero subfield as long as all prime divisors of $X$ have positive height with respect to $\ovl{L}$.

\end{proof}
 
Let $W$ be the generic fiber for $f: V^m \to A$, so that $W$ can be viewed as a variety over $K'$. We may view it naturally as a subvariety of $A_{K'}^m$ using the embedding induced by $V^m \xhookrightarrow{} A^m$. We claim that $W$ has zero canonical height inside $A_{K'}^m$. Let $\ovl{M} = \pi_1^*\ovl{L} + \cdots + \pi_m^*\ovl{L},$ with underlying line bundle $M.$ Then by definition,
$$\h_{K'}(W) = \frac{\ell(\langle \ovl{M},\ldots,\ovl{M} \rangle_{V^m/A})}{M_{K'}^{\dim W} (\dim W + 1)},$$
where $\langle \ovl{M},\ldots,\ovl{M} \rangle$ gives us an adelic line bundle on $A$ and hence can be evaluated by our GVF functional $\ell$. We thus have
$$\h_{K'}(W) = \frac{\wh{\deg}_K(\langle \ovl{M},\ldots,\ovl{M} \rangle_{V^m/A} \cdot \ovl{L}^g)}{M_{K'}^{\dim W} (\dim W + 1)},$$
where $\langle \ovl{M},\ldots,\ovl{M} \rangle_{V^m/A} \cdot \ovl{L}^g$ is viewed as an adelic line bundle on $\Spec K$. 
We now view the morphisms $V^m \to A \to \Spec K$ as $X \to Y \to S$. We apply \cite[Lemma 4.6.1, (1)]{yuan2021adelic}, where after replacing $A$ with a Zariski open $U$, we obtain that $(V^m)_U \to U$ is flat. In this situation, we obtain an isomorphism 
$$\langle \ovl{M},\ldots,\ovl{M} \rangle_{(V^m)_U/U} \cdot \ovl{L}^g \simeq \langle \ovl{M},\ldots,\ovl{M}, f^*\ovl{L}^g \rangle_{(V^m)_U/\Spec K}.$$
Now as $f^*\ovl{L} \leq a \ovl{M}$ for some $ a > 0$ by the ampleness of $\ol{M},$ it follows that 
$$\langle \ovl{M},\ldots,\ovl{M}, f^*\ovl{L}^g \rangle_{(V^m)_U/\Spec K} \leq a' \langle \ovl{M},\ldots, \ovl{M} \rangle_{(V^m)_U/\Spec K}$$
for some $a' > 0$. But the latter is $0$ as $\h_{\ovl{M}}(V^m) = 0$. This proves that 
$$\h_{K'}(W) = 0$$
as desired. Now by induction, as the height zero subfield of $K'$ is still $k_0$ and $\Tr_{\ovl{K'}/\ovl{k_0}}(A_{K'}^m) = 0$, it follows that $W$ is a finite union of cosets of $A_{K'}^m$. 
In this case, since $W$ is irreducible (but not necessarily geometrically irreducible) as it is the fiber of the dominant morphism $f$ from an irreducible variety, the cosets forming $W$ must be translates of the same abelian subvariety $B$. This follows from the fact that every abelian subvariety of $A_{K'}$ is defined over $K$. We may hence write $W = \bigcup_{i=1}^{n} (x_i + B)$ for $x_i \in A^m(\ovl{K'})$. As the abelian subvariety $B$ descends to $K$ and and hence it follows that for a Zariski open $U \subseteq A$, we must have $f^{-1}(y) = \bigcup_{i=1}^{n} (x_{i,y} + B)$ for all $y \in U$, where $x_{i,y}$ are points of $V^m$ that can depend on $y$. Then there is a Zariski open of $V^m$ which is stabilized by $B$ and hence $V^m$ has $B$ as a stabilizer, which contradicts $V$ having finite stabilizer. 
\end{proof}

By Proposition \ref{Induction2}, the morphism $f: V^m \to A$ is generically finite. We now consider the map $\delta: V^m \to A$, given by $(x_1,\ldots,x_m) \mapsto x_1 + \cdots + x_{m-1} - [m-1]x_m$. This is the Faltings--Zhang map, used by Zhang \cite{Zha98} in his proof of the Bogomolov conjecture over number fields. A similar map previously appeared in Faltings' proof of the Mordell--Lang conjecture \cite{Fal91}. Then this map is still generically finite thanks to geometric non-degeneracy of $V$ (by \cite[Corollary 5.6]{DM25}), and the diagonal $\Delta$ is contracted. The following construction is due to GPT6 Astra. 

\begin{proposition} \label{ExceptionalDivisor1}
Let $Y = V^m$ and consider it as a generically finite cover of $A$ by $\delta: Y \to A$. Then there exists a projective variety $X$ over $K$ and positive integer $s$ such that there is a closed embedding $\iota: X \xhookrightarrow{} Y^s_A$, where 
$Y_A^s := \underbrace{Y \times_A Y \times_A \cdots \times_A Y}_{s \text{ times}},$ so that the induced map $f: X \to A$ is surjective, generically finite and has an exceptional divisor. Here, an effective divisor $D \subseteq X$ is exceptional for $f$ if $\codim f(D) \geq 2$. 
\end{proposition}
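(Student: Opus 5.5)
We follow the Galois-closure construction sketched earlier in this section. View $K(Y)$ as a finite extension of $K(A)$ via $\delta$; it is finite because $\delta$ is generically finite and dominant, since $\dim Y = m\dim V = g$ by Proposition \ref{Induction2}. First we handle inseparability. Let $K(Y)^{\mathrm{sep}}$ be the separable closure of $K(A)$ in $K(Y)$, let $E$ be the Galois closure of $K(Y)^{\mathrm{sep}}/K(A)$, and set $G=\Gal(E/K(A))$. Choose a finite purely inseparable extension $E'/E$ containing $K(Y)$; for instance take the compositum of $E$ and $K(Y)$ inside an algebraic closure, which is purely inseparable over $E$. Then $G$ extends uniquely to $\Aut(E'/K(A)^{1/p^N})$ for a suitable $N$, or more simply $G$ acts on $E'$ because automorphisms extend uniquely along purely inseparable extensions. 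Let $\sigma_1,\ldots,\sigma_s$ be the distinct $K(A)$-embeddings $K(Y)\hookrightarrow E'$; these are the $G$-translates of a fixed inclusion. Each $\sigma_i$ gives a dominant rational map $\tau_i\colon A'\dashrightarrow Y$ over $A$, where $A'$ is the normalization of $A$ in $E'$. Combining them gives $\tau\colon A'\dashrightarrow Y^s_A$.

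Next, let $X_1\subseteq A'\times_A Y^s_A$ be the closure of the graph of $\tau$. This is a projective variety with a $G$-action: $G$ acts on $A'$, and on $Y^s_A$ by permuting factors according to $g\circ\sigma_i=\sigma_{g(i)}$, and the graph is stable under this action. Instead of quotienting the graph, we take $X$ to be the scheme-theoretic image of $X_1$ in $Y^s_A$, with its natural closed embedding $\iota$. Equivalently, $X$ is the closure of the image of $\tau$. Its function field is the compositum $\sigma_1(K(Y))\cdots\sigma_s(K(Y))\subseteq E'$. The $G$-action permuting the factors of $Y^s_A$ preserves $X$. Set $Z=X/G$; this exists since $X$ is projective and $G$ is finite. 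Then $f\colon X\to A$ is surjective and generically finite, and it factors as $X\to Z\xrightarrow{\tilde f}A$. The field $K(Z)=K(X)^G$ is purely inseparable over $K(A)$, because the $G$-fixed field of $E'$ is purely inseparable over $K(A)$. To get an honestly birational $\tilde f$, we would precompose the whole construction with a Frobenius twist, replacing $A$ by $A^{(1/p^N)}$ and pulling back. Alternatively, we would work with radicial maps directly: the exceptional-divisor argument of \cite[Corollary 2.63]{KM08} needs only that $\tilde f$ be proper, birational up to a universal homeomorphism, and target normal. Here $A$ is smooth.

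The key point is that $\tilde f$ is not finite. The diagonal-type locus $\Delta_V=\{(x,\ldots,x)\}\subseteq Y$ has positive dimension $\dim V$ and is contracted by $\delta$ to $0$. The projection $\pi_1\colon X\to Y$ is surjective, being dominant and proper. So $\pi_1^{-1}(\Delta_V)$ is a positive-dimensional subset of $X$ mapping to the point $0$. Hence $f$ is not finite, and since $X\to Z$ is finite, $\tilde f$ is not finite either. In particular $\tilde f$ is not an isomorphism. Normalizing $Z$ if necessary, and using that $A$ is smooth and hence $\mathbb{Q}$-factorial, \cite[Corollary 2.63]{KM08} gives a prime divisor $E_Z\subseteq Z$ with $\codim \tilde f(E_Z)\geq 2$. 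Its preimage in $X$ contains a prime divisor $D$, since $X\to Z$ is finite and surjective, and $f(D)=\tilde f(E_Z)$ has codimension $\ge 2$. This $D$ is the required exceptional divisor. Finally, we would remark that replacing $X$ by its normalization is harmless, because we only need the composite map to $Y^s_A$ to be a closed embedding on $X$ itself. So we apply Kollár–Mori to the normalization of $Z$ and push the divisor down to $X$.

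The main obstacle is the inseparable case: making $\tilde f$ genuinely birational while keeping $X\hookrightarrow Y^s_A$ a closed embedding. I would first try the Frobenius-twist reduction, because the purely inseparable part of $K(Z)/K(A)$ can be absorbed by a power of relative Frobenius on $A$, which is again an abelian variety isogenous to $A$. The other cost is verifying the $\mathbb{Q}$-factoriality and normality hypotheses needed for \cite[Corollary 2.63]{KM08}.
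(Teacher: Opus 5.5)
Your separable case is the paper's argument. You use the same $X\subseteq Y^s_A$ (the closure of the point defined by the conjugate embeddings) and the same $G$-action by permuting factors. As in the paper, $X/G\to A$ is birational but not finite because the diagonal is contracted, and you conclude with \cite[Corollary 2.63]{KM08} using smoothness of $A$. The gap is the inseparable case, which is genuinely needed in characteristic $p$ and which you leave unresolved.

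First, your suggested $E'=E\cdot K(Y)$ is in general not $G$-stable. The unique extension of $g\in G$ to the perfect closure sends $K(Y)$ to a conjugate $\sigma_j(K(Y))$, which need not lie in $E\cdot K(Y)$. For example, take $p$ odd, $F=k(\sqrt{u},t)$ over $k(u,t)$, and $K(Y)=F\bigl((t+\sqrt{u})^{1/p}\bigr)$. Then $E\cdot K(Y)=K(Y)$, but $(t-\sqrt{u})^{1/p}\notin K(Y)$. So not every $G$-translate of the inclusion lands in $E'$, and $G$ need not act on your $X$. You must take $E'$ to be the compositum of all the conjugates $\sigma_i(K(Y))$, as the paper does, or $E^{1/p^N}$ for $N\gg 0$.

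Second, and more seriously, once $K(X/G)$ is a nontrivial purely inseparable extension of $K(A)$, the map $X/G\to A$ is not birational, so \cite[Corollary 2.63]{KM08} does not apply. You claim the argument needs only properness, birationality up to universal homeomorphism, and a normal target. That claim is unproved, and as stated it is wrong: the operative hypothesis is $\mathbb{Q}$-factoriality of the target of a birational map, not normality.

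The missing step is the paper's Lemma \ref{Radical1}:
\begin{itemize}
\item Let $A'$ be the normalization of $A$ in $K(X/G)$. The normalization of $X/G$ then maps birationally, and not finitely, to $A'$.
\item $\pi\colon A'\to A$ is finite and a universal homeomorphism, because every integral element $b$ has $b^{p^e}$ in the coordinate ring of the normal variety $A$.
\item $A'$ is $\mathbb{Q}$-factorial: for a prime divisor $D'\subseteq A'$, the image $\pi(D')$ is Cartier on the smooth $A$, and $\pi^*\pi(D')=eD'$.
\end{itemize}
Since $A'$ need not be smooth, this check cannot be skipped. Koll\'ar--Mori then applies over $A'$, and the finite map $A'\to A$ preserves the codimension of the image of the exceptional divisor.

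Your Frobenius-twist route can also be made rigorous. Let $F$ be the separable closure of $K(A)$ in $K(Y)$. For $N\ge e$ one has $K(Y)\,K(A)^{1/p^N}=F\,K(A)^{1/p^N}$, so the reduced pullback of $Y$ to $A^{(p^{-N})}$ is separable over $A^{(p^{-N})}$. The separable argument applies there, and the resulting variety and exceptional divisor can be pushed forward along the finite map to $Y^s_A$. However, you have carried out neither route, so the proposal does not yet prove the statement in positive characteristic.
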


\begin{proof}
 Since $\delta: Y \to A$ is generically finite, we may view $K(Y)$ as a finite extension of $K(A)$. Let's first assume that $K(Y)/K(A)$ is a separable extension. Let $E$ be the Galois closure of $K(Y)$ over $K(A)$. Let $\sigma_i: K(Y) \xhookrightarrow{} E$ be the different conjugate embeddings for $i=1,\ldots,r$. This gives us an injection 
 $$\Spec E \xhookrightarrow{} \Spec R = \Spec \underbrace{K(Y) \otimes_{K(A)} \cdots \otimes_{K(A)} K(Y)}_{r \text{ times }}$$
corresponding to the surjection $R \to E$ given by $x_1 \otimes \cdots \otimes x_r \mapsto \prod_{i=1}^{r} \sigma_i(x_i)$. We now view $\Spec R$ as the generic fiber of the natural map $\delta: Y^r_A \to A$. We let $\eta$ be the point corresponding to $\Spec E$ and we set $X$ to be the Zariski closure of $\eta$ inside $Y^r_A$. 
\par 
The Galois group $G = \Gal(E/K(A))$ naturally acts on $Y^r_A$, by permuting the factors of the fiber product, i.e. $g \cdot (x_1,\ldots,x_r) = (x_{\pi_g^{-1}(1)},\ldots,x_{\pi_g^{-1}(r)})$, where $\pi_g$ is the permutation on $(\sigma_1,\ldots,\sigma_r)$ that the action of $g$ induces. This induces the usual action of $G$ on $\Spec E$. Since $X$ is the Zariski closure of $\eta$, it follows that $G$ acts on $X$ too. Observe that $\delta: X \to A$ is not a finite morphism as it factors through $X \to Y \to A$ for any of the projection maps $\pi_i: X \to Y$, and the map $X \to Y$ is surjective while the map $\delta: Y \to A$ is contracts a positive dimensional subvariety. 
\par 
We now have the map $\delta: X \to A$ so that $g \circ \delta = \delta$ for all $g \in G$. Let $W = X/G$ be the finite quotient of $X$ by $G$, which exists because $G$ is a finite group. Then we have an induced morphism $\tilde{\delta}:W \to A$ which is birational, as they both share the same function field. However it cannot be an isomorphism, as otherwise $\delta: X \to A$ is a finite morphism. We now apply \cite[Corollary 2.63]{KM08}. Since $A$ is smooth, it is $\bb{Q}$-factorial and hence the corollary applies, giving us an exceptional divisor in $X$ as desired by pulling back the exceptional divisor in $W.$
\par 
If $K(Y)/K(A)$ is inseparable, we have to modify the argument as if we take the normal closure $E$ of $K(Y)/K(A)$, then the invariant subfield of the Galois group $\Gal(E/K(A))$ for $E$ need not equal $K(A)$. We remedy this by letting $E/K(A)$ be the Galois closure of the maximal separable subextension $F$ of $K(Y)/K(A)$. We refer the readers to \cite[\href{https://stacks.math.columbia.edu/tag/09HD}{Tag 09HD}]{stacks-project} for basic facts about purely inseparable extensions. We let $\sigma_i: F \xhookrightarrow{} E$ be the different conjugate embeddings for $i=1,\ldots,r$. Since $K(Y)/F$ is purely inseparable, these extend to give $\sigma_i: K(Y) \xhookrightarrow{} \ovl{E}$. We set $E'$ to be the compositum of the $\sigma_i(K(Y))$'s. Then $E'/E$ is purely inseparable, the Galois group $G = \Gal(E/K(A))$ acts on $E'$ and permutes the conjugate embeddings $\sigma_i$'s, and the fixed field $(E')^G$ is purely inseparable over $K(A)$. 
\par 
We perform the same construction on $Y_A^r$ again, but this time with the point $\Spec E'$ instead of $\Spec E$. Taking the Zariski closure, we obtain a closed subvariety $X \subseteq Y_A^r$ such that $G$ acts on $X$. Taking the quotient gives us a morphism $\tilde{\delta}: X \to A$, which need not be birational as $K(X) = (E')^G$ which could be larger than $K(A)$. 
\par 
We now take the normalization $A'$ of $A$ in $K(X)$ and we replace $X$ with its normalization $X'$. Then $\tilde{\delta}:X' \to A$ extends to $\tilde{\delta}: X' \to A'$ by \cite[\href{https://stacks.math.columbia.edu/tag/035H}{Tag 035H}]{stacks-project} as $X'$ is normal. We have the following lemma regarding $\pi: A' \to A$. 

\begin{lemma} \label{Radical1}
The morphism $\pi: A' \to A$ is finite and a universal homeomorphism. Hence $A'$ is also $\bb{Q}$-factorial. 
\end{lemma}

\begin{proof}
 Since $A' \to A$ is the normalization of $A$ over a finite extension, it follows that $A' \to A$ is finite. To show its a universal homeomorphism, we let $U = \Spec C$ be an affine open of $A$. As $K(A')/K(A)$ is purely inseparable, there exists $e > 0$ such that $x^{p^e} \in K(A)$ for all $x \in K(A')$. Let $B$ be the integral closure of $C$ inside $K(A')$. Then for all $b \in B,$ $b^{p^e} \in K(A)$ and is also integral over $C$, implying that $b^{p^e} \in C$ as $A$ is normal. Thus $C \xhookrightarrow{} B$ satisfies the hypothesis of \cite[\href{https://stacks.math.columbia.edu/tag/0BRA}{Tag 0BRA}]{stacks-project} which implies that $\Spec B \to \Spec C$ is a universal homeomorphism. Hence $\pi: A' \to A$ is a universal homeomorphism. 
 \par
 Now let $D' \subseteq A'$ be a prime Weil divisor. Then $\pi(D')$ is also a prime Weil divisor on $A$. Since $A$ is smooth, $\pi(D')$ is Cartier. Now the pullback $\pi^* \pi(D')$ is also Cartier, but $\pi^* \pi(D') = e D'$ for some positive integer $e > 0$. Hence $D'$ is $\bb{Q}$-Cartier and thu $A'$ is $\bb{Q}$-factorial. 
\end{proof}

Applying \cite[Corollary 2.63]{KM08}, as $A'$ is still $\bb{Q}$-factorial, we obtain an exceptional divisor on $X'$. Mapping it down to $X$ gives us the exceptional divisor we want, finishing the proof.   
\end{proof}

Applying Proposition \ref{ExceptionalDivisor1}, we obtain an exceptional projective variety $X$ with a closed embedding into $Y^s_A$ and hence into $A^s$, such that $f: X \to A$ is generically finite and has an exceptional divisor $D$. Since the induced $f: X \to A$ is generically finite and surjective, so is the map $\pi_i: X \to Y$. Now let $\ovl{L}$ be the canonical adelic line bundle on $A$. Recall that $Y = V^m$ which when viewed as a subset of $A^m$, there is an adelic line bundle $\ovl{L}'$ given by pulling back and tensoring the different $\ovl{L}$'s. 
\par 
We let $\ovl{M} = \pi_1^* \ovl{L}' + \cdots + \pi_r^* \ovl{L}'$ and let $\ovl{N} = f^* \ovl{L}$. Then both $\ovl{M}$ and $\ovl{N}$ are nef, semipositive adelic line bundles, and are geometrically semiample and big. In fact, $\ovl{M}$ is geometrically ample since $\iota: X \xhookrightarrow{} Y^s_A$ is a closed immersion and so if $p_i: Y^r_A \xhookrightarrow{} Y^r \to Y$ are the projection maps, then 
$$\ovl{M} = \iota^*(p_1^* \ovl{L}' + \cdots + p_r^*\ovl{L}')$$
and each $\ovl{L}'$ is geometrically ample, giving us that $\ovl{M}$ is geometrically ample. 
\par 
The morphism $f: X \to A$ commutes with composing $\delta: V^m \to A$ with $\pi_i: X \to Y$.

\begin{proposition} \label{ExceptonalDivisorNef}
There exists $ a > 0$ such that both $a \ovl{L}' - \delta^*\ovl{L}$ and $a \ovl{M} - \ovl{N}$ is nef on $Y$ and $X$ respectively. 
\end{proposition}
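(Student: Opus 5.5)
We need $a\ovl{L}' - \delta^*\ovl{L}$ nef on $Y = V^m \subseteq A^m$, where $\ovl{L}' = \sum_i p_i^*\ovl{L}$ and $\delta(x) = x_1+\cdots+x_{m-1}-[m-1]x_m$; the statement on $X$ will then follow by pullback. The plan is to prove the inequality on all of $A^m$ at the level of canonically metrized line bundles, and then restrict to $Y$ and pull back to $X$. Nefness is preserved under pullback: a limit of nef models pulls back to a limit of nef models, and for the Yuan--Zhang definition, if $a\ovl{L}-\ovl{M}+\ovl{P}$ is strongly nef with $\ovl{P}$ strongly nef, the same holds after pullback.

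\textbf{Step 1 (on $A^m$).} The homomorphism $\delta: A^m \to A$ commutes with $[n]$. Since $L$ is symmetric and rigidified, $\delta^*\ovl{L}$ is the canonical (N\'eron--Tate) adelic metrization of the symmetric line bundle $\delta^*L$. Likewise $\ovl{L}'$ is canonical for $L' = \boxtimes L$. Now $L'$ is ample on $A^m$, so choose $a$ with $aL' - \delta^*L$ ample; this is possible since $L'$ is ample and $\delta^*L$ is a fixed line bundle. The bundle $aL'\otimes\delta^*L^{-1}$ is symmetric and ample. Its canonical adelic metric equals $a\ovl{L}' - \delta^*\ovl{L}$, because both are fixed by the Tate limiting process $\frac{1}{n^2}[n]^*$: the canonical metric on a symmetric bundle is unique (the Tate iteration is a contraction), and it is compatible with tensor products. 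The canonical metric on a symmetric ample bundle is nef, as it is a uniform limit of the nef models $\frac{1}{n^2}[n]^*$ applied to a nef initial model. Hence $a\ovl{L}' - \delta^*\ovl{L}$ is strongly nef on $A^m$.

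\textbf{Step 2 (restriction to $Y$).} Restricting along $V^m \hookrightarrow A^m$ gives nefness on $Y$.

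\textbf{Step 3 (on $X$).} For $X \hookrightarrow Y^s_A$, we have $f = \delta\circ\pi_i$ for each $i$. Hence
\[
\pi_1^*\bigl(a\ovl{L}'-\delta^*\ovl{L}\bigr) = a\,\pi_1^*\ovl{L}' - \ovl{N}
\]
is nef on $X$. Adding the terms $a\,\pi_j^*\ovl{L}'$ for $j\geq 2$, each of which is nef as a pullback of a nef bundle, shows that $a\ovl{M}-\ovl{N}$ is a sum of nef bundles and therefore nef.

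The main technical point is Step 1: identifying $a\ovl{L}'-\delta^*\ovl{L}$ with the canonical metric of an ample symmetric bundle, and checking that this canonical metric is (strongly) nef in the Yuan--Zhang sense over the relevant model base. If the models cause difficulty, the fallback is to argue directly with the Tate iterations. Take models $\mathcal{L}_0'$ and $\mathcal{L}_0$ with $a\mathcal{L}_0'-\delta^*\mathcal{L}_0$ nef; for instance, take the model of the ample difference bundle, which is possible after shrinking the base. Then apply $\frac{1}{n^2}[n]^*$; since $[n]$ commutes with $\delta$, nefness is preserved at every stage and hence passes to the limit.
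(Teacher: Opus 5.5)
Your proof is correct, but it establishes nefness on $A^m$ by a softer route than the paper does.

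\textbf{The paper's route.} The paper does not choose $a$ by ampleness. It uses the theorem of the cube, i.e.\ that $\alpha \mapsto \alpha^*L$ is quadratic on $\Hom(A^m,A)$. Combining this with Lagrange's identity for $c_1=\cdots=c_{m-1}=1$ and $c_m=-(m-1)$ gives
\[
m(m-1)\ovl{L}' - \delta^*\ovl{L} = \sum_{1\le i<j\le m-1}(p_i-p_j)^*\ovl{L} + \sum_{i=1}^{m-1}\bigl((m-1)p_i+p_m\bigr)^*\ovl{L}.
\]
This identity passes from $\Pic$ to canonically metrized bundles because canonical metrics are compatible with the cube isomorphisms. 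Nefness then follows because the right-hand side is a sum of pullbacks of the single nef bundle $\ovl{L}$ along homomorphisms $A^m\to A$. The payoff is an explicit constant $a=m(m-1)$, and the only positivity input is nefness of $\ovl{L}$ itself together with stability of nefness under pullbacks and sums.

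\textbf{Your route.} You need nefness of the canonical metric on a new symmetric ample bundle $aL'-\delta^*L$ on $A^m$ over the model base. This is Yuan--Zhang's theorem on invariant adelic line bundles for the polarized system given by $[2]$. The sketch ``nef initial model, then iterate $\frac{1}{n^2}[n]^*$'' needs the usual boundary correction in the quasi-projective setting, but this is exactly what that theorem supplies. You also need uniqueness of the rigidified $[2]$-invariant extension, to identify that canonical metric with $a\ovl{L}'-\delta^*\ovl{L}$ via $\delta\circ[2]=[2]\circ\delta$. In exchange, you need no explicit identity, and the argument works verbatim with any homomorphism in place of $\delta$, at the cost of a non-explicit $a$.

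Both arguments ultimately rest on the same principle, that canonical metrics are determined by rigidified $[2]$-invariance. Your Step~3 correctly spells out the passage from $Y$ to $X$ that the paper leaves implicit in ``it suffices'': pull back along $\pi_1$ and add the nef terms $a\,\pi_j^*\ovl{L}'$ for $j\ge 2$.
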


\begin{proof}
It suffices to find $a > 0$ such that $ a \ovl{L}' - \delta^* \ovl{L}$ is nef on $Y = V^m$. Let $p_i : V^m \to V$ be the different coordinate projections. Then $\ovl{L}' = \sum_{i=1}^{m} p_i^*\ovl{L}$. Now by the theorem of the cube \cite[Theorem 8.6.11]{BG06}, the function $q(\alpha) = \alpha^*L$ for $\alpha \in \Hom(A^m,A)$ is quadratic if $L$ is symmetric. We hence obtain the identity 
\begin{equation} \label{eq: Nef1}
m(m-1) \sum_{i=1}^{m} p_i^*L - \delta^* L =  \sum_{1 \leq i < j \leq m-1} (p_i - p_j)^*L + \sum_{i=1}^{m-1} ((m-1)p_i + p_m)^*L,
\end{equation}
coming from the general identity for quadratic forms
$$\left(\sum_{i} c_i^2 \right) \sum_i q(x_i) - q(\sum c_i x_i) = \sum_{i < j} q(c_j x_i - c_i x_j),$$
where we take $c_1 = \cdots = c_{m-1} = 1$ and $c_m = -(m-1)$. As $\ovl{L}$ has the canonical metric, it is compatible with the theorem of the cube and hence (\ref{eq: Nef1}) holds with $\ovl{L}$ replacing $L$, proving that $m(m-1) \ovl{L}' - \delta^* \ovl{L}$ is nef as desired.
\end{proof}

\begin{proposition} \label{ExceptionalDivisor2}
We have $\ovl{M} \cdot \ovl{N}^{g} = 0$ where $g = \dim A$. Hence $\ovl{M}^{g+1} = \ovl{N}^{g+1} = 0$.  
\end{proposition}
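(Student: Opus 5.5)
The plan is to read off the vanishing from the fact that $\ovl{N}$ is pulled back from the $g$-dimensional variety $A$, and then propagate it with Proposition \ref{HodgeIndex2}. Note that $\dim X = \dim A = g$, since $f: X \to A$ is generically finite and surjective. Also $\ovl{N} = f^*\ovl{L}$, where $\ovl{L}$ is the canonical adelic line bundle of $A$, and $\ovl{M} = \sum_i \pi_i^*\ovl{L}'$ with $\ovl{L}'$ pulled back from the $\ovl{L}$'s on $A^m$ restricted to $Y = V^m$.

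\textbf{Step 1: $\ovl{N}^{g+1} = 0$.} By the projection formula, $\ovl{N}^{g+1} = (f^*\ovl{L})^{g+1} = \deg(f)\,\ovl{L}^{g+1}$ on $A$. This vanishes because $\h_{A,L}(A) = 0$ for the canonical metric; equivalently, $[2]^*\ovl{L} = 4\ovl{L}$ forces $4^{g+1}\ovl{L}^{g+1} = 4^g \ovl{L}^{g+1}$.

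\textbf{Step 2: $\ovl{M}^{g+1} = 0$.} Each $\pi_i: X \to Y$ is generically finite and surjective. Hence $(\pi_i^*\ovl{L}')^{g+1} = \deg(\pi_i)\,\ovl{L}'^{g+1}|_{Y}$, and this vanishes because $\h(V) = 0$, which gives $\h_{\ovl{L}'}(V^m) = 0$. For the mixed terms $\pi_1^*\ovl{L}'^{a_1}\cdots\pi_r^*\ovl{L}'^{a_r}$, use that all $\pi_i^*\ovl{L}'$ are nef. The sum $\ovl{M}$ restricted to $X$ is bounded by the pullback under $\iota: X \hookrightarrow Y^r$ of $\sum_i p_i^*\ovl{L}'$. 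By the projection formula (Deligne pairing to $\Spec K$), each mixed product is the height of the cycle $\iota_*[X]$ with respect to canonical bundles on $(A^m)^r$. That cycle sits inside the product $V^{mr}$, which has height zero, and the nef-bounding argument of Proposition \ref{Induction2} applies to it. A cleaner route, which I would try first, is to bypass the mixed terms entirely and deduce $\ovl{M}^{g+1} = 0$ from the statement's first claim via Proposition \ref{HodgeIndex2}. That route is Step 3.

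\textbf{Step 3: $\ovl{M}\cdot\ovl{N}^g = 0$ and conclusion.} Write $\ovl{M}\cdot\ovl{N}^g = \sum_i \pi_i^*\ovl{L}'\cdot (\pi_i^*\delta^*\ovl{L})^g$, using $f = \delta\circ\pi_i$. By the projection formula along $\pi_i$, each term equals $\deg(\pi_i)\,\ovl{L}'\cdot(\delta^*\ovl{L})^g$ on $Y$. By Proposition \ref{ExceptonalDivisorNef}, $a\ovl{L}' - \delta^*\ovl{L}$ is nef, so
\[
0 \le \ovl{L}'\cdot(\delta^*\ovl{L})^g \le \ovl{L}'\cdot(a\ovl{L}')^g = a^g\,\ovl{L}'^{g+1} = 0,
\]
the last equality coming from $\h_{\ovl{L}'}(V^m) = 0$. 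The monotonicity used here holds because intersections of nef bundles are nonnegative, so we may replace one $\delta^*\ovl{L}$ at a time by $a\ovl{L}'$. Thus $\ovl{M}\cdot\ovl{N}^g = 0$.

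Now apply Proposition \ref{HodgeIndex2} with $\ovl{N}$ in the role of $\ovl{L}$ and $\ovl{M}$ in the role of $\ovl{M}$. Both are nef, semipositive and geometrically big: $N = f^*L$ is big since $f$ is generically finite and surjective, and $M$ is ample. We have $\ovl{N}^g\cdot\ovl{M} = \ovl{N}^{g+1} = 0$, so Proposition \ref{HodgeIndex2} gives $\ovl{M}^{g+1} = 0$.

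The main obstacle is justifying the projection-formula and monotonicity manipulations for adelic line bundles over a GVF. This requires reducing, via Lemma \ref{lem: comparing intersections for YZ and CM}, to Yuan--Zhang Deligne pairings on a model, where pullback under generically finite maps multiplies by the degree and nefness gives nonnegativity.
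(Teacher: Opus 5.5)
Your Steps 1 and 3 are exactly the paper's argument: the mixed term $\ovl{L}'\cdot(\delta^*\ovl{L})^g$ vanishes because $a\ovl{L}'-\delta^*\ovl{L}$ is nef and $\ovl{L}'^{g+1}=0$; the projection formula along each $\pi_i$ (using $f=\delta\circ\pi_i$) then gives $\ovl{M}\cdot\ovl{N}^g=0$; the projection formula along $f$ gives $\ovl{N}^{g+1}=0$; and Proposition \ref{HodgeIndex2}, with $\ovl{N}$ in the role of $\ovl{L}$, gives $\ovl{M}^{g+1}=0$. You should drop the Step 2 digression, whose justification does not work (lying inside the height-zero variety $V^{mr}$ does not force $X$ to have height zero, as non-special subvarieties of $A$ itself show), but your final route does not depend on it.
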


\begin{proof}
Since $a \ovl{L}' - \delta^* \ovl{L}$ is nef and $(\ovl{L}')^{g+1} = 0$, it follows that all mixed intersections between $\ovl{L}'$ and $\delta^*\ovl{L}$ vanish. Hence we have $\ovl{L}' (\delta^* \ovl{L})^g = 0$. By the projection formula, we have $\pi_i^*\ovl{L}' (f^* \ovl{L})^g = 0$ for each $i=1,\ldots,r$. Hence $\ovl{M} \cdot \ovl{N}^g = 0$. 
\par 
By the projection formula, we know that $\ovl{N}^{g+1} = 0$. We now apply Proposition \ref{HodgeIndex2}. As $\ovl{M}$ and $\ovl{N}$ are nef, we obtain $\ovl{M}^{g+1} = 0$ as desired.
\end{proof}

We now apply Proposition \ref{HeightEquidistribution2}. Let $D$ be an exceptional divisor for $f: X \to A$. It follows that 
$$h_{\ovl{M}}(D) = h_{\ovl{N}}(D).$$
However, $\ovl{N} = f^*\ovl{L}$. As $\codim f(D) \geq 2$, the projection formula implies that $h_{\ovl{N}}(D) = 0$. Thus $h_{\ovl{M}}(D) = 0$. Using our closed embedding $\iota: X \xhookrightarrow{} Y_A^s \xhookrightarrow{} Y^s$, we obtain a subvariety $D$ of $Y^s = V^{ms}$ with dimension $g-1$ with zero canonical height. 
\par
We now apply the same argument as in Proposition \ref{Induction2} to arrive at a contradiction through the following proposition.  
\begin{proposition} \label{Induction3}
Let $K$ be a countable GVF and $A$ an abelian variety over $K$ which satisfies $\Tr_{\ovl{K}/\ovl{k_0}}(A_{\ovl{K}}) = 0$ where $k_0$ is the height zero subfield of $K$. Assume that $V \subseteq A$ is a height zero subvariety which is not a coset. Furthermore assume that $(\dim V, \dim A)$ is minimal among all counterexamples. Let $D \subseteq V^{r}$ be a positive dimensional subvariety of height zero. Then $\gcd(\dim D, \dim V) > 1$. 
\end{proposition}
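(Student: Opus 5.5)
We argue by contradiction: suppose $\gcd(\dim D, \dim V) = 1$, and write $d = \dim D$, $n = \dim V$, $g = \dim A$. By the reductions made at the start of the proof of Theorem \ref{GVFBogomolov1}, we may assume that $\Stab(V)$ is finite, that $V - V$ generates $A$, and hence (Proposition \ref{Induction1}) that $V$ is geometrically non-degenerate. The idea is to run the fibration argument of Proposition \ref{Induction2} with $V$ replaced by $D$ inside the abelian variety $A^r$. This produces a height zero subvariety of $A^r$ whose generic fibres are unions of cosets. Minimality of $(\dim V, \dim A)$ then forces these cosets to be stable under some abelian subvariety. Combining this with the finiteness of $\Stab(V)$ should force $n \mid d$, contradicting $\gcd(d,n) = 1$ unless $n = 1$. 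The case $n = 1$ does not occur, since curves were already handled by the divisor case.

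\textbf{Reducing $D$.} By Lemma \ref{lem: image of height zero}, the images of $D$ under the coordinate projections $p_i : V^r \to V$, and more generally under homomorphisms $A^r \to A$, have height zero. If some such image is a proper subvariety of $V$ of positive dimension, we pass to it. It is then a smaller-dimensional height zero subvariety, so by minimality it is a coset. This can only happen in a controlled way, because $V$ is not a coset and $V - V$ generates $A$. After discarding redundant factors, we may therefore assume that $D$ surjects onto each factor $V$.

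\textbf{Fibration and key step.} Fix a coordinate $i$ and consider $p_i : D \to V$, which is surjective. Its generic fibre $W$ is a variety over $K' = K(V)$, where $K(V)$ carries the polarized GVF structure coming from $\ovl{L}|_V$. Exactly as in Proposition \ref{Induction2}, three facts hold:
\begin{itemize}
\item the Deligne pairing together with the projection formula (\cite[Lemma 4.6.1]{yuan2021adelic}) shows $\h_{K'}(W) = 0$;
\item the analogue of Lemma \ref{HeightZeroSubfield1} keeps the height zero subfield equal to $k_0$, provided every prime divisor of $V$ has positive height. This holds by induction, since divisors of $V$ have dimension $n - 1$ and are not cosets (otherwise $V$ would be too degenerate);
\item tracelessness of $A^{r-1}$ is preserved.
\end{itemize}
Since $\dim W = d - n < \dim V$ whenever $d < 2n$, minimality implies that $W$ is a union of translates of a single abelian subvariety $B \subseteq A^{r-1}$, and $B$ is defined over $K$. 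Hence a dense open subset of $D$ is stable under $B$. I expect the main obstacle to be organizing the induction on $d$ so that this argument always applies. For larger $d$, I would first project $D$ to a sub-product $V^{j}$ on which the image has dimension in $(n(j-1), nj]$, and then iterate.

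\textbf{Arithmetic conclusion.} Once we know $D$ is stable under $B$, replace $D$ by its image in $A^r/B$. This image has height zero by Lemma \ref{lem: image of height zero}, and the quotient lowers both dimensions. Iterating, $D$ eventually becomes a union of cosets fibred over products of copies of $V$. Its dimension is then $jn + \dim B'$ for some integer $j \geq 1$, where $B'$ is an abelian subvariety arising as a stabilizer. The stabilizer of $V$ is finite, and $B'$ projects into the stabilizers of fibres that lie in products of $V$. I would show that such a $B'$ must be trivial, which forces $d = jn$ and hence $\gcd(d, n) = n > 1$. Ruling out a nontrivial $B'$ is the delicate point, and I would attack it first by analysing the projections of $B'$ to each $A$-factor and using that $V$ has finite stabilizer and is non-degenerate.
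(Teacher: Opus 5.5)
Your fibration step is the right starting point, but the argument does not close. You use minimality only through the Bogomolov statement for the generic fibre $W$, and this requires $\dim W=d-n<n$. For $d\ge 2n$ you only sketch an unspecified iteration over sub-products. By your own account, the final step (showing the stabilizer $B'$ is trivial) is left open. The iteration is also not coherent as written. Once you replace $D$ by its image in $A^r/B$, you no longer have a subvariety of a product of copies of $V$. So neither the reduction to surjective coordinate projections nor the fibration over $V$ can be run again.

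The missing idea is that the proposition reproduces itself over $K(V)$. Give $K(V)$ the polarized GVF structure. Then $V_{K(V)}\subseteq A_{K(V)}$ is again a height zero, non-coset subvariety of an abelian variety that is traceless over the same height zero subfield. It has the same pair $(\dim V,\dim A)$, so it is again a minimal counterexample.

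You can therefore induct on $\dim D$ and apply the proposition itself to $W\subseteq V_{K(V)}^{r-1}$, which has height zero and dimension $d-n$. Either $d=n$ or $\gcd(d-n,n)>1$, and in both cases $\gcd(d,n)>1$. This needs no bound $d<2n$ and no stabilizer or quotient analysis. In your range $d<2n$ the stabilizer point is in any case immediate: $\{0\}\times B$ stabilizes $D$, so $p_j(B)\subseteq\Stab(p_j(D))=\Stab(V)$ is finite and connected, which forces $B=0$.

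Your reduction step also needs a real argument. A non-coset $V$ that generates $A$ and has finite stabilizer can in general contain translates of positive-dimensional abelian subvarieties, so the justification you give is insufficient. What excludes a proper positive-dimensional image $x+B$ of a coordinate projection is Proposition \ref{Induction1} combined with $\dim A\ge 2\dim V$ from Proposition \ref{Induction2}. Together these give $\dim(A/B)>\dim V\ge\dim q(V)$, a contradiction. Hence each coordinate projection of $D$ is either a point or all of $V$.
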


\begin{proof}
We proceed by induction on $\dim D$. First, observe that the image of $D$ to any $V$ must be either a point or the entire $V$. Indeed otherwise the image of $D$ is a positive dimensional subvariety of $V$ with canonical height zero, just as in the proof of Proposition \ref{Induction1}. By minimality of $V$, the image of $D$ must be a coset, say $x+B$. By Proposition \ref{Induction2}, we know that $\dim A \geq 2 \dim V$. This implies that 
$$\dim(A/B) = \dim A - \dim B \geq 2 \dim V - \dim B > 2 \dim V - \dim V = \dim V.$$
In particular, we cannot have $q(V) = A/B$ which contradicts Proposition \ref{Induction1}. 
\par 
Now let's assume that $\pi: D \to V$ is surjective where $\pi$ is the first projection. We let $\nu: V' \to V$ be the normalization of $V$ inside $K(V)$ and endow $K(V)$ with the polarized GVF structure $(K(V),  \nu^*\ovl{L},\ldots,\nu^*\ovl{L})$. As all prime divisors of $V$ have canonical height $> 0$ by assumption of minimality, it follows by the proof of Lemma \ref{HeightZeroSubfield1} that $A_{K(V)}$ remains traceless over the height zero subfield. 
\par 
Now as in the proof of Proposition \ref{Induction2}, it follows that $D_{K(V)}$ has height zero and lies inside $V_{K(V)}^{r-1}$. But $V_{K(V)}$ and $A_{K(V)}$ is still a minimal counterexample in terms of $(\dim V, \dim A)$ whereas $\dim D_{K(V)} = \dim D - \dim V$. Hence either $\dim D - \dim V = 0,$ or we can apply induction. Either way, we find that 
$$\gcd(\dim D - \dim V ,\dim V) > 1 \implies \gcd(\dim D,\dim V) > 1$$
as desired.
\end{proof}

\begin{proof}[Proof of Theorem \ref{GVFBogomolov1}]
Combining $\dim D = g-1$ combined with Proposition \ref{Induction3} gives us a contradiction, since we know from Proposition \ref{Induction2} that $\dim V$ divides $g = \dim A.$ Thus the Bogomolov for traceless GVFs must be true, proving Theorem \ref{GVFBogomolov1}.

\end{proof}

It is now straightforward to conclude the Bogomolov conjecture over GVFs where $A$ may not have trivial trace.

\begin{theorem} \label{GVFBogomolov2}
Let $K$ be a countable non-archimedean GVF and let $k$ be the height zero subfield. Assume both $K$ and $k$ are algebraically closed. Let $A/K$ be an abelian variety and let $B = \Tr_{K/k}(A)$ be its trace with the trace morphism $\Tr: B_K \to A$. Then if $V \subseteq A$ has canonical height zero, $V$ must be of the form $x + B + \Tr(W_K)$ where $x$ is a point of height zero, $B$ is an abelian subvariety, and $W \subseteq B$ a subvariety defined over $k_0$.  
\end{theorem}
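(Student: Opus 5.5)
We reduce to the traceless case, Theorem \ref{GVFBogomolov1}, by the standard quotient-by-trace argument (as in Yamaki's treatment of special subvarieties). Write $T = \Tr_{K/k}(A)$ and $\tau: T_K \to A$ for the trace morphism. Since $k$ is algebraically closed, $\tau$ has finite kernel (purely inseparable in positive characteristic, by \cite{Con06}), so $A_0 := \tau(T_K)$ is an abelian subvariety. We set $A_1 := A/A_0$ with quotient map $q: A \to A_1$. By \cite[Theorem 6.4, 6.8]{Con06}, $\Tr_{K/k}(A_1) = 0$: a nonzero map from a constant abelian variety to $A_1$ would, after Poincaré reducibility and the universal property of the trace, lift to a constant part of $A$ not contained in $A_0$.

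\textbf{Step 1: reduce to the traceless quotient.} Given $V$ with $\h_{A,L}(V)=0$, Lemma \ref{lem: image of height zero} shows $\h(q(V)) = 0$ in $A_1$. If $q(V)$ is a point, it has height zero. Otherwise Theorem \ref{GVFBogomolov1} applies and gives $q(V) = y + B_1$ with $B_1 \subseteq A_1$ an abelian subvariety. In both cases, $y$ lifts to a height zero point $x \in A(K)$ by the surjectivity on height zero points proved in Lemma \ref{lem: dimension zero bogomolov}. Let $B$ be the connected component of $q^{-1}(B_1)$ complementary to $A_0$; we fix a complement using Poincaré reducibility, so that $B \to B_1$ is an isogeny. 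Then $V - x \subseteq B + A_0$.

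\textbf{Step 2: work inside $B \times T_K$.} Consider the isogeny-type map $\phi: B \times T_K \to A$, $(b,t) \mapsto b + \tau(t)$, which is surjective onto $B + A_0$ with finite kernel. Pulling back, choose an irreducible component $V'$ of $\phi^{-1}(V - x)$. Since $\phi$ is finite, $V'$ has height zero for a pullback of $L$. Projecting $V'$ to $B$ gives a height zero subvariety of $B$; since $B$ is isogenous to $B_1$, it is traceless. By the same argument as in Step 1, the image of $V'$ in $B$ is therefore a height zero coset $x' + B'$. Projecting $V'$ to $T_K$ gives a height zero subvariety of a constant abelian variety.

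\textbf{Step 3: the constant part.} This is the step I expect to be the main obstacle. One must show that height zero subvarieties of the constant abelian variety $T_K$ are exactly $t + W_K$ with $W \subseteq T$ defined over $k$ and $t$ of height zero, and moreover that $V'$ splits as a product (up to translation) rather than being a graph-like correspondence between $x'+B'$ and a constant piece. For the splitting, I would use that $\Hom(B, T_K) = 0$ because $B$ is traceless; this rigidity should force $V' = (x'+B') \times Z$ after possibly enlarging $B'$ by the stabilizer. For the constant part, the canonical metric on $T_K$ is a model metric at almost every place, with the Néron–Tate height computed over $k$. A subvariety $Z$ with $\h(Z)=0$ is then isotrivial. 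To prove this, I would argue with the GVF analytification and continuity, or reduce via the Chow point: the Chow form of $Z$ has height zero in $\mathbb{P}^N$, and height zero coordinates lie in $k$ up to a common scalar. This shows $Z$ is a translate of some $W_K$ with $W \subseteq T$ over $k$, where the translating point has height zero. Pushing forward by $\phi$ and translating back by $x$ yields $V = x + B + \tau(W_K)$, as claimed.
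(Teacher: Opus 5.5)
Your reduction is the same as the paper's: up to isogeny $A$ is $B\times T_K$ with $B$ traceless and $T$ constant, and the statement is isogeny-invariant (the paper cites \cite[Lemma 5.0.3]{Hul26}). Steps 1 and 2 are fine; applying Theorem \ref{GVFBogomolov1} to $q(V)$ first is harmless but not needed. The gap is the splitting claim in Step 3. This is precisely the nontrivial input the paper takes from \cite[Lemma 5.4.6]{Hul26}: the Bogomolov conjecture for the product of a constant abelian variety and a traceless one that already satisfies it.

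The vanishing of $\Hom(B,T_K)$ only shows, via Goursat, that \emph{abelian subvarieties} of $B\times T_K$ are products. It gives no rigidity for an arbitrary irreducible $V'\subseteq (x'+B')\times(t+W_K)$ that dominates both factors. Such non-product subvarieties are abundant: take any irreducible curve of positive bidegree in $E\times T_K$, with $E$ a traceless elliptic curve and $T$ a constant one. Said differently, the fibres of $V'\to x'+B'$ give a rational map to $\mathrm{Chow}(T)_K$. Even though $\mathrm{Chow}(T)$ is defined over $k$, nothing forces this map to be constant, since every $K$-variety has many nonconstant rational maps to $\bb{P}^1_K=(\bb{P}^1_k)_K$. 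So knowing that both projections of $V'$ are special is not enough; you have to use $\h(V')=0$ a second time.

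One natural route is a fibration argument in the style of Proposition \ref{Induction2}:
\begin{enumerate}
\item Put a polarized GVF structure on $K(B')$ and show that the generic fibre of $V'\to x'+B'$ is a height-zero subvariety of $T_{K(B')}$.
\item Apply your constant-case result over that GVF. This would make the fibre constant and force $V'=(x'+B')\times(W_0)_K$.
\end{enumerate}
The difficulty is controlling the height-zero subfield of $K(B')$. Lemma \ref{HeightZeroSubfield1} needs every prime divisor to have positive height. That hypothesis fails as soon as $B'$ has a codimension-one abelian subvariety, because its translates by height-zero points have height zero. So a further reduction would be required.

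As written, Step 3 does not establish the theorem. You need either such an argument or the citation the paper uses. Separately, your Chow-form argument for the constant case relies on a comparison between intersection-theoretic heights and Chow-form heights over adelic curves, which you should at least justify or cite.
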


\begin{proof}
By \cite[Lemma 5.0.3]{Hul26} (and noting that images of cosets are still images of cosets), it suffices to prove the conjecture for any isogenous abelian variety. There exists an abelian variety $C/K$ with $\Tr_{K/k}(C) = 0$ and an isogeny $B \times C \to A$. Hence it suffices to prove it for $B \times C$. As we know the Bogomolov conjecture for $C$ and $B$ is defined over $k$, \cite[Lemma 5.4.6]{Hul26} shows that the Bogomolov conjecture for $B \times C$ follows as desired.    
\end{proof}

\section{Uniform Height Gaps for Subvarieties in Positive Characteristic}
We now explain how to prove uniform height gaps for subvarieties in positive characteristic by using Theorem \ref{GVFBogomolov2}. Already in \cite[\S 4.2]{Hul26}, Hultberg showed that Theorem \ref{GVFBogomolov2} implies the new gap principle for subvarieties over $\ovl{\bb{Q}}$, and also over function fields of characteristic zero (see also \cite{MY25}). We show that Theorem \ref{GVFBogomolov2} also implies a new gap principle for subvarieties in positive characteristic. The main difficulty in the positive characteristic situation is that an abelian variety $A$ can be non-isotrivial but isogeneous to an isotrivial abelian variety $B$. To handle this, we will use the theory of $p$-divisible groups. We begin by reviewing some results on $p$-divisible groups by Lau--Nicole--Vasiu \cite{LNV13}. 
\par 
Let $k$ be an algebraically closed field of characteristic $p > 0$ and let $D$ be a $p$-divisible group over $k$. The isomorphism number $n_D$ is the smallest $n$ such that $D[p^n]$ determines $D$, i.e. if $E$ is another $p$-divisible group with $E[p^n] \simeq D[p^n]$, then $E \simeq D$. The distance $q_{D,E}$ between two $p$-divisible groups $D$ and $E$ is the minimal non-negative integer $m$ for which there exists an isogeny $\pi: D \to E$ with kernel killed by $p^m$. We say that $D$ is a minimal $p$-divisible group if $n_D = 1$ and we define the minimal height of $D$ to be $q_{D,D_0}$ where $D_0$ is the unique minimal $p$-divisible group in the isogeny class of $D$. 
\par 
Let $K = \ovl{k(C)}$ be the algebraic closure of a function field of a curve $C$ over an algebraically closed field $k$ of characteristic $p$ and let $A/K$ be an abelian variety. Then $D_A = A[p^{\infty}]$ defines for us a $p$-divisible group. We want to understand the minimal degree of an isogeny $\pi: A \to B$ such that $D_B$ is minimal. 

\begin{proposition} \label{IsogenyBound1}
There exists an isogeny $\pi: A \to B$, with $\ker(\pi) \subseteq A[p^{2g}]$, so that $D_B$ is minimal. 
\end{proposition}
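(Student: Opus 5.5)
The plan is to reduce the statement to a bound on the minimal height of $D_A = A[p^\infty]$ over the algebraically closed field $K$, and then to realize the resulting isogeny of $p$-divisible groups as an isogeny of abelian varieties. Since $K = \ovl{k(C)}$ is algebraically closed of characteristic $p$, the results of Lau--Nicole--Vasiu recalled above apply to $D_A$ as a $p$-divisible group over $K$. Let $D_0$ be the unique minimal $p$-divisible group in the isogeny class of $D_A$; it is determined by the Newton polygon of $D_A$. The statement will follow from the two steps below.

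The first step is to prove that the minimal height satisfies $q_{D_A,D_0} \le 2g$, where $2g$ is the height of $D_A$. This means producing an isogeny $\phi: D_A \to D_0$ whose kernel is killed by $p^{2g}$. I would cite the bound from \cite{LNV13} on the distance to the minimal object in an isogeny class in terms of the height $h$ (here $h = 2g$). If only a bound of the shape $q_{D,D_0}\le c(h)$ with a different constant is available, the argument is unchanged. The statement's $p^{2g}$ should then be read as whatever explicit bound they give, and I expect it is of this linear form.

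Failing a direct citation, I would argue through Dieudonné theory. Let $M$ be the Dieudonné module of $D_A$ and $M_0\subseteq M[1/p]$ the lattice corresponding to $D_0$, obtained from the slope decomposition over $W(K)$. One then shows that $p^{a}M_0 \subseteq M \subseteq M_0$ after rescaling, with $a \le h$. This comes from comparing the Hodge polygon of $M$ with the Newton polygon, since both have breakpoints at integers bounded by $h$, and it gives an isogeny with kernel killed by $p^{2g}$. Controlling this exponent uniformly is the step I expect to be the main obstacle. The comparison between $M$ and the minimal lattice is not canonical, and getting exactly $2g$ may require the sharp form of the Lau--Nicole--Vasiu estimates rather than a naive Hodge/Newton argument.

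The second step is to pass from $p$-divisible groups to abelian varieties. Given an isogeny $D_A \to D_0$ with kernel $G\subseteq D_A[p^{2g}] = A[p^{2g}]$, the kernel $G$ is a finite subgroup scheme of $A$. Set $B = A/G$ and let $\pi:A\to B$ be the quotient isogeny, so that $\ker\pi = G \subseteq A[p^{2g}]$. Then $B[p^\infty] = D_A/G \simeq D_0$, and this $D_0$ is minimal, which gives exactly the statement of Proposition \ref{IsogenyBound1}. It is worth noting that $\deg \pi \le p^{4g^2}$ is then bounded solely in terms of $g$ and $p$. This is the form that will be needed for Theorem \ref{IntroTheorem2}.
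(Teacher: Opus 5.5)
Your argument is the paper's: bound the minimal height $q_{D_A,D_0}$ using \cite{LNV13}, then take $B = A/\ker(D_A \to D_0)$, so that $B[p^\infty] \simeq D_0$ and $\ker \pi \subseteq A[p^{2g}]$. The precise input, which removes your hedge about the constant, is \cite[Theorem 1.5]{LNV13}: it gives $q_{D_A,D_0} \le \lfloor \nu(c) \rfloor$, where $\nu$ is the Newton polygon and $c$ the codimension, and convexity gives $\nu(c) \le cd/(c+d) \le 2g$. So the exponent $2g$ holds exactly as stated, and your Dieudonné-module fallback is unnecessary (a Hodge--Newton comparison alone would not control that exponent anyway).
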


\begin{proof}
Let $D_A = A[p^{\infty}]$. By \cite[Theorem 1.5]{LNV13}, we know that $q_{D_A} \leq \lfloor \nu(c) \rfloor$, where $\nu: [0,c+d] \to \bb{R}$ is the Newton polygon for $D_A$, with $d$ denoting the dimension and $c$ the codimension, where $c+d$ adds up to the height of $D_A$ which is $2g$. As noted by the discussion after \cite[Theorem 1.2]{LNV13}, we have 
$$\nu(c) \leq \frac{cd}{c+d} \leq \frac{(c+d)^2}{(c+d)} = 2g.$$
Hence we can find $\pi: D_A \to D_0$, with $D_0$ minimal and $\ker(\pi) \subseteq D_A[p^{2g}]$. Let $H = \ker(\pi)$. Then taking $B = A/\ker(\pi)$ gives us an isogenous abelian variety with $D_B \simeq D_0$ which is minimal as desired.
\end{proof}

If $A/K$ is an abelian variety with $D_A$ minimal, then if $A$ is isogenous to an abelian variety over $k$, it follows that $A$ itself is also defined over $k$. This resolves the issue noted at the beginning of the section: a non-isotrivial abelian variety $A$ may be isogenous to an isotrivial abelian variety $B$.

\begin{proposition} \label{MinimalTrace1}
Let $A/K$ be an abelian variety with $D_A$ minimal, and assume there exists $B/k$ so that there is an isogeny $\pi: B_K \to A$. Then $A$ itself is defined over $k$. 
\end{proposition}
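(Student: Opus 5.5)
The plan is to move the whole problem to $p$-divisible groups. Since $B$ is defined over $k$, its base change $B_K$ has $p$-divisible group $D_{B_K} = (D_B)_K$, which is constant. The isogeny $\pi: B_K \to A$ gives an isogeny of $p$-divisible groups $D_{B_K} \to D_A$ over $K$, so $D_A$ is isogenous to a constant $p$-divisible group.

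The first step is to show that $D_A$ is itself constant, i.e.\ isomorphic to $(D_0)_K$ for some $p$-divisible group $D_0$ over $k$. Let $D_0$ be the unique minimal $p$-divisible group over $k$ in the isogeny class of $D_B$. Isogeny classes over an algebraically closed field are determined by the Newton polygon, which does not change under base change of algebraically closed fields. Hence $(D_0)_K$ is the minimal $p$-divisible group over $K$ in the isogeny class of $D_A$, because minimality ($n=1$) is a property of the truncation $D[p]$ and is preserved by base change. By uniqueness of the minimal object in an isogeny class over the algebraically closed field $K$, and the hypothesis that $D_A$ is minimal, we get $D_A \simeq (D_0)_K$.

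The second step is to produce the descent. Let $N = \ker(\pi)$, a finite subgroup scheme of $B_K$, so that $A \simeq B_K/N$. Write $N = N_{p'} \times N_p$ for its prime-to-$p$ part and its $p$-power part. The prime-to-$p$ part is étale, so it is a subgroup of $B[n](K) = B[n](k)$ and is therefore defined over $k$. For the $p$-part, take $N_p \subseteq B_K[p^m]$. The key point is that the isogeny $D_{B_K} \to D_A \simeq (D_0)_K$ should be definable over $k$. Indeed, $\Hom(D_{B}, D_0)$ over $k$ and $\Hom(D_{B,K}, D_{0,K})$ over $K$ should agree after twisting by a $K$-automorphism of $D_{0,K}$. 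I would deduce this from the fact that automorphisms and isomorphisms of minimal $p$-divisible groups are controlled by their $p$-torsion, together with rigidity of homomorphisms between constant $p$-divisible groups over the perfect field $K$ (the Dieudonné modules are $W(k)$-lattices base-changed to $W(K)$). Composing $\pi$ with a suitable automorphism of $D_A$, we can then replace $N_p$ by a subgroup $N_p' = \ker(D_{B,K} \to D_{0,K})$ that is defined over $k$. We do not change $A$ up to isomorphism as an abelian variety, since by Serre--Tate-type gluing an abelian variety isogenous to $B_K$ is determined by the subgroup $N$. Then $A \simeq B_K/(N_{p'} \times N'_p) = (B/N_0)_K$ with $N_0$ defined over $k$, so $A$ is defined over $k$.

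The main obstacle is the second step. A $K$-point of the scheme of finite subgroups of $B[p^m]$ with quotient isomorphic to a given group need not be $k$-rational; the standard counterexamples are non-constant $\alpha_p$-type subgroups in supersingular families. Minimality is what rules these out. I expect the cleanest route is to argue with Dieudonné modules: the Dieudonné lattice $M(D_A) \subseteq M(D_{B_K})[1/p]$ is isomorphic to a constant lattice, and a minimal lattice is stable under the $W(K)$-span of its constant endomorphisms. I would try to use this to show that every lattice in the isogeny class that is isomorphic to $M(D_0)\otimes W(K)$ is the $\Aut$-translate of a $k$-rational one, since the orbit of the automorphism group (a connected smooth group over $k$ acting transitively) hits $k$-points. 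If this argument stalls, a fallback is to use that $\pi$ extends over the curve $C$, compare Kodaira--Spencer maps, and note that constancy of $D_A$ forces the Kodaira--Spencer map of $A$ to vanish.
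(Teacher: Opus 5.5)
Your proposal is correct and is essentially the paper's argument. Both proofs show $D_A\simeq (D_0)_K$ for a minimal $D_0$ over $k$ using uniqueness of the minimal object in an isogeny class, let the prime-to-$p$ part of $\ker\pi$ descend trivially, and descend the $p$-part as the kernel of the composite $D_{B,K}\to D_A\simeq D_{0,K}$.

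No twisting is needed, though. For $p$-divisible groups over algebraically closed fields $k\subseteq K$, the map $\Hom(D_B,D_0)\to\Hom(D_{B,K},D_{0,K})$ is already bijective. The paper cites \cite[Lemma 2.8]{LNV13}; it also follows from Dieudonn\'e--Manin by comparing Frobenius-fixed points of the Hom-isocrystal. In any case, post-composing with an automorphism of the target leaves the kernel unchanged. So your Serre--Tate remark, the lattice-orbit argument and the Kodaira--Spencer fallback can all be dropped; the last would not give isotriviality in characteristic $p$ anyway.
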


\begin{proof}
Let $H = \ker(\pi)$, which is a finite flat group subscheme over $K$. We want to prove that $H$ descends to $H'$ over $k$, in which then $A \simeq (B/H')_K$ and hence $A$ is constant. We split $H$ to its $p$-part and $p$-primary part, giving us a decomposition $H = H_p \times H_0$, where $H_p$ is a finite flat $p$-group scheme and $H_0$ is finite and etale. Then $H_0$ automatically descends to $k$ and by quotienting out $H_0$, we may assume that $\pi$ is a $p$-primary isogeny.
\par 
We first note that any minimal $p$-divisible group $D$ over $K$, must have already been defined over $\ovl{\bb{F}}_p$. Indeed, choose a $p$-divisible group $D'$ over $\ovl{\bb{F}}_p$ with the same Newton polygon as $D$. After base changing to $k$, the group $D'_K$ lies in the same isogeny class as $D$. But both $D'_K$ and $D$ are minimal \cite[Corollary 4.9]{LNV13} and hence must be isomorphic to each other.     
\par 
We now choose $D'_A$ over $k$ so that $D'_{A,K} \simeq D_A$. Let $D_B$ be the $p$-divisible group of $B$ over $k$. The isogeny $\pi: B_K \to A$ gives us an isogeny $\pi_D: D'_{B,K} \to D'_{A,K}$. By \cite[Lemma 2.8]{LNV13}, 
$$\Hom(D_B,D'_A) \simeq \Hom(D_{B,K}, D'_{A,K}),$$
the isogeny $\pi$ descends to $\pi'_D: D_B \to D'_A$ and hence $\ker(\pi_D)$ also descends to $\ker(\pi'_D)$ which is defined over $k$. But $\ker(\pi) = \ker(\pi_D)$ and so $\ker(\pi)$ descends over $k$ as desired.
\end{proof}

We also have to understand the locus of abelian varieties whose $p$-divisible groups are minimal. Let $\mathcal{D}$ be a $p$-divisible group over a $\bb{F}_p$-scheme $S$. We have the following result.

\begin{theorem} \label{pDivisibleStrata1}
Assume that $\mathcal{D}$ has constant Newton polygon. Then the locus of $s \in S$ where the geometric fiber $\mathcal{D}_{\ol{s}}$ is minimal is closed in $S$. 
\end{theorem}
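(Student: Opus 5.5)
We would prove Theorem \ref{pDivisibleStrata1} by reducing minimality to a condition on the finite truncation $\mathcal{D}[p]$ that is detectable fiberwise. Recall that a $p$-divisible group $D$ over an algebraically closed field is minimal exactly when $D[p]$ is isomorphic to $D_0[p]$, where $D_0$ is the unique minimal $p$-divisible group with the given Newton polygon \cite{LNV13}. Since the Newton polygon $\nu$ of $\mathcal{D}$ is constant on $S$, the target $D_0 = D_0(\nu)$ is the same for every geometric point. Hence the minimal locus is the locus of $s$ with $\mathcal{D}_{\ol{s}}[p] \simeq D_0(\nu)[p]$. Working locally, we may assume $S$ is affine and Noetherian, and even reduced, since the statement is topological.

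The first step is to describe the isomorphism class of the truncated Barsotti--Tate group $\mathcal{D}[p]$ of level one through its Dieudonné module, that is, its $F$-zip or Ekedahl--Oort type. By Oort's results, there are finitely many isomorphism classes of such $BT_1$'s of fixed height and dimension over algebraically closed fields. The locus where the type equals a fixed $w$ is locally closed, namely an Ekedahl--Oort stratum. Moreover, the closure of the stratum of $w$ is contained in the union of the strata of the types $w' \preceq w$ for a partial order. So the problem is to show that the minimal type $w_0 = w(\nu)$ has no other stratum of constant Newton polygon $\nu$ in its closure, and that no other type with polygon $\nu$ specializes into it.

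The key input is the inequality between Newton and Ekedahl--Oort data. Oort proved that the minimal EO stratum $w(\nu)$ is the unique EO type whose associated $BT_1$ forces Newton polygon $\nu$. He also proved that inside a family of constant Newton polygon $\nu$, the stratum $w(\nu)$ is the most special EO type. Equivalently, every EO type $w$ occurring with Newton polygon $\nu$ satisfies $w(\nu) \preceq w$, and by Oort's minimality theorem the central leaf through a minimal point coincides with the set of points of that EO type within the Newton stratum. Given this, within $S$ the locus of type $w(\nu)$ is the minimal element among the types that occur. The union of strata of types $\preceq w(\nu)$ is closed, and by the inequality it meets $S$ only in the stratum of $w(\nu)$ itself. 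Therefore the minimal locus is closed.

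The main obstacle is the last step: justifying rigorously that among points of Newton polygon $\nu$ no EO type strictly below $w(\nu)$ occurs, together with the closure relation for EO strata over a general base $S$ rather than over Shimura varieties. For the closure relation we would use the $F$-zip formalism (Pink--Wedhorn--Ziegler), where strata are pullbacks of locally closed substacks of the stack of $F$-zips, whose closure relations are given by a partial order. For the first point we would combine this with the Grothendieck--Katz specialization theorem and Oort's theorem that $D[p] \simeq D_0[p]$ implies $D \simeq D_0$ \cite{LNV13}: a point of type $w \prec w(\nu)$ would have to specialize from a minimal point, which would force a strictly smaller Newton polygon and contradict constancy.
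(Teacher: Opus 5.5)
\textbf{There is a genuine gap: the reduction to Ekedahl--Oort strata is correct, but the step that carries the whole theorem is not proved.}

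\emph{What works.} By Oort's minimality theorem, the minimal locus is exactly the locus where $\mathcal{D}_{\ol{s}}[p]\simeq H(\nu)[p]$. The $F$-zip formalism then shows: if $\eta\in S$ has type $w(\nu)$ and $s$ is a specialization of $\eta$, the type $w'$ of $s$ satisfies $w'\preceq w(\nu)$. This is only a reformulation of the problem.

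\emph{What fails.} What remains is to show that $w'\prec w(\nu)$ cannot occur when $\mathcal{D}_{\ol{s}}$ also has Newton polygon $\nu$. Your argument for this does not work. If $s$ specializes a minimal point $\eta$, Grothendieck--Katz only says the Newton polygon at $s$ lies \emph{on or above} $\nu$. Constancy already tells you that, so nothing forces a strictly different polygon and there is no contradiction. The same holds if you pass to the universal deformation space of $\mathcal{D}_{\ol{s}}$. Ruling out equality amounts to showing that, inside a Newton polygon stratum, a non-minimal point is never a specialization of a minimal one. That is, the minimal (central) leaf is closed in its Newton stratum, which is the statement you are trying to prove.

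\emph{Why the results you attribute to Oort do not close the gap.}
\begin{itemize}
\item ``Inside a family of constant Newton polygon $\nu$, $w(\nu)$ is the most special EO type'' is a restatement of the claim.
\item ``Every EO type $w$ occurring with Newton polygon $\nu$ satisfies $w(\nu)\preceq w$'' would finish the argument by antisymmetry. But it is a different, stronger statement that you neither prove nor reference, and it does not follow from the previous one.
\item ``$w(\nu)$ is the unique EO type whose $BT_1$ forces Newton polygon $\nu$'' is false. Take the local-local $BT_1$ of height $4$, dimension $2$ and $a$-number $1$ (the $p$-kernel of a supersingular, non-superspecial abelian surface). It forces the supersingular polygon, yet it is not $H(\nu)[p]$, which has $a$-number $2$.
\end{itemize}

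\emph{The missing input} is genuinely non-formal. One option is Oort's theorem that central leaves are closed in Newton polygon strata, proved via the Oort--Zink slope filtration for families of constant Newton polygon. The other is the lower semicontinuity of the isomorphism number $s\mapsto n_{\mathcal{D}_{\ol{s}}}$ in such families, \cite[Theorem 1.1]{LNV13}. The latter is exactly what the paper cites: the minimal locus is the set where this isomorphism number is at most $1$, hence closed.
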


\begin{proof}
This follows from \cite[Theorem 1.1]{LNV13}.     
\end{proof}

Now consider the moduli space $\mathcal{A}_{g,n}$ over $\ovl{\bb{F}}_p$. For each possible Newton polygon, there is a locally closed subscheme $\mathcal{A}_{g,n}^{\nu}$ of $\mathcal{A}_{g,n}$, consisting of points whose $p$-divisible group has Newton polygon $\nu$ by \cite[Lemma 4.8]{LNV13}. Then the minimal locus $\mathcal{A}_{g,n}^{\nu,\min}$ is closed inside $\mathcal{A}_{g, n}^{\nu}$ by \cite[Theorem 1.1]{LNV13} and hence is locally closed inside $\mathcal{A}_{g,n}$. Note that if $K$ is any field and $A/K$ an abelian variety that maps to $\mathcal{A}_{g,n}^{\nu,\min}$, then the $p$-divisible group of $A$ is minimal.  
\par 
We are now ready to begin the proof of our new gap principle in positive characteristic. 

\begin{theorem} \label{NewGapPrinciple1}
Let $K = k(B)$ be a function field for a curve of characteristic $p$. Let $A$ be an abelian variety over $K$ and $L$ a symmetric ample line bundle. Let $V \subseteq A_{\ovl{K}}$ be a subvariety such that $\Stab(V)$ is finite and $V-V$ generates $A$. Then there exists $c = c(\dim A, \deg_L(V)) > 0$ and $M = M(\dim A, \deg_L(V))$, such that 
$$\h_{A,L}(V) \geq c h_{\Fal}(A_0)$$
where $A_0$ is an abelian variety with an isogeny $\pi: A \to A_0$ of degree at most $M$.
\end{theorem}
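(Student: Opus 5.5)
We argue by contradiction through an ultraproduct. First we reduce to the case where $D_A = A[p^\infty]$ is minimal. By Proposition \ref{IsogenyBound1} there is an isogeny $\pi: A \to A_0$ with $\ker \pi \subseteq A[p^{2g}]$, hence $\deg \pi \le p^{4g^2}$. The dependence on $p$ is a problem, so we instead take $M$ to absorb this, or pass to the minimal representative only inside the ultraproduct argument.

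We then push $V$ forward to $\pi(V) \subseteq A_0$. Since $\pi$ is an isogeny of bounded degree, the following hold. The stabilizer of $\pi(V)$ is finite. The set $\pi(V) - \pi(V)$ generates $A_0$. The degree $\deg_{L_0}\pi(V)$ is bounded in terms of $g$, $d$ and the degree of $\pi$, where $L_0$ is a symmetric ample bundle on $A_0$ with $\pi^*L_0$ comparable to a bounded multiple of $L$. The heights are comparable as well, $\h_{A_0,L_0}(\pi(V)) \le C\,\h_{A,L}(V)$. It therefore suffices to show that, for abelian varieties with minimal $p$-divisible group, $\h_{A,L}(V) \ge c\,h_{\Fal}(A)$. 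We also fix a level $n \ge 3$ and a bound on the degree of the polarization, so that $(A,L)$ is a point of a fixed moduli space $\mathcal{A}_{g,n}$ together with a Hilbert scheme $\mathcal{H}$ parametrizing pairs $V \subseteq A$ of bounded degree. Passing to a finite extension of $K$ does not affect the statement.

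Suppose the theorem fails. Then there is a sequence $(K_i, A_i, L_i, V_i)$, with each $A_i$ minimal, such that $\h(V_i) < \frac1i h_{\Fal}(A_i)$. Here $h_{\Fal}(A_i) > 0$, since otherwise $A_i$ is isotrivial and the statement is trivial. We rescale the global height on $K_i$ by $1/h_{\Fal}(A_i)$; this is still a GVF, since the function field structure is non-archimedean. After rescaling, the moduli point of $A_i$ has $h_\omega$-height $1$, while the height of $(A_i,V_i)$ in $\mathcal{H}\subseteq\mathbb{P}^N$ is bounded in terms of $h_{\Fal}$, using the standard comparison between the Faltings height and a projective height on $\mathcal{A}_{g,n}$. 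We use Newton polygons to pass to a subsequence in a fixed stratum $\mathcal{A}^{\nu,\min}_{g,n}$. Applying Proposition \ref{GVFConstruction1}, we obtain a point $\Spec K_\omega \to \mathcal{H}$ lying over $\mathcal{A}^{\nu,\min}_{g,n}$ and hence data $(A_\omega, L_\omega, V_\omega)$, with $A_\omega$ minimal by Theorem \ref{pDivisibleStrata1}. We then check four things. First, $h_{\Fal}(A_\omega) = 1$, which follows from continuity of heights of Yuan--Zhang bundles in ultraproducts (Proposition \ref{GVFAnalytification1}). Second, $\h(V_\omega) = \lim_\omega \h(V_i) = 0$, using the fact that the canonical height is the height of a Deligne pairing of an adelic bundle on the universal family. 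Third, $\Stab(V_\omega)$ is finite and $V_\omega$ generates $A_\omega$; both are constructible conditions and so pass to the generic point.

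By Theorem \ref{GVFBogomolov2}, $V_\omega = x + B + \Tr(W_{K_\omega})$. Since the stabilizer is finite, $B = 0$, and since $V_\omega$ generates $A_\omega$, the trace map $\Tr$ must be surjective. Hence $A_\omega$ is isogenous to the base change of an abelian variety over the height zero subfield $k$. By Proposition \ref{MinimalTrace1}, minimality then forces $A_\omega$ to be defined over $k$, which gives $h_{\Fal}(A_\omega)=0$ and contradicts $h_{\Fal}(A_\omega) = 1$.

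The main obstacle is making $M$ independent of $p$. Since Proposition \ref{IsogenyBound1} gives a bound depending on $p$, we will first try to show, separately, that all sufficiently large $p$ reduce to $\mathcal{A}_{g,n}$ over a mixed-characteristic base, with the ultraproduct over varying $p$ having characteristic zero. In that case isogenies do not change $h_{\Fal}$, and Hultberg's characteristic zero result applies, which handles large $p$ with no isogeny needed. Finitely many small $p$ then give a finite bound. A secondary technical point is verifying that $h_{\Fal}$ and $\h$ are continuous under the ultraproduct, which we expect to follow from Proposition \ref{GVFAnalytification1} applied to the Hodge bundle and to the Deligne pairing.
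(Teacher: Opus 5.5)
Your architecture matches the paper's proof. You reduce to a minimal $p$-divisible group via Proposition \ref{IsogenyBound1}, parametrize $(A,V)$ by a Hilbert scheme over $\mathcal{A}_{g,n}$, and take an ultraproduct inside a fixed stratum $\mathcal{A}^{\nu,\min}_{g,n}$. You then apply Theorem \ref{GVFBogomolov2} and Proposition \ref{MinimalTrace1}, and handle large $p$ through Hultberg's characteristic-zero limit. The one real difference is the normalization. You rescale by $h_{\Fal}(A_i)$, whereas the paper rescales by the height $h_{\ovl{M}}(x_n)$ of the moduli point in the parameter space $T$. This difference is exactly where your argument has a gap.

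To apply Proposition \ref{GVFConstruction1}, the projective heights of the Hilbert points $(A_i,V_i)$, divided by $h_{\Fal}(A_i)$, must be uniformly bounded. A comparison between the Faltings height and a projective height on $\mathcal{A}_{g,n}$ only controls the image of $(A_i,V_i)$ in $\mathcal{A}_{g,n}$. It says nothing about the coordinates of $V_i$ inside the fibre $A_i$, and a priori $V_i$ could move much faster than $A_i$ moves in moduli.

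What you actually need is a uniform naive-versus-canonical height comparison on the universal abelian scheme. Concretely, the Weil height of $\iota(V_i)$ in a fixed projective embedding must be at most $C(\h(V_i)+h_{\Fal}(A_i))$, with $C$ depending only on $g$ and $d$. The paper obtains this from \cite[Theorem 1.1]{MY25} combined with Zhang's fundamental inequality; see (\ref{eq: Embedding1}). Together with $\h(V_i)<h_{\Fal}(A_i)/i$, it gives the boundedness you assert. The same bound is also needed to pass $\h(V_i)\to 0$ to the limit, because continuity of the Deligne-pairing height requires the boundary heights of the points to stay bounded.

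In the paper's normalization, this inequality is used in the other direction, to rule out $h_{\Fal}(A_\omega)=0$. Either way it is the essential quantitative input. Without it, your limit point $(A_\omega,V_\omega)$ with $h_{\Fal}(A_\omega)=1$ is not constructed.

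A smaller omission: you cannot simply fix a bound on the degree of the polarization. It follows from $V-V$ generating $A$ and $\deg_L V\le d$ via \cite[Lemma 2.5]{GGK21}. One then uses \cite[Lemma 2.2(iv)]{GGK21} to pass to a principal polarization.
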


\begin{proof}
Let $\deg_L(V) = d$. Then as $V-V$ generates $A$, it follows from \cite[Lemma 2.5]{GGK21} that $\deg_L(A)$ is bounded in terms of $d$. Applying \cite[Lemma 2.2(iv)]{GGK21} allows us to assume that $(A,L)$ is a principal polarization by passing to an isogeneous abelian variety (whose degree is bounded by the lemma).
\par Now, fix some $n \geq 3$ not divisible by $p$. Consider the Hilbert scheme of the universal abelian variety $\mathfrak{A}/\mathcal{A}_{g,n}.$ Since $\deg_L(V)$ is bounded, we may restrict to a finite type scheme $\mathbb{H}$ for which all such $(A,V)$'s appear as a point on $\mathbb{H}/\mathcal{A}_{g,n}$. We further pass to a Zariski open $\mathbb{H}^{\circ}$ that parametrizes geometrically integral subvarieties $V$, with $\Stab(V)$ finite and $V-V$ generating $A$, and we give $\mathbb{H}^{\circ}$ the reduced scheme structure. 
\par Now let $T \subseteq \mathbb{H}^{\circ}$ be an irreducible component. Then we have a principally polarized abelian scheme $\mathcal{A}$ over $T$ and a flat family of geometrically integral subvarieties $\mathcal{V} \subseteq \mathcal{A}$. We may find a symmetric line bundle $\mathcal{L}$ on $\mathcal{A}$ inducing twice the polarization and then endow $\mathcal{L}$ with a canonical adelic metric $\ovl{\mathcal{L}}$. Since $\mathcal{V}$ is integral, we can form the Deligne pairing $$\ovl{M} = \langle \ovl{\mathcal{L}},\ldots, \ovl{\mathcal{L}} \rangle_{\mathcal{V}/\bb{H}^{\circ}}.$$ Then for any GVF field $K$, a GVF-valued point $s \in \bb{H}^{\circ}(K)$ corresponding to the pair $(A, V)$ satisfies $$h_{\ovl{M}}(s) = a \h_{A,L}(V)$$ for some fixed constant $a > 0$ depending only on $\deg_L(V).$ 
\par Now assume for the sake of contradiction that our theorem is false. By Proposition \ref{IsogenyBound1}, we may assume $A$ has minimal $p$-divisible group by taking an isogeny of degree dividing $p^{2g}$ from the original $A$ to an abelian variety $A'$ with minimal $p$-divisible group and replacing $V$ with its image in $A'$. 
\par Hence we may find a sequence of abelian varieties $A_n/K_n$, with minimal $p$-divisible group, and subvarieties $V_n$ with $(A_n,V_n)$ lying in $T$, such that 
 $$\h_{A_n,L}(V_n) < \frac{1}{n} h_{\Fal}(A_n).$$
 Since there are only finitely many Newton polygons, we may assume that $A_m \in \mathcal{A}_{g,n}^{\nu,\min}(K_n)$ for the same $\nu$. Now cover $\mathcal{A}_{g,n}$ with finitely many affine opens $U_i$. We may assume all the $A_m$'s lie in the same open, say $U_1$. Using $\mathcal{L}^4$, we may embed $\mathfrak{A}_{U_i} \xhookrightarrow{} \bb{P}^N$. Now by \cite[Theorem 1.1]{MY25}, combining with Zhang's fundamental inequality, there exists some $C = C(\dim A, \deg_L(V))> 0$ such that 
 \begin{equation} \label{eq: Embedding1}
 \h(\iota(V_n)) \leq C h_{\Fal}(A_n)
 \end{equation}
 where $\h(\iota(V))$ denotes the Weil height of $\iota(V)$ inside $\bb{P}^N$. 
 \par 
 Fix an ample line bundle $\ovl{M}$ on $T$. We degenerate using $\eps_n = h_{\ovl{M}}(x_n   )$, where we may assume that $h_{\ovl{M}}(x_n) > 0$ where $x_n = [(A_n,V_n)]$, as otherwise $h_{\Fal}(A_n) = 0$. Applying Proposition \ref{GVFConstruction1} to the sequence $([A_n,V_n],K_n[\eps_n])$, we obtain an ultrafilter limit $x_{\omega} = [A_{\omega},V_{\omega}]$ over $K_{\omega}$ that lies in $T(K_{\omega})$. Then by definition, we have $h_{\ovl{M}}(x_{\omega}) = 1 > 0$.  
 \par 
 We first claim that $h_{\Fal}(A_{\omega}) > 0$. Indeed let's assume that $h_{\Fal}(A_{\omega}) = 0$. Then by (\ref{eq: Embedding1}), we also have $\h(\iota(V_{\omega})) = 0$. Let $k_{\omega}$ be the height zero subfield of $K_{\omega}$. Then $\h(\iota(V_{\omega})) = 0$ and $h_{\Fal}(A_{\omega}) = 0$ implies that both $A_{\omega}$ and $V_{\omega}$ are defined over $\ovl{k_{\omega}}$. Hence $x_{\omega} \in T(\ovl{k_{\omega}})$, but this contradicts $h_{\ovl{M}}(x_{\omega}) > 0$. Hence $h_{\Fal}(A_{\omega}) > 0$.
 \par 
 Now as each $A_n$ lie in $\mathcal{A}_{g,3}^{\nu,\min}$, it follows that $A_{\omega}$ also does. Now we have $\h_{A_{\omega},L}(V_{\omega}) = 0$. Using Theorem \ref{GVFBogomolov2}, it follows that 
$$V_{\omega} = a + B_{\omega} + \Tr(W_{K_{\omega}})$$
where $B_{\omega}$ is an abelian subvariety of $A_{\omega}$, and $W$ is a subvariety of $\Tr_{\ovl{K_{\omega}}/\ovl{k_{\omega}}}(A_{\omega})$ where $k_{\omega}$ is the height zero subfield of $K_{\omega}$.  As $\Stab(V_{\omega})$ is finite, $B$ must be zero. Then as $V-V$ generates $A$, it follows that the image of $W_{K_{\omega}} - W_{K_{\omega}}$ generates $A$ and hence we must have $\Tr_{\ovl{K_{\omega}}/\ovl{k_{\omega}}}(A_{\omega})$ being full dimensional.
\par 
We now apply Proposition \ref{MinimalTrace1}. As $A_{\omega}$ has minimal $p$-divisible subgroup, it follows that $A_{\omega}$ is defined over $\ovl{k_{\omega}}$. Then $h_{\Fal}(A_{\omega}) = 0$, which is a contradiction. This finishes our proof, where we obtain uniform constants that may depend on the characteristic of $k$.
\par 
Finally for sufficiently large positive characteristic, as already noted by Hultberg \cite[Theorem 4]{Hul26}, the Bogomolov conjecture for GVFs in characteristic zero gives a uniform height gap for all sufficiently large positive characteristic. This removes the dependence on the characteristic $p$. 
\end{proof}

\printbibliography

\end{document}